\newcommand\restr[2]{{
  \left.\kern-\nulldelimiterspace 
  #1 
  \vphantom{\big|} 
  \right|_{#2} 
  }}
   \newcommand{\KKv}{\KK_v}
  \newcommand{\OO}{\mathcal{O}}
  \newcommand{\LL}{{\mathbb{L}}}
\newcommand{\KK}{{\mathbb{K}}}
\newcommand{\QQ}{{\mathbb{Q}}}
\newcommand{\NN}{{\mathbb{N}}}
\newcommand{\ZZ}{{\mathbb{Z}}}
\newcommand{\CC}{{\mathbb{C}}}
\newcommand{\A}{{\mathbb{A}}}
\newcommand{\RR}{{\mathbb{R}}}
\newcommand{\DD}{{\mathbb{D}}}
\newcommand{\GL}{{\mathrm{GL}}}
\newcommand{\re}{{\mathrm{Re}(s)}}
\newcommand{\fm}{\mathfrak{m}}
\newcommand{\HH}{\mathbb{H}}
\newcommand{\VV}{\mathcal{V}}
\newcommand{\ra}{\rightarrow}
\newcommand{\Sp}{\mathrm{Sp}}
\newcommand{\bs}{\backslash}
\newcommand{\tr}{\mathrm{Tr}}
\newcommand{\Kb}{{\mathbb{K}_v}}
\newcommand{\DDb}{\mathrm{D}}
\newcommand{\bpm}{\begin{pmatrix}}
\newcommand{\epm}{\end{pmatrix}}
\newcommand{\ain}[3]{#1\in\{#2,\ldots,#3\}}
\newcommand{\iso}{\overset{\cong}{\rightarrow}}
\newcommand{\ddd}{\,\mathrm{d}}
\newcommand{\lig}{\mathfrak{g}}
\newcommand{\jl}{\mathrm{JL}}
\newcommand{\lj}{\mathrm{LJ}}
\newcommand{\mw}{\mathrm{MW}}
\renewcommand*{\det}{\qopname\relax o{det}}
\newcommand{\lik}{\mathfrak{k}}
\newcommand{\dv}{\,\mathrm{d}_v}
\newcommand{\SG}{{\textbf{S}_{K_f'}^{\GL'_{2n}}}}
\newcommand{\SH}{{\textbf{S}_{K_f'}^{H'_n}}}
\newcommand{\Sh}{{\textbf{S}_{K_f'}^{H'_1}}}
\newcommand{\Sg}{{\textbf{S}_{K_f'}^{\GL'_{2}}}}
\newcommand{\mE}{\mathcal{E}_\mu^\lor}
\newcommand{\gK}{\left(\mathfrak{g}'_\infty,K'_\infty\right)}
\newcommand{\tbf}{\textbf{t}}
\newcommand{\CCrit}{\mathrm{Crit}}
\newcommand{\diag}{\mathrm{diag}}
\newcommand{\ato}[2]{\genfrac{}{}{0pt}{}{#1}{#2}}
\newcommand{\Aut}{\mathrm{Aut}}
\newcommand{\mS}{\mathcal{S}}
\newcommand{\df}{\, \mathrm{d}_f}
\newcommand{\bsg}{\mathbf{S}^{\GL_{2n}'}}
\newcommand{\SL}{\mathrm{SL}}
\begin{document}
\def\wsdraftnote{}
\markboth{Johannes Droschl}{Critical values of $L$-functions of residual representations of $\GL_4$}

\catchline{}{}{}{}{}

\title{Critical values of $L$-functions of residual representations of $\GL_4$}

\author{Johannes Droschl}

\address{}

\maketitle

\begin{abstract}
  In this paper we prove rationality results for critical values for $L$-functions attached to representations in the residual spectrum of $\GL_4(\A)$. We use the Jacquet-Langlands correspondence to describe their partial $L$-functions via cuspidal automorphic representations of the group $\GL'_2(\A)$ over a quaternion algebra. Using ideas inspired by results of Grobner and Raghuram we are then able to compute the critical values as a Shalika period up to a rational multiple.
\end{abstract}

\keywords{Automorphic representations; Critical values; Shalika models; Jacquet-Langlands correspondence.}

\ccode{Mathematics Subject Classification 2020: 11F67, 11F70, 11F75}

\section{Introduction}
Let $\DD$ be a division algebra over a totally real number field $\KK$, which is non-split at every place at infinity. Denote by $\Sigma_\DD$ the set of places where $\DD$ splits and by $\A=\A_\KK$ the adeles. We let
$M_{n,n}$ be the algebraic variety of $n\times n$ matrices over $\KK$ and
$\GL_{n}$ be the general linear group over $\KK$. Similarly, let $M_{n,n}'$ be the variety of $n\times n$ matrices with coefficients in $\DD$ and let $\GL'_{n,\DD}=\GL'_{n}$ be the group of invertible matrices in $M_{n,n}'$, where we see both varieties as algebraic groups over $\KK$.
In \cite{GroRag} the authors proved certain rationality results of critical values of the $L$-function of cohomological cuspidal irreducible automorphic representations of $\GL_{2n}(\A)$, which admit a Shalika model. The goal of this paper is to extend these results to non-cuspidal discrete series representations of $\GL_4\left(\A\right)$ by lifting them from cuspidal irreducible representations of $\GL'_2\left(\A\right)$ by use of the Jacquet-Langlands correspondence $\jl$, see \cite{BadRen}.

Let \[\mS \coloneqq \Delta \GL'_n\rtimes U_{(n,n)}'=\left\{\bpm h&X\\0& h\epm:\, h\in \GL_n',\, X\in M_{n,n}' \right\} \] be the Shalika subgroup of $\GL'_{2n}$. We say that an irreducible cuspidal automorphic representation $\Pi'$ of $\GL'_{2n}\left(\A\right)$ with central character $\omega$ admits a Shalika model with respect to a character $\eta$, if $\eta^n=\omega$ and if the Shalika period 
\[\mS_\psi^\eta\left(\phi\right)\left(g\right)\coloneqq \int_{Z'_{2n}\left(\A\right)\mS\left(\KK\right)\bs\mS\left(\A\right)}\phi\left(sg\right)\psi\left(\tr(X)\right)^{-1}\eta\left(\det'(h)\right)^{-1}\ddd s\neq 0\] does not vanish
for some $\phi\in\Pi'$ and $g\in \GL'_{2n}\left(\A\right)$.

In the split case, \emph{i.e.} $\DD=\KK$, it is well known that $\Pi'$ admits a Shalika model with respect to $\eta$ if and only if the twisted partial exterior square $L$-function
$L^S\left(s,\Pi',\bigwedge ^2\otimes\eta^{-1}\right)$
has a pole at $s=1$. In the non-split case there is currently no analogous theorem known, however, in the special case $n=1$ and $\DD$ a quaternion division algebra the following was proved in \cite{GanTakII}.
We recall quickly the M{\oe}glin-Waldspurger classification of discrete series representation. Namely, for $
\Sigma$ a cuspidal representation of $\GL_l(\A)$ and $k\in\NN$, one can construct a discrete series representation $\mw(\Sigma,k)$ of $\GL_{kl}(\A)$.
\begin{theorem}[{\cite[Theorem 1.3]{GanTakII}}]\label{T:G-TI}
Assume $\DD$ is a quaternion division algebra and $\Pi'$ a cuspidal irreducible automorphic representation of $\GL'_{2}\left(\A\right)$. 
If $\jl\left(\Pi'\right)$ is cuspidal and irreducible, the following assertions are equivalent.
\begin{enumerate}
    \item $\Pi'$ admits a Shalika model with respect to $\eta$.
    \item The twisted partial exterior square $L$-function $L^S\left(s,\Pi',\bigwedge^2\otimes\eta^{-1}\right)$ has a pole at $s=1$ and for all $v\in \VV_\DD$, $\Pi'_v$ is not isomorphic to a parabolically induced representation \[\lvert\det'\lvert_v^\frac{1}{2}\tau_1'\times\lvert\det'\lvert_v^{-\frac{1}{2}}\tau_2',\]
    where $\tau_i'$ are representations of $\GL'_1\left(\KK_v\right)$ with central character $\eta_v$.
\end{enumerate}
If $\jl\left(\Pi'\right)$ is not cuspidal, $\jl\left(\Pi'\right)=\mw(\Sigma,2)$ for some cuspidal irreducible representation $\Sigma$ of $\GL_2(\A)$. Then the following assertions are equivalent.
\begin{enumerate}
    \item $\Pi'$ admits a Shalika model with respect to $\eta$.
    \item The central character $\omega_\Sigma$ of $\Sigma$ equals $\eta$.
    \item The twisted partial exterior square $L$-function $L^S\left(s,\Pi',\bigwedge^2\otimes\eta^{-1}\right)$ has a pole at $s=2$.
\end{enumerate}
\end{theorem}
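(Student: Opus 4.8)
\emph{Proof proposal.} The plan is to convert, in both cases, the nonvanishing of the global Shalika period $\mS_\psi^\eta$ into a statement about the rightmost pole of the partial exterior-square $L$-function together with a finite list of local conditions at the ramified places $v\in\VV_\DD$. Note first that, by definition, the partial $L$-function is built from $\GL_4$-factors at a cofinite set of places, all of which lie in $\Sigma_\DD$; hence $L^S(s,\Pi',\bigwedge^2\otimes\eta^{-1})=L^S(s,\jl(\Pi'),\bigwedge^2\otimes\eta^{-1})$, and the location and order of its poles are governed by whether $\jl(\Pi')$ is cuspidal (then: holomorphic for $\re>1$, at most a simple pole at $s=1$) or residual of the form $\mw(\Sigma,2)$ (then: rightmost pole at $s=2$). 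In the residual case, the plethysm $\bigwedge^2(A\otimes B)=\mathrm{Sym}^2A\otimes\bigwedge^2B\oplus\bigwedge^2A\otimes\mathrm{Sym}^2B$ applied to the $4$-dimensional parameter $\phi_\Sigma\otimes(|\cdot|^{1/2}\oplus|\cdot|^{-1/2})$ of $\mw(\Sigma,2)$ gives
\begin{align*}
L^S&\left(s,\mw(\Sigma,2),\textstyle\bigwedge^2\otimes\eta^{-1}\right)\\
&=L^S(s,\mathrm{Sym}^2\Sigma\otimes\eta^{-1})\,L^S(s+1,\omega_\Sigma\eta^{-1})\,L^S(s,\omega_\Sigma\eta^{-1})\,L^S(s-1,\omega_\Sigma\eta^{-1}),
\end{align*}
in which the only factor with a pole at $s=2$ is the Hecke $L$-function $L^S(s-1,\omega_\Sigma\eta^{-1})$, so that the pole at $s=2$ exists if and only if $\omega_\Sigma=\eta$; this is (2)$\Leftrightarrow$(3).

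The bridge from the global period to this pole is an exterior-square zeta integral of Jacquet--Shalika type, adapted from $\GL_{2n}$ to the inner form $\GL'_{2n}$: for a cusp form $\phi\in\Pi'$ and a Schwartz--Bruhat function one forms $Z(s,\phi,\Phi,\eta)$ whose inner integration runs over the unipotent radical of $\mS$, expands $\phi$ along that unipotent to see that $Z$ is Eulerian, and identifies (the basic identity) $Z$ with an integral of $\mS_\psi^\eta$, whose local factors equal $L_v(s,\pi'_v,\bigwedge^2\otimes\eta_v^{-1})$ at almost every place. The upshot is that $\mS_\psi^\eta$ is not identically zero if and only if $L^S(s,\Pi',\bigwedge^2\otimes\eta^{-1})$ has a pole at its rightmost admissible point ($s=1$ in the cuspidal case, $s=2$ in the residual case) \emph{and} every local zeta integral at $v\in\VV_\DD$ can be chosen holomorphic and nonzero there. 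At $v\in\Sigma_\DD$ one has $\GL'_2(\KK_v)\cong\GL_4(\KK_v)$, this is the classical $\GL_4$ situation, and nonvanishing of the local integral is equivalent to the pole of the local exterior-square factor, so no extra condition appears; this already recovers the known statement when $\DD$ is split and, in the residual case, leaves no surviving local condition (the local obstruction below does not arise for components of $\mw(\Sigma,2)$, which are proper Langlands quotients), giving (1)$\Leftrightarrow$(2).

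The decisive input in the cuspidal case is therefore the local classification of Shalika functionals at the ramified places $v\in\VV_\DD$, where $\GL'_2(\KK_v)=\GL_2(\DD_v)$ with $\DD_v$ a division algebra and $\jl(\pi'_v)$ is essentially square-integrable, hence generic, on $\GL_4(\KK_v)$. I would analyse $\mathrm{Hom}_{\mS(\KK_v)}(\pi'_v,\psi_v\otimes\eta_v)$ by the geometric lemma --- working out the $\mS(\KK_v)$-orbits on the relevant partial flag variety of $\GL'_2$, reading off the Bernstein--Zelevinsky filtration of the restriction to $\mS(\KK_v)$ of an induced representation of $\GL_2(\DD_v)$, and showing that this space is one-dimensional, and the local zeta integral correspondingly nonzero at $s=1$, except exactly when $\pi'_v\cong|\det'|_v^{1/2}\tau_1\times|\det'|_v^{-1/2}\tau_2$ with the $\tau_i$ of central character $\eta_v$, when the functional degenerates. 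Feeding this back into the global reduction completes (1)$\Leftrightarrow$(2) in the cuspidal case.

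I expect the hardest step to be exactly this local classification at the ramified places, since pinning down the family $|\det'|_v^{1/2}\tau_1\times|\det'|_v^{-1/2}\tau_2$ as the \emph{only} obstruction beyond the $L$-pole requires rather precise control of Shalika functionals on $\GL_2$ of a $p$-adic division algebra, where the orbit combinatorics and the Bernstein--Zelevinsky formalism are coarser than on $\GL_4$. A route plausibly closer to the original argument would replace the zeta integral by the theta correspondence between $\GL'_2$ and similitude forms of $\mathrm{SO}_5$, exploiting that a Shalika model on $\GL_4$ is equivalent to being a functorial lift from $\mathrm{SO}_5$; the delicate local bookkeeping at $\VV_\DD$ would, however, be of the same nature.
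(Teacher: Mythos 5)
You should first be aware that the paper does not prove this statement at all: it is quoted verbatim from Gan--Takeda (\cite[Theorem 1.3]{GanTakII}), and the author explicitly records, just before the restatement as \Cref{T:G-T}, that it ``is a consequence of the global theta correspondence.'' So the only fair comparison is between your sketch and the Gan--Takeda argument, which is the theta-correspondence route you relegate to a closing remark. Your main proposed route has a genuine gap at its core: the Jacquet--Shalika exterior-square zeta integral and its ``basic identity'' unfolding rest on the Whittaker--Fourier expansion of cusp forms along the maximal unipotent of $\GL_{2n}$, and no such expansion (nor any known Eulerian exterior-square integral) exists on the inner form $\GL'_{2}(\A)=\GL_2(\DD)(\A)$, which has no Whittaker models. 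This is not a technicality one can ``adapt'': the paper itself stresses that in the non-split case no analogue of the Jacquet--Shalika criterion (\Cref{T:JF}) is known, and that is precisely why the quaternionic $n=1$ case must be imported from \cite{GanTakII}. Relatedly, your ``upshot'' --- period nonvanishing iff rightmost pole of $L^S$ plus nonvanishing of local zeta integrals at $\VV_\DD$ --- is not how the split criterion works either (there the equivalence is pole $\Leftrightarrow$ period, with no residual local conditions), and it cannot be extracted from an Euler-product factorization; indeed \cite[Theorem 1.4]{GanTakII}, cited in the paper, shows that existence of local Shalika models at all places does not imply a global one, so any argument that reduces the global period to placewise nonvanishing statements is structurally wrong.

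The decisive local step in the cuspidal case --- that the only obstruction at $v\in\VV_\DD$ is $\pi'_v\cong\lvert\det'\lvert_v^{1/2}\tau_1\times\lvert\det'\lvert_v^{-1/2}\tau_2$ with $\tau_i$ of central character $\eta_v$ --- is asserted in your sketch rather than proved, and in Gan--Takeda it comes out of the local theta correspondence (between $\GL_2(\DD_v)$ and the relevant similitude orthogonal/symplectic group), not from a geometric-lemma computation of $\mathrm{Hom}_{\mS(\KK_v)}(\pi'_v,\psi_v\otimes\eta_v)$; note also that one-dimensionality of that Hom-space (\Cref{T:nien}) is known here only for trivial $\eta_v$, so even the uniqueness input you invoke is delicate. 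What does survive of your proposal is the plethysm computation in the residual case: since the partial $L$-function only sees split places, $L^S(s,\Pi',\bigwedge^2\otimes\eta^{-1})$ factors as $L^S(s,\mathrm{Sym}^2\Sigma\otimes\eta^{-1})L^S(s+1,\omega_\Sigma\eta^{-1})L^S(s,\omega_\Sigma\eta^{-1})L^S(s-1,\omega_\Sigma\eta^{-1})$, and the pole at $s=2$ occurs exactly when $\omega_\Sigma=\eta$; this correctly gives (2)$\Leftrightarrow$(3). But the equivalences with (1), in both cases, are exactly the content for which you would need the theta-lift machinery, so the proposal as written does not constitute a proof.
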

For the rest of the introduction assume $\DD$ is a quaternion algebra and $\Pi'$ be an irreducible cuspidal cohomological automorphic representation of $\GL_2'\left(\A\right)$ with respect to a coefficient system $E_\mu^\lor$.
Note that for a cuspidal $\Pi'$ and $\sigma\in\Aut\left(\CC\right)$ one can define the $\sigma$-twist ${}^\sigma \Pi'_f$ of the finite part of $\Pi'_f$. Following \cite{GroRag2} we extend this to a $\sigma$-twist ${}^\sigma\Pi'$ of $\Pi'$, which is a discrete series representation of $\GL_{2}'\left(\A\right)$. In \cite{GroRag2} it was shown that if moreover $\jl\left(\Pi'\right)$ is cuspidal, ${}^\sigma\Pi'$ is again cuspidal. We prove that the assumption of $\jl\left(\Pi'\right)$ being cuspidal is not necessary and extend their argument using the M{\oe}glin-Waldspurger classification to the case when $\jl\left(\Pi'\right)$ is residual.
Using the above criterion for admitting a Shalika model, we see that if $\Pi'$ admits a Shalika model then so does ${}^\sigma \Pi'$. 
Let $\QQ\left(\Pi_f'\right)$ be the field fixed by the automorphisms fixing $\Pi'_f$.
In \cite{GroRag2} it was shown that $\QQ\left(\Pi_f'\right)$ is a number field and that $\QQ\left(\Pi'_f\right)=\QQ\left(\jl\left(\Pi'\right)_f\right)$. Following \cite{JiaSunTia}, \cite{GroRag} we define a finite extension $\QQ\left(\Pi',\eta\right)$ of $\QQ\left(\Pi_f'\right)$ and a $\QQ\left(\Pi',\eta\right)$-structure on the Shalika model $\mS_{\psi_f}^{\eta_f}\left(\Pi_f'\right)$ of $\Pi'_f$.

As in \cite{GroRag} we will make use of a \emph{numerical coincidence}, which is together with \ref{T:G-TI}, \ref{T:Sun} and \ref{T:nien} the reason why we must limit ourselves to the case $\DD$ being quaternion and $n=1$.  Let $q_0$ be the lowest degree in which the $\left(\lig ',K'_\infty\right)$-cohomology of $\pi_\infty'\otimes E_\mu^\lor$ does not vanish. Then we have $q_0=\dim_\QQ\KK$ and 
\[\dim_\CC H^{q_0}\left(\lig'_\infty,K'_\infty,\Pi_\infty'\otimes E_\mu^\lor\right)=1.\]
By fixing a basis vector of this one-dimensional vector space, we can define an isomorphism
\[\Theta_{\Pi'}\colon\mS_{\psi_f}^{\eta_f}\left(\Pi_f'\right)\rightarrow H^{q_0}\left(\lig'_\infty,K'_\infty,\Pi'\otimes E_\mu^\lor\right),\] where the right hand side inherits a {$\QQ\left(\Pi',\eta\right)$-structure} from its geometric realization as automorphic cohomology.
Thus we can normalize the above isomorphism by a factor $\omega\left(\Pi_f'\right)$,
the so-called Shalika period, such that it respects the $\QQ\left(\Pi',\eta\right)$-structures of both sides. Analogously to \cite{GroRag} we compute how $\omega\left(\Pi_f'\right)$ behaves under twisting with a Hecke character $\chi$ of $\GL_1(\A)$ lifted to $\GL_2'\left(\A\right)$ via the determinant map. Let $\mathcal{G}\left(\chi_f\right)$ be the Gauss sum of $\chi_f$. Then
\[\sigma\left(\frac{\omega\left({\Pi'}_f\otimes\chi_f\right)}{\mathcal{G}\left(\chi_f\right)^4\omega\left({\Pi'}_f\right)}\right)=\frac{\omega\left({}^\sigma{\Pi'}_f\otimes {}^\sigma\chi_f\right)}{\mathcal{G}\left({}^\sigma\chi_f\right)^4\omega\left({}^\sigma{\Pi'}_f\right)}\] for $\sigma\in \Aut\left(\CC\right)$.

The next ingredient is the Shalika zeta-integral, first introduced in \cite{FriJac}, and extended to $\GL_2'\left(\A\right)$,
\[\zeta\left(s,\phi\right)\coloneqq \int_{\GL'_1\left(\A\right)}S^\eta_\psi\left(\phi\right)\left(\bpm g_1&0\\0&1\epm\right)\lvert\det\left(g_1\right)\lvert^{s-\frac{1}{2}}\ddd g_1\]
and its local analogs. As in \cite{GroRag} we fix a special vector $\xi^0_{{\Pi'}_f}\in \mS_{\psi_f}^{\eta_f}\left({\Pi'}_f\right)$ such that
\[\zeta_v\left(\frac{1}{2},\xi^0_{{\Pi'}_f}\right)=L\left(\frac{1}{2},\pi_v\right)\] if $v$ is a finite place at which $\psi$ and $\Pi'$ are unramified. By \cite{FriJac} the period integral over $H'_1=\GL'_1\times \GL'_1$ of a cusp form is precisely the Shalika zeta integral. To show the invariance of this period integral under the action of a Galois group, we first interpret it as an instance of Poincar\'e duality of the top cohomology group of the space
\[\textbf{S}_{K_f'}^{H'_1}=H'_1\left(\KK\right)\bs H'_1\left(\A\right)/\left(K'_\infty\cap {H'_1}_\infty\right)\iota^{-1}\left(K_f'\right),\]
where $\iota\colon H'_1\hookrightarrow \GL'_{2}$ is the block-diagonal embedding and $K_f'$ a small enough open compact subgroup of $\GL'_{2}\left(\A_{f}\right)$. To make the whole story work it is crucial that 
$\dim_\mathbb{R} \textbf{S}_{K_f'}^{H'_1}=q_0$, which only works if we restrict ourselves to the case $n=1$ and $\DD$ being a quaternion algebra, the aforementioned numerical coincidence.
Since we assume that $\jl\left(\Pi'\right)$ is residual, we then compute that the critical points of $L\left(s,\Pi'\right)$ are all half-integers $s=\frac{1}{2}+m ,\,m\in\mathbb{Z}$ with $-\mu_{v,2}\le m\le -\mu_{v,3}$ for all infinite places $v$.

Since we assume $\KK$ to be totally real, we show as in \cite{GroRag} that a certain representation $E_{\left(0,-w\right)}$ of $H'_1$ appears in the coefficient system $E_\mu^\lor$ of $\pi_\infty'$ if $\frac{1}{2}$ is a critical point of the $L$-function, which in turn lets us map the fixed special vector $\xi_{{\Pi'}_f}^0$ first to $H^{q_0}\left(\lig'_\infty,K'_\infty,\mS_\psi^\eta\left({\Pi'}\right)\otimes E_\mu^\lor\right)$ and then interpret it as an element of 
$H^{q_0}_c\left(\mathbf{S}_{K_f'}^{H'_1},\mE\right),$ which we then map to $H^{q_0}_c\left(\textbf{S}_{K_f'}^{H'_1},\mathcal{E}_{\left(0,-w\right)}\right)$ using the map from above, where $\mE$ and $\mathcal{E}_{\left(0,-w\right)}$ are the sheaves on $\textbf{S}_{K_f'}^{H'_1}$ associated to $E_\mu^\lor $ and $E_{\left(0,-w\right)}$. Finally, applying Poincar\'e duality to this last space, we show that the resulting number is essentially the value of the $L$-function $L(s,\Pi')$ at $s=\frac{1}{2}$. Now the final result of \cite{GroRag} for critical values of the $L$-function follows analogously in our case, namely if $s=\frac{1}{2}+m$, there exist periods $\omega\left(\Pi'_f\right)$ and $\omega\left(\pi_\infty',m\right)$ such that
\[\sigma\left(\frac{L\left(\frac{1}{2}+m,\Pi_f'\otimes\chi_f\right)}{\omega\left(\Pi_f'\right)\mathcal{G}\left(\chi_f\right)^4\omega\left(\pi_\infty',m\right)}\right)=\frac{L\left(\frac{1}{2}+m,{}^\sigma\Pi_f'\otimes{}^\sigma\chi_f\right)}{\omega\left({}^\sigma\Pi_f'\right)\mathcal{G}\left({}^\sigma\chi_f\right)^4\omega\left(\pi_\infty',m\right)}\]
for all $\sigma\in\Aut\left(\CC/\QQ\left(\Pi',\eta\right)\right)$. Let $\QQ\left(\Pi',\eta,\chi\right)$ be the compositum of $\QQ\left(\Pi',\eta\right)$ and $\QQ\left(\chi\right)$. This implies that
\[\frac{L\left(\frac{1}{2}+m,\Pi_f'\otimes\chi_f\right)}{\omega\left(\Pi_f'\right)\mathcal{G}\left(\chi_f\right)^4\omega\left(\pi_\infty',m\right)}\in \QQ\left(\Pi',\eta,\chi \right) \] and hence, proves the main result.
\begin{theorem}
Let $\Pi$ be a non-cuspidal discrete series representation of $\GL_4\left(\A\right)$ with trivial central character written as $\Pi\cong \mw(\Sigma\lvert\det\lvert^{\frac{1}{2}}\times \Sigma\lvert\det\lvert^{-\frac{1}{2}})$ via the M{\oe}glin-Waldspurger classification, where $\Sigma$ is a cuspidal irreducible representation of $\GL_2\left(\A\right)$. Assume moreover that there exists an irreducible cuspidal cohomological representation $\Pi'$ of $\GL'_2\left(\A\right)$ with $\jl\left(\Pi'\right)=\Pi$ which is cohomological with respect to coefficient system $E_\mu^\lor$. Let $\chi$ be a finite order Hecke-character of $\GL_1\left(\A\right)$ and $s=\frac{1}{2}+m$ a critical point of $L\left(s,\Pi'\right)$. Then
\[\frac{L\left(\frac{1}{2}+m,\Pi_f\otimes \chi_f\right)}{\omega\left(\Pi_f'\right)\mathcal{G}\left(\chi_f\right)^4\omega\left(\pi_\infty',m\right)}\in \QQ\left(\Pi',\chi\right).\]
\end{theorem}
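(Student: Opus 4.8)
\emph{Proof plan.} The strategy is to transfer the question to the quaternionic group $\GL'_2$ via Jacquet--Langlands and then carry out the cohomological Shalika-period argument outlined in the introduction, which mirrors \cite{GroRag}.

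\textbf{Transfer to $\GL'_2$.} First I would use that the local Jacquet--Langlands correspondence preserves standard $L$-factors and is compatible with character twists, so that $L(s,\Pi_v\otimes\chi_v)=L(s,\pi'_v\otimes\chi_v)$ at every finite place $v$ and hence $L\!\left(\tfrac{1}{2}+m,\Pi_f\otimes\chi_f\right)=L\!\left(\tfrac{1}{2}+m,\Pi'_f\otimes\chi_f\right)$; it therefore suffices to prove the asserted rationality for $L\!\left(\tfrac{1}{2}+m,\Pi'_f\otimes\chi_f\right)$, which is precisely the statement displayed at the end of the introduction. Next I would check that $\Pi'$ admits a Shalika model: since $\jl(\Pi')=\Pi=\mw(\Sigma,2)$ is residual and $\Pi$ has trivial central character, $\Pi'$ has trivial central character too, so (as $n=1$) the only possible Shalika character is $\eta=\mathbf{1}$; by the second half of \Cref{T:G-TI}, which in this case asserts that $\Pi'$ has a Shalika model with respect to $\eta$ if and only if $\omega_\Sigma=\eta$, this forces $\omega_\Sigma$ to be trivial, and then $\Pi'$ indeed admits such a model with $\eta=\mathbf{1}$. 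In particular $\QQ(\Pi',\eta)=\QQ(\Pi'_f)$, which is why only the field $\QQ(\Pi',\chi)$ — the compositum of $\QQ(\Pi'_f)$ and $\QQ(\chi)$, equivalently $\QQ(\Pi',\eta,\chi)$ — appears in the conclusion. I would also have to verify that the $\sigma$-twist ${}^\sigma\Pi'$ is again an irreducible cuspidal cohomological representation of $\GL'_2(\A)$ whose lift $\jl({}^\sigma\Pi')$ is residual of the form $\mw({}^\sigma\Sigma,2)$ and which again admits a Shalika model; this is the point where the argument of \cite{GroRag2} (stated there for $\jl(\Pi')$ cuspidal) must be extended, using the M{\oe}glin--Waldspurger classification to describe the $\sigma$-twist of the residual lift and \Cref{T:G-TI} to preserve the Shalika condition.

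\textbf{Periods and the cohomological comparison.} Granting the Shalika model, I would then invoke the numerical coincidence: $q_0=\dim_\QQ\KK$, the space $H^{q_0}(\lig'_\infty,K'_\infty,\Pi'_\infty\otimes E_\mu^\lor)$ is one-dimensional, and $\dim_\RR\mathbf{S}_{K_f'}^{H'_1}=q_0$. A choice of generator of that cohomology line defines the comparison isomorphism $\Theta_{\Pi'}$ and hence the Shalika period $\omega(\Pi'_f)$; the behaviour of $\omega(\Pi'_f)$ under twisting by the finite-order character $\chi$ (with $\mathcal{G}(\chi_f)^4$) and its $\Aut(\CC)$-equivariance then follow as in \cite{GroRag}, now using the residual version of the $\sigma$-twist just discussed. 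On the analytic side, since $\jl(\Pi')$ is residual one computes that the critical values of $L(s,\Pi')$ are exactly the half-integers $s=\tfrac{1}{2}+m$, and that admissibility of $\mu$ (the existence of $p$ with $-\mu_{v,2}\le p\le-\mu_{v,3}$ at every infinite place) guarantees that the auxiliary $H'_1$-representation $E_{(0,-w)}$ occurs in $E_\mu^\lor$. This lets one realize the special vector $\xi^0_{\Pi'_f}$ inside $H^{q_0}_c(\mathbf{S}_{K_f'}^{H'_1},\mE)$, push it forward along the projection induced by $E_{(0,-w)}\hookrightarrow E_\mu^\lor$ to $H^{q_0}_c(\mathbf{S}_{K_f'}^{H'_1},\mathcal{E}_{(0,-w)})$, and evaluate it by Poincar\'e duality — legitimate precisely because $\dim_\RR\mathbf{S}_{K_f'}^{H'_1}=q_0$ — thereby identifying the Friedberg--Jacquet period, up to $\omega(\Pi'_f)$ and $\omega(\pi'_\infty,m)$, with $L\!\left(\tfrac{1}{2}+m,\Pi'_f\otimes\chi_f\right)$. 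Assembling the $\Aut(\CC)$-equivariance of the individual ingredients would give
\[\sigma\!\left(\frac{L\!\left(\tfrac{1}{2}+m,\Pi'_f\otimes\chi_f\right)}{\omega(\Pi'_f)\,\mathcal{G}(\chi_f)^4\,\omega(\pi'_\infty,m)}\right)=\frac{L\!\left(\tfrac{1}{2}+m,{}^\sigma\Pi'_f\otimes{}^\sigma\chi_f\right)}{\omega({}^\sigma\Pi'_f)\,\mathcal{G}({}^\sigma\chi_f)^4\,\omega(\pi'_\infty,m)}\]
for all $\sigma\in\Aut(\CC/\QQ(\Pi',\eta))$, and restricting to those $\sigma$ that additionally fix $\chi_f$ shows that the normalized $L$-value lies in $\QQ(\Pi',\chi)$.

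\textbf{Main obstacle.} The crux is the passage from the cuspidal to the residual case on the $\GL_4$-side. I expect the genuinely new points to be: (i) proving that ${}^\sigma\Pi'$ remains a cohomological discrete series with residual lift $\mw({}^\sigma\Sigma,2)$, which needs the M{\oe}glin--Waldspurger classification rather than the cuspidal rationality input of \cite{GroRag2}; (ii) pinning down the critical set and the correct $H'_1$-isotype $E_{(0,-w)}$ — since $\jl(\Pi')$ is residual the Archimedean $L$-factor and the branching of $E_\mu^\lor$ differ from the cuspidal situation, and one must show that admissibility of $\mu$ is exactly the hypothesis making the Friedberg--Jacquet period non-zero and relating it to $L\!\left(\tfrac{1}{2}+m,\Pi'\right)$; and (iii) checking, at the finitely many places where $\DD$ ramifies, the local non-degeneracy hypothesis in \Cref{T:G-TI} excluding the induced form $\lvert\det'\rvert_v^{1/2}\tau_1\times\lvert\det'\rvert_v^{-1/2}\tau_2$, which should follow from cohomologicality together with the residuality of the lift. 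Everything else is a faithful transcription of the arguments of \cite{GroRag}.
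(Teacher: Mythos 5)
Your plan is the paper's own route (Jacquet--Langlands transfer, \Cref{T:G-T} for the Shalika model, \Cref{P:cusp} and \Cref{L:twMW} for the $\sigma$-twists, the period map $\Theta_{\Pi'}$, the twisting formula \Cref{T:twist}, and Poincar\'e duality together with \Cref{T:Sun}, \Cref{T:pre} and \Cref{T:manny}), but both of your reduction steps at the $\GL_4\leftrightarrow\GL_2'$ interface fail as stated. First, the local Jacquet--Langlands correspondence does \emph{not} preserve standard $L$-factors at the non-split finite places, because there $\Pi_v$ is a non-tempered (Speh-type) local component: if $v\in\VV_\DD\cap{\VV_f}$ and $\Sigma_v$ is an unramified principal series $\chi_1\times\chi_2$, then $\Pi_v\cong(\chi_1\circ\det)\times(\chi_2\circ\det)$ has $L(s,\Pi_v)=\prod_{i}L(s+\tfrac{1}{2},\chi_i)L(s-\tfrac{1}{2},\chi_i)$, of degree four in $q_v^{-s}$, whereas $\pi'_v=\lj_v(\Pi_v)\cong(\chi_1\circ\det')\times(\chi_2\circ\det')$ on $\GL_2'(\KKv)$ has Godement--Jacquet factor $\prod_i L(s+\tfrac{1}{2},\chi_i)$, of degree at most two. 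So your identity $L(\tfrac{1}{2}+m,\Pi_f\otimes\chi_f)=L(\tfrac{1}{2}+m,\Pi'_f\otimes\chi_f)$ is false in general. What the paper uses is weaker and sufficient: the partial $L$-functions agree outside $\VV_\DD$, and at the finitely many remaining finite places both $L(\tfrac{1}{2}+m,\Pi_v\otimes\chi_v)$ and $L(\tfrac{1}{2}+m,\pi'_v\otimes\chi_v)$ are nonzero elements of $\QQ(\Pi',\chi)$ by $\Aut(\CC)$-equivariance of local factors (\Cref{L:compat} and its split analogue invoked in its proof); hence the two normalized ratios differ by an element of $\QQ(\Pi',\chi)^\times$ and the theorem for $\Pi$ follows from \Cref{T:manny}. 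You must replace the false equality by this comparison.

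Second, your deduction that trivial central character of $\Pi$ forces $\eta=\mathbf{1}$ and $\omega_\Sigma=\mathbf{1}$ is not correct in the quaternionic setting: the central compatibility for the Shalika period is $\eta(\det'(a))=\omega_{\Pi'}(a)$ with $\det'(a)=a^{2}$ on the center when $n=1$, i.e. $\eta^{2}=\omega_{\Pi'}=\omega_\Sigma^{2}$ (this is exactly the remark following \Cref{T:G-T}), not $\eta=\omega_{\Pi'}$. Thus trivial $\omega_\Pi$ only makes $\omega_\Sigma$ quadratic, and by \Cref{T:G-T} the Shalika model exists precisely with respect to $\eta=\omega_\Sigma$, possibly a nontrivial quadratic character. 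The rationality field is unharmed (quadratic characters are $\QQ$-valued, so $\QQ(\Pi',\omega_\Sigma,\chi)=\QQ(\Pi',\chi)$), but the local uniqueness input \Cref{T:nien} is only available for trivial $\eta_v$ over a quaternion algebra; this is why \Cref{T:manny} and the $\GL_4$-theorem in the body carry the hypothesis that either the character is trivial or the $\Aut(\CC)$-orbit admits a unique local Shalika model with respect to $\omega_\Sigma$, and your argument needs that hypothesis (or $\omega_\Sigma=\mathbf{1}$) at the point where the $\QQ(\Pi',\eta)$-structure on $\mS_{\psi_f}^{\eta_f}(\Pi'_f)$ and the period $\omega(\Pi'_f)$ are defined. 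Finally, your ``obstacle (iii)'' is vacuous: the exclusion of $\lvert\det'\rvert_v^{1/2}\tau_1\times\lvert\det'\rvert_v^{-1/2}\tau_2$ belongs to the cuspidal case of \Cref{T:G-T}; in the residual case at hand the criterion is solely $\eta=\omega_\Sigma$, so there is nothing to check there. With these corrections your outline coincides with the paper's proof.
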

\subsection*{Acknowledgements:}
I would like to thank Harald Grobner for his many helpful comments and patience as well as Binyong Sun for pointing out a mistake in an earlier version. Finally, I would also like to thank the anonymous reviewer for spotting some inconsistencies in an earlier version. This work has been supported by the research project P32333 of the Austrian Science Fund (FWF).
\section{Preliminaries}
We start by fixing our notations regarding automorphic representations
\subsection{ Adelic notation}\label{S:2.2}
Let $\KKv$ be a local non-archimedean field and $\DD_v$ be a central division algebra over $\KKv$ of degree $d_v$.
We let $\OO_v$ be the ring of integers of $\KK_v$ and fix a uniformizer $\varpi_v$, \emph{i.e.} a generator of the maximal ideal of $\OO_v$.
We extend the valuation $v$ of $\KK_v$ to a valuation $v'$ of $\DD_v$ by \[v'\left(x\right)\coloneqq \frac{1}{d_v}v\left(\mathrm{Nr}_{\DD_v/\KKv}\left(x\right)\right),\] where we denote by $\mathrm{Nr}_{\DD_v/\KKv}\colon \DD_v\ra\KKv$ the reduced norm. Define the ring of integers of $\DD_v$ as
$\OO_v'\coloneqq \{x\in \DD_v:v'\left(x\right)\ge 0\}.$
Let $\KK$ be a number field with $\dim_\QQ\KK=r$, $\OO$ its ring of integers, and let $\DD$ be a central division algebra of degree $d$ over $\KK$. Recall the set of places $\VV$, which decomposes into the finite places ${\VV_f}$ and the infinite places ${\VV_\infty}$. For $v\in \VV$, one has $\DD\otimes \KKv\cong M_{r_v,r_v}\left(\DD_v\right),$ where $\DD_v$ is a central division algebra of dimension $d_v^2$ over $\KKv$ and $d_vr_v=d$. 
If $\DD_v=\KKv$ we call $v$ a split place of $\DD$. From now on we assume that $\DD$ is non-split at all infinite places. Equivalently, $\KK$ is totally real and for $v\in {\VV_\infty}$, $\DD_v=M_{\frac{d}{2},\frac{d}{2}}\left(\HH\right)$, where $\HH$ denotes the Hamilton quaternions. We denote the places where $\DD$ is non-split by $\VV_\DD$.
Finally, let $\mathfrak{D}$ be the absolute different of $\KK$, \emph{i.e.} ${\mathfrak{D}^{-1}\coloneqq \{x\in\KK: \mathrm{Tr}_{\KK/\QQ}\left(x\OO\right)\subseteq\mathbb{Z}\}}$. 
We will also fix the standard non-trivial additive character $\psi\colon \KK\bs \A_\KK\rightarrow \CC^\times.$ Note that the finite places where $\psi$ ramifies correspond precisely to the prime ideals $\mathfrak{p}$ not dividing $\mathfrak{D}$. 
We will write from now on $\A$ for the adeles of $\KK$. 
\subsection{ The general linear group}
We will quickly introduce the reductive groups relevant to us and fix our notation regarding tori and parabolic subgroups.
Let $\mathrm{K}$ be a field and denote by $\GL_n$ the $n$-th general linear group over $\mathrm{K}$ with the usual maximal torus $T_n$ of diagonal matrices and fixed Borel subgroup $B_n$ of upper-triangular matrices, giving rise to a set of positive roots.
To each dominant weight $\mu\in X^*(T_n)$ one can associate a highest weight representation $E_\mu$ of $\GL_n(\CC)$. Recall that the parabolic subgroups over $\Kb$ containing $B_n$ are then parameterized by compositions of $n$. 
In other words, to $\alpha=\left(\alpha_1,\ldots,\alpha_k\right)$ a composition of $n$ we associate the parabolic subgroup $P_\alpha$ of $\GL_{n}$
containing the upper triangular matrices and having as a Levi-component the block-diagonal matrices $M_\alpha=\GL_{\alpha_1}\times\ldots\times \GL_{\alpha_k}$ and unipotent component $U_\alpha$. 

Let $\DDb$ be a central division algebra over $\mathrm{K}$ of degree $d$. Let $M_{n,n}$ be the variety whose $\mathrm{K}$ points are the $n\times n$ matrices with entries in $\mathrm{K}$ and let $M_{n,n}'$ be the variety whose $\mathrm{K}$ points are the $n\times n$ matrices with entries in $\DDb$. We recall the determinant $\det'\colon M_{n,n}'\rightarrow M_{1,1}$ and trace map $\tr\colon M_{n,n}'\rightarrow M_{1,1}$. We denote by $\GL_n'$ the elements with non-zero determinant in $M_{n,n}'$
and the center of $\GL'_n$ by $Z'_n$. 
Again we can assign to each composition $\alpha$ of $n$ a standard parabolic subgroup $P_\alpha$ of $\GL_n'$ defined over $\Kb$, containing the upper triangular matrices and having as a Levi-component the block-diagonal matrices $M_\alpha=\GL_{\alpha_1}'\times\ldots\times \GL_{\alpha_k}'$ and unipotent component denoted by $U_\alpha'$. Then $\overline{P_\alpha'}$ is again conjugated to $P_{\overline{\alpha}}'$. 
We extend the notions of highest weight representations of $\GL_n$ to $\GL_n'$ as follows. If $\mathrm{K}=\RR$ and $\DDb=\HH$, a representation $E_\mu$ of $\GL_n'(\RR)$ is called a highest weight representation with dominant weight $\mu\in \ZZ^{2n}$, if the corresponding complexified representation of $\GL_{2n}(\CC)$ is a highest weight representation with weight $\mu$.
Define finally $H_n=\GL_n\times \GL_n$, $H_n'=\GL_n'\times \GL_n'$. 
\subsection{ Automorphic representations}\label{S:2.3}
Let $\KK,\OO,\DD,r,d$ as in \ref{S:2.2}. We will highlight the basic properties and constructions regarding automorphic representations of $\GL_n(\A)$ and $\GL_n'(\A)$. 

For $v\in {\VV_\infty}$, let $Z'_{n,v}$ be the center of $\GL'_n\left(\KKv\right)$ and let $K'_v$ be the product of the maximal compact subgroup of $\GL'_n\left(\KKv\right)$ and the connected component of $Z'_{n,v}$, \emph{i.e.}
\[K'_v\coloneqq \mathrm{Sp}\left(\frac{nd}{2}\right)\RR_{> 0},\, K_\infty'\coloneqq \prod_{v\in {\VV_\infty}} K'_v.\]
Note that $\mathrm{Sp}(n)$ does not denote the standard algebraic symplectic group, we denote it by $\Sp_n$, rather it denotes the \emph{compact symplectic group} \[\Sp(n)\coloneqq \Sp_{2n}(\CC)\cap \mathrm{U}_n(\RR)\] or, alternatively the quaternionic unitary group. The group $\Sp(n)$ is a real Lie group of dimension $\dim_\RR \Sp(n)=n(2n+1)$, it is compact and simply connected.
Similarly, we fix for $v\in {\VV_\infty}$
\[K_v\coloneqq \mathrm{SO}_n(\RR)\RR_{>0},\, K_\infty\coloneqq \prod_{v\in {\VV_\infty}} K_v.\]
Moreover, we fix also open compact subgroups $K_v'$ of $\GL_n'\left(\KKv\right)$ for $v\in {\VV_f}$ as follows. Note that $\GL_n'\left(\KKv\right)$ consists of invertible $nr_v\times nr_v$ matrices with entries in $\DD_v$. We then let $K_v'$ be those matrices in $\GL_n'\left(\KKv\right)=\GL_{nd_v}(\DD_v)$ which have entries in $\OO_v'$. 
Denote for $v\in {\VV_\infty}$ by $\lig'_v$ the Lie algebra of $\GL'_{n}\left(\KKv\right)$ and by $\lig'_\infty,$ the Lie algebra of $\GL'_{n,\infty}\coloneqq \prod_{v\in \VV_\infty}\GL_n'(\KK_v)$.

To ensure that the periods we will consider in later sections are well defined, we will also have to fix a Haar measure on $H'_n\left(\KKv\right)$ for all $v\in {\VV_f}$. We do this by setting the volumes of the two copies of $K_v'$ in $H'_n\left(\KKv\right)$ with respect to the measures to $1$. Taking the product of those measures over all $v\in {\VV_f}$, we obtain a Haar measure $\df g_1\times \df g_2$ on $H'_n\left(\A_f\right)$. This in turn determines the volume
\[c\coloneqq \mathrm{vol}\left(Z'_{2n}\left(\KK\right)\bs Z'_{2n}\left(\A\right)/{\mathbb{R}^{r}_{> 0}}\right)=2^r\cdot \mathrm{vol}\left(\overbrace{\KK^\times\bs \A_f^\times\times\ldots\times\KK^\times\bs \A_f^\times}^{r}\right).\] Now for $v\in {\VV_\infty}$ let $\ddd_v{g_1}$ and $\ddd_v{g_2}$ be the Haar measures on the two copies of $\GL'_n\left(\KKv\right)$ such that $\mathrm{Sp}\left(\frac{nd}{2}\right)\subseteq \GL_\frac{nd}{2}\left(\HH\right)$ has volume $1$. Set \[\ddd_\infty g_{1}\coloneqq  c\prod_{v\in {\VV_\infty}}\ddd_vg_{1},\,
\ddd_\infty{g_2}\coloneqq \prod_{v\in {\VV_\infty}}\ddd_vg_{2},\]
which then gives a Haar measure $\ddd g_1\times \ddd g_2$ on $H'_n\left(\A\right)$ where $\ddd g_i\coloneqq \df g_i\ddd_\infty g_i, \,i=1,2.$
\subsection{ Discrete series}
Let us now fix our notations regarding automorphic representations, automorphic forms, and in particular, discrete series representations.
We call an irreducible $(\lig_\infty', K_\infty',\GL_n'(\A))$-subquotient $\Pi'$ of the space of automorphic forms $\mathcal{A}\left(\GL_n'\left(\KK\right)\bs \GL_n'\left(\A\right)\right)$ on $\GL_n'\left(\A\right)$ an (irreducible) automorphic representation of $\GL_n'\left(\A\right)$. We call $\Pi'$ \emph{cuspidal} if it is generated by a cusp form $\phi$, \emph{i.e.} an automorphic form $\phi$ such that
\[\int_{U\left(\KK\right)\bs U\left(\A\right)}\phi\left(ug\right)\ddd u=0,\]
for all $g\in \GL_n'(\A)$ and all non-trivial parabolic subgroups $P$ of $\GL_n'$ with Levi-decomposition $P=MU$.
Let $\omega\colon Z_n'\left(\KK\right)\bs Z_n'\left(\A\right)\rightarrow \CC^\times$ be a continous, unitary character. 
As in the introduction we denote the $L^2$-completion of the square-integrable functions on $\GL_n'\left(\KK\right)\bs \GL_n'\left(\A\right)$ with central character $\omega$ by\[L^2\left(\GL_n'\left(\KK\right)\bs \GL_n'\left(\A\right),\omega\right).\] This is a representation of $\GL_n'\left(\A\right)$ via the right regular action. If $\widetilde{\Pi}$ is an irreducible subrepresentation of $L^2\left(\GL_n'\left(\KK\right)\bs \GL_n'\left(\A\right),\omega\right)$, we will denote by $\widetilde{\Pi}^\infty$ the smooth vectors in $\widetilde{\Pi}$, \emph{cf}. \cite[Chapter 11]{Gro}. 
Moreover, the subspace of smooth, $K_\infty'$-finite vectors in $\widetilde{\Pi}$ carries the structure of a $\left(\lig_\infty', K_\infty',\GL_n'\left(\A\right)\right)$-module.
The automorphic representations which can be obtained in this way will be called \emph{discrete series representations} and every cuspidal representation is a discrete series representation. If it is clear from context, we will implicitly use the representation $\Pi'$ if we talk about $\left(\lig_\infty', K_\infty',\GL_n'\left(\A\right)\right)$-modules and the corresponding representation $\widetilde{\Pi}$ if we talk about $\GL_n'(\A)$-representations.

Coming with those two ways of looking at a discrete series representation $\Pi'$, we have two ways of writing it as a restricted tensor product, \emph{cf}. \cite[Chapter 14]{Gro}.
We again denote by $\widetilde{\Pi}$ the corresponding subrepresentation of $L^2\left(\GL_n'\left(\KK\right)\bs \GL_n'\left(\A\right),\omega\right)$. Then the smooth vectors $\widetilde{\Pi}^\infty$ admit a decomposition 
\[\widetilde{\Pi}^\infty\cong {\underset{v\in {\VV_\infty}}{\overline{\bigotimes}_\mathrm{pr}}}\widetilde{\pi}_v^\infty\otimes_\mathrm{in}\bigotimes_{v\in {\VV_f}}'\widetilde{\pi}_v^\infty,\]
where $\overline{\bigotimes}_\mathrm{pr}$ denotes taking the completed projective tensor product, $\otimes_\mathrm{in}$ denotes the inductive tensor product and $\widetilde{\pi}_v^\infty$ are $\GL_n'(\KKv)$-representations. For $v\in {\VV_\infty}$, taking $K_v'$-finite vectors gives a $(\lig_v', K_v')$-module $\Pi_v'$. 
This gives us a second decomposition
 $\Pi'\cong\bigotimes_{v\in \VV}'\Pi_v'$, which now is a restricted tensor product of $(\lig'_\infty,K_\infty')$- respectively $\GL_n'(\KKv)$-modules. 
Throughout the paper we will therefore mean $\left(\widetilde{\pi}_v\right)^\infty$ if we treat $\Pi_v'$ as a $\GL_{n}'\left(\KKv\right)$-representation.
We denote by $S_{\Pi'}\subseteq {\VV_f}$ the finite set of places where $\Pi'$ ramifies. The central character of $\Pi'$ will be denoted by $\omega=\omega_{\Pi'}$.
For $v\in {\VV_\infty}$ and $\left(\rho, W\right),\, \rho=\rho_1\otimes\ldots\otimes\rho_k$ an irreducible representation of $M_\alpha'\left(\KKv\right)$ we set \begin{equation}\begin{gathered}\label{E:ind}{}^a\mathrm{Ind}_{P_\alpha'}^{G_n'}\left(\rho\right)\coloneq
\{f\in C^\infty\left(G,W\right):f\left(mng\right)=\rho\left(m\right)f\left(g\right),\\ m\in M_\alpha'\left(\KKv\right),\, n\in U_\alpha'\left(\KKv\right),\, g\in  \GL_n'\left(\KKv\right)\}, \end{gathered}\end{equation} on which $\GL_n'\left(\KKv\right)$ acts by right translation. We equip ${}^a\mathrm{Ind}_{P_\alpha'}^{G_n'}$
with the subspace topology induced from the Fr\'echet space $C^\infty(\GL_n'(\KKv), W)$.
The space \[\mathrm{Ind}_{P_\alpha'}^{G_n'}\left(\rho\right)\coloneqq {}^a\mathrm{Ind}_{P_\alpha'}^{G_n'}\left(\rho\otimes\delta_{P_\alpha'}^{\frac{1}{2}}\right)\]
is called the normalized parabolically induced representation, where $\delta_{P_\alpha'}$ is the modular character of the group $P_\alpha'$.

If $(\Sigma, W)$ is a discrete series representation of $M_\alpha'(\A)$, with the corresponding $M_\alpha'(\A)$-representation $(\widetilde{\Sigma}, \widetilde{W})$ and $\mu$ a character of $P_\alpha'(\A)$ we define $\mathrm{Ind}_{P_\alpha'\left(\A\right)}^{\GL_n'\left(\A\right)}\left(\Sigma\otimes \mu \right)$ to be the space of smooth functions $f\colon \GL_n'(\A)\ra \widetilde{W}^\infty$ satisfying the normalized global analogue of the equivariance condition (\ref{E:ind}). The so-obtained space admits a natural topology with which the $\GL_n'(\A)$-action by right translations is continuous. It admits a decomposition
\[\mathrm{Ind}_{P_\alpha'\left(\A\right)}^{\GL_n'\left(\A\right)}\left(\Sigma\otimes \mu\right)\cong {\underset{v\in {\VV_\infty}}{\overline{\bigotimes}_\mathrm{pr}}}\mathrm{Ind}_{P_\alpha'}^{G_n'}\left((\Tilde{\Sigma}_v\otimes\mu_v)^\infty\right)\otimes_\mathrm{in} \bigotimes'_{v\in {\VV_f}} \mathrm{Ind}_{P_\alpha'}^{G_n'}\left(\Sigma_v\otimes\mu_v\right).\]
Similarly, we define for $\GL_n$ parabolic and normalized parabolic induction.

\subsubsection{ M{\oe}glin-Waldspurger classification} We also recall the following well-known description of discrete series representations known as the M{\oe}glin-Waldspurger classification.
\begin{theorem}[\cite{Bad},\cite{MoeWal}]\label{T:MWclas}
Let $k,l\in\mathbb{Z}_{\ge 1},\,n=lk$ and $\Sigma'$ be a cuspidal unitary automorphic representation of $\GL_l'\left(\A\right)$. Then the parabolically induced $\GL_n'(\A)$-representation \[\Sigma'\lvert\det'\lvert^\frac{k-1}{2}\times\ldots\times \Sigma'\lvert\det'\lvert^\frac{1-k}{2}\] admits a unique irreducible quotient, denoted by 
$\mw\left(\Sigma',k\right)$. It is a discrete series representation of $\GL_n'\left(\A\right)$ and moreover for every discrete series representation $\Pi'$ of $\GL_n'\left(\A\right)$, there exists $l,k$ and $ \Sigma'$ as above such that $\Pi'\cong \mw\left(\Sigma',k\right)$. The analogous statement for $\GL_n$ instead of $\GL_n'$ holds also true.
\end{theorem}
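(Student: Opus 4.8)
The statement is the M{\oe}glin--Waldspurger description of the discrete spectrum of $\GL_n$, extended to the inner form $\GL_n'$ by Badulescu, and the plan is to treat the split group first and then transfer. For $\GL_n(\A)$, Langlands' spectral decomposition spans the discrete spectrum by iterated residues of cuspidal Eisenstein series $E(\phi,\lambda)$ attached to a standard Levi $M_\alpha=\GL_{n_1}\times\cdots\times\GL_{n_r}$ and a unitary cuspidal $\sigma=\sigma_1\otimes\cdots\otimes\sigma_r$. The constant term of $E(\phi,\lambda)$ along $M_\alpha$ is a sum of standard intertwining operators whose normalizing factors are products of completed Rankin--Selberg $L$-functions $L(s,\sigma_i\times\sigma_j^\vee)$; since each is holomorphic and nonvanishing for $\mathrm{Re}(s)\ge 1$ apart from a simple pole at $s=1$ occurring precisely when $\sigma_i\cong\sigma_j$, a singular hyperplane carrying a square-integrable residue forces $\alpha=(l,\dots,l)$ with $n=lk$, all $\sigma_i$ equal to one unitary cuspidal $\Sigma$ of $\GL_l(\A)$, and the inducing datum equal to $\Sigma\lvert\det\rvert^{(k-1)/2}\otimes\cdots\otimes\Sigma\lvert\det\rvert^{(1-k)/2}$. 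The nonzero iterated residue along that hyperplane is square-integrable, and computing its constant term identifies it with the unique irreducible quotient of $\Sigma\lvert\det\rvert^{(k-1)/2}\times\cdots\times\Sigma\lvert\det\rvert^{(1-k)/2}$, the Speh representation $\mw(\Sigma,k)$; existence and uniqueness of that quotient is the Langlands quotient theorem together with irreducibility of the full induced representation off the relevant hyperplanes, and conversely one checks directly that every such pair $(\Sigma,k)$ contributes, with multiplicity one. This is \cite{MoeWal}, and gives the final assertion of the theorem for $\GL_n$.

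To pass to $\GL_n'$ I would invoke the global Jacquet--Langlands correspondence of \cite{Bad}: an injection $\jl$ from the discrete series of $\GL_n'(\A)$ into the discrete series of $\GL_{nd}(\A)$, compatible at every place with the local correspondence, preserving central and infinitesimal characters, with image exactly the discrete series that are ``$\DD$-compatible'', a condition imposed locally at the places $v\in\VV_\DD$. Given a discrete series $\Pi'$ of $\GL_n'(\A)$, the split case writes $\jl(\Pi')$ as a Speh representation $\mw(\Pi,K)$ for a unitary cuspidal $\Pi$; unwinding the $\DD$-compatibility of $\mw(\Pi,K)$ at the places $v\in\VV_\DD$, together with the local Jacquet--Langlands dictionary for essentially square-integrable representations, produces a unitary cuspidal $\Sigma'$ of $\GL_l'(\A)$ with $n=lk$ for an appropriate divisor $k$ of $K$, in terms of which $\Pi'$ is the unique irreducible quotient $\mw(\Sigma',k)$ of $\Sigma'\lvert\det'\rvert^{(k-1)/2}\times\cdots\times\Sigma'\lvert\det'\rvert^{(1-k)/2}$ --- existence and uniqueness of that quotient being the Langlands classification over $\DD$. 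Conversely, for a unitary cuspidal $\Sigma'$ on $\GL_l'(\A)$ with $n=lk$, the representation $\mw(\Sigma',k)$ so defined transfers under $\jl$ to a $\DD$-compatible Speh representation of $\GL_{nd}(\A)$, hence lies in the discrete spectrum of $\GL_n'(\A)$, and its transfer determines the data $(l,k,\Sigma')$ up to the obvious equivalence; this yields the existence, uniqueness, and square-integrability claims over $\DD$.

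The hard part lies entirely at the ramified places: one must know that the global correspondence is compatible with parabolic induction and with the formation of Speh and Langlands quotients, that its image is precisely the set of $\DD$-compatible discrete series (a purely local statement at $v\in\VV_\DD$), and that the residue defining $\mw(\Sigma',k)$ over $\DD$ is genuinely nonzero --- the last point amounting to transporting the pole at $s=1$ of $L(s,\Sigma\times\Sigma^\vee)$ through $\jl$, i.e.\ to the compatibility of Rankin--Selberg $L$-functions with the Jacquet--Langlands correspondence. All of this is carried out in \cite{Bad}, building on \cite{MoeWal}; granting it, the theorem follows by unwinding the bijections.
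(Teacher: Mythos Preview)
The paper does not prove this theorem; it states it as a cited result from \cite{MoeWal} and \cite{Bad} with no accompanying argument. Your sketch is therefore not being compared against any proof in the paper, and as a summary of the strategy in the cited references it is accurate: M{\oe}glin--Waldspurger for $\GL_n$ via residues of Eisenstein series and Rankin--Selberg poles, then Badulescu's transfer to the inner form via the global Jacquet--Langlands correspondence and its local compatibilities.
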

\subsection{ Jacquet-Langlands correspondence}\label{S:2.6}
We will now quickly recall the basic notions of the Jacquet-Langlands correspondence.
For a complete discussion see \cite{Bad}, \cite{BadRen}. Let $v\in \VV$ be a place and
recall that to each irreducible unitary representation of $\Pi_v$ of $\GL_{dn}\left(\KKv\right)$ respectively $\Pi_v'$ of $\GL'_n\left(\KKv\right)$ we can associate a trace character $\chi_{\Pi_v}$, respectively, $\chi_{\Pi_v'}$. We refer to \cite{DelKazVig} for the notion of a $d_v$-compatible representation of $\GL_n'(\KKv)$.
Let $U_{cp}'\left(\GL_{dn}\left(\KKv\right)\right)$ be the set of unitary $d_v$-compatible irreducible representations of $\GL_{dn}\left(\KKv\right)$ and let $U'\left(\GL'_n\left(\KKv\right)\right)$ be the set of unitary irreducible representations of $\GL'_n\left(\KKv\right)$. Moreover, let $U_{cp}\left(\GL_{dn}\left(\KKv\right)\right)$, respectively, $U\left(\GL'_n\left(\KKv\right)\right)$ be the set of representations of the form $\Pi\otimes \lvert\det\lvert^s$, respectively, $\Pi'\otimes \lvert\det'\lvert^s$ for $\Pi\in U_{cp}'\left(\GL_{dn}\left(\KKv\right)\right)$, respectively, $\Pi' \in U'\left(\GL'_n\left(\KKv\right)\right)$.
Then there exists a map called the \emph{local Jacquet-Langlands correspondence} \[\lj_v\colon U_{cp}\left(\GL_{dn}\left(\KKv\right)\right)\rightarrow U\left(\GL'_n\left(\KKv\right)\right)\] with the following properties, see \cite{DelKazVig}:
\begin{enumerate}
    \item If $\Pi_v=\widetilde{\pi}_v\otimes \lvert\det'\lvert^s$ with $\widetilde{\pi}_v$ a unitary $d_v$-compatible irreducible representations of $\GL_{dn}\left(\KKv\right)$, we have $\lj_v\left(\Pi_v\right)=\lj_v\left(\widetilde{\pi}_v\right)\otimes  \lvert\det'\lvert^s.$
    \item If $v$ is a split place of $\DD$, $\lj_v$ is the identity.
    \item $\lj_v$ restricted to square integrable representations is a bijection onto the square integrable representations of $\GL'_n\left(\KKv\right)$.
    \item $\lj_v$ commutes with parabolic induction.
\end{enumerate}
\label{S:propjl}Similarly, there is a global correspondence going from the unitary discrete series representations of $\GL'_n\left(\A\right)$ into the set of unitary discrete series representations of $\GL_{nd}\left(\A\right)$ which is denoted by $\jl$ and called the \emph{global Jacquet-Langlands correspondence}. It satisfies the following properties:
\begin{enumerate}
    \item $\lj_v\left(\left(\jl\left(\Pi'\right)\right)_v\right)\cong \Pi_v'$ for all $v\in \VV$.
    \item $\jl$ is injective.
    \item If $\jl\left(\Pi'\right)$ is cuspidal, then $\Pi'$ is cuspidal.
\end{enumerate}
Crucially, if $\Pi'$ is cuspidal $\jl\left(\Pi'\right)$ does not have to be cuspidal. 
\subsection{ Cohomological automorphic representation}
Let us fix our notations regarding relative Lie algebra cohomology and cohomological automorphic representation. For each irreducible $\left(\lig'_\infty,K_\infty'\right)$-module $\Pi_\infty'\cong\bigotimes_{v\in {\VV_\infty}}\Pi_v'$ we denote by $H^r\left(\lig'_\infty,K'_\infty,\Pi_\infty'\right)$ the $\gK$-cohomology of degree $r$ of $\Pi_\infty'$.
A $(\lig',K_\infty')$-module $\Pi_\infty'$ is called cohomological if there exists a highest weight representation $E_\mu$ of $\GL_n'(\RR)$ such that 
$H^r\left(\lig'_\infty, K'_\infty,\Pi_\infty'\otimes E_\mu\right)$
is nonzero for some $r$.
We call an automorphic representation $\Pi'\cong \Pi_\infty'\otimes\Pi'_f$ of $\GL_n'\left(\A\right)$ cohomological if its archimedean component $\Pi_\infty'$ is cohomological. The analogous definition can be made for $\GL_n$.
\subsubsection{ Godement-Jacquet global L-functions}
For $\Pi'$ a discrete series representation of $\GL_n'(\A)$, we define
\[L(s,\Pi')\coloneqq \prod_{v\in \VV}L(s,\Pi_v'),\]
which is well-defined for $\re>>0$ and admits an analytic continuation to a meromorphic function, \emph{cf.} \cite[Theorem 13.8]{GodJac}.
Moreover, if $\Pi'$ is cuspidal and not a unitary character of the form $\lvert\det\lvert^{it}$, the $L$-function $L(s,\Pi')$ is entire.
For $S$ a finite subset of $V$, we write $L^S(s,\Pi')\coloneqq \prod_{v\notin S}L(s,\Pi_v')$, respectively, for its analytic continuation. In particular, we set $L(s,\Pi'_f)\coloneqq  L^{{\VV_\infty}}(s,\Pi')$.

Let us also recall how the $L$-function behaves with respect to the Jacquet-Langlands correspondence, see \cite[§6]{Bad} and \cite[§19]{BadRen}. If $S$ does contain all places where $\DD$ splits, it follows immediately that $L^S(s,\Pi')=L^S(s,\jl(\Pi'))$ for any cuspidal representation $\Pi'$. Moreover, if $\Pi_v$ is an irreducible local discrete series representation of $\GL_{nd}(\KKv)$, then also  $L(s,\Pi_v)=L(s,\lj(\Pi_v))$. 
\section{ Cohomological unitary dual}
We start by recalling the classification of the cohomological irreducible unitary dual of $\GL_n\left(\HH\right)$ due to \cite{VogZuc} and explicitly described in \cite{GroRag2}. Let $\mathfrak{g}'$ be the Lie algebra of $\GL_n\left(\HH\right)$ and let $\mathfrak{k}'$ be the Lie algebra of $\Sp(n)$, which determines a Cartan involution $\theta'\left(X\right)=-\Bar{X}^T$ of $\lig'$. Moreover, let $\mathfrak{h}'$ be a maximal compact, $\theta'$-stable Cartan-algebra $\mathfrak{h}'=\mathfrak{a}'\oplus\mathfrak{t}'$, with
\[\mathfrak{t}'=\left\{\bpm ix_1&&0\\&\ddots&\\0&&ix_n\epm: x_j\in\mathbb{R}\right\}\text{ and }\mathfrak{a}'=\left\{\bpm y_1&&0\\&\ddots&\\0&&y_n\epm: y_j\in\mathbb{R}\right\}.\]
Furthermore, let $E_\lambda$ be a highest weight representation of $\GL_n\left(\HH\right)$, where $\lambda$ is a highest weight with respect to the subalgebra $\mathfrak{h}'_\CC$. 
To each composition $n=\sum_{i=0}^r n_i$ written as \[\underline{n}=[n_0,\ldots,n_r]\] with $n_0\ge 0$ and $n_i>0$ we can associate a $\theta'$-stable, parabolic subalgebra $\mathfrak{q}'_{\underline{n}}$ of $\lig'_\CC$ whose Levi-decomposition we will denote as $\mathfrak{q}'_{\underline{n}}=\mathfrak{l}'_{\underline{n}}+\mathfrak{u}'_{\underline{n}}$, \emph{cf.} \cite[Section 4]{GroRag2} for more details.
We further assume that $\restr{\lambda}{\mathfrak{a}'}=0$ and that $\lambda$ can be extended to an admissible character of $ \mathfrak{l}'_{\underline{n}}\supseteq \mathfrak{h}'_\CC$.
\begin{theorem}[{\cite[Theorem 4.9]{GroRag2}}]\label{T:dimcoh}
Let $E_\lambda$ be a self-dual highest weight representation of $\GL_n\left(\HH\right)$.
\begin{enumerate}
    \item To each ordered composition $\underline{n}=[n_0,\ldots,n_r]$ of $n$ with $n_0\ge 0, n_i>0$ one can assign an irreducible unitary representation $A_{\underline{n}}\left(\lambda\right)$ of $\GL_n\left(\HH\right)$.
    \item All such representations are cohomological with respect to $E_\lambda$ and every cohomological representation is of this form.
    \item 
    The Poincar\'e polynomial of $H^*\left(\lig',\mathrm{Sp}(n)\RR_{\ge 0}, E_\lambda\otimes A_{\underline{n}}\left(\lambda\right)\right)$ is
    $$P\left(\underline{n},X\right)=\frac{X^{\dim_\CC\left(\mathfrak{g}_\CC^-\cap\mathfrak{u}'_{\underline{n}}\right)}}{ 1+X}\prod_{i=1}^r\prod_{j=1}^{n_i}\left(1+X^{2j-1}\right)\prod_{j=1}^{n_0}\left(1+X^{4j-3}\right).$$ Here $\mathfrak{g}_\CC^-$ is the $-1$-eigenspace of $\theta'$ acting on $\lig'_\CC$.
\end{enumerate}
\end{theorem}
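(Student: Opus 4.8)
The plan is to deduce all three assertions from the Vogan--Zuckerman theory of representations with nonzero relative Lie algebra cohomology \cite{VogZuc}, specialised to $\GL_n(\HH)$, together with classical computations of the cohomology of compact symmetric spaces. \textbf{Step 1 (the parameters).} Recall from \cite[Section~4]{GroRag2} that the $\theta'$-stable parabolic $\mathfrak{q}'_{\underline n}=\mathfrak{l}'_{\underline n}+\mathfrak{u}'_{\underline n}$ attached to $\underline n=[n_0,n_1,\dots,n_r]$ is $\mathfrak{q}'_x$, the sum of the eigenspaces of $\mathrm{ad}(x)$ of non-negative eigenvalue, for $x\in i\mathfrak{t}'$ whose eigenvalue has multiplicity $n_0$ at $0$ and $n_i$ at the $i$-th smallest positive value. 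Its Levi is $L'_{\underline n}=\GL_{n_0}(\HH)\times\prod_{i=1}^r\GL_{n_i}(\CC)$: the $0$-block contributes the factor $\GL_{n_0}(\HH)$, while a block on which $x$ acts by a nonzero imaginary scalar $ic$ — and we may take $c>0$, since conjugating by $\diag(j,\dots,j)$ sends $i\mapsto -i$ — contributes a factor $\GL_{n_i}(\CC)$, because a matrix commuting with $ic\cdot\mathrm{Id}$ has all entries in $\RR\oplus\RR i\subseteq\HH$. The assignment $\underline n\mapsto\mathfrak{q}'_{\underline n}$ is a bijection from ordered compositions of $n$ onto the $\Sp(n)$-conjugacy classes of $\theta'$-stable parabolics (permuting the $n_i$ changes the nilradical). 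Under the standing hypothesis on $\lambda$ — that $\restr{\lambda}{\mathfrak{a}'}=0$ and $\lambda$ extends to an admissible character of $\mathfrak{l}'_{\underline n}$ — Vogan--Zuckerman attach to $\mathfrak{q}'_{\underline n}$ the irreducible unitary representation $A_{\underline n}(\lambda):=A_{\mathfrak{q}'_{\underline n}}(\lambda)$ of $\GL_n(\HH)$, which is assertion (1).

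\textbf{Step 2 (the classification).} That each $A_{\underline n}(\lambda)$ is cohomological with respect to $E_\lambda$ is part of the Vogan--Zuckerman construction; self-duality of $E_\lambda$ is used here to identify it with the dual $F_\lambda^\lor$ of the finite-dimensional module defining $A_{\underline n}(\lambda)$, so that the cohomology may be taken against $E_\lambda$ itself. Conversely, if $\pi$ is an irreducible unitary representation of $\GL_n(\HH)$ with $H^*(\lig',\Sp(n)\RR_{\ge 0},\pi\otimes E_\lambda)\neq 0$, then by \cite{VogZuc} one has $\pi\cong A_{\mathfrak{q}'}(\lambda)$ for some $\theta'$-stable $\mathfrak{q}'$; by Step 1 this $\mathfrak{q}'$ is $\Sp(n)$-conjugate to $\mathfrak{q}'_{\underline n}$ for a unique $\underline n$, so $\pi\cong A_{\underline n}(\lambda)$, which is assertion (2). (One does not need injectivity of $\underline n\mapsto A_{\underline n}(\lambda)$, only that its image is the whole cohomological unitary dual.)

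\textbf{Step 3 (the Poincar\'e polynomial).} The key tool is the Vogan--Zuckerman cohomology formula
\[
H^{i}\bigl(\lig',\Sp(n),A_{\underline n}(\lambda)\otimes E_\lambda\bigr)\;\cong\;\ho_{L'_{\underline n}\cap\Sp(n)}\Bigl(\textstyle\bigwedge^{\,i-R_{\underline n}}\bigl(\mathfrak{l}'_{\underline n}\cap\mathfrak{g}_\CC^-\bigr),\CC\Bigr),\qquad R_{\underline n}=\dim_\CC\bigl(\mathfrak{g}_\CC^-\cap\mathfrak{u}'_{\underline n}\bigr),
\]
with $\mathfrak{g}_\CC^-$ the $(-1)$-eigenspace of $\theta'$; this already supplies the overall factor $X^{R_{\underline n}}$ with exactly the exponent of the statement. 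The right-hand side equals $H^{\,*-R_{\underline n}}(\mathfrak{l}'_{\underline n},L'_{\underline n}\cap\Sp(n),\CC)$, the de Rham cohomology of the compact dual of the symmetric space of $L'_{\underline n}$, which by Künneth factors over the blocks of $\underline n$: the compact dual of $\GL_{n_i}(\CC)/\mathrm{U}(n_i)$ is the group manifold $\mathrm{U}(n_i)$, with Poincar\'e polynomial $\prod_{j=1}^{n_i}(1+X^{2j-1})$, while the compact dual of $\GL_{n_0}(\HH)/\Sp(n_0)$ is $\mathrm{U}(2n_0)/\Sp(n_0)$, whose cohomology ring is the exterior algebra on generators in degrees $1,5,\dots,4n_0-3$, with Poincar\'e polynomial $\prod_{j=1}^{n_0}(1+X^{4j-3})$. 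Multiplying these and then passing from $\Sp(n)$ to $\Sp(n)\RR_{\ge 0}$ — which divides the Poincar\'e polynomial by $1+X$, since the one-dimensional split centre contributes a degree-$1$ exterior generator — yields precisely $P(\underline n,X)$.

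\textbf{Main obstacle.} The content beyond invoking Vogan--Zuckerman is concentrated in Step 3: recognising $H^*(\mathfrak{l}'_{\underline n},L'_{\underline n}\cap\Sp(n),\CC)$ as the cohomology of a product of compact symmetric spaces and, in particular, knowing that $H^*(\mathrm{U}(2k)/\Sp(k))$ is the exterior algebra with generators in degrees $1,5,\dots,4k-3$; and, on the structural side, handling the quaternionic subtleties in Step 1 — that a nonzero imaginary block has a $\CC$-linear rather than $\HH$-linear centraliser, that the sign of the imaginary scalar is immaterial, and that it is the \emph{ordering} of the nonzero levels that records the nilradical $\mathfrak{u}'_{\underline n}$, and hence the exponent $R_{\underline n}$. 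Everything else is the routine assembly of the cited Vogan--Zuckerman theorems with the classical cohomology of $\mathrm{U}(k)$ and $\mathrm{U}(2k)/\Sp(k)$.
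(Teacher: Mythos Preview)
The paper does not prove this theorem at all: it is quoted verbatim as \cite[Theorem 4.9]{GroRag2} and used as a black box, with the surrounding text only saying that the classification is ``due to \cite{VogZuc} and explicitly described in \cite{GroRag2}''. So there is no proof in the paper to compare your proposal against.

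That said, your sketch is in the right spirit and matches the approach of the cited source: the representations $A_{\underline{n}}(\lambda)$ are indeed Vogan--Zuckerman modules attached to the $\theta'$-stable parabolics $\mathfrak{q}'_{\underline{n}}$, the classification (2) is the content of \cite{VogZuc}, and the Poincar\'e polynomial in (3) comes from the Vogan--Zuckerman formula combined with the K\"unneth decomposition over the blocks of $L'_{\underline{n}}\cong \GL_{n_0}(\HH)\times\prod_i\GL_{n_i}(\CC)$ and the classical cohomology of $\mathrm{U}(k)$ and $\mathrm{U}(2k)/\Sp(k)$. Your identification of the shift $R_{\underline{n}}=\dim_\CC(\mathfrak{g}_\CC^-\cap\mathfrak{u}'_{\underline{n}})$ and the division by $1+X$ when passing from $\Sp(n)$ to $\Sp(n)\RR_{\ge 0}$ are both correct and consistent with how the paper uses the result (cf.\ \Cref{L:explicitform} and the computations in \Cref{L:littlelem}). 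If you want to turn this into a full proof, the only places requiring real care are the precise admissibility conditions on $\lambda$ relative to each $\mathfrak{q}'_{\underline{n}}$ and the verification that the bijection between compositions and $\Sp(n)$-conjugacy classes of $\theta'$-stable parabolics is set up exactly as in \cite[\S4]{GroRag2}; but for the purposes of this paper, citing \cite{GroRag2} is what the author does.
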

For later use, we compute the following.
\begin{lemma}\label{L:explicitform}
Let $\underline{n}=[n_0,n_1,\ldots,n_r]$ be a composition of $n$. Then
    \[\dim_\CC(\mathfrak{g}_\CC^-\cap\mathfrak{u}'_{\underline{n}})=\sum_{i=1}^r\binom{n_i}{2}+2\sum_{0\le i<j\le r}n_in_j.\]
\end{lemma}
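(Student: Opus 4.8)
The plan is to compute $\dim_\CC(\mathfrak{g}_\CC^- \cap \mathfrak{u}'_{\underline n})$ directly from the explicit description of the $\theta'$-stable parabolic $\mathfrak{q}'_{\underline n} = \mathfrak{l}'_{\underline n} + \mathfrak{u}'_{\underline n}$ given in \cite[Section 4]{GroRag2}. First I would recall that $\lig'_\CC \cong \mathfrak{gl}_{2n}(\CC)$ and that the Cartan involution $\theta'(X) = -\bar X^T$ induces on $\lig'_\CC$ the symmetric-space decomposition $\lig'_\CC = \mathfrak{k}'_\CC \oplus \mathfrak{g}_\CC^-$, where $\mathfrak{k}'_\CC \cong \mathfrak{sp}_{2n}(\CC)$ and $\mathfrak{g}_\CC^-$ is its $(-1)$-eigenspace (of dimension $n(2n-1)$, the complement of $\dim \mathfrak{sp} = n(2n+1)$ inside $4n^2$). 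The composition $\underline n = [n_0, n_1, \ldots, n_r]$ determines a flag, and $\mathfrak{l}'_{\underline n}$ is block-diagonal with one central block of ``size'' $n_0$ stabilized by the full unitary-type structure and blocks of ``size'' $n_i$ for $i \ge 1$ that are of general-linear type; the nilradical $\mathfrak{u}'_{\underline n}$ consists of the strictly-block-upper-triangular part, and in the $\mathfrak{gl}_{2n}$ picture each index $n_i$ ($i \ge 1$) contributes a $2n_i$-dimensional coordinate block while $n_0$ contributes a $2n_0$-dimensional block sitting in the middle.

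The key computation is then to intersect $\mathfrak{u}'_{\underline n}$ with $\mathfrak{g}_\CC^-$. I would organize $\mathfrak{u}'_{\underline n}$ by pairs of blocks: the $(i,j)$-block with $1 \le i < j \le r$, the $(0,i)$- and $(i,0)$-blocks, and the internal structure within the $n_0$-block. For two distinct general-linear blocks $i < j$, the off-diagonal Hom-space has $\CC$-dimension $(2n_i)(2n_j)$, and precisely half of it lies in $\mathfrak{g}_\CC^-$ (the $\theta'$-action pairs the $(i,j)$-block with the $(j,i)$-block and one gets a $(\pm 1)$-eigenspace decomposition, each of dimension $\tfrac12 (2n_i)(2n_j) \cdot 2 = 2n_in_j$ once one accounts correctly for which of the two blocks lies in $\mathfrak{u}$), giving the $2\sum_{i<j} n_in_j$ term — and a parallel count involving the $n_0$-block (which also has coordinate size $2n_0$ but whose internal symplectic structure must be handled separately) extends the sum to $2\sum_{0 \le i < j \le r} n_in_j$. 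For the ``diagonal'' contribution of a single general-linear block $n_i$: inside a $\mathfrak{gl}_{2n_i}$-block the nilradical direction coming from the $\theta'$-stable parabolic structure is a single upper-triangular $n_i \times n_i$ corner, and intersecting with $\mathfrak{g}_\CC^-$ (inside this block $\mathfrak{k}'_\CC$ looks like $\mathfrak{sp}_{2n_i}$) yields $\binom{n_i}{2}$ — this is the $\sum_{i=1}^r \binom{n_i}{2}$ term. The $n_0$-block contributes nothing to the ``diagonal'' sum, consistent with the absence of an $n_0$ term of that shape in the formula.

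The main obstacle will be getting the bookkeeping exactly right: one must be careful about the precise form of $\mathfrak{q}'_{\underline n}$ as defined in \cite{GroRag2} (in particular whether the blocks are arranged symmetrically around the central $n_0$-block, so that $\mathfrak{u}'_{\underline n}$ genuinely sees each pair $(i,j)$ once and not twice), and about the interaction of the $\theta'$-eigenspace decomposition with the upper-triangular truncation — a sign error or a factor-of-two error in the pairing of the $(i,j)$- and $(j,i)$-blocks would corrupt the final count. I would cross-check the answer against \Cref{T:dimcoh}(3): for $\underline n = [n,0,\ldots]$ i.e. $\underline n = [n]$, the formula gives $\dim = 0$, which must match the leading term $X^0$ of the Poincaré polynomial $P([n],X) = \frac{1}{1+X}\prod_{j=1}^n (1 + X^{4j-3})$; and for $\underline n = [0,n]$ it gives $\dim = \binom{n}{2}$, which should match $P([0,n],X) = \frac{X^{\binom n2}}{1+X}\prod_{j=1}^n(1 + X^{2j-1})$. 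A further sanity check is that $\dim_\CC(\mathfrak{g}_\CC^- \cap \mathfrak{u}'_{\underline n})$ plus $\dim_\CC(\mathfrak{g}_\CC^- \cap \overline{\mathfrak{u}'_{\underline n}})$ plus $\dim_\CC(\mathfrak{g}_\CC^- \cap \mathfrak{l}'_{\underline n})$ should equal $\dim_\CC \mathfrak{g}_\CC^- = n(2n-1)$; verifying this identity with the claimed formula is a clean final consistency test that I would include.
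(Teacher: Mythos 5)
Your proposal is correct in substance and arrives at the same per-block numbers as the paper, but the paper organizes the count more economically through the root system of the compact Cartan: it recalls from \cite{GroRag2} that $\mathfrak{u}'_{\underline{n}}$ is the sum of the root spaces $(\lig'_\CC)_\alpha$ with $\alpha(x)>0$ for $x=\mathrm{diag}(0,\ldots,0,1,\ldots,1,\ldots,r,\ldots,r)\in i\mathfrak{t}'$, and that $\Delta(\mathfrak{g}_\CC^-,\mathfrak{t}'_\CC)=\{\pm e_a\pm e_b:\,a<b\}$, so the dimension is just the number of these roots with $\alpha(x)>0$: for $a,b$ in the same block $n_i$ with $i\ge 1$ only $e_a+e_b$ qualifies, giving $\binom{n_i}{2}$; for $a,b$ in two different blocks (the $n_0$-block included) both $e_a+e_b$ and $-e_a+e_b$ qualify, giving $2n_in_j$; and the $n_0$-block contributes nothing internally. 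This phrasing automatically settles the factor-of-two bookkeeping you worry about, since each weight $\pm e_a\pm e_b$ has multiplicity two in $\mathfrak{gl}_{2n}(\CC)$ but multiplicity one in $\mathfrak{g}_\CC^-\cong\Lambda^2\CC^{2n}$ (the other copy sits in $\mathfrak{k}'_\CC\cong\mathfrak{sp}_{2n}(\CC)$); that is exactly the content of your claim that half of each cross-block piece of $\mathfrak{u}'_{\underline{n}}$ lies in $\mathfrak{g}_\CC^-$, and once it is in place your block-by-block computation is the same count. One local slip to fix: the parenthetical ``$\tfrac12(2n_i)(2n_j)\cdot 2=2n_in_j$'' is not an identity (the left-hand side equals $4n_in_j$); the correct bookkeeping is that the part of $\mathfrak{u}'_{\underline{n}}$ between two distinct blocks has dimension $(2n_i)(2n_j)=4n_in_j$, of which exactly half, namely $2n_in_j$, lies in $\mathfrak{g}_\CC^-$. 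Your proposed cross-checks (the lowest degree in the Poincar\'e polynomial for $[n]$ and $[0,n]$, and the identity $\dim_\CC(\mathfrak{g}_\CC^-\cap\mathfrak{u}'_{\underline{n}})+\dim_\CC(\mathfrak{g}_\CC^-\cap\overline{\mathfrak{u}'_{\underline{n}}})+\dim_\CC(\mathfrak{g}_\CC^-\cap\mathfrak{l}'_{\underline{n}})=n(2n-1)$) are sound and would catch any such error.
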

\begin{proof}
    We first recall the definition of $\mathfrak{u}'_{\underline{n}}$, \emph{cf.} \cite[§4.2]{GroRag2}. Let \[x=\mathrm{diag}(\underbrace{0,\ldots,0}_{n_0},\underbrace{1,\ldots,1}_{n_1},\ldots,\underbrace{r\ldots,r}_{n_r})\in i\mathfrak{t}'\] and let $\Delta(\lig'_\CC,\mathfrak{t}'_\CC)$, respectively, $\Delta(\mathfrak{g}_\CC^-,\mathfrak{t}'_\CC)$ be the set of roots coming from $\mathfrak{t}'_\CC$. We have the explicit description
    $\Delta(\mathfrak{g}_\CC^-,\mathfrak{t}'_\CC)=\{\pm e_i\pm e_j, 1\le i<j\le n\},$
    where $e_j(H)=ix_j$ for $H=\diag(ix_1+y_1,\ldots,ix_n+y_n))\in \mathfrak{h}'$.
    Moreover,
    \[\mathfrak{u}'_{\underline{n}}=\bigoplus_{\ato{\alpha\in \Delta(\lig'_\CC,\mathfrak{t}'_\CC)}{ \alpha(x)>0}}(\lig'_\CC)_\alpha\]
    and therefore \[\mathfrak{u}'_{\underline{n}}\cap\mathfrak{g}_\CC^- =\bigoplus_{\ato{\alpha\in \Delta(\mathfrak{g}_\CC^-,\mathfrak{t}'_\CC)}{ \alpha(x)>0}}(\lig'_\CC)_\alpha.\]
    Hence $ \dim_\CC(\mathfrak{g}_\CC^-\cap\mathfrak{u}'_{\underline{n}})=\#\{\alpha\in \Delta(\mathfrak{g}_\CC^-,\mathfrak{t}'_\CC),\, \alpha(x)>0\},$
    which is easily seen to be equal to the above explicit formula.
\end{proof}
Our next step is to showing that if $\Sigma$ is a cuspidal irreducible representation of $\GL_{n}'(\A)$ and $k\in \mathbb{N}$, then $\Sigma$ is cohomological if $\mw(\Sigma,k)$ is. The author would like to thank Harald Grobner for pointing out the argument presented here. Before we start, we need to recall the following theorem.
    \begin{theorem}[{\cite[Theorem 1.8]{Sal}}]\label{T:sal}
        Let $G$ be a connected, semisimple real Lie group with finite center and Lie algebra $\lig$. Fix a maximal connected subgroup $K$ of $G$ with Lie algebra $\lik$ and moreover, let $\pi$ be an irreducible unitary smooth representation of $G$ with central character $\chi_\pi$. Finally, let $U$ be a finite-dimensional $(\lig,K)$-module admitting an infinitesimal character $\chi_{U}=\chi_{\pi^\lor}$. Then
        \[H^*(\lig,\lik,\pi\otimes U)\neq 0.\]
    \end{theorem}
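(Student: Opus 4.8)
The plan is to deduce this from the Vogan--Zuckerman theory of cohomological modules together with the classification theorem of \cite{Sal}. First I would reduce to the case that $U$ is irreducible: a finite-dimensional $(\lig,\lik)$-module which possesses an infinitesimal character is a sum of copies of a single finite-dimensional irreducible $F$, since finite-dimensional irreducibles are separated by their highest weights. So we may take $U=F$ with highest weight $\mu$, and the hypothesis $\chi_U=\chi_{\pi^\lor}$ becomes $\chi_\pi=\chi_{F^\lor}$; that is, $\pi$ carries the infinitesimal character of a finite-dimensional representation. Writing this parameter as $\nu+\rho$ with $\nu$ the (dominant, integral) highest weight of $F^\lor$, one has $\langle\nu+\rho,\alpha^\vee\rangle\ge\langle\rho,\alpha^\vee\rangle\ge 1$ for every positive coroot $\alpha^\vee$, so $\chi_\pi$ is integral, regular, and in fact \emph{strongly regular} in the sense of \cite{Sal}.

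The crucial input, which I expect to be the real obstacle, is then the classification theorem of \cite{Sal}: every irreducible unitary $(\lig,\lik)$-module with strongly regular infinitesimal character is cohomologically induced in the good range from an admissible unitary character of a $\theta$-stable Levi, i.e.\ $\pi\cong A_{\mathfrak{q}}(\lambda)$ for some $\theta$-stable parabolic $\mathfrak{q}=\mathfrak{l}\oplus\mathfrak{u}$ of $\lig_\CC$. This is the technical heart of \cite{Sal}: one constrains the $\lik$-types of $\pi$ via the Dirac-operator (Parthasarathy) inequality, extracts the lowest $\lik$-type, and matches the resulting data with the Vogan--Zuckerman list of cohomological $A_{\mathfrak{q}}(\lambda)$'s, strong regularity being exactly what makes the inequality rigid enough to pin $\pi$ down. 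For the class of groups in the statement (connected, semisimple, finite center) one is already in, or reduces by standard manipulations to, the setting treated there.

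Granting $\pi\cong A_{\mathfrak{q}}(\lambda)$, the conclusion is formal. The hypothesis $\chi_U=\chi_{\pi^\lor}$ says precisely that the finite-dimensional module $U$ carries the infinitesimal character required for the Vogan--Zuckerman computation of $(\lig,\lik)$-cohomology with coefficients in $A_{\mathfrak{q}}(\lambda)\otimes U$, which yields
\[H^{j}\left(\lig,\lik,A_{\mathfrak{q}}(\lambda)\otimes U\right)\;\cong\;\ho_{\mathfrak{l}\cap\lik}\left(\bigwedge\nolimits^{j-R}(\mathfrak{l}\cap\mathfrak{p}),\,\CC\right),\qquad R=\dim_\CC(\mathfrak{u}\cap\mathfrak{p}),\]
where $\mathfrak{p}$ is the $(-1)$-eigenspace of $\theta$ on $\lig_\CC$; this is one-dimensional in degree $j=R$, hence $H^*(\lig,\lik,\pi\otimes U)\ne 0$. (In the $\GL_n(\HH)$ case $\dim_\CC(\mathfrak{u}\cap\mathfrak{p})$ is the bottom cohomological degree computed in \Cref{T:dimcoh} and \Cref{L:explicitform}.) The only remaining points — that the generality of the statement matches the hypotheses of \cite{Sal}, and that passing between $U$ and $U^\lor$ leaves integrality and strong regularity of the infinitesimal character intact — each take a line.
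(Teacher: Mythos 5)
This statement is quoted verbatim from \cite{Sal} (Theorem 1.8) and the paper supplies no proof of it, so there is no internal argument to compare against. Your sketch is the standard derivation and is correct: reduce to $U$ isotypic of a single irreducible $F$ (finite-dimensional modules with a fixed infinitesimal character are isotypic), note that $\chi_\pi=\chi_{F^\lor}$ is then integral and strongly regular, invoke the Salamanca-Riba classification to write $\pi\cong A_{\mathfrak{q}}(\lambda)$, and conclude with the Vogan--Zuckerman formula, which is one-dimensional in degree $R=\dim_\CC(\mathfrak{u}\cap\mathfrak{p})$. The one caveat is that the technical heart of your argument --- the classification of irreducible unitary modules with strongly regular infinitesimal character --- is precisely the content of the cited reference, so you are identifying the black box rather than replacing it, which matches how the paper itself uses the result (as an input to \Cref{L:sal2} and \Cref{T:sal3}).
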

We denote for a real Lie group $G$ by $Z_G$ its center and by $Z_G^0$ the connected component of the latter.
\begin{lemma}\label{L:sal2}
Let $\underline{G}$ be a connected reductive group over $\KK$, $v\in {\VV_\infty}$ and $G\coloneqq \underline{G}(\KKv)$. Let $\pi$ be an irreducible unitary representation of $G$ and $E_\lambda$ a finite dimensional highest weight representation of $G$ over $\CC$ such that $Z_G^0$, acts trivially on $E_\lambda\otimes \pi$ and $\chi_{E_\lambda}=\chi_{\pi^\lor}$.
Then \[H^*(\lig,(Z_G \cdot K)^0,\pi\otimes E_\lambda)\neq 0.\]
\end{lemma}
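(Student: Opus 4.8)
The plan is to reduce Lemma~\ref{L:sal2} to Theorem~\ref{T:sal} by passing from the (possibly non-semisimple, possibly disconnected modulo center) group $G$ to an associated connected semisimple group, while carefully tracking the central directions that we quotient out. First I would set $G_1 \coloneq (Z_G \cdot K)^0$ and observe that the $(\lig,G_1)$-cohomology only sees the action of the derived Lie algebra together with $\lik \cap [\lig,\lig]$; more precisely, since $Z_G^0$ acts trivially on $\pi \otimes E_\lambda$ by hypothesis, the center contributes nothing to the relative Lie algebra cohomology, and one may replace $\lig$ by $\lig^{\mathrm{der}} = [\lig,\lig]$ and $G_1$ by the image $\bar{G}_1$ of $G_1$ in the adjoint group, or equivalently by a maximal compact of the semisimple group $\bar{G} \coloneq G/Z_G^0$. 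Concretely, there is an isomorphism
\[
H^*\bigl(\lig,(Z_G\cdot K)^0,\pi\otimes E_\lambda\bigr)\;\cong\;H^*\bigl(\bar{\lig},\bar{K},\bar\pi\otimes \overline{E_\lambda}\bigr),
\]
where $\bar{\lig}$ is the Lie algebra of the connected semisimple group $\bar G$ with finite center, $\bar K$ a maximal compact subgroup, and $\bar\pi$, $\overline{E_\lambda}$ are the representations of $\bar G$ obtained by descent (the descent is legitimate precisely because $Z_G^0$ acts trivially on both factors).

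Next I would check that the hypotheses of Theorem~\ref{T:sal} hold for $\bar\pi$ and $\overline{E_\lambda}$. Unitarity and irreducibility of $\bar\pi$ are inherited from $\pi$ since we only quotient by a subgroup acting trivially. The condition on infinitesimal characters is the one requiring care: we are given $\chi_{E_\lambda} = \chi_{\pi^\lor}$ as infinitesimal characters of $\lig$, and I would argue that restricting to $\bar{\lig}$ (equivalently, to the semisimple part of a Cartan) preserves the equality, so that $\chi_{\overline{E_\lambda}} = \chi_{\bar\pi^\lor}$ holds on $\bar{\lig}$. Here one uses the Harish-Chandra isomorphism functorially: the center of the universal enveloping algebra of $\lig$ maps onto that of $\bar{\lig}$ compatibly with the decomposition $\lig = \mathfrak{z}(\lig) \oplus [\lig,\lig]$, and the central part is handled by the triviality of the $Z_G^0$-action. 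With these hypotheses verified, Theorem~\ref{T:sal} applied to $\bar G$ yields $H^*(\bar{\lig},\bar K,\bar\pi\otimes\overline{E_\lambda})\neq 0$, and running the isomorphism above backwards gives the claim.

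The main obstacle, and the step I would spend the most care on, is the descent/comparison isomorphism for relative Lie algebra cohomology when the subgroup $(Z_G\cdot K)^0$ is not quite a maximal compact of $G$ and $G$ itself need not be semisimple or connected. One must be precise about which central torus directions are being killed: $Z_G$ may have both a compact part and a split part $\RR_{>0}^{\times}$-factor, and the notation $(Z_G\cdot K)^0$ is designed so that exactly the right combination is included. I would handle this by writing $\lig = \mathfrak{z} \oplus \lig^{\mathrm{der}}$ with $\mathfrak{z}$ the center, noting $(Z_G\cdot K)^0$ has Lie algebra $\mathfrak{z} \oplus (\lik \cap \lig^{\mathrm{der}})$, and then invoking the standard fact (e.g.\ from Borel--Wallach, Chapter~I) that for a module on which the center acts trivially, relative Lie algebra cohomology with respect to a subgroup containing that central torus splits off the trivial central contribution, leaving cohomology of the semisimple pair. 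A secondary technical point is ensuring $\bar K$ is connected with the right properties so that Theorem~\ref{T:sal}, which is stated for connected $K$, actually applies; since $\bar G$ is connected semisimple with finite center, its maximal compact is connected, so this is automatic. Once these bookkeeping matters are settled, the proof is essentially a one-line application of Theorem~\ref{T:sal}.
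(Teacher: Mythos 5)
Your overall route is the paper's: kill the central directions in the relative pair and then quote \Cref{T:sal} for the semisimple quotient. The paper does this by identifying $H^*(\lig,(Z_G\cdot K)^0,\pi\otimes E_\lambda)$ with $H^*(\lig,\mathfrak{z}_G\oplus\lik,\pi\otimes E_\lambda)$ and applying the K\"unneth formula, which, because $\mathfrak{z}_G$ acts trivially on $\pi\otimes E_\lambda$, leaves exactly $H^*(\lig/\mathfrak{z}_G,\lik,\pi\otimes E_\lambda)$; your appeal to the Borel--Wallach splitting is the same step, and the infinitesimal-character bookkeeping you describe is what the paper uses to invoke \Cref{T:sal}.

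There is, however, one step in your write-up that fails as stated. You descend $\pi$ and $E_\lambda$ \emph{separately} to $\bar G=G/Z_G^0$, justified by the parenthetical claim that $Z_G^0$ ``acts trivially on both factors.'' That is not the hypothesis: \Cref{L:sal2} only assumes $Z_G^0$ acts trivially on the tensor product $E_\lambda\otimes\pi$. Since $E_\lambda$ has a central character $\chi$, the hypothesis forces $\pi$ to have central character $\chi^{-1}$ on $Z_G^0$, which need not be trivial; in that case neither $\bar\pi$ nor $\overline{E_\lambda}$ exists as a representation of $\bar G$, so your comparison isomorphism and your application of \Cref{T:sal} to $\bar\pi$, $\overline{E_\lambda}$ are not literally available. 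The repair is essentially what the paper does: do not descend the individual factors, but note that the tensor product is a module for the pair $(\lig/\mathfrak{z}_G,\lik)$, and apply \Cref{T:sal} with $\pi$ restricted to the connected semisimple group generated by $\exp(\lig/\mathfrak{z}_G)$ (equivalently, first twist $\pi$ and $E_\lambda$ by a character of $G$ matching $\chi$ on the split part of $Z_G^0$, which is harmless for the cohomology). With that adjustment your argument coincides with the paper's proof.
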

\begin{proof}
Note that $E_\lambda$ always admits a central character.
Recall that \[H^*(\lig,(Z_G \cdot K)^0,\pi\otimes E_\lambda)=H^*(\lig,\mathfrak{z}_G \oplus \lik,\pi\otimes E_\lambda),\] where $\mathfrak{z}_G$ is the Lie algebra of $Z_G$ and we use that $K$ has finite center. Since $\mathfrak{z}_G$ acts trivially on $\pi\otimes E_\lambda$, the Künneth formula gives a decomposition  \[H^*(\lig,\mathfrak{z}_G \oplus \lik,\pi\otimes E_\lambda)\cong \bigotimes_{a+b=*}H^a(\mathfrak{z}_G,\mathfrak{z}_G,\CC)\otimes H^b(\lig/ \mathfrak{z}_G,\lik,\pi\otimes E_\lambda)=\]\[=H^*(\lig/ \mathfrak{z}_G,\lik,\pi\otimes E_\lambda).\]The latter does not vanish by \ref{T:sal}, since both $\pi$ and $E_\lambda$ admit the right central characters and the image of $\lig/ \mathfrak{z}_G$ under the exponential map generates  the connected, semisimple real Lie group $G/Z_G$ .
\end{proof}
    Let  $\underline{G}$ be either $\GL_n$ or $\GL_n'$, $v\in {\VV_\infty}$, $G=\underline{G}(\KKv)$ and $K=K_v$ or $K_v'$. Moreover, let $\underline{P}$ be a standard parabolic subgroup of $\underline{G}$ and set $P=\underline{P}(\KKv)=L\rtimes U$.  Write  $L=M\times A^0$, where $A^0=Z_L^0$. Next, let $(\pi,V)$ be an irreducible, unitary representation of $L$. Denote now by $\mathfrak{b}_\CC$ the complexified Cartan subalgebra of the Lie algebra of $M$ coming from our fixed choice of Cartan subalgebra of $L$, \emph{i.e.} the diagonal matrices if $\underline{G}=\GL_n$ or $\mathfrak{h}'$ if $\underline{G}=\GL_n'$. Let $\mathfrak{a}_{P,\CC}^\lor$ be the complexified dual of the Lie-algebra of $A^0$ and fix $\mu\in \mathfrak{a}_{P,\CC}^\lor$. 
    We let $\mathfrak{p}_\CC$ be the complexified Lie algebra of $P$ and let $\rho$ be the half-sum of all positive roots of $\mathfrak{p}_\CC$ with respect to our fixed Cartan subalgebra. 
    Denote by $\Delta_M$ the simple roots of $M$, $W$ the Weyl-group of $G$ and
$W^P=\{w\in W:w^{-1}(\alpha)>0\text{ for all }\alpha\in \Delta_M\}.$
    We write
\[\mathrm{Ind}_{P}^G(\pi,\mu)=\{f\colon G\ra V\text{ smooth}:f(maug)=a^{\rho+\mu}\pi(m)f(g),\]\[ a\in A^0,\, m\in M,\, u\in U,\, g\in G\}\] and use the standard parametrization of infinitesimal characters, i.e. for a highest weight representation $E_\lambda$, $\chi_{\lambda+\rho}=\chi_{E_\lambda}$. 
\begin{prop}\label{T:sal3}
    If $\tau$ is a non-zero $(\lig,(Z_G \cdot K)^0)$-module, which appears as a quotient of $\mathrm{Ind}_{P}^G(\pi,\mu)$ and is cohomolocigal with respect to some highest weight representation $E_\lambda^\lor$, then $\pi$ is cohomological as a $(\mathfrak{l}, (Z_L\cdot (L\cap K)^0)$-module with respect to $E_{{w(\lambda+\rho)-\rho}}^\lor,$ where $w$ is some element of $W^P$.
\end{prop}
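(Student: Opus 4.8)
The plan is to extract from the hypotheses a statement purely about infinitesimal characters, and then to invoke \Cref{L:sal2}. First, since $\tau$ is a $(\lig,(Z_G\cdot K)^0)$-module that is cohomological with respect to $E_\lambda^\lor$, we have $H^*(\lig,(Z_G\cdot K)^0,\tau\otimes E_\lambda^\lor)\neq 0$; by the standard vanishing result for relative Lie algebra cohomology this forces the infinitesimal character of $\tau$ to agree with that of $(E_\lambda^\lor)^\lor=E_\lambda$, namely $\chi_{\lambda+\rho}$ in our parametrization. Infinitesimal characters are inherited by subquotients, so $\mathrm{Ind}_{P}^G(\pi,\mu)$ also has infinitesimal character $\chi_{\lambda+\rho}$.

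Next I would compute the infinitesimal character of $\mathrm{Ind}_{P}^G(\pi,\mu)$ in terms of that of $\pi$ (as an $(\mathfrak{l},(Z_L\cdot(L\cap K))^0)$-module) and of $\mu$, using the familiar recipe for normalized parabolic induction: writing $\mathfrak{h}_\CC^\lor=\mathfrak{b}_\CC^\lor\oplus\mathfrak{a}_{P,\CC}^\lor$ and letting $\Lambda\in\mathfrak{h}_\CC^\lor$ be a suitably $\rho$-shifted representative of the infinitesimal character of $\pi$, the induced representation has infinitesimal character the $W$-orbit of $\Lambda+\mu$. Equating this with the $W$-orbit of $\lambda+\rho$ and decomposing $W=W_M\cdot W^P$, where $W^P$ is the chosen set of minimal-length representatives of $W_M\backslash W$, we obtain $w\in W^P$ and $w_M\in W_M$ with $\Lambda+\mu=w_M w(\lambda+\rho)$. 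Applying $w_M^{-1}$ and using that $W_M$ fixes $\mathfrak{a}_{P,\CC}^\lor$ pointwise, we find that $w(\lambda+\rho)$ represents the infinitesimal character of $\pi$, and, after unwinding the $\rho$-bookkeeping, that the infinitesimal character of $\pi$ equals that of $E_{w(\lambda+\rho)-\rho}$. That $E_{w(\lambda+\rho)-\rho}$ is a genuine highest weight representation of $L$ is forced by $w\in W^P$: since $w^{-1}$ carries the simple roots of $M$ into $\Delta^+$ and $\lambda+\rho$ is dominant and regular, one has $\langle w(\lambda+\rho),\alpha^\lor\rangle=\langle\lambda+\rho,w^{-1}\alpha^\lor\rangle\ge 1$ for every simple root $\alpha$ of $M$, so $w(\lambda+\rho)-\rho$ is dominant and integral for $M$.

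Finally I would feed this into \Cref{L:sal2}, applied to the connected reductive $\KK$-group underlying $L$ in place of $\underline G$, to $\mathfrak{l}$ in place of $\lig$, and to $L\cap K$ in place of $K$. The representation $\pi$ is irreducible and unitary, $E_{w(\lambda+\rho)-\rho}^\lor$ is a finite-dimensional highest weight module, and by the previous step its infinitesimal character is $\chi_{\pi^\lor}$. Moreover two weights lying in a common $W_M$-orbit have the same restriction to $\mathfrak{z}_L$, so the agreement of the infinitesimal characters of $\pi$ and $E_{w(\lambda+\rho)-\rho}$ also forces $Z_L^0$ to act trivially on $\pi\otimes E_{w(\lambda+\rho)-\rho}^\lor$. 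Hence the hypotheses of \Cref{L:sal2} are met and $H^*(\mathfrak{l},(Z_L\cdot(L\cap K))^0,\pi\otimes E_{w(\lambda+\rho)-\rho}^\lor)\neq 0$, i.e.\ $\pi$ is cohomological with respect to $E_{w(\lambda+\rho)-\rho}^\lor$.

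The main obstacle — and the reason \Cref{L:sal2} rather than a direct K\"unneth/Kostant computation is needed — is that $\tau$ is only a \emph{quotient} of $\mathrm{Ind}_{P}^G(\pi,\mu)$, so one cannot decompose the $(\lig,K)$-cohomology of the full induced module; the only invariant of $\mathrm{Ind}_{P}^G(\pi,\mu)$ that survives to $\tau$ is the infinitesimal character, and recovering the nonvanishing of cohomology for $\pi$ from unitarity together with the infinitesimal character is exactly what \Cref{L:sal2} supplies. The secondary technical nuisance is to align the normalization of $\mathrm{Ind}_{P}^G(\pi,\mu)$, the half-sum $\rho$ of the positive roots of $\mathfrak{p}_\CC$, and the Kostant-type twist $w(\lambda+\rho)-\rho$.
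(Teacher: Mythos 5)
Your route is the paper's own: Wigner's lemma pins the infinitesimal character of $\tau$, hence of $\mathrm{Ind}_{P}^G(\pi,\mu)$; you extract a Kostant representative $w\in W^P$ from the Weyl-orbit identity, check dominance of $w(\lambda+\rho)-\rho$ (by hand, where the paper cites \cite{BorWal}, III.3.2 -- this is fine), and then feed $\pi$ into \Cref{L:sal2}.

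The genuine gap is in the $\mathfrak{a}_{P}$-direction. From $\Lambda+\mu=w_Mw(\lambda+\rho)$ you get $w(\lambda+\rho)=w_M^{-1}\Lambda+\mu$, since $W_M$ fixes $\mathfrak{a}_{P,\CC}^\lor$ pointwise; so $w(\lambda+\rho)$ represents the infinitesimal character of $\pi$ \emph{twisted by} $\mu$, not of $\pi$ itself, and the two differ by $\mu$ on the central direction. In the intended application (\Cref{C:conj}) $\mu\neq 0$, as it records the nontrivial $\lvert\det'\lvert$-twists. Consequently your two key assertions -- that the full infinitesimal characters of $\pi$ and $E_{w(\lambda+\rho)-\rho}$ agree, and that this agreement automatically forces $Z_L^0$ to act trivially on $\pi\otimes E_{w(\lambda+\rho)-\rho}^\lor$ -- do not follow as written: only the restrictions to $\mathfrak{b}_\CC$ agree, while the $Z_L^0=A^0$-action on $\pi\otimes E_{w(\lambda+\rho)-\rho}^\lor$ picks up exactly the $\mu$-discrepancy you dropped. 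Since the cohomology in the conclusion is taken relative to $(Z_L\cdot(L\cap K))^0\supseteq A^0$, triviality of this central action is a genuine hypothesis of \Cref{L:sal2} and cannot be waved through. The paper treats precisely this point separately: it first normalizes (without loss of generality) so that $A^0$ acts trivially on $\pi$, asserts the infinitesimal-character identity only after restriction to $\mathfrak{b}_\CC$, and then chooses $w$ as in \cite{BorWal}, III, Theorem 3.3, so that $Z_L^0$ acts trivially on $\pi\otimes E_{w(\lambda+\rho)-\rho}^\lor$; only then is \Cref{L:sal2} invoked. Your argument needs this extra normalization and choice of $w$ (or an equivalent bookkeeping of the central character against $\mu$ and $\restr{\rho}{\mathfrak{a}_{P,\CC}}$) before the last step is valid.
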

\begin{proof}
We notice that without loss of generality $A^0$ acts trivially on $\pi$.
Moreover, if $\chi_\pi$ denotes the infinitesimal character of $\pi$, $\mathrm{Ind}_{P}^G(\pi,\mu)$ and hence also $\tau$ have infinitesimal character $\chi_{\pi+\mu}$. On the other hand, $\tau$ is by assumption cohomological with respect to $E_\lambda^\lor$ and hence it has to have infinitesimal character $\chi_{{\lambda+\rho}}$ by \cite[Theorem I.5.3]{BorWal}.
 Therefore the infinitesimal character of $\restr{\pi}{M}$ is equal to
$\chi_{\restr{\lambda+\rho-\mu}{\mathfrak{b}_\CC}}$ and hence $\restr{\pi}{M}$ has non-vanishing cohomology with respect to $E_{\restr{w(\lambda+\rho)-\rho}{\mathfrak{b}_\CC}}^\lor$ by \ref{L:sal2}.
Now for any Konstant-representative $w\in W^P$
$\chi_{\restr{\lambda+\rho-\mu}{\mathfrak{b}_\CC}}=\chi_{\restr{w(\lambda+\rho)}{\mathfrak{b}_\CC}}.$ Note now that ${\restr{w(\lambda+\rho)-\rho}{\mathfrak{b}_\CC}}$ is a dominant weight, see \cite[III.3.2]{BorWal}, and hence the last character is equal to
$\chi_{E_{\restr{w(\lambda+\rho)-\rho}{\mathfrak{b}_\CC}}^\lor}.$
We can choose now $w$ as in \cite[III. Theorem 3.3]{BorWal} such that $Z_L^0=A^0$ acts trivially on $\pi\otimes E_{{w(\lambda+\rho)-\rho}}^\lor.$Hence $\pi$ is by \ref{L:sal2} cohomological with respect to $E_{{w(\lambda+\rho)-\rho}}^\lor$. 
\end{proof}
\begin{corollary}\label{C:conj}
Let $\Sigma$ be a cuspidal irreducible representation of $\GL_n(\A)$ or $\GL_n'(\A)$ and $k\in\mathbb{N}$. Then $\Sigma$ is cohomological if $\mw(\Sigma,k)$ is cohomological.
\end{corollary}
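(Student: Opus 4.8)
The plan is to reduce the assertion to a purely local, archimedean statement and then feed it into \Cref{T:sal3}. I will treat $\GL_n'$, the case of $\GL_n$ being entirely analogous (it is the special case $\DD=\KK$, and \Cref{T:MWclas}, \Cref{L:sal2} and \Cref{T:sal3} are in any case stated for both). Being cohomological is a condition on the archimedean component, and $H^*(\lig'_\infty,K'_\infty,-)$ is computed by the Künneth formula recalled above; so, writing the coefficient system as $E_\mu^\lor=\bigotimes_{v\in\VV_\infty}E_{\mu_v}^\lor$, if $\mw(\Sigma,k)$ is cohomological with respect to $E_\mu^\lor$ then each local factor $\mw(\Sigma,k)_v$ ($v\in\VV_\infty$) is cohomological with respect to $E_{\mu_v}^\lor$, and conversely, once each $\Sigma_v$ ($v\in\VV_\infty$) is known to be cohomological the same decomposition shows that $\Sigma$ is. Thus it suffices to prove: for each fixed $v\in\VV_\infty$, if $\mw(\Sigma,k)_v$ is cohomological then so is $\Sigma_v$.

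Fix $v\in\VV_\infty$, let $G=\GL_{nk}'(\KKv)$, and let $P\subset\GL_{nk}'$ be the standard parabolic subgroup whose Levi factor is $L\cong\GL_n'\times\cdots\times\GL_n'$ ($k$ copies), corresponding to the composition $(n,\ldots,n)$ of $nk$. By \Cref{T:MWclas} together with the compatibility of the M{\oe}glin--Waldspurger classification, and of the Jacquet--Langlands correspondence, with passage to local components (\cite{Bad},\cite{BadRen}; for the latter one also uses that $\lj_v$ commutes with parabolic induction), the archimedean component $\mw(\Sigma,k)_v$ is the Langlands quotient of the locally induced representation $\mathrm{Ind}_P^G\bigl(\Sigma_v\otimes\cdots\otimes\Sigma_v,\ \mu\bigr)$, where $\mu\in\mathfrak{a}_{P,\CC}^\lor$ is the parameter encoding the shifts $\lvert\det'\rvert^{(k-2j+1)/2}$ on the $j$-th block. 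In particular $\mw(\Sigma,k)_v$ appears as a quotient of $\mathrm{Ind}_P^G(\Sigma_v^{\otimes k},\mu)$. Keeping the twists inside the parameter $\mu$ rather than inside the inducing representation is exactly what will let us avoid an untwisting step later.

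Now apply \Cref{T:sal3} with $\tau=\mw(\Sigma,k)_v$ — cohomological with respect to $E_{\mu_v}^\lor$, so take $\lambda=\mu_v$ — and with inducing datum $\pi=\Sigma_v\otimes\cdots\otimes\Sigma_v$, a unitary irreducible representation of $L$. The proposition yields that $\Sigma_v^{\otimes k}$ is cohomological as an $\bigl(\mathfrak{l},(Z_L\cdot(L\cap K_v'))^0\bigr)$-module with respect to $E_{w(\mu_v+\rho)-\rho}^\lor$ for some $w\in W^P$. Since $L$ is a product of $k$ copies of $\GL_n'(\KKv)$, the highest weight representation $E_{w(\mu_v+\rho)-\rho}$ decomposes as an external tensor product $E_{\nu_1}\otimes\cdots\otimes E_{\nu_k}$ of highest weight representations of the factors, and one more application of the Künneth formula gives $H^*\bigl(\lig'_v,K_v',\Sigma_v\otimes E_{\nu_j}^\lor\bigr)\neq 0$ for each $j$. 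Hence $\Sigma_v$ is cohomological, and, $v\in\VV_\infty$ being arbitrary, so is $\Sigma$.

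The step most in need of care is the identification in the second paragraph: one must verify that the archimedean local component of $\mw(\Sigma,k)$ really is the Langlands quotient of $\mathrm{Ind}_P^G(\Sigma_v^{\otimes k},\mu)$ — so that the ``appears as a quotient'' hypothesis of \Cref{T:sal3} is genuinely met — which amounts to unwinding how the global M{\oe}glin--Waldspurger classification, and in the division-algebra case the Jacquet--Langlands transfer, interact with the restricted tensor product decomposition, and to checking that the local inducing datum at $v$ is the expected tensor product of the twisted copies of $\Sigma_v$. Everything else is routine bookkeeping with the Künneth formula.
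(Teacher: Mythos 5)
Your proposal is correct and follows essentially the same route as the paper: realize the archimedean component of $\mw(\Sigma,k)$ as a quotient of the locally induced representation from the (twisted) copies of $\Sigma_v$ and feed this into \Cref{T:sal3}, with Künneth bookkeeping to pass between global, local, and Levi-factor cohomology. The only differences are cosmetic: the appeal to Jacquet--Langlands compatibility is not needed (the quotient property of the local components suffices and is how the paper phrases it), and the paper additionally records the identities $K_v'=(\mathrm{Sp}(\frac{nd}{2})\RR)^{0}$, $K_v=(\mathrm{O}(n)\RR)^{0}$ so that the cohomology in \Cref{T:sal3} is taken with respect to the correct groups.
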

\begin{proof}
    Since $\mw(\Sigma,k)_v^\infty$ is the quotient of \[(\Sigma_v^\infty\lvert\det'\lvert^\frac{k-1}{2}\times\ldots\times \Sigma_v^\infty\lvert\det'\lvert^\frac{1-k}{2})_v\] for all $v\in {\VV_\infty}$, the claim follows from \ref{T:sal3}, because \[K_v'=\mathrm{Sp}(\frac{nd}{2})\RR_{>0}=(\mathrm{Sp}(\frac{nd}{2})\RR)^{0},\, K_v=\mathrm{SO}(n)\RR_{>0}=(\mathrm{O}(n)\RR)^{0}.\]
\end{proof}
\begin{lemma}\label{L:littlelem}
Assume $\Pi'$ is a cuspidal irreducible cohomological representation of $\GL_{2n}'\left(\A\right)$ such that $\jl\left(\Pi'\right)$ is not a cuspidal representation of $\GL_{2dn}\left(\A\right)$. Let $2dn=kl$ and let $\Sigma$ be a unitary cuspidal irreducible representation of $\GL_l\left(\A\right)$ such that 
$\jl\left(\Pi'\right)=\mw\left(\Sigma,k\right)$.

Then $l$ is even and $\Sigma_v$ is cohomological with respect to some highest weight representation $E_{\lambda_v}$, $\lambda_v=\left(\lambda_{v,1},\ldots,\lambda_{v,\frac{l}{2}}\right)$.
 For each $v\in {\VV_\infty}$, $\Pi_v'$ is of the form $\Pi_v'=A_{\underline{nd}}\left(\lambda_v'\right)$ for \[\underline{nd}=[0,\overbrace{k,\ldots,k}^\frac{l}{2}]\]
 and $\lambda_{v,\frac{l}{2}}=\lambda_{v,nd}'$. In particular, the lowest, respectively, highest degree in which the cohomology group $H^{q}(\lig_v',\mathrm{Sp}(nd)\RR_{\ge 0}, \Pi_v'\otimes E_{\lambda_v'})$ does not vanish is
\[q=nd\left(nd-1\right)-\frac{nd}{2}\left(k-1\right)\text{, respectively, } q=nd\left(nd-1\right)+\frac{nd}{2}\left(k+1\right)-1.\]
\end{lemma}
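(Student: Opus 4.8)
The plan is to pass to the archimedean components $\pi'_v$ ($v\in\VV_\infty$), exploit that $\lj_v$ is compatible with cohomological induction between the inner forms $\GL_{2dn}(\RR)$ and $\GL_{nd}(\HH)$ and with parabolic induction, and then read the cohomology degrees off \Cref{T:dimcoh}(3) together with \Cref{L:explicitform}. First, since $\Pi'$ is cohomological with respect to $E_\mu^\lor$, each $\pi'_v$ is a cohomological unitary representation of $\GL'_{2n}(\KK_v)=\GL_{nd}(\HH)$, hence of the form $A_{\underline{m_v}}(\lambda'_v)$ for some composition $\underline{m_v}$ of $nd$ by \Cref{T:dimcoh}. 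By property~(1) of the global Jacquet--Langlands correspondence, $\lj_v\big((\mw(\Sigma,k))_v\big)\cong\pi'_v$; since $\lj_v$ identifies cohomological representations of the two inner forms and preserves the coefficient system (\cite{GroRag2}), $(\mw(\Sigma,k))_v$ is cohomological for every $v\in\VV_\infty$. Thus $\mw(\Sigma,k)$ is a cohomological representation of $\GL_{2dn}(\A)$, and \Cref{C:conj}, applied with $\underline{G}=\GL_l$, shows that $\Sigma$ is cohomological; write $E_{\lambda_v}$ for its coefficient system at $v$.

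Being cuspidal, $\Sigma$ is globally generic, so $\sigma_v$ is a generic cohomological representation of $\GL_l(\RR)$; such a representation is essentially tempered, so its cuspidal support is a product of discrete series of $\GL_2(\RR)$ and, if $l$ is odd, one character of $\GL_1(\RR)$. On the other hand $(\mw(\Sigma,k))_v$ lies in the image of $\lj_v$, hence is $d_v$-compatible with $d_v=2$, and a $2$-compatible representation of $\GL_N(\RR)$ has no character of $\GL_1(\RR)$ in its cuspidal support (\cite{DelKazVig}); since that support is $k$ shifted copies of the cuspidal support of $\sigma_v$, the latter contains no such character either. Hence $l$ is even and, up to a common twist by $\lvert\det\lvert^{s}$, $\sigma_v\cong\delta_{v,1}\times\cdots\times\delta_{v,l/2}$ with each $\delta_{v,i}$ a discrete series of $\GL_2(\RR)$, so its coefficient system is encoded by the $l/2$ parameters $\lambda_v=(\lambda_{v,1},\ldots,\lambda_{v,l/2})$. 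By the M{\oe}glin--Waldspurger construction $(\mw(\Sigma,k))_v$ is the unique irreducible quotient $J\big(\sigma_v\lvert\det\lvert^{\frac{k-1}{2}},\ldots,\sigma_v\lvert\det\lvert^{\frac{1-k}{2}}\big)$; each $\delta_{v,i}$ is square-integrable, hence $2$-compatible, and $\lj_v(\delta_{v,i})$ is an irreducible representation of $\GL_1(\HH)=\HH^\times$. Using that $\lj_v$ commutes with parabolic induction (property~(4)) and with taking the unique irreducible quotient,
\[\pi'_v=J\big(\lj_v(\sigma_v)\lvert\det'\lvert^{\frac{k-1}{2}},\ldots,\lj_v(\sigma_v)\lvert\det'\lvert^{\frac{1-k}{2}}\big),\qquad \lj_v(\sigma_v)=\lj_v(\delta_{v,1})\times\cdots\times\lj_v(\delta_{v,l/2}),\]
and regrouping the Langlands data by the index $i$ presents $\pi'_v$ as the representation of $\GL_{nd}(\HH)$ assembled from $l/2$ blocks, each a Speh representation of degree $k$ over a character of $\HH^\times$. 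Comparison with the Vogan--Zuckerman classification of \Cref{T:dimcoh} (a block over $\HH^\times$ produces no $n_0$-type entry) identifies $\pi'_v\cong A_{\underline{nd}}(\lambda'_v)$ with $\underline{nd}=[0,\overbrace{k,\ldots,k}^{l/2}]$, and tracking infinitesimal and central characters through $\lj_v$ and the Speh construction yields $\lambda_{v,l/2}=\lambda'_{v,nd}$.

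It remains to compute the cohomology degrees via \Cref{T:dimcoh}(3) with $\underline{n}=\underline{nd}=[0,k,\ldots,k]$ ($l/2$ copies of $k$). Since $n_0=0$ and $\tfrac{l}{2}k=nd$, \Cref{L:explicitform} gives
\[\dim_\CC\big(\mathfrak{g}_\CC^-\cap\mathfrak{u}'_{\underline{nd}}\big)=\tfrac{l}{2}\binom{k}{2}+2\binom{l/2}{2}k^2=nd(nd-1)-\tfrac{nd}{2}(k-1)=:a,\]
and, extracting the factor $(1+X)$ from the $j=1$ term of the product in \Cref{T:dimcoh}(3),
\[P(\underline{nd},X)=X^{a}(1+X)^{l/2-1}\prod_{i=1}^{l/2}\prod_{j=2}^{k}\big(1+X^{2j-1}\big).\]
The lowest-degree term is $X^{a}$, so the smallest non-vanishing degree is $q=nd(nd-1)-\tfrac{nd}{2}(k-1)$; the top degree is $a+\big(\tfrac{l}{2}-1\big)+\tfrac{l}{2}\sum_{j=2}^{k}(2j-1)=a-1+ndk$, which equals $nd(nd-1)+\tfrac{nd}{2}(k+1)-1$ upon using $\sum_{j=1}^{k}(2j-1)=k^2$ and $\tfrac{l}{2}k^2=ndk$. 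These are exactly the extremal degrees of $H^{q}(\lig_v',\mathrm{Sp}(nd)\RR_{\ge 0},\pi'_v\otimes E_{\lambda'_v})$ asserted in the statement.

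The main obstacle is the identification in the second paragraph that $\lj_v$ sends $(\mw(\Sigma,k))_v$ to precisely $A_{[0,k,\ldots,k]}(\lambda'_v)$ — in particular that the leading entry of the composition is $0$ and that there are exactly $l/2$ remaining entries, all equal to $k$. This requires the compatibility of the local Jacquet--Langlands correspondence with parabolic induction and with Langlands quotients, matched carefully against the explicit Vogan--Zuckerman description of the cohomological dual of $\GL_{nd}(\HH)$ from \cite{GroRag2}; the weight identification $\lambda_{v,l/2}=\lambda'_{v,nd}$ is part of the same bookkeeping. The other essential input, used in the first paragraph, is that $\lj_v$ preserves cohomologicality and coefficient systems across the two inner forms, which is likewise taken from \cite{GroRag2}.
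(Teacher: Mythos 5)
Your overall strategy is the paper's: transfer to the archimedean inner form, match against the Vogan--Zuckerman classification of \Cref{T:dimcoh}, and read off the extremal degrees from the Poincar\'e polynomial via \Cref{L:explicitform}; your final numerics $q=nd(nd-1)-\tfrac{nd}{2}(k-1)$ and $q=nd(nd-1)+\tfrac{nd}{2}(k+1)-1$ are correct and agree with the paper. However, your argument that $l$ is even has a genuine gap. You deduce it from the claim that a $2$-compatible representation of $\GL_N(\RR)$ has no character of $\GL_1(\RR)$ in its cuspidal support, citing \cite{DelKazVig} (a non-archimedean reference in any case). That claim is false: one-dimensional representations, and more generally Speh-type blocks of even length built from a character of $\GL_1$, are compatible and do transfer to the quaternionic side. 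This failure is not incidental here: since $kl=2nd$ and $d$ is even (as $\DD$ is non-split at infinity), $l$ odd would force $k$ even, so the length-$k$ character block arising from the $\GL_1(\RR)$ factor of the essentially tempered $\sigma_v$ would have even length and would be transferable --- no contradiction follows from the fact you invoke. The paper gets $l$ even from a global input instead, namely the classification of the image of $\jl$ in \cite[Theorem 18.2]{Bad}, which gives $k\mid d$ and hence $l=2n(d/k)$ even; some such global (or genuinely finer local) input is needed, and your proof does not supply it.

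The second weak point is the identification $\pvi'\cong A_{[0,k,\ldots,k]}(\lambda_v')$ together with $\lambda_{v,\frac{l}{2}}=\lambda'_{v,nd}$, which you yourself flag as the main obstacle. You justify it by saying $\lj_v$ commutes with parabolic induction ``and with taking the unique irreducible quotient,'' but compatibility with Langlands quotients is not among the properties of $\lj_v$ recalled in the paper (the induction property concerns products of unitary representations, and the standard module $\sigma_v\lvert\det\lvert^{\frac{k-1}{2}}\times\cdots\times\sigma_v\lvert\det\lvert^{\frac{1-k}{2}}$ is reducible), so this step is asserted rather than proved. The paper closes exactly this gap by recapping the explicit Langlands datum $(P',\sigma,\mu)$ of $A_{\underline{nd}}(\lambda_v')$ from \cite[Theorem 5.2]{GroRag2}, expressing $\lj_v(\mw(\sigma_v,k))$ as the Langlands quotient of an explicit induced representation with exponents $\mu'$, and comparing $\mu$ with $\mu'$ via uniqueness of the Langlands datum to force $n_0=0$ and $n_1=\cdots=n_{l'}=k$; the equality $\lambda_{v,\frac{l}{2}}=\lambda'_{v,nd}$ then falls out of the same comparison of the parameters $k_{i,j}$. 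To make your write-up a proof, you would need to carry out this comparison (or an equivalent one) explicitly rather than appeal to an unproved compatibility of $\lj_v$ with Langlands quotients.
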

\begin{proof}
Fix an infinite place $v\in {\VV_\infty}$. By \cite[Theorem 5.2]{GroRag2} $\mw\left(\Sigma,k\right)$ is cohomological and thus by \ref{C:conj} so is $\Sigma$. By \cite[Theorem 18.2]{Bad}, $k\lvert d$ and hence, $l$ has to be even.
Since the archimedean component of a cohomological cuspidal irreducible unitary representation of $\GL_l\left(\A\right)$ must be tempered we may write $\Sigma_v=A_{[0,2,\ldots,2]}\left(\lambda_v\right)$ and let $\Pi_v'=A_{\underline{nd}}\left(\lambda_v'\right)$ for suitable $\underline{nd}$ and $\lambda_v$, $\lambda_v'$, with $\underline{nd}=[n_0,n_1,\ldots,n_{l'}]$, see \cite[Section 5.5]{GroRag2}.
Furthermore, $\Sigma_v$ is fully induced from representations of $\GL_2(\RR)$.
To proceed with the proof, we are quickly going to recap the construction of $A_{\underline{nd}}\left(\lambda_v'\right)$ in the proof of \cite[Theorem 5.2]{GroRag2}. 
Let \[\rho_{\mathfrak{gl}_{{m}}\left(\HH\right)}=\left(\frac{2m-1}{2},\ldots,-\frac{2m-1}{2}\right),\] respectively,
\[\rho_{\mathfrak{gl}_{{m}}\left(\CC\right)}=\left(\left(\frac{m-1}{2},\ldots,-\frac{m-1}{2}\right),\left(\frac{m-1}{2},\ldots,-\frac{m-1}{2}\right)\right)\]
the smallest algebraically integral element in the interior of the dominant Weyl chamber of $\GL_{m}\left(\HH\right)$, respectively, $\GL_{m}\left(\CC\right)$.
Define now \[\mu\coloneqq \left(\rho_{\mathfrak{gl}_{n_0}\left(\HH\right)}, \rho_{\mathfrak{gl}_{n_1}\left(\CC\right)},\ldots,\rho_{\mathfrak{gl}_{n_{l'}}\left(\CC\right)}\right)\]
and let $P'$ be a certain complex parabolic subgroup of $\GL_{dn}\left(\HH\right)$, which we will specify in a moment, and having Levi-factor
$\prod_{i=1}^{nd}\GL_1(\HH)$.
For any integer $s> 0$ and $u\in \CC$ we set $D\left(u,s\right)\coloneqq  D\left(s\right)\otimes \lvert\det\lvert^{-\frac{u}{2}},$ where $D\left(s\right)$ is the unique irreducible discrete series representation of $\mathrm{SL}_2^{\pm}\left(\RR\right)$ of lowest $\mathrm{O}\left(2\right)$-type $s+1$.
We also set \[F\left(u,s\right)\coloneqq  F\left(s\right)\otimes \lvert\det'\lvert^{-\frac{u}{2}},\] where $F\left(s\right)$ is the unique irreducible representation of $\mathrm{SL}_1\left(\HH\right)$ of dimension $s$. 
Moreover, recall the Levi decomposition ${\mathfrak{q}'_{\underline{nd}}}=\mathfrak{l}'_{\underline{nd}}+\mathfrak{u}'_{\underline{nd}}$ and 
let the weight $\rho\left(\underline{nd}\right)=\left(\rho\left(\underline{nd}\right)_1,\ldots,\rho\left(\underline{nd}\right)_{nd}\right)$ 
be the half-sum of all roots appearing in $\mathfrak{u}'_{\underline{nd}}$. Let $k_i=\lambda_i'+\rho\left(\underline{nd}\right)_i$.
We set \[\sigma=\bigotimes_{i=1}^{nd-n_0} F\left(0,k_i\right).\] 
Then $P'$ can be chosen such that $(P',\sigma, \mu)$ is a Langlands-datum and $A_{\underline{nd}}\left(\lambda_v'\right)$ is the unique irreducible quotient of the induced representation
$\mathrm{Ind}_{P'}^{\GL'_{2n}(\KKv)}(\sigma,\mu)$.
Since $\Sigma_v\lvert\det'\lvert^{\frac{k+1}{2}-j}$ is essentially tempered for every $v\in {\VV_\infty}$ and $\ain{j}{1}{k}$ and $\Sigma_v$ is cohomological, \[\Sigma_v\lvert\det'\lvert^{\frac{k+1}{2}-j}\cong\mathrm{Ind}_{P_j}^{\GL_{l}(\KKv)}(\sigma_j)\]
by \cite[Section 5.5]{GroRag2}, where$\sigma_j=\bigotimes_{i=1}^\frac{l}{2} D\left(2j-k-1,k_{i,j}\right)$
for certain $k_{i,j}\in\mathbb{Z}_{>0}$ and $P_j$ is the standard parabolic subgroup of upper triangular matrices with block size $\overbrace{\left(2,\ldots,2\right)}^\frac{l}{2}$.
Recall furthermore \[A_{\underline{nd}}\left(\lambda_v'\right)=\Pi_v'=\lj_v\left(\left(\jl\left(\Pi'\right)\right)_v\right)=\lj_v\left(\mw\left(\Sigma_v,k\right)\right).\]
By \cite[Theorem 5.2]{GroRag2} and its proof the last term is equal to the Langlands quotient of \[\mathrm{Ind}_{P}^{\GL_{nd}(\HH)}\left(\bigotimes_{j=1}^k\bigotimes_{i=1}^\frac{l}{2} F\left(0,k_{i,j}\right),\mu'\right),\]
where now $P$ is the standard parabolic subgroup of type $\overbrace{(1,\ldots,1)}^{nd}$ of $\GL_{nd}(\HH)$
and \[\mu'=\left(\overbrace{\frac{k-1}{2},\ldots,\frac{k-1}{2}}^{\frac{l}{2}},\ldots,\overbrace{-\frac{k-1}{2},\ldots,-\frac{k-1}{2}}^{\frac{l}{2}}   \right).\]
Comparing $\mu$ and $\mu'$ and
using the uniqueness of the Langlands quotient implies then that $n_0=0$ and $n_1=\ldots=n_{l'}=k$. Moreover, by  we have $\lambda_{v,\frac{l}{2}}+1=k_{\frac{l}{2},k}=k_{nd}=\lambda_{v,nd}'+1$.
From \ref{L:explicitform} we obtain
\[\dim_\CC\left(\mathfrak{g}_\CC^-\cap\mathfrak{u}'_{\underline{nd}}\right)=2\binom{nd}{ 2}-\frac{l}{2}\binom{k}{ 2}=nd\left(nd-1\right)-\frac{nd}{2}\left(k-1\right).\]
By \ref{T:dimcoh} the lowest degree of non-vanishing cohomology is $nd\left(nd-1\right)-\frac{nd}{2}\left(k-1\right)$ and the highest degree of non-vanishing cohomology is
\[\dim_\CC\left(\mathfrak{g}_\CC^-\cap\mathfrak{u}'_{\underline{nd}}\right)-1+\sum_{j=1}^\frac{l}{2}\sum_{i=1}^k \left(2i-1\right)=nd\left(nd-1\right)+\frac{nd}{2}\left(k+1\right)-1.\]
\end{proof}
\begin{remark}
To extend the ideas of \cite{GroRag} to the case $\GL'_{2n}\left(\A\right)$ we need the following numerical coincidence. Namely, it will be necessary that either the lowest or highest degree in which the cohomology group
$H^*\left(\lig_v',K_v', \Pi_v \otimes E_{\lambda_v}^\lor\right)$
does not vanish is \[q_0=\left(nd\right)^2-\left(nd\right)-1=\dim_\RR \left(H'_\frac{nd}{2}\left(\RR\right)/\left(\mathrm{Sp}\left(\frac{nd}{2}\right)\times \mathrm{Sp}\left(\frac{nd}{2}\right)\times \RR_{> 0}\right)\right).\] By \ref{L:littlelem} the only possible value for $n$ is therefore $n=1$, $d=2$ and the composition $\underline{2n}$ of $2n=2$ has to be $\underline{2n}=[0,2]$.
\end{remark}
\section{ Rational structures} Next we will recall the action of $\Aut(\CC)$ on representations.
Let $\Pi_f'$ be a representation of $\GL'_n\left(\A_f\right)$ on some complex vector space $W$ and $\sigma\in\Aut\left(\CC\right)$. We define the $\sigma$-twist ${}^\sigma\Pi'_f$ as follows, \emph{cf.} \cite{WalI}. Let $W'$ be a complex vector space which allows a $\sigma$-linear isomorphism $t\colon W'\rightarrow W$. We then set ${}^\sigma\Pi'_f\coloneqq t^{-1}\circ\Pi'_f\circ t.$ An explicit example of such a space $W'$ is the space 
$W'=W\otimes_\CC{} _\sigma\CC,$
where $_\sigma\CC$ is $\CC$ as a field but $\CC$ acts on $_\sigma\CC$ via $\sigma^{-1}$. Then 
$W'$ is a $\CC$ vector space via the right action of $\CC$ on $\CC$ and the map $t\colon W\rightarrow W\otimes_\CC {}_\sigma\CC$ is given by $w\mapsto w\otimes 1$. Similarly, we define the $\sigma$-twist $^\sigma\Pi_v'$ of a local representation $\Pi_v'$ with $v\in {\VV_f}$.
For a highest weight representation $E_\mu$ of $\GL'_{n,\infty}$, we define $\left({}^\sigma E_\mu\right)_v\coloneqq \left(E_\mu\right)_{\sigma^{-1}\circ v},$ where $v$ is seen as an embedding $\KK\hookrightarrow\CC$ and hence, $\sigma^{-1}\circ v$ defines an infinite place of $\KK$.
For $\Pi_f'$ as above let $$\mathfrak{S}\left(\Pi'_f\right)\coloneqq \{\sigma\in\Aut\left(\CC\right): {{}^\sigma\Pi_f'}\cong \Pi_f'\}$$
and let
$$\mathbb{Q}\left(\Pi_f'\right)\coloneqq \{z\in\CC:\sigma\left(z\right)=z,\, \text{for all}\, \sigma\in\mathfrak{S}\left(\Pi_f'\right)\}$$
be the rationality field of $\Pi_f'$. Analogously we define for a highest weight representation $E_\mu$ and a local representation $\Pi_v'$ the fields $\QQ\left(E_\mu\right)$ and $\QQ\left(\Pi_v'\right)$.
Moreover, if $\alpha=\left(\alpha_1,\ldots,\alpha_k\right)$ is a composition of $n$ and $\rho=\rho_1\otimes\ldots\otimes\rho_k$ an irreducible representation of $M_\alpha'\left(\KKv\right)$, $v\in {\VV_f}$, \[{}^\sigma{}^a\mathrm{Ind}_{P_\alpha'\left(\KKv\right)}^{\GL_n\left(\KKv\right)}\left(\rho\right)={}^a\mathrm{Ind}_{P_\alpha'\left(\KKv\right)}^{\GL_n\left(\KKv\right)}\left({}^\sigma\rho\right), \, \sigma\in\Aut\left(\CC\right)\]
and therefore \begin{equation}\label{E:E2}{}^\sigma\left(\rho_1\times\ldots\times\rho_k\right)=\left({}^\sigma\rho_1\times\ldots\times{}^\sigma\rho_k\right)\epsilon_\sigma^{dn-1},\, \epsilon_\sigma\coloneqq \frac{\lvert\det'\lvert^\frac{1}{2}}{\sigma\left(\lvert\det'\lvert^\frac{1}{2}\right) }\end{equation}
and similarly for the split case $\GL_n$.

Finally, we say that the representation $\Pi'_f,\, \Pi_v'$ or $E_\mu$ with underlying vector space $W$ is defined over some field $\mathbb{L}\subseteq \CC$ if there exists an $\LL$-vector space $W_\LL\subseteq W$, stable under the group action of $\GL_n'(\A_f),\, \GL_n'(\KKv)$, respectively, $\prod_{v\in {\VV_\infty}}\GL_n'(\KK)$, such that the natural map $W_\LL\otimes_\LL\CC\rightarrow W$ is an isomorphism. In this case we say $W$ admits an $\LL$-structure. Let $E_\mu$ be a highest weight representation and let $\mathbb{L}$ be a minimal field extension of $\KK$ such that $\DD$ splits over $\mathbb{L}$. Then $E_\mu$ is defined over $\LL$, see \cite[Lemma 7.1]{GroRag2} and we set
\[\QQ\left(\mu\right)\coloneq\mathbb{L}\cdot \QQ\left(E_\mu\right).\]
\begin{lemma}[{\cite[Proposition 3.2]{CloI}}]\label{L:clozelstructure}
    Let $v\in {\VV_f}$ and $\Pi_v'$ an irreducible representation of $\GL_n'(\KKv)$. Then $\Pi_v'$ admits an $\QQ(\Pi_v')$-structure.
\end{lemma}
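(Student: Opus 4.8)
The plan is to realise $\pi:=\pvi'$ over its field of rationality by a Galois descent argument carried out through its system of Hecke modules. Since $\pi$ is a smooth irreducible representation of the locally profinite group $G:=\GL'_n(\KKv)$, it is admissible by Bernstein's theorem and satisfies $\mathrm{End}_G(\pi)=\CC$ by Jacquet's countability argument, so $\pi$ is absolutely irreducible. Two facts set up the descent. First, for every compact open subgroup $K\subseteq G$ the Hecke algebra $\mathcal{H}(G,K)$ carries a natural $\QQ$-structure $\mathcal{H}(G,K)_\QQ$ — take the Haar measure giving $K$ volume one, so that the structure constants of convolution, being cardinalities of coset spaces, lie in $\QQ$ — and the compactly induced representation $\mathrm{ind}_K^G(\mathbf 1)=\CC[G/K]$ is defined over $\QQ$, compatibly with $\mathrm{End}_G(\mathrm{ind}_K^G\mathbf 1)=\mathcal{H}(G,K)^{\mathrm{op}}$. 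Second, for a smooth $G$-representation $W$ defined over a subfield $\LL\subseteq\CC$ and $\sigma\in\Aut(\CC/\LL)$, the natural $\sigma$-semilinear action on $W=W_\LL\otimes_\LL\CC$ realises ${}^\sigma W$, and a $\CC$-subspace of $W$ is defined over $\LL$ if and only if it is stable under all such $\sigma$; in particular a $G$-subspace that is canonically attached to $\pi$ is automatically defined over $\QQ(\pi)$, since ${}^\sigma\pi\cong\pi$ for $\sigma\in\Aut(\CC/\QQ(\pi))$.

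With this in place I would reduce the problem to a statement about finite-dimensional algebras. Via the Jacquet--Langlands transfer and the Langlands--Zelevinsky classification, $\pi$ is the unique irreducible quotient of a standard module $\Xi=\mathrm{Ind}_{P'}^{G}(\delta_1\times\cdots\times\delta_k)$, with each $\delta_i$ an essentially square-integrable representation of a smaller $\GL'$, and $\pi\mapsto\Xi$ is a bijection intertwining the $\sigma$-twist, so $\QQ(\Xi)=\QQ(\pi)$. Since the largest semisimple quotient of $\Xi$ is $\pi$ itself, the radical of $\Xi$ is canonically attached to $\pi$ and hence defined over $\QQ(\pi)$ once $\Xi$ is; thus it suffices to realise each $\Xi$, equivalently each $\delta_i$, over its rationality field, parabolic induction being manifestly rational. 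By the type theory of Bushnell--Kutzko and S\'echerre--Stevens every such $\delta_i$ is compactly induced from a finite-dimensional representation $\Lambda$ of an open subgroup compact modulo the centre, so the question collapses to realising the finite-dimensional $\Lambda$ over its field of rationality. (In the split case $\DD_v=\KKv$ one may argue more cleanly: $\Xi$ is then a generic representation, and its Whittaker model with respect to a fixed additive character valued in roots of unity provides a rational structure outright.)

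The finite-dimensional statement is where I expect the real work to lie: realising an absolutely irreducible finite-dimensional representation over its field of rationality is equivalent to the vanishing of a Schur index over a number field, and this genuinely fails for arbitrary finite-dimensional representations. Concretely, $\mathcal{H}(G,K)_{\QQ(\pi)}/\mathrm{Ann}(\pi^K)$ is a priori only a $\QQ(\pi)$-form of the matrix algebra $M_{\dim_\CC\pi^K}(\CC)$, and one must show it is split. That it is split — for $\GL_n$ and for its inner forms — is precisely the content of \cite[Proposition 3.2]{CloI}: the relevant Bernstein blocks of $\mathcal{H}(G,K)$ are Morita equivalent to tensor products of affine Hecke algebras of type $A$ with rational parameters, whose simple modules are realised over their rationality fields. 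Granting this, one obtains compatible $\QQ(\pi)$-structures on the absolutely simple $\mathcal{H}(G,K)$-modules $\pi^K$ as $K$ shrinks, and passing to the colimit $\pi=\varinjlim_K\pi^K$ yields the desired $\QQ(\pi)$-structure on $\pi$ stable under $G$, which is the assertion.
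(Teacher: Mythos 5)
Your proposal correctly isolates where the difficulty sits (vanishing of a Schur-index obstruction beyond the mere equality of $\pvi'$ with its $\Aut(\CC/\QQ(\pvi'))$-twists), but it has a genuine gap exactly at that decisive point. The step you describe as ``precisely the content of [CloI, Proposition 3.2]'' --- splitness of the $\QQ(\pvi')$-form of the relevant Hecke-module endomorphism algebras --- is in fact the statement of the lemma itself, so the argument is circular where it matters; moreover the cited proposition is proved by Clozel only for the split group $\GL_n(\KKv)$, and not by the mechanism you attribute to it (Morita equivalences with type-$A$ affine Hecke algebras with rational parameters): his proof runs through the Zelevinsky/Langlands classification by multisegments, with the base case settled by Whittaker--Kirillov models, which is exactly the tool that is unavailable for $\GL_n'(\KKv)$ when $\DD_v\neq\KKv$. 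Your alternative route for the non-split base case --- compact induction from a type $(J,\Lambda)$ \`a la Bushnell--Kutzko/S\'echerre--Stevens --- only trades the problem for the rationality of the finite-dimensional representation $\Lambda$ of a compact-mod-centre subgroup, and, as you yourself concede, the Schur index of a finite-dimensional irreducible representation over a number field need not vanish; no argument is offered that it does here, nor that $\QQ(\Lambda)$ is controlled by $\QQ(\pvi')$, nor that the type can be chosen compatibly with the $\sigma$-twisting. A smaller wrinkle: the $\sigma$-twist of a standard module is not the standard module of the naively twisted inducing data --- half-integral powers of $\lvert\det'\lvert$ produce the character $\epsilon_\sigma$ of (\ref{E:E2}) --- so the identification $\QQ(\Xi)=\QQ(\pvi')$ and the claim that the radical of $\Xi$ is defined over $\QQ(\pvi')$ require this bookkeeping.

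For comparison, the paper gives no independent proof: it invokes Clozel's Proposition 3.2 and remarks that his argument carries over to $\GL_n'$ because the Langlands classification via multisegments is equally available for inner forms. Your reduction to the essentially square-integrable case is the same in spirit as that remark, but your treatment of the base case is both different from Clozel's and incomplete; to make your argument self-contained you would have to actually establish the vanishing of the obstruction for essentially square-integrable representations of $\GL_n'(\KKv)$ (or cite a source that treats the inner form rather than the split group).
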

Note that in the reference the lemma is only proven in the case $\GL_n$. However, the proof carries over analogously, since the Langlands classification via multisegments used in it is also valid for $\GL_n'$.
\begin{theorem}[{\cite[Theorem 8.1, Proposition 8.2, Theorem 8.6]{GroRag2}} ]\label{T:rat}
Let $\Pi'$ be a cuspidal irreducible representation of $\GL_n'\left(\A\right)$ and let $\mu$ be a highest weight such that $\Pi'$ is cohomological with respect to $E_\mu$. Then $\Pi_f'$ is defined over the number field \[\QQ\left(\Pi'\right)\coloneq\QQ\left(\mu\right)\QQ\left(\Pi_f'\right).\] Moreover, let $S\subseteq \VV$ be a finite set containing all places where $\Pi_f'$ ramifies. Then $\QQ\left(\Pi_f'\right)$ is the compositum of the number fields $\QQ\left(\Pi_v'\right), v\in {\VV_f}-S$.
\end{theorem}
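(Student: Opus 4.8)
The plan is to realise $\Pi'_f$ inside cuspidal cohomology, where a natural arithmetic structure is available, and then to descend. First I would fix an open compact $K'_f\subseteq\GL'_n(\A_f)$ small enough that $(\Pi'_f)^{K'_f}\neq 0$. Since $\Pi'$ is cuspidal and cohomological with respect to $E_\mu$, for a suitable choice of $K'_\infty$-type the finite part $\Pi'_f$ occurs as a Hecke-isotypic summand of the cuspidal cohomology $H^\bullet_{\cus}(\mathbf{S}^{\GL'_n}_{K'_f},\widetilde{E_\mu^\lor})$ of the arithmetic quotient with coefficients in the local system attached to $E_\mu^\lor$. This cohomology is finite-dimensional; and because $E_\mu$ is defined over the number field $\LL$ (a splitting field of $\DD$ over $\KK$), hence the local system is defined over $\QQ(\mu)=\LL\cdot\QQ(E_\mu)$, the cohomology carries a natural $\QQ(\mu)$-structure on which the Hecke algebra acts $\QQ(\mu)$-rationally. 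The group $\Aut(\CC)$ acts semilinearly and Hecke-equivariantly on its $\CC$-points, carrying the $E_\mu^\lor$-coefficient cohomology to the ${}^\sigma E_\mu^\lor$-one and the $\Pi'_f$-isotypic part to the ${}^\sigma\Pi'_f$-isotypic part of $H^\bullet_{\cus}(\mathbf{S}^{\GL'_n}_{K'_f},\widetilde{{}^\sigma E_\mu^\lor})$. The one genuinely delicate ingredient here is that cuspidal cohomology is an $\Aut(\CC)$-stable Hecke-submodule — equivalently, that the $\sigma$-twist of a cuspidal cohomological representation is again of this form — which I would obtain, as in \cite{GroRag2}, from Franke's description of automorphic cohomology together with the $\Aut(\CC)$-equivariance of $H^\bullet_c\to H^\bullet$.

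Next I would show $\QQ(\Pi'_f)$ is a number field and identify it. The $\Aut(\CC)$-orbit of $E_\mu^\lor$ is finite (it merely permutes the archimedean places of $\KK$ and $E_\mu$ is defined over $\LL$), so every twist ${}^\sigma\Pi'_f$ occurs in the fixed finite-dimensional space $\bigoplus_{\mu'}H^\bullet_{\cus}(\mathbf{S}^{\GL'_n}_{K'_f},\widetilde{E_{\mu'}^\lor})$, with $\mu'$ running over that orbit. By strong multiplicity one — available for $\GL'_n$ by transporting the $\GL_{nd}$-statement through the injective Jacquet--Langlands correspondence $\jl$, using that $\jl$ commutes with $\sigma$-twisting — distinct twists span linearly independent isotypic subspaces, so there are only finitely many of them and $\mathfrak{S}(\Pi'_f)$ has finite index in $\Aut(\CC)$. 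Moreover the Hecke operators act on the $\QQ(\mu)$-rational cohomology through a finite-dimensional $\QQ(\mu)$-algebra, so the Hecke eigensystem of $\Pi'_f$ takes values in a single number field $E$; since (again by strong multiplicity one) $\sigma$ fixes $\Pi'_f$ precisely when it fixes that eigensystem, $\mathfrak{S}(\Pi'_f)=\Aut(\CC/E')$ where $E'\subseteq E$ is the subfield generated by the eigenvalues. Because $\ol\QQ/E'$ is Galois this gives $\QQ(\Pi'_f)=\CC^{\mathfrak{S}(\Pi'_f)}=E'$, a number field, and hence $\QQ(\Pi')=\QQ(\mu)\QQ(\Pi'_f)$ is one too.

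For the descent I would note that $\Aut(\CC/\QQ(\Pi'))$ fixes the isomorphism classes of both $E_\mu^\lor$ and $\Pi'_f$, hence stabilises the $\Pi'_f$-isotypic subspace of the $\QQ(\Pi')$-rational cohomology; this yields a $\QQ(\Pi')$-structure on $H^\bullet(\lig'_\infty,K'_\infty,\Pi'_\infty\otimes E_\mu^\lor)\otimes_\CC(\Pi'_f)^{K'_f}$. As the archimedean factor is non-zero (as $\Pi'_\infty$ is cohomological) and independent of $K'_f$, these structures are compatible as $K'_f$ shrinks and glue to a $\QQ(\Pi')$-structure on $\Pi'_f=\varinjlim_{K'_f}(\Pi'_f)^{K'_f}$, which one checks restricts at each finite place to the local $\QQ(\pi'_v)$-structure of \Cref{L:clozelstructure}. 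It then remains to pin down $\QQ(\Pi'_f)$: any $\sigma$ fixing $\Pi'_f$ fixes each $\pi'_v$ by uniqueness of the factors in the restricted tensor product, so $\QQ(\pi'_v)\subseteq\QQ(\Pi'_f)$ and the compositum over $v\in\VV_f\setminus S$ lies in $\QQ(\Pi'_f)$; conversely, if $\sigma$ fixes $\QQ(\pi'_v)$ for every $v\in\VV_f\setminus S$ then each $\pi'_v$ being defined over $\QQ(\pi'_v)$ forces ${}^\sigma\pi'_v\cong\pi'_v$ there, so ${}^\sigma\Pi'$ and $\Pi'$ — both cuspidal automorphic via the cohomological realisation — agree outside $S$, hence coincide by strong multiplicity one, i.e. $\sigma\in\mathfrak{S}(\Pi'_f)$; this gives the reverse inclusion $\QQ(\Pi'_f)\subseteq\prod_{v\in\VV_f\setminus S}\QQ(\pi'_v)$.

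I expect the two ``rationality of cohomology'' points to be the main obstacles: proving that cuspidal cohomology is stable under $\Aut(\CC)$ (equivalently, that ${}^\sigma\Pi'$ is again cuspidal cohomological, which in the residual-$\jl$ setting of this paper also requires the M\oe glin--Waldspurger analysis), and carrying the rational structure from the cohomology down to $\Pi'_f$ itself, which forces one to control the $(\lig'_\infty,K'_\infty)$-cohomology tensor factor and the compatibility of the structures as the level varies.
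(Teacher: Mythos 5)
This statement is not proved in the paper at all: it is imported verbatim from \cite[Theorem 8.1, Proposition 8.2, Theorem 8.6]{GroRag2}, so there is no internal proof to compare against. Your outline — realising $\Pi'_f$ in cuspidal cohomology with its $\QQ(\mu)$-structure, using the $\Aut(\CC)$-equivariance of the Hecke action and strong multiplicity one transported through $\jl$, the finiteness of the weight orbit and the Hecke-eigensystem argument to see that $\QQ(\Pi'_f)$ is a number field, descent of the rational structure to $\Pi'_f$ via uniqueness up to homothety, and the local $\QQ(\pi'_v)$-structures of \Cref{L:clozelstructure} for the compositum statement — is essentially the strategy of that cited reference (with the one delicate input you correctly flag, stability of cuspidal cohomology under $\sigma$-twisting, being exactly what \Cref{T:twistiscoh} and, beyond the regular case, \Cref{P:cusp} supply in this paper), so your approach matches the source the paper relies on.
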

We also have the following theorem by the same authors.
\begin{theorem}[{\cite[Proposition 7.21]{GroRag2}  }]\label{T:twistiscoh}
Let $\Pi'$ be a cuspidal irreducible representation of $\GL_n'\left(\A\right)$ and let $\mu$ be a highest weight such that $\Pi'$ is cohomological with respect to $E_\mu$. Then for all $\sigma\in\Aut\left(\CC\right)$ the representation ${}^\sigma\Pi_f'$ is the finite part of a discrete series representation ${}^\sigma\Pi'$ of $\GL'_n\left(\A\right)$ which is cohomological with respect to ${}^\sigma E_\mu$. 
Moreover, if $E_\mu$ is regular, ${}^\sigma\Pi$ is cuspidal.
\end{theorem}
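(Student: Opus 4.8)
The plan is to move the entire question to $\GL_{nd}(\A)$ through the Jacquet--Langlands correspondence, settle it there using the M{\oe}glin--Waldspurger classification together with the known rationality theorem for \emph{cuspidal} cohomological representations of a general linear group, and then descend back to $\GL_n'$. So I would first put $\Pi\coloneq\jl(\Pi')$, a discrete series representation of $\GL_{nd}(\A)$, and write $\Pi\cong\mw(\Sigma,k)$ by \Cref{T:MWclas}, for a cuspidal unitary irreducible representation $\Sigma$ of $\GL_l(\A)$ with $nd=kl$. Since $\lj_v\big((\jl(\Pi'))_v\big)\cong\pvi'$ for all $v$ and the archimedean correspondence matches the $(\lig,K)$-cohomology of $\Pi_v$ with that of $\pvi'$ (cf.\ \Cref{T:dimcoh} and \cite{GroRag2}), the representation $\Pi$ is cohomological, and hence so is $\Sigma$ by \Cref{C:conj}, say with respect to a highest weight module $E_\nu$.

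Next I would invoke Clozel's rationality theorem for $\GL_l$ (\cite{CloI}): for every $\sigma\in\Aut(\CC)$ the representation ${}^\sigma\Sigma_f$ is the finite part of a cuspidal cohomological automorphic representation ${}^\sigma\Sigma$ of $\GL_l(\A)$, cohomological with respect to ${}^\sigma E_\nu$ and with $({}^\sigma\Sigma)_v\cong\Sigma_{\sigma^{-1}\circ v}$ at the infinite places. What makes the descent work is that $\lj_v$ is $\Aut(\CC)$-equivariant: it is pinned down by the trace identity $\chi_{\Pi_v}(g)=\epsilon(\Pi_v)\chi_{\pvi'}(g')$ for $g\leftrightarrow g'$, and this identity is stable under $\sigma$ because trace characters of $p$-adic groups are $\overline{\QQ}$-valued with $\chi_{{}^\sigma\pi}=\sigma\circ\chi_\pi$, the sign $\epsilon(\Pi_v)\in\{\pm 1\}$ is fixed by $\sigma$, and the relation $g\leftrightarrow g'$ (equality of characteristic polynomials) is defined over $\QQ$; in particular $\lj_v$ is defined exactly on the $d_v$-compatible representations, a class visibly stable under $\sigma$-twisting. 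Consequently $\mw({}^\sigma\Sigma,k)$ is a discrete series representation of $\GL_{nd}(\A)$ (\Cref{T:MWclas}) whose local components at $\VV_\DD$ are $d_v$-compatible, so it lies in the image of $\jl$ (\cite{Bad}); I then set
\[{}^\sigma\Pi'\coloneq\jl^{-1}\big(\mw({}^\sigma\Sigma,k)\big),\]
a discrete series representation of $\GL_n'(\A)$.

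It then remains to identify its local components. For every $v$ one has $({}^\sigma\Pi')_v=\lj_v\big(\mw(({}^\sigma\Sigma)_v,k)\big)$, while $\pvi'=\lj_v\big(\mw(\Sigma_v,k)\big)$. At a finite $v$ I would compare these using that $\sigma$-twisting commutes with parabolic induction up to the explicit factor $\epsilon_\sigma^{dn-1}$ of (\ref{E:E2}), that $\lj_v$ commutes with parabolic induction and with twisting by $\lvert\det\rvert^s$ and by unramified characters, and that $\lj_v$ is $\Aut(\CC)$-equivariant; the $\epsilon_\sigma$-normalizations cancel across the transfer and one obtains $({}^\sigma\Pi')_v\cong{}^\sigma\pvi'$, so that the finite part of ${}^\sigma\Pi'$ is ${}^\sigma\Pi_f'$. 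At an infinite $v$, $({}^\sigma\Sigma)_v\cong\Sigma_{\sigma^{-1}\circ v}$ gives $({}^\sigma\Pi')_v\cong\pi'_{\sigma^{-1}\circ v}$, which is cohomological with respect to $E_{\mu_{\sigma^{-1}\circ v}}=({}^\sigma E_\mu)_v$. Hence ${}^\sigma\Pi'$ is a discrete series representation of $\GL_n'(\A)$, cohomological with respect to ${}^\sigma E_\mu$, with finite part ${}^\sigma\Pi_f'$. Finally, if $E_\mu$ is regular and $k\ge 2$, then \Cref{L:littlelem} forces $\pvi'\cong A_{\underline{nd}}(\lambda_v')$ with $\underline{nd}=[0,k,\ldots,k]$; but then the admissible character $\lambda_v'$ of $\mathfrak{l}'_{\underline{nd}}$ is trivial on the semisimple part of the Levi, hence constant on each of its blocks of size $k$, so the coefficient weight $\mu_v$ has a repeated entry, contradicting regularity. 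Thus $k=1$, $\Pi=\jl(\Pi')=\Sigma$ is cuspidal, $\jl({}^\sigma\Pi')={}^\sigma\Sigma$ is cuspidal, and therefore ${}^\sigma\Pi'$ is cuspidal by property (3) of the global Jacquet--Langlands correspondence (\Cref{S:2.6}).

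The step I expect to be the main obstacle is the bookkeeping in the third paragraph: one must carry all the $\epsilon_\sigma$-normalizations of (\ref{E:E2}) faithfully through the transfer from $\GL_n'$ to $\GL_{nd}$ to $\GL_l$ and back, while simultaneously verifying that the condition cutting out the image of $\jl$ (Badulescu's $\DD$-compatibility) is preserved by $\sigma$-twisting. The clean reason everything goes through is the rigidity of $\lj_v$: being determined by $\QQ$-rational trace identities, it commutes with the $\Aut(\CC)$-action. The genuinely deep external input, which I take as known, is Clozel's rationality theorem for cuspidal cohomological representations of $\GL_l$.
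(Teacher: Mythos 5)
The paper itself does not prove this statement: it is imported verbatim from [GroRag2, Proposition 7.21], where it is established by a Clozel-style rational-structure argument carried out directly on the cohomology of the arithmetic quotients of $\GL_n'$ (cuspidal cohomology sits inside a cohomology space with a $\QQ(\mu)$-structure, so the $\sigma$-conjugate Hecke eigensystem again occurs there and is then identified inside the discrete spectrum; regular weight forces the contribution to be cuspidal). Your route --- transfer to $\GL_{nd}$ by $\jl$, apply Clozel's theorem to $\Sigma$ with $\jl(\Pi')=\mw(\Sigma,k)$, and descend via $\jl^{-1}(\mw({}^\sigma\Sigma,k))$ --- is genuinely different, and it is in fact close to the circle of ideas this paper uses later in \Cref{L:twMW} and \Cref{P:cusp} (which, note, take the present theorem as an input, so any descent argument must be checked for circularity).

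That is exactly where your proposal has a genuine gap. Your pivotal claims are that $\lj_v$ is $\Aut(\CC)$-equivariant and that $d_v$-compatibility is ``visibly stable under $\sigma$-twisting''. As justified, this does not stand: the domain of $\lj_v$ consists of unitary representations (up to $\lvert\det\rvert^s$), and $\sigma$-twisting does not preserve unitarity (already an unramified unitary character of $\GL_1(\KKv)$ can be twisted out of the unitary dual); likewise your supporting assertion that trace characters of $p$-adic groups are $\overline{\QQ}$-valued is false for the same example --- what is true is $\Theta_{{}^\sigma\pi}=\sigma\circ\Theta_\pi$ on the regular semisimple set, proved by pairing with $\QQ$-valued test functions and rational Haar measures. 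More seriously, to assert that ${}^\sigma\bigl(\mw(\Sigma,k)_v\bigr)$ is $d_v$-compatible you need, with the definition used in \Cref{S:2.6}, a \emph{unitary} partner on $\GL_n'(\KKv)$ satisfying the character identity; the only natural candidate is ${}^\sigma\pvi'$, whose unitarity is essentially the content of the theorem you are proving (it would follow from the automorphy of ${}^\sigma\Pi'_f$). So the step that puts $\mw({}^\sigma\Sigma,k)$ into the image of the global $\jl$ is, as written, circular. It can be repaired: verify compatibility of $\mw({}^\sigma\Sigma,k)_v$ at finite $v\in\VV_\DD$ using Badulescu's intrinsic (Grothendieck-group/combinatorial) characterization, which depends only on the Zelevinsky data and is manifestly unchanged when $\Sigma_v$ is replaced by ${}^\sigma\Sigma_v$; only \emph{afterwards} identify $\lj_v\bigl(\mw({}^\sigma\Sigma,k)_v\bigr)$ with ${}^\sigma\pvi'$ by applying $\sigma$ to the character identity for $(\mw(\Sigma,k)_v,\pvi')$ and invoking linear independence of characters --- this then also delivers the unitarity of ${}^\sigma\pvi'$ rather than presupposing it. Two smaller points: the $\epsilon_\sigma$-bookkeeping you defer is precisely the character $\chi_\sigma$ of \Cref{L:twMW} and of (\ref{E:E2}); since $\DD$ is nonsplit at the archimedean places, $d$ is even and $\chi_\sigma$ is trivial, which is why your candidate ${}^\sigma\Pi'$ needs no quadratic correction --- this should be said, not assumed. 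And \Cref{L:littlelem} is stated for $\GL_{2n}'$ in the specific situation of this paper; for your regularity argument in general rank you need (and can easily prove by the same comparison of Langlands data) that a residual $\mw$-component is cohomological only with respect to a weight having repeated entries.
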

\begin{definition}\label{D:oc}
We say the $\Aut\left(\CC\right)$-orbit of an cuspidal irreducible representation $\Pi'$ of either $\GL_n'\left(\A\right)$ or $\GL_n\left(\A\right)$ is cuspidal cohomological if ${}^{\sigma}\Pi'$ is cuspidal and cohomological for all $\sigma\in\Aut\left(\CC\right)$.
\end{definition}
We will now show that the regularity condition on $E_\mu$ is not needed.
\begin{prop}\label{P:cusp}
Let $\Pi'$ be a cuspidal irreducible cohomological representation of $\GL_n'\left(\A\right)$. Then $^\sigma\Pi'$ is cuspidal for all $\sigma\in \Aut\left(\CC\right)$. Moreover, ${}^\sigma\jl\left(\Pi'\right)=\jl\left({}^\sigma\Pi'\right)$
for all $\sigma\in\Aut\left(\CC\right)$. 
\end{prop}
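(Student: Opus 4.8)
The plan is to reduce everything to the behaviour of the $\sigma$-twist on the discrete spectrum together with the Mœglin--Waldspurger classification \Cref{T:MWclas}, running an induction on $n$. For $n=1$ the group $\GL_1'$ has no proper parabolic subgroup, so every discrete series representation of $\GL_1'(\A)$ is cuspidal and the first assertion is automatic, while the compatibility with $\jl$ follows from the local comparison below. So assume $n\ge 2$ and that the first assertion holds for every $\GL_m'$ with $m<n$. By \Cref{T:twistiscoh}, ${}^\sigma\Pi'$ is the finite part of a \emph{discrete series} representation of $\GL_n'(\A)$, cohomological with respect to ${}^\sigma E_\mu$; here I use only the discrete-series part of that statement, not its regularity hypothesis. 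By \Cref{T:MWclas} write ${}^\sigma\Pi'\cong\mw(\tau,k')$ with $k'\ge 1$ and $\tau$ a cuspidal irreducible representation of $\GL_{l'}'(\A)$, $n=l'k'$; the goal is $k'=1$.

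Suppose $k'\ge 2$. Then $l'=n/k'<n$, and since ${}^\sigma\Pi'=\mw(\tau,k')$ is cohomological, \Cref{C:conj} shows $\tau$ is cohomological as well; being also cuspidal irreducible, the inductive hypothesis (applied with $\sigma^{-1}$) gives that ${}^{\sigma^{-1}}\tau$ is cuspidal. Now $\mw(\tau,k')$ is the unique irreducible quotient of the standard module $\tau\lvert\det'\rvert^{\frac{k'-1}{2}}\times\cdots\times\tau\lvert\det'\rvert^{\frac{1-k'}{2}}$, the functor ${}^{\sigma^{-1}}(-)$ is exact, is compatible with parabolic induction up to the characters $\epsilon_\sigma$ of \eqref{E:E2}, and preserves the property of being the unique irreducible quotient; hence
\[\Pi'\cong{}^{\sigma^{-1}}\!\big({}^\sigma\Pi'\big)={}^{\sigma^{-1}}\mw(\tau,k')\cong\mw\big({}^{\sigma^{-1}}\tau,k'\big)\otimes\nu,\]
where $\nu$ is a character of $\GL_n'(\A)$ inflated from $\GL_1$ via $\det'$. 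Twisting by such a $\nu$ does not change the Mœglin--Waldspurger depth, so the right-hand side is a discrete series of depth $k'\ge 2$, hence not cuspidal, contradicting the cuspidality of $\Pi'$. Therefore $k'=1$, i.e.\ ${}^\sigma\Pi'$ is cuspidal.

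For the identity ${}^\sigma\jl(\Pi')=\jl({}^\sigma\Pi')$, first note that both sides are discrete series of $\GL_{nd}(\A)$: the left one because $\jl(\Pi')$ is a cohomological discrete series (cf.\ \cite{GroRag2} and the proof of \Cref{L:littlelem}) and \Cref{T:twistiscoh} applies to the split group $\GL_{nd}$ as well, the right one because ${}^\sigma\Pi'$ is now known to be cuspidal. I would compare local components, using the defining properties of $\jl$ recalled in \Cref{S:2.6}. At a finite place $v$ the local correspondence $\lj_v$ is $\Aut(\CC)$-equivariant on smooth representations, up to the absolute-value normalisation fixed in \eqref{E:E2} — this follows from its description through multisegments, see \cite{DelKazVig} — so that $\lj_v\big(({}^\sigma\jl(\Pi'))_v\big)\cong{}^\sigma\big(\lj_v((\jl(\Pi'))_v)\big)\cong{}^\sigma(\pi_v')\cong({}^\sigma\Pi')_v$. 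At an infinite place $w$ the $\sigma$-twist merely relabels $w$ by $\sigma^{-1}\circ w$, and since $\DD$ is non-split at every infinite place, $\lj_w$ and $\lj_{\sigma^{-1}\circ w}$ are the same archimedean correspondence, giving $\lj_w\big(({}^\sigma\jl(\Pi'))_w\big)\cong\lj_{\sigma^{-1}\circ w}\big((\jl(\Pi'))_{\sigma^{-1}\circ w}\big)\cong\pi_{\sigma^{-1}\circ w}'\cong({}^\sigma\Pi')_w$. In particular each $({}^\sigma\jl(\Pi'))_v$ is $d_v$-compatible, so ${}^\sigma\jl(\Pi')$ lies in the image of $\jl$, say ${}^\sigma\jl(\Pi')=\jl(\Pi'')$; the local identities then force $\pi_v''\cong({}^\sigma\Pi')_v$ for all $v$, whence $\Pi''\cong{}^\sigma\Pi'$ by strong multiplicity one and ${}^\sigma\jl(\Pi')=\jl({}^\sigma\Pi')$.

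The crux — false in general without the cohomological hypothesis — is the claim that a cuspidal discrete series does not become properly residual under a $\sigma$-twist; the induction on $n$ combined with \Cref{C:conj} is exactly what isolates it, and it is the step I expect to require the most care. The remaining work is bookkeeping: the compatibility of $\sigma$-twisting with the Mœglin--Waldspurger lift and with $\jl$, which reduces to the $\Aut(\CC)$-equivariance of the local Jacquet--Langlands map at finite places (cf.\ \Cref{S:2.6}), and the tracking of the normalisation characters $\epsilon_\sigma$ of \eqref{E:E2}, which is harmless precisely because these characters are pulled back from $\GL_1$ via $\det'$ and hence neither affect the Mœglin--Waldspurger depth nor the comparison of local components.
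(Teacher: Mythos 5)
Your overall strategy for the cuspidality claim (induction on the rank, \Cref{C:conj} to see that the cuspidal support $\tau$ is cohomological, and a twist-compatibility statement for the M{\oe}glin--Waldspurger lift) runs parallel to the paper's, but your pivotal identity ${}^{\sigma^{-1}}\mw(\tau,k')\cong\mw({}^{\sigma^{-1}}\tau,k')\otimes\nu$ is asserted on purely formal grounds, and this is a genuine gap. The $\sigma$-twist of an automorphic representation is defined only through its finite part (via \Cref{T:twistiscoh}); there is no global functor that you can apply to the global standard module and its unique irreducible quotient, so ``exactness and compatibility with parabolic induction'' only give local statements place by place. The global identity you need is precisely the $\GL_n'$-analogue of \Cref{L:twMW}, which the paper proves by an unramified computation at almost all (necessarily split) places followed by strong multiplicity one, not by formal manipulation. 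Worse, your correction character $\nu$ is built from the local characters $\epsilon_\sigma$ of (\ref{E:E2}); these are unramified quadratic characters of the finite id\`eles which in general do \emph{not} glue to a Hecke character, so ``$\mw({}^{\sigma^{-1}}\tau,k')\otimes\nu$ is a discrete series of depth $k'\ge 2$'' is not automatic --- this is why \Cref{L:twMW} pins down $\chi_\sigma$ exactly and records ${}^{\sigma^{-1}}\chi_\sigma=\chi_{\sigma^{-1}}^{-1}$ for a later cancellation. Finally, even granting the identity, the contradiction with cuspidality of $\Pi'$ needs strong multiplicity one for the discrete spectrum of $\GL_n'(\A)$: cuspidality cannot be read off from local components (at split places the local components of a cuspidal $\Pi'$ with residual $\jl(\Pi')$ are Speh representations), so agreeing with a residual representation locally almost everywhere is only a contradiction once both objects are honest members of the discrete spectrum and multiplicity one is invoked; you leave all of this implicit.

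For the second claim your route is also heavier and shakier than the paper's: you invoke $\Aut(\CC)$-equivariance of $\lj_v$ at \emph{all} finite places, including $v\in\VV_\DD$, which is an unproved assertion (the correspondence is characterized by character identities, and its compatibility with $\sigma$-twists at non-split places requires an argument), and you need ${}^\sigma\jl(\Pi')$ to be a discrete series, which for residual $\jl(\Pi')$ is again the content of \Cref{L:twMW} rather than of \Cref{T:twistiscoh} (stated only for cuspidal representations). The paper avoids both issues by comparing the two sides only at the split places, where $\lj_v$ is the identity, and concluding by strong multiplicity one; the archimedean components need not be discussed at all. Your argument is salvageable --- prove the $\GL_n'$-version of \Cref{L:twMW} (the same proof works, since almost all places are split and unramified), settle the automorphy of the correction character, and make the strong multiplicity one steps explicit --- but as written the crux of the induction step is missing, not merely compressed.
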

\begin{proof}
If $\Pi\coloneq\jl\left(\Pi'\right)$ is cuspidal, the two claims are proven in \cite[Theorem 7.30]{GroRag2}. More precisely, they are proven under the assumption that $\Pi$ is so-called \emph{regular algebraic}, which by \cite[Lemma 3.14]{CloI} is equivalent to $\Pi$ being cohomological. Since $\jl$ sends cohomological representations to cohomological representations, this shows the first claims.

If $\Pi$ is not cuspidal, it is still a discrete series and we can write $\Pi=\mw\left(\Sigma,k\right)$ for some $k \ge 1$ by \ref{T:MWclas}. Let $\sigma\in\Aut\left(\CC\right)$. We will proceed by showing that $^\sigma\Pi'$ is cuspidal by induction on the $\KK$-rank of $\GL'_n$. If $n=1$, we already know that ${}^\sigma \Pi '$ is cuspidal. For $n>1$, let $\Theta,\Sigma',s$ and $t$ be such that \[\mw\left(\Theta,s\right)= {}^\sigma\Pi',\, \mw\left(\Sigma',t\right)= \jl\left(\Theta\right)\] and hence, by \cite[Theorem 18.2]{BadRen} $\jl\left({}^\sigma\Pi'\right)=\mw\left(\Sigma',st\right).$
  Note that \begin{equation}\label{E:E1}\begin{gathered}{}^\sigma \mw\left(\Sigma,k\right)_{\VV\setminus\VV_\DD}{=}{}^\sigma\jl\left(\Pi'\right)_{\VV\setminus\VV_\DD}\stackrel{(\ref{S:2.6}.(2))}{=}{}^\sigma\left(\Pi'_{\VV\setminus\VV_\DD}\right)\stackrel{(\ref{S:propjl})}{=}\\=\left({}^\sigma\Pi'\right)_{\VV\setminus\VV_\DD}=\stackrel{(\ref{S:2.6}.(2))}{=}\jl\left({}^\sigma\Pi'\right)_{\VV\setminus\VV_\DD}=\mw\left(\Sigma',st\right)_{\VV\setminus\VV_\DD}.\end{gathered}\end{equation}
 We will need the following intermediate lemma.
\begin{lemma}\label{L:twMW}
We have ${}^\sigma \mw\left(\Sigma,k\right)= \mw\left({}^\sigma \Sigma\chi_\sigma,k\right)$ for some quadratic character $\chi_\sigma$ with $^{\sigma^{-1}}\chi_\sigma=\chi_{\sigma^{-1}}^{-1}$.
\end{lemma}
\begin{proof} 
Let $v\in \VV\setminus \VV_\DD$ be a place where $\mw(\Sigma,k)$ and $\Sigma$ are unramified. By the Bernstein-Zelevinsky classification, see \cite{ZelII}, we can write $\Sigma_v=\langle\fm\rangle$ and $\mw(\Sigma,k)_v=\langle\fm'\rangle$ for some multisegments $\fm$ and $\fm'$ both consisting only of segments of length $1$ and with unramified cuspidal support. We will write from now on denote a finite length representation $\pi$ its cosocle, \emph{i.e.} its maximal, semi-simple quotient, by $\cos(\pi)$.
Moreover, $\fm$ and $\fm'$ determine each other and
\[\cos(\langle \fm\rangle\lvert\det\lvert^{\frac{k-1}{2}}\times\ldots\times \langle\fm\rangle\lvert\det\lvert^{\frac{1-k}{2}})= \langle\fm'\rangle.\]
by \ref{T:MWclas}. 
Applying \cite[Lemma 3.5(ii)]{CloI} both to $\langle\fm\rangle$ and $\langle\fm'\rangle$ yields
\[\cos({}^\sigma\langle\fm\rangle\lvert\det\lvert^{\frac{k-1}{2}}\chi_\sigma\times\ldots\times {}^\sigma\langle\fm\rangle\lvert\det\lvert^{\frac{1-k}{2}}\chi_\sigma)={}^\sigma\langle\fm'\rangle,\]
where $\chi_\sigma$ is $\epsilon_\sigma$ if both $k$ and $dn$ are odd and the trivial character otherwise.  
By \cite[Lemma 1]{LanV} $\mw\left({}^\sigma \Sigma\epsilon_\sigma,k\right)_v$ has to be the unique constituent of \[(\lvert\det\lvert^\frac{k-1}{2}\Sigma\chi_\sigma\times\ldots\times \lvert\det\lvert^\frac{1-k}{2}\Sigma\chi_\sigma)_v\]
with a $K_v'$-fixed vector for almost all places $v\in \VV_f$. 
Similarly, ${}^\sigma \mw\left(\Sigma,k\right)_v$
has to be the unique constituent of \[{}^\sigma\left(\lvert\det\lvert^\frac{k-1}{2}\Sigma\times\ldots\times \lvert\det\lvert^\frac{1-k}{2}\Sigma\right)_v\]
with a $K_v'$-fixed vector for almost all places $v\in \VV_f$.
Thus, the representations ${}^\sigma \mw\left(\Sigma,k\right)$ and $\mw\left({}^\sigma \Sigma\chi_\sigma,k\right)$ have to agree at almost all places and the claim follows then from Strong Multiplicity One, \emph{cf.} \cite[§4.4]{Bad}.
\end{proof}
Hence, it follows from (\ref{E:E1}) that $st=k$ and ${}^\sigma\Sigma\chi_\sigma=\Sigma'$ by Strong Multiplicity One. Assume now that $s>1$. By \ref{C:conj}, we know that $\Theta$ is cohomological and since $s>1$ the induction hypothesis implies that $^{\sigma^{-1}}\Theta$ is cuspidal. We thus can consider the discrete series representation $\mw\left(^{\sigma^{-1}}\Theta,t\right)$.
Finally,
\[\jl\left(^{\sigma^{-1}}\Theta\right)_{\VV\setminus\VV_\DD}{=}{}^{\sigma^{-1}}\jl\left(\Theta\right)_{\VV\setminus\VV_\DD}={}^{\sigma^{-1}}\mw\left({}^\sigma\Sigma\chi_\sigma,t\right)_{\VV\setminus\VV_\DD}\stackrel{(\ref{L:twMW})}{=}\]\[=\mw\left(\Sigma {{}^{\sigma^{-1}}\chi_\sigma}\chi_{\sigma^{-1}},t\right)_{\VV\setminus\VV_\DD}=\mw\left(\Sigma,t\right)_{\VV\setminus\VV_\DD}.\]
Therefore, $$\jl\left(\mw\left(^{\sigma^{-1}}\Theta,s\right)\right)_{\VV\setminus\VV_\DD}=\mw\left(\Sigma,k\right)_{\VV\setminus\VV_\DD}=\lvert JL\lvert\left(\Pi'\right)_{\VV\setminus\VV_\DD},$$
which implies $\mw\left(^{\sigma^{-1}}\Theta,s\right)=\Pi'$ by Strong Multiplicity One and the injectivity of $\jl$, a contradiction by \ref{T:MWclas}. Thus, $s=1$ and hence, ${}^\sigma \Pi '$ is cuspidal.
Moreover,
\[{}^\sigma\jl\left(\Pi'\right)_{\VV\setminus\VV_\DD}=  {}^\sigma \mw\left(\Sigma,k\right)_{\VV\setminus\VV_\DD}=\mw\left({}^\sigma \Sigma\epsilon_\sigma,k\right)_{\VV\setminus\VV_\DD}=\jl\left({}^\sigma\Pi\right)_{\VV\setminus\VV_\DD}\]
and the second claim follows again from Strong Multiplicity One.
\end{proof}
\section{Shalika models}
Let $U_{(n,n)}'$ and $\mS$ be the following two subgroups of $\GL_{2n}'$. We recall the Shalika subgroup\[\mS\coloneqq \Delta \GL'_n\rtimes U_{(n,n)}'=\left\{\bpm h&X\\0&h\epm: h\in \GL'_n, X\in M'_n\right\}.\] 
Let $\psi$ be the additive character fixed in \ref{S:2.2}. We extend this character to $\mS\left(\A\right)$ by setting $\psi\left(s\right)\coloneq\psi\left(\tr\left(X\right)\right),\,\eta\left(s\right)\coloneq\eta\left(\det'\left(h\right)\right)$ for $s=\bpm h&X\\0&h\epm.$ Let $\Pi'$ be a cuspidal irreducible representation of $\GL_{2n}'\left(\A\right)$ and we
assume there exists a Hecke character $\eta$ of $\GL_1\left(\A\right)$ such that for all $a\in \GL_1(\A)$ \[\eta\circ \det'\overbrace{\bpm a&&\\&\ddots&\\&&a\epm}^n=\omega\overbrace{\bpm a&&\\&\ddots&\\&&a\epm}^{2n},\] where we recall that $\omega$ is the central character of $\Pi'$. Let $S_\eta$ be the set of places where $\eta$ ramifies.
For $\phi\in \Pi'$ a cusp form and $g\in \GL_{2n}'\left(\A\right)$ we define the Shalika period integral by
\[\mS_\psi^\eta\left(\phi\right)\left(g\right)\coloneq\int_{Z'_{2n}\left(\A\right)\mS\left(\KK\right)\bs\mS\left(\A\right)}\phi\left(sg\right)\psi\left(s\right)^{-1}\eta\left(s\right)^{-1}\ddd s.\]
Note that this is well defined since \[Z'_{2n}\left(\A\right)\Delta \GL'_n\left(\KK\right)\bs \Delta \GL'_n\left(\A\right)\]
has finite measure and $U_{(n,n)}'\left(\KK\right)\bs U_{(n,n)}'\left(\A\right)$ is compact.
If there exists a $\phi$ such that $\mS_\psi^\eta\left(\phi\right)$ does not vanish for some $g\in \GL_{2n}'\left(\A\right)$, this gives a nonzero intertwining operator of $\GL_{2n}'\left(\A\right)$-representations
\[\mS_\psi^\eta\colon\Pi'\rightarrow \mathrm{Ind}_{\mS\left(\A\right)}^{\GL_{2n}'\left(\A\right)}\left(\eta\otimes \psi\right),\]
where the second space is the vector-space consisting of smooth functions with the obvious left-invariance.
In this case we say that $\Pi'$ admits a Shalika model with respect to $\eta$.
For $v\in \VV$ we define local Shalika models of $\Pi'\cong \bigotimes_{v\in \VV}'\Pi_v'$ as follows.
We also denote the local counterpart by $\mathrm{Ind}_{\mS\left(\KKv\right)}^{\GL'_{2n}\left(\KKv\right)}\left(\eta_v\otimes \psi_v\right).$
If $v$ is a finite place in $V$, we say $\Pi_v'$ admits a local Shalika model if there exists a non-zero intertwiner \[\Pi_v'\ra \mathrm{Ind}_{\mS\left(\KKv\right)}^{\GL'_{2n}\left(\KKv\right)}\left(\eta_v\otimes \psi_v\right)\] of $\GL_{2n}'\left(\KKv\right)$-representations. For $v\in {\VV_\infty}$ a priori, $\Pi_v'$ is by our conventions not an honest $\GL\left(\KKv\right)$-representation and therefore we have to
 consider $\left(\Pi_v'\right)^\infty$, the sub-space of smooth vectors in $\Pi_v'$. Then
 $\mathrm{Ind}_{\mS\left(\KKv\right)}^{\GL'_{2n}\left(\KKv\right)}\left(\eta_v\otimes \psi_v\right)$ and $(\Pi_v')^\infty$ are both Fr\'echet spaces and admit a natural, smooth $\GL_{2n}'\left(\KKv\right)$-action. 
 We say $\Pi_v'$ admits a local Shalika model with respect to $\eta_v$ if there exists a non-zero, continuous intertwining operator of $\GL_{2n}'\left(\KKv\right)$-representations 
\[\left(\Pi_v'\right)^\infty\rightarrow \mathrm{Ind}_{\mS\left(\KKv\right)}^{\GL'_n\left(\KKv\right)}\left(\eta_v\otimes \psi_v\right).\]
If $\Pi'$ admits a global Shalika model with respect to $\eta$, then so does $\Pi_v'$ with respect to $\eta_v$ for all $v\in \VV$.
Note that the reverse direction, \emph{i.e.} the existence of a local Shalika model for each $\Pi_v',\, v\in \VV$ implying the existence of a Shalika model for $\Pi'$, is not true in general, see \cite[Theorem 1.4]{GanTakII}. 
\subsection{ Existence} In the case of $\GL_n$ we have the following characterization of Shalika models.
\begin{theorem}[{\cite[Theorem 1]{JacSha}}]\label{T:JF}
Let $\Pi$ be a cuspidal irreducible representation of $\GL_{2n}\left(\A\right)$.
Then the following assertions are equivalent.
\begin{enumerate}
    \item There exists $\phi\in\Pi$ and $g\in \GL_{2n}\left(\A\right)$ such that $\mS_\psi^\eta\left(\phi\right)\left(g\right)\neq 0$.
    \item Let $S\subset V$ be a finite subset of places containing ${\VV_\infty}$ and the finite places where $\Pi$ and $\eta$ ramify. Then the twisted partial exterior square $L$-function 
    $L^S\left(s,\Pi,\bigwedge^2\otimes\eta^{-1}\right)$ has a pole at $s=1$.
\end{enumerate}
\end{theorem}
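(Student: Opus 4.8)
The plan is to prove the two implications simultaneously through the Jacquet--Shalika integral representation of the twisted exterior square $L$-function, following the method of \cite{JacSha}. The key object is a global zeta integral: for a cusp form $\phi\in\Pi$ and a Schwartz--Bruhat function $\Phi$ on $\A^{n}$ (row vectors) one sets, for $\mathrm{Re}(s)\gg 0$,
\[
Z(s,\phi,\Phi)=\int_{Z_{2n}(\A)\GL_n(\KK)\backslash\GL_n(\A)}\left(\int_{M_{n,n}(\KK)\backslash M_{n,n}(\A)}\phi\left(\sigma\bpm \mathrm{1}_n&X\\0&\mathrm{1}_n\epm\bpm g&0\\0&g\epm\right)\psi(\tr X)^{-1}\ddd X\right)E^{\Phi}(s,g)\,\eta(\det g)^{-1}\ddd g,
\]
where $\GL_n\hookrightarrow\GL_{2n}$ is the diagonal embedding, $\sigma$ is the permutation matrix interleaving the two blocks of rows, and $E^{\Phi}(s,g)=|\det g|^{s}\sum_{\xi\in\KK^{n}\setminus\{0\}}\Phi(\xi g)$ is the attached degenerate Eisenstein (Epstein zeta) series on $\GL_n$. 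Since $\phi$ is cuspidal the inner integral is rapidly decreasing on $\GL_n(\A)$, so $Z(s,\phi,\Phi)$ converges for $\mathrm{Re}(s)\gg 0$; and since $E^{\Phi}(s,g)$ has meromorphic continuation in $s$ whose only singularity in the closed right half-plane is a simple pole at $s=1$ with residue a nonzero constant times $\widehat\Phi(0)$ (by Poisson summation), the integral $Z(s,\phi,\Phi)$ continues meromorphically with at worst a simple pole at $s=1$.

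Next I would unfold $E^{\Phi}$ and use the Fourier expansion of $\phi$ along the mirabolic together with the uniqueness of the Whittaker model to factor $Z(s,\phi,\Phi)$, for decomposable data, into an Euler product of local zeta integrals $\prod_{v}\Psi_v(s,W_v,\Phi_v)$, and then carry out the unramified computation (a Casselman--Shalika type calculation) showing $\Psi_v(s,W_v^{\circ},\Phi_v^{\circ})=L(s,\pi_v,\bigwedge^{2}\otimes\eta_v^{-1})$ at every $v\notin S$. This yields
\[
Z(s,\phi,\Phi)=\Big(\prod_{v\in S}\Psi_v(s,W_v,\Phi_v)\Big)\cdot L^{S}\left(s,\Pi,\bigwedge^{2}\otimes\eta^{-1}\right).
\]
From the local theory of these integrals one knows that each of the finitely many factors $\Psi_v$ is holomorphic at $s=1$, and that for a suitable choice of data at the places $v\in S$ the product $\prod_{v\in S}\Psi_v(s,W_v,\Phi_v)$ is nonzero at $s=1$.

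The bridge to the Shalika period is the residue computation: substituting for $E^{\Phi}(s,g)$ its residue at $s=1$ inside $Z$, the inner $M_{n,n}$-integral and the outer $\GL_n$-integral combine --- after undoing the conjugation by $\sigma$ --- into an integral over $Z_{2n}(\A)\mS(\KK)\backslash\mS(\A)$, giving
\[
\mathrm{Res}_{s=1}\,Z(s,\phi,\Phi)=c\cdot\widehat\Phi(0)\cdot\mS_{\psi}^{\eta}(\phi)(g_0)
\]
for some fixed $g_0\in\GL_{2n}(\A)$ and a nonzero constant $c$. Combining this with the displayed factorization now proves the theorem: if some Shalika period $\mS_{\psi}^{\eta}(\phi)(g)$ is nonzero then, choosing $\Phi$ with $\widehat\Phi(0)\neq 0$, the integral $Z(s,\phi,\Phi)$ has a pole at $s=1$, and since the factors $\Psi_v$, $v\in S$, are holomorphic there the pole must come from $L^{S}(s,\Pi,\bigwedge^{2}\otimes\eta^{-1})$, giving (1)$\Rightarrow$(2); conversely, if $L^{S}(s,\Pi,\bigwedge^{2}\otimes\eta^{-1})$ has a pole at $s=1$, choosing the bad data so that $\prod_{v\in S}\Psi_v$ is nonzero at $s=1$ forces $Z(s,\phi,\Phi)$ to have a pole at $s=1$, hence a nonzero residue, hence $\mS_{\psi}^{\eta}(\phi)(g_0)\neq 0$, giving (2)$\Rightarrow$(1). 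Passing between $\mS_{\psi}^{\eta}(\phi)(g_0)\neq 0$ for a fixed $g_0$ and its non-vanishing for some pair $(\phi,g)$ is immediate from $\GL_{2n}(\A)$-equivariance of the period.

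The main obstacle is the residue identity together with the accompanying local non-vanishing statement. Recognizing the residue of $Z$ at $s=1$ as (a conjugate of) the Shalika period requires carefully tracking the permutation $\sigma$ and the degenerate Eisenstein series and identifying the resulting domain of integration with $Z_{2n}(\A)\mS(\KK)\backslash\mS(\A)$; and one must show that the finitely many bad local integrals $\Psi_v$ can be made simultaneously nonzero at $s=1$, which rests on the (known) local theory of Jacquet--Shalika integrals. The analytic inputs --- absolute convergence of $Z$ for $\mathrm{Re}(s)\gg 0$ and the precise location of its poles --- reduce to the rapid decay of cusp forms and the standard analytic continuation of Epstein zeta / degenerate Eisenstein series, and are comparatively routine.
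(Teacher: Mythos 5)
This theorem is not proved in the paper at all: it is imported verbatim as \cite[Theorem 1]{JacSha}, and your proposal is essentially a faithful sketch of that original Jacquet--Shalika argument (global zeta integral against the mirabolic Eisenstein series, unfolding to an Euler product with the unramified computation, holomorphy and nonvanishing of the finitely many bad local factors at $s=1$, and the residue identity expressing $\mathrm{Res}_{s=1}Z$ through the Shalika period), so in substance you are following the same route the paper relies on. One slip worth fixing: as you wrote it, $E^{\Phi}(s,g)=\lvert\det g\rvert^{s}\sum_{\xi\neq 0}\Phi(\xi g)$ is entire in $s$, so it has no pole to exploit; the correct object also integrates over $a\in\KK^{\times}\backslash\A^{\times}$ against $\lvert a\rvert^{ns}\eta(a)$ (a Tate-type integral), and it is this integration, via Poisson summation, that produces the meromorphic continuation and the simple pole at $s=1$ with residue proportional to $\widehat{\Phi}(0)$, after which your outline goes through as in Jacquet--Shalika.
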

If $\DD$ does not split over $\KK$ there is no longer such a nice criterion.
In the case $n=2$ and $\DD$ a quaternion algebra we have the following criterion by \cite{GanTakII}, which is a consequence of the global theta correspondence.
\begin{theorem}[{\cite[Theorem 1.3]{GanTakII}}]\label{T:G-T}
Assume $\DD$ is a quaternion algebra, $\Pi'$ a cuspidal irreducible representation of $\GL'_{2}\left(\A\right)$ and $\eta$ the Hecke character we fixed above. Let $S\subset V$ be a finite subset of places containing ${\VV_\infty}$ and the finite places where $\Pi$ and $\eta$ ramify.
If $\jl\left(\Pi'\right)$ is cuspidal the following assertions are equivalent.
\begin{enumerate}
    \item $\Pi'$ admits a Shalika model with respect to $\eta$.
    \item The twisted partial exterior square $L$-function $L^S\left(s,\Pi',\bigwedge^2\otimes\eta^{-1}\right)$ has a pole at $s=1$ and for all $v\in \VV_\DD$ the representation $\Pi_v'$ is not of the form
$\lvert\det'\lvert_v^{\frac{1}{2}}\tau_1\times\lvert\det'\lvert_v^{-\frac{1}{2}}\tau_2,$
    where $\tau_1$ and $\tau_2$ are representations of $\GL'_1\left(\KKv\right)$ with central character $\eta_v$.
\end{enumerate}
If $\jl\left(\Pi'\right)$ is not cuspidal it is of the form $\mw\left(\Sigma,2\right)$ for some cuspidal irreducible representation $\Sigma$ of $\GL_2\left(\A\right)$. Then the following assertions are equivalent.
\begin{enumerate}
    \item $\Pi'$ admits a Shalika model with respect to $\eta$.
    \item The central character $\omega_\Sigma$ of $\Sigma$ equals $\eta$.
    \item The twisted partial exterior square $L$-function $L^S\left(s,\Pi',\bigwedge^2\otimes\eta^{-1}\right)$ has a pole at $s=2$.
\end{enumerate}
\end{theorem}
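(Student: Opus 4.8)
The plan is to deduce this from the global theta correspondence for the similitude dual pair $\left(\mathrm{GSp}_4,\mathrm{GO}(V)\right)$, in the spirit of Gan--Takeda. The starting observation is that $\GL'_2$ is an inner form of $\GL_4$ and is realized as the identity component $\mathrm{GSO}(V)$ of the similitude orthogonal group of a six-dimensional quadratic $\KK$-space $V$ whose Hasse--Witt invariant records exactly the set $\VV_\DD$ of places where $\DD$ ramifies; under this identification the six-dimensional representation $\bigwedge^2$ of $\GL_4$ twisted by $\eta^{-1}$ becomes the standard (degree-six) $L$-function of $\mathrm{GSO}(V)$ twisted by $\eta^{-1}$. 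The first step is then to establish the clean reformulation: a cuspidal irreducible $\Pi'$ of $\GL'_2\left(\A\right)$ admits a Shalika model with respect to $\eta$ if and only if, viewed on $\mathrm{GSO}(V)$, it is a nonzero global theta lift from $\mathrm{GSp}_4\left(\A\right)$ whose similitude character is matched with $\eta$. One implication is an explicit unfolding of the Shalika period against the Weil representation, which turns a nonzero Shalika functional into a nonzero period of a $\mathrm{GSp}_4$-automorphic form, hence into nonvanishing of the theta lift; the reverse direction runs the same computation backwards.

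The second step imports the tower property and the Rallis inner product formula for this pair. The global theta lift of $\Pi'$ to $\mathrm{GSp}_4$ in the relevant Witt tower is nonzero if and only if (i) the local theta lift of $\pi'_v$ is nonzero for every $v\in\VV$, and (ii) the completed standard $L$-function of $\Pi'$ twisted by $\eta^{-1}$ --- equivalently $L^S\left(s,\Pi',\bigwedge^2\otimes\eta^{-1}\right)$ --- has a pole at the relevant point of the tower. Condition (i) is equated, via Gan--Takeda's explicit determination of the local theta correspondence for the quaternionic pairs, with the existence of a local Shalika model for $\pi'_v$ at all $v$; and at a ramified place $v\in\VV_\DD$ the only local representations admitting no local theta lift, equivalently no local Shalika model, are exactly those of the excluded form $\lvert\det'\rvert_v^{1/2}\tau_1\times\lvert\det'\rvert_v^{-1/2}\tau_2$ with $\tau_1,\tau_2$ of central character $\eta_v$. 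If $\jl\left(\Pi'\right)$ is cuspidal, $\Pi'$ occurs at the top of the tower, the corresponding representation of $\mathrm{GSp}_4$ is tempered, and the pole in (ii) sits at $s=1$, which gives the first equivalence. If $\jl\left(\Pi'\right)=\mw\left(\Sigma,2\right)$ is residual, $\Pi'$ occurs one step lower in the tower, as a theta lift from a non-tempered (Saito--Kurokawa / CAP type) representation built from $\Sigma$, so the pole is pushed to $s=2$; the lift from that family is nonzero precisely when the similitude characters agree, i.e.\ when $\omega_\Sigma=\eta$, and this is also the condition under which no excluded local component can arise, so assertions $(1)$ and $(2)$ coincide in the residual case.

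The last step matches $(2)$ with $(3)$ in the residual case by a direct $L$-function computation from \Cref{T:MWclas}: the $L$-parameter of $\mw\left(\Sigma,2\right)$ on $\GL_4$ is $\phi_\Sigma\lvert\cdot\rvert^{1/2}\oplus\phi_\Sigma\lvert\cdot\rvert^{-1/2}$, so
\[L^S\left(s,\mw\left(\Sigma,2\right),\bigwedge\nolimits^2\otimes\eta^{-1}\right)=L^S\left(s+1,\omega_\Sigma\eta^{-1}\right)L^S\left(s,\omega_\Sigma\eta^{-1}\right)L^S\left(s-1,\omega_\Sigma\eta^{-1}\right)L^S\left(s,\mathrm{Sym}^2\Sigma\otimes\eta^{-1}\right);\]
the last factor is holomorphic at $s=2$, so the rightmost possible pole sits at $s=2$ and occurs exactly when the unitary Hecke character $\omega_\Sigma\eta^{-1}$ is trivial. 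The main obstacle is not this formal bookkeeping but two genuinely hard inputs that must be imported: first, the precise local dictionary at the quaternionic places $v\in\VV_\DD$ between nonvanishing of the local theta lift to $\mathrm{GSp}_4$ and existence of a local Shalika model, together with the exact list of exceptional local representations; and second, a version of the Rallis inner product formula (regularized Siegel--Weil, second-term identity) valid in this similitude, globally non-split setting, which is what actually converts the analytic statement that $L^S\left(s,\Pi',\bigwedge^2\otimes\eta^{-1}\right)$ has a pole into the geometric statement that the global theta lift, and hence the Shalika period, is nonzero.
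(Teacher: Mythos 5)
This statement is imported verbatim from \cite[Theorem 1.3]{GanTakII}; the paper offers no proof of its own, and your sketch follows essentially the same route as the cited proof: realizing $\GL_2'$ inside $\mathrm{GSO}$ of a six-dimensional quadratic space ramified exactly at $\VV_\DD$, converting the Shalika period into (non)vanishing of a theta lift for the similitude pair with $\mathrm{GSp}_4$, detecting that nonvanishing by the Rallis inner product formula and the regularized Siegel--Weil identity, and handling the residual case $\jl(\Pi')=\mw(\Sigma,2)$ via the lower step of the Witt tower, which pushes the pole to $s=2$ and reduces to $\omega_\Sigma=\eta$. Your concluding exterior-square factorization and the identification of the two genuinely hard imported inputs (the local theta/Shalika dictionary at $v\in\VV_\DD$ and the second-term Siegel--Weil identity) match the actual content of Gan--Takeda's argument.
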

Thus, the situation is much more delicate in the case where $\jl\left(\Pi'\right)$ is cuspidal because of the second, local condition. On the other hand, if $\jl\left(\Pi'\right)$ is not cuspidal we have a priori $\eta^2=\omega_\Pi=\omega_\Sigma^2$, hence, $\eta$ and $\omega_\Sigma$ only differ by a quadratic character at most.
\subsection{ Shalika zeta-integrals}
The connection between $L$-functions and Shalika models can first be seen from the next two theorems, which are extensions of \cite[Proposition 2.3, Proposition 3.1, Proposition 3.3]{FriJac}.
\begin{theorem}\label{T:global}
Let $\Pi'$ be a cuspidal irreducible representation of $\GL_{2n}'\left(\A\right)$.
Assume $\Pi'$ admits a Shalika model with respect to $\eta$ and let $\phi\in\Pi'$ be a cusp form. Consider the integrals
\[\Psi\left(s,\phi\right)\coloneqq \int_{Z'_{2n}\left(\A\right)H_n'\left(\KK\right)\bs H_n'\left(\A\right)}\phi\left(\bpm h_1&0\\0&h_2\epm\right)\left\lvert \frac{\det'\left(h_1\right)}{\det'\left(h_2\right)}\right\lvert^{s-\frac{1}{2}}\eta\left(h_2\right)^{-1}\ddd h_1\ddd h_2,\]
\[\zeta\left(s,\phi\right)\coloneqq \int_{\GL'_n\left(\A\right)}S^\eta_\psi\left(\phi\right)\left(\bpm g_1&0\\0&1\epm\right)\lvert\det'\left(g_1\right)\lvert^{s-\frac{1}{2}}\ddd g_1.\]
Then $\Psi\left(s,\phi\right)$ converges absolutely for all $s$ and $\zeta\left(s,\phi\right)$ converges absolutely if $\re>>0$. Moreover, if $\zeta\left(s,\phi\right)$ converges absolutely,
$\Psi\left(s,\phi\right)=\zeta\left(s,\phi\right)$.
\end{theorem}
In \cite{FriJac} this statement was proven for $\DD=\KK$, and we will show in \ref{S:JF} that their proof extends with some small adjustments to the case of $\DD$ being a division algebra.
Let $\xi_\phi\in\mS_\psi^\eta\left(\Pi'\right)$ and choose an isomorphism $\mS_\psi^\eta\left(\Pi'\right)\xrightarrow{\cong}\bigotimes_{v\in \VV}\mS_{\psi_v}^{\eta_v}\left(\Pi_v'\right)$. Assume the image of $\xi_\phi$ can be written as a pure tensor
\[\xi_\phi\mapsto \bigotimes_{v\in \VV}\xi_{\phi,v}\in \bigotimes_{v\in \VV}\mS_{\psi_v}^{\eta_v}\left(\Pi_v'\right).\]
We can now consider the local version of the above integral 
\[\zeta_v\left(s,\xi_{\phi,v}\right)\coloneqq \int_{\GL'_n\left(\KKv\right)}\xi_{\phi,v}\left(\bpm g_1&0\\0&1\epm\right)\lvert\det'_v\left(g_1\right)\lvert^{s-\frac{1}{2}}\dv g_{1},\] where $\xi_{\phi,v}\in \mS_{\psi_v}^\eta\left(\Pi_v'\right)$.
The local Shalika integrals are then connected to the local $L$-factors by the following theorem.
\begin{theorem}\label{T:connL}
Let $\Pi'$ be a cuspidal irreducible representation of $\GL_{2n}'\left(\A\right)$
and assume $\Pi'$ admits a Shalika model with respect to $\eta$. Then for each place $v\in \VV$ and $\xi_{v}\in\mS_{\psi_v}^{\eta_v}\left(\Pi_v'\right)$ there exists an entire function $P\left(s,\xi_{v}\right)$, with $P\left(s,\xi_{v}\right)\in \CC[q_v^{s-\frac{1}{2}},q_v^{\frac{1}{2}-s}]$ if $v\in {\VV_f}$, such that
\[\zeta_v\left(s,\xi_{v}\right)=P\left(s,\xi_{v}\right)L\left(s,\Pi_v\right)\]
and hence, $\zeta_v\left(s,\xi_{v}\right)$ can be analytically continued to $\CC$. Moreover, for each place $v$ there exists a vector $\xi_{v}$ such that $P\left(s,\xi_{v}\right)=1$. If $v$ is a place where neither $\Pi'$ nor $\psi$ ramify this vector can be taken as the spherical vector $\xi_{\Pi_v'}$ normalized by $\xi_{\Pi_v'}\left(\mathrm{id}\right)=1$.
\end{theorem}
In the case $\KK=\DD$ the existence of such a holomorphic $P$ was proven in \cite{FriJac} and in \cite[Corollary 5.2]{LapMao} it was shown that $P$ is actually a polynomial in $\CC[q_v^{s-\frac{1}{2}},q_v^{\frac{1}{2}-s}]$.
\ref{T:global} and \ref{T:connL} imply for $\xi_{\phi,f}\cong \bigotimes_{v\in {\VV_f}}'\xi_{\phi,f}$ and $\re>>0$
\[\zeta_f\left(s,\xi_{\phi,f}\right)\coloneq\int_{\GL'_n\left(\A_f\right)}\xi_{\phi,f}\left(\bpm g_1&0\\0&1\epm\right)\lvert\det'\left(g_1\right)\lvert_f^{s-\frac{1}{2}}\df g_{1}=\]\[=\prod_{v\in {\VV_f}}P\left(s,\xi_{\phi,v}\right)L\left(s,\Pi_v\right).\]
\subsection{ \texorpdfstring{$\Aut(\CC)$}{Aut(C)}-action}Let $\Pi'$ be a cuspidal irreducible representation of $\GL_{2n}'\left(\A\right)$ and assume $\Pi'$ admits a Shalika model with respect to $\eta$. It is natural to ask whether ${}^\sigma\Pi'$ admits a Shalika model with respect to ${}^\sigma\eta$ assuming that ${}^\sigma\Pi'$ is cuspidal. In the split case it was proven in the appendix of \cite{GroRag} that if $\Pi'$ admits a Shalika model with respect to $\eta$, then ${}^\sigma\Pi'$ admits one with respect to ${}^\sigma\eta$. 
\begin{definition}\label{D:os}
We say the $\Aut\left(\CC\right)$-orbit of $\Pi'$ admits a Shalika model with respect to $\eta$ if $^\sigma\Pi'$ is cuspidal and admits a Shalika model with respect to $^\sigma\eta$ for all $\sigma\in\Aut\left(\CC\right)$.
\end{definition}
Note that the above definition has been studied in the wider context of certain distinction problems in \cite{GanRag2012}, in which an extended discuss on this phenomenon can be found.
In the case of $n=1$ and $\DD$ a quaternion algebra \ref{T:G-T} allows us to prove the following.
\begin{lemma}\label{L:admism}
Let $\DD$ be a quaternion algebra and $\Pi'$ a cuspidal irreducible cohomological representation of $\GL'_2\left(\A\right)$. If $\Pi'$ admits a Shalika model with respect to $\eta$ then ${}^\sigma \Pi'$ admits one with respect to ${}^\sigma \eta$.
\end{lemma}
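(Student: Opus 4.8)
The plan is to reduce everything to the Gan--Takeda criterion \Cref{T:G-T} combined with \Cref{P:cusp}. By \Cref{P:cusp} we already know that $^\sigma\Pi'$ is cuspidal and that $\jl\left({}^\sigma\Pi'\right)={}^\sigma\jl\left(\Pi'\right)$. Since the $\sigma$-twist of a cohomological cuspidal representation of $\GL_4\left(\A\right)$ is again cohomological and cuspidal, the dichotomy in \Cref{T:G-T} (i.e.\ whether $\jl\left(\Pi'\right)$ is cuspidal or not) is preserved under twisting. So it suffices to verify, in each of the two cases, that the defining conditions of \Cref{T:G-T} for $\Pi'$ relative to $\eta$ carry over to $^\sigma\Pi'$ relative to $^\sigma\eta$.

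The case where $\jl\left(\Pi'\right)$ is not cuspidal is quick. Write $\jl\left(\Pi'\right)=\mw\left(\Sigma,2\right)$ with $\Sigma$ cuspidal irreducible on $\GL_2\left(\A\right)$; note $\Sigma$ is cohomological by \Cref{C:conj}. Since $k=2$ is even, the twisting character in \Cref{L:twMW} is trivial, so $\jl\left({}^\sigma\Pi'\right)={}^\sigma\mw\left(\Sigma,2\right)=\mw\left({}^\sigma\Sigma,2\right)$, and $^\sigma\Sigma$ is again cuspidal irreducible. By \Cref{T:G-T} the hypothesis that $\Pi'$ admits a Shalika model with respect to $\eta$ is equivalent to $\omega_\Sigma=\eta$; applying $\sigma$ gives $\omega_{{}^\sigma\Sigma}={}^\sigma\omega_\Sigma={}^\sigma\eta$, which by the same theorem says exactly that $^\sigma\Pi'$ admits a Shalika model with respect to $^\sigma\eta$.

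Now suppose $\Pi:=\jl\left(\Pi'\right)$ is cuspidal; then $^\sigma\Pi=\jl\left({}^\sigma\Pi'\right)$ is cuspidal as well. There are two conditions to transfer. First, the local one: for each finite $v\in\VV_\DD$ we have $^\sigma\pi'_v={}^\sigma\left(\pi'_v\right)$, and by \eqref{E:E2} the $\sigma$-twist of $\lvert\det'\lvert_v^{1/2}\tau_1\times\lvert\det'\lvert_v^{-1/2}\tau_2$ is again of this shape, with the central characters of the inducing data passing from $\eta_v$ to $^\sigma\eta_v$ (the global $\epsilon_\sigma$-twist and the shift $\lvert\det'\lvert^{1/2}\mapsto\sigma\left(\lvert\det'\lvert^{1/2}\right)$ affect both inducing factors equally and are absorbed after recentering). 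Hence $\pi'_v$ is not of the forbidden form with respect to $\eta_v$ if and only if $^\sigma\pi'_v$ is not of the forbidden form with respect to $^\sigma\eta_v$. At the infinite places of $\VV_\DD$ there is nothing to check: because $\Pi'$ is cohomological cuspidal with cuspidal $\jl$-image, $\pi'_v$ is tempered for $v\in{\VV_\infty}$ (local $\lj_v$ preserves temperedness), and likewise $^\sigma\pi'_v$; a tempered irreducible representation of $\GL_2\left(\HH\right)$ is never isomorphic to the non-tempered induced module $\lvert\det'\lvert^{1/2}\tau_1\times\lvert\det'\lvert^{-1/2}\tau_2$. Second, the pole condition: fixing a finite $S\supseteq\VV_\DD$, at every $v\notin S$ the algebra $\DD$ splits and $\pi'_v=\Pi_v$, so $L^S\left(s,\Pi',\bigwedge^2\otimes\eta^{-1}\right)=L^S\left(s,\Pi,\bigwedge^2\otimes\eta^{-1}\right)$ and similarly for $^\sigma\Pi'$ and $^\sigma\Pi$. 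Thus it is enough to show that a pole at $s=1$ of $L^S\left(s,\Pi,\bigwedge^2\otimes\eta^{-1}\right)$ forces one for $L^S\left(s,{}^\sigma\Pi,\bigwedge^2\otimes{}^\sigma\eta^{-1}\right)$. For cuspidal $\Pi$ on $\GL_4\left(\A\right)$, by \Cref{T:JF} and the theory of the exterior square (Jacquet--Shalika, Cogdell--Kim--Piatetski-Shapiro--Shahidi) this pole is equivalent to $\Pi$ being \emph{$\eta$-symplectic}, i.e.\ $\Pi\cong\Pi^\lor\otimes\eta$ with $L^S\left(s,\Pi,\mathrm{Sym}^2\otimes\eta^{-1}\right)$ holomorphic and non-vanishing at $s=1$. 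Applying $\sigma$ to the isomorphism $\Pi\cong\Pi^\lor\otimes\eta$ gives $^\sigma\Pi\cong\left({}^\sigma\Pi\right)^\lor\otimes{}^\sigma\eta$, and the symplectic-versus-orthogonal alternative is itself $\Aut\left(\CC\right)$-stable, so $^\sigma\Pi$ is $^\sigma\eta$-symplectic and the desired pole reappears. Combining the two conditions, \Cref{T:G-T} applied to $^\sigma\Pi'$ yields the claim.

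The main obstacle is the pole-transfer step: one must pin down a characterization of the pole of $L^S\left(s,\Pi,\bigwedge^2\otimes\eta^{-1}\right)$ that is visibly $\Aut\left(\CC\right)$-equivariant --- the naive "apply $\sigma$ to the Dirichlet series" is meaningless --- and one must check that the normalization twists by $\epsilon_\sigma$ occurring in \eqref{E:E2} and in the passage $\lvert\det'\lvert^{1/2}\mapsto\sigma\left(\lvert\det'\lvert^{1/2}\right)$ do not move the pole off $s=1$. Since $\eta$ is (a shift of) a finite-order character and the symplecticity statement is about representations rather than special $L$-values, these twists cancel, but making this rigorous --- including the identification of $^\sigma$ on archimedean components with a permutation of the infinite places --- is the delicate part, handled as in the split case of \cite{GroRag}.
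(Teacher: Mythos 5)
Your overall architecture is the same as the paper's: use \Cref{P:cusp} to get cuspidality of ${}^\sigma\Pi'$ and compatibility with $\jl$, split according to whether $\jl(\Pi')$ is cuspidal, settle the residual case by the central-character criterion of \Cref{T:G-T} (your argument there, via \Cref{C:conj} and \Cref{L:twMW}, is exactly the paper's), and transfer the local obstruction at non-split places by twisting the forbidden induced shape with $\sigma^{-1}$ via (\ref{E:E2}), again as in the paper.

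The gap is in the pole-transfer step when $\Pi=\jl(\Pi')$ is cuspidal. You replace the pole of $L^S(s,\Pi,\bigwedge^2\otimes\eta^{-1})$ at $s=1$ by the statement that $\Pi$ is ``$\eta$-symplectic'' and then assert that ``the symplectic-versus-orthogonal alternative is itself $\Aut(\CC)$-stable.'' That assertion is not formal: applying $\sigma$ to $\Pi\cong\Pi^\lor\otimes\eta$ only gives ${}^\sigma\Pi\cong({}^\sigma\Pi)^\lor\otimes{}^\sigma\eta$, i.e.\ self-duality up to twist, and says nothing about whether the pole sits in the exterior-square or the symmetric-square factor for ${}^\sigma\Pi$; deciding that is exactly the nontrivial content, and it is precisely what the split-case theorem of \cite{GroRag} (their Theorem 3.6.2, proved in the appendix by a genuinely non-formal argument) supplies. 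So as written your key step is asserted rather than proved, and the ``handled as in the split case of \cite{GroRag}'' remark is doing all the work. The paper avoids the symplectic detour entirely: since the partial $L$-functions of $\Pi'$ and $\Pi$ agree outside $S$, the pole at $s=1$ is, by \Cref{T:JF}, equivalent to $\Pi$ itself admitting a (global) Shalika period with respect to $\eta$; the cited split-case theorem then gives a Shalika period for ${}^\sigma\Pi=\jl({}^\sigma\Pi')$ with respect to ${}^\sigma\eta$, and \Cref{T:JF} converts this back into the required pole for ${}^\sigma\Pi'$. If you substitute that chain for your symplecticity argument (which also quietly uses unproved facts such as holomorphy/non-vanishing of the twisted symmetric square at $s=1$), your proof closes and coincides with the paper's; the archimedean temperedness digression is then unnecessary, since at $v\in\VV_\infty$ the twist ${}^\sigma\pi'_v$ is just $\pi'_{\sigma^{-1}\circ v}$, a component of $\Pi'$ at another infinite place.
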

\begin{proof}
Note first that by \ref{P:cusp} $^\sigma\Pi'$ is cuspidal.
Assume first that $\jl\left(\Pi'\right)$ is not cuspidal, \emph{i.e.} $\jl\left(\Pi'\right)=\mw\left(\Sigma,2\right)$ for some cuspidal irreducible representation of $\GL_2\left(\A\right)$. From \ref{C:conj}, \ref{P:cusp} and \ref{L:twMW} it follows that \[\jl\left({}^\sigma \Pi'\right)={}^\sigma\jl\left(\Pi'\right)=\mw\left({}^\sigma \Sigma,2\right).\] Since the central character $\omega_\Sigma$ of $\Sigma$ equals by assumption $\eta$, the central character of ${}^\sigma \Sigma$ equals ${}^\sigma \eta$. Thus we are done by \ref{T:G-T}.
Next assume $\jl\left(\Pi'\right)$ is cuspidal and hence, $\jl\left(\Pi'\right)$ admits a Shalika model with respect to $\eta$ by \ref{T:JF} and \ref{T:G-T}. Thus, $^\sigma\jl\left(\Pi'\right)=\jl\left(^\sigma\Pi'\right)$ admits also a Shalika model with respect to $^\sigma\eta$ by \cite[Theorem 3.6.2]{GroRag} and hence $L^S\left(s, {}^\sigma\Pi,\bigwedge^2\otimes {}^\sigma\eta^{-1}\right)$ has a pole at $s=1$. Moreover, 
if $v$ is a non-split place of $\DD$ and ${}^\sigma \Pi_v$ were of the form  \[{}^\sigma \Pi_v\cong \lvert\det'\lvert_v^\frac{1}{2}\tau_1\times\lvert\det'\lvert_v^{-\frac{1}{2}}\tau_2,\] where $\tau_i$ are representations of $\GL'_1\left(\KKv\right)$ with central character ${}^\sigma\eta$. This would lead to the contradiction, since by (\ref{E:E2}) \[\Pi_v={}^{\sigma^ {-1}}(\lvert\det'\lvert_v^\frac{1}{2}\tau_1\times\lvert\det'\lvert_v^{-\frac{1}{2}}\tau_2)=\lvert\det'\lvert_v^\frac{1}{2}{}^{\sigma^{-1}}\tau_1\times\lvert\det'\lvert_v^{-\frac{1}{2}}{}^{\sigma^{-1}}\tau_2.\]
\end{proof}
\begin{remark}We have currently no proof in the general case $n>2$ and unfortunately the methods of \cite{GanTakII} do not generalize well beyond the quaternion case. Hence, we can only conjecture the following. \end{remark}
\begin{conjecture}
Let $\Pi'$ be a cuspidal irreducible cohomological representation of $\GL_n'\left(\A\right)$ such that $\jl\left(\Pi'\right)$ is residual. If $\Pi'$ admits a Shalika model with respect to $\eta$, then so does the $\Aut\left(\CC\right)$-orbit of $\Pi'$.
\end{conjecture}
In \cite{GroRag} the authors define an action of $\Aut\left(\CC\right)$ on a given Shalika model and we will generalize this now to our setting.
Let $\psi_f$ be the finite part of the additive character $\psi$, which takes values in $\mu_\infty\subseteq \CC^\times$, the subgroup of all roots of unity of $\CC^\times$. We will associate to an element $\sigma\in\Aut\left(\CC\right)$ an element $t_\sigma\in \A^\times$ such that for all $x\in \A$ one has $\sigma\left(\psi\left(x\right)\right)=\psi\left(t_\sigma x\right).$ More explicitly, we construct $t_\sigma$ by first restricting $\sigma$ to $\QQ\left(\mu_\infty\right)$ and sending it to $\prod_p \mathbb{Z}_p^\times$ via the global symbol map of Artin reciprocity \[\Aut\left(\QQ\left(\mu_\infty\right)/\QQ\right)\xrightarrow{\cong} \widehat{\mathbb{Z}}^\times=\prod_{p \text{ prime}} \mathbb{Z}_p^\times,\] then embed the so obtained element into $\A$ via the diagonal embedding $\mathbb{Z}_p\hookrightarrow \prod_{v\lvert p}\OO_v$.
 Next we define the action of $\sigma\in\Aut\left(\CC\right)$ on the finite part $\mS_{\psi_f}^{\eta_f}\left(\Pi_f'\right)$ by sending $\xi_f$ to \[g_f\mapsto {}^\sigma\xi_f\left( g_f\right)\coloneqq \sigma\left(\xi_f\left(\tbf^{-1}_\sigma g_f\right)\right),\, g_f\in \GL_{2n}'\left(\A_f\right),\]where
$\tbf_\sigma=\mathrm{diag}\left(\overbrace{t_\sigma,\ldots,t_\sigma}^n,\overbrace{1,\ldots,1}^n\right).$
This gives a $\sigma$-linear intertwining operator
\begin{equation}\label{E:actionshalika}
\sigma^*\colon\mathrm{Ind}_{\mS\left(\A_f\right)}^{\GL'_{2n}\left(\A_f\right)}\left(\eta_f\otimes \psi_f\right)\rightarrow \mathrm{Ind}_{\mS\left(\A_f\right)}^{\GL'_{2n}\left(\A_f\right)}\left({}^\sigma \eta_f\otimes \psi_f\right),\, \xi_f\mapsto{}^\sigma\xi_f.\end{equation}
Completely analogously we define a $\sigma$-linear intertwining operator 
\[\sigma^*\colon\mathrm{Ind}_{\mS\left(\KKv\right)}^{\GL'_{2n}\left(\KKv\right)}\left(\eta_v\otimes \psi_v\right)\rightarrow \mathrm{Ind}_{\mS\left(\KKv\right)}^{\GL'_{2n}\left(\KKv\right)}\left({}^\sigma \eta_v\otimes \psi_v\right)\] for every finite place $v$, where we use $t_{\sigma,v}$ and $\tbf_{\sigma,v}$ instead of $t_{\sigma}$ and $\tbf_\sigma$.
\subsection{ Uniqueness} Let $\Pi'$ be a cuspidal irreducible cohomological representation of $\GL_{2n}'\left(\A\right)$ which admits a Shalika model with respect to $\eta$. Let $v\in {\VV_f}$ be a finite place. In order to proceed we need the local uniqueness of the Shalika model, \emph{i.e.} for every irreducible representation $\Pi_v'$ of $\GL_{2n}'\left(\KKv\right)$ the claim that
\[\dim_\CC \mathrm{Hom}_{\GL_{2n}'\left(\KKv\right)}\left(\Pi_v',\mathrm{Ind}_{\mS\left(\KKv\right)}^{\GL'_{2n}\left(\KKv\right)}\left(\eta_v\otimes \psi_v\right)\right)\le 1.\]
By Frobenius reciprocity every such map corresponds uniquely to a Shalika functional $\lambda\in \mathrm{Hom}_{\mS\left(\KKv\right)}\left(\Pi_v',\eta_v\otimes\psi_v\right).$ \begin{definition}\label{D:ol}
We say that the $\Aut\left(\CC\right)$-orbit of $\Pi'$ has a unique local Shalika model if ${}^\sigma\Pi_v'$ has a unique Shalika model for all $v\in {\VV_f}$ and $\sigma\in \Aut\left(\CC\right)$.
\end{definition}
In the split case or when $\DD$ is a quaternion algebra the following was proven in \cite{Nie}.
\begin{theorem}[{\cite[ Theorem 3.4]{Nie}}]\label{T:nien}
Let $\DD$ be a field or a quaternion algebra. If $\DD$ is quaternion, assume $\eta_v$ is trivial. Then \[\dim_\CC\mathrm{Hom}_{\GL_{2n}'\left(\KKv\right)}\left(\Pi_v',\mathrm{Ind}_{\mS\left(\KKv\right)}^{\GL'_{2n}\left(\KKv\right)}\left(\eta_v\otimes \psi_v\right)\right)\le 1.\]
\end{theorem}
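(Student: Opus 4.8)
We only sketch the strategy, which is essentially the one employed in \cite{Nie}. The plan is to deduce the bound from the Gelfand--Kazhdan criterion for $\ell$-groups. Write $G=\GL'_{2n}(\KKv)$, $H=\mS(\KKv)$ and let $\chi=\eta_v\otimes\psi_v$ be the character of $H$ fixed above. By Frobenius reciprocity for smooth induction one has
\[\mathrm{Hom}_{G}\bigl(\pvi',\mathrm{Ind}_{\mS(\KKv)}^{\GL'_{2n}(\KKv)}(\eta_v\otimes\psi_v)\bigr)\cong\mathrm{Hom}_{H}(\pvi',\chi),\]
so it is enough to bound the latter by $1$ for every irreducible smooth $\pvi'$. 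The criterion reduces this to exhibiting an anti-involution $\theta$ of $G$ such that (i) $\theta(H)=H$ and $\chi\circ\theta=\chi$, and (ii) $\theta_{*}T=T$ for every distribution $T$ on $G$ which is $(H,\chi)$-equivariant under both left and right translation, combined with the known fact that $\pvi'\circ\theta\cong(\pvi')^{\vee}$ for all irreducible $\pvi'$. For $\GL_{2n}$ one takes $\theta(g)={}^{t}g$; over $\DD_v$ one takes transpose followed by the entrywise canonical involution of $\DD_v$, and $\pvi'\circ\theta\cong(\pvi')^{\vee}$ is the analogue for $\GL'_{m}$ of the statement that the contragredient is realized by transpose. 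Point (i) is where the hypotheses enter: $\theta$ visibly preserves $\mS$ and the character $\psi_v\circ\tr$ on the unipotent block, but matching $\eta_v\circ\det'$ with $\eta_v\circ\det'\circ\theta$ on the diagonal block is only immediate when $\eta_v$ is trivial, which is why in the quaternion case one assumes this.

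The substance is (ii), and I would treat it by the orbit method: first I would analyze the $H\times H$-orbits on $G$ through a Bruhat-type decomposition, and then invoke Bernstein's localization principle to reduce the vanishing of $\theta_{*}T-T$ to an orbit-by-orbit statement: along every non-open orbit there are no nonzero $(H,\chi)$-bi-equivariant distributions supported on it or on its transverse jets, while on the open orbit this space is one-dimensional and visibly $\theta$-stable. The vanishing along a non-open stratum is obtained by producing a one-parameter unipotent subgroup of $\mS$ acting on the relevant normal space through a nontrivial additive character built from $\psi_v$, which annihilates any equivariant distribution there; on the open orbit the one-dimensionality and $\theta$-invariance follow from a direct computation with the stabilizer. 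Gluing these statements via the localization principle yields $\theta_{*}T=T$ for all $T$, hence $\dim_{\CC}\mathrm{Hom}_{H}(\pvi',\chi)\le 1$.

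The hard part will be carrying out this orbit analysis over $\DD_v$ rather than over a field: one must re-enumerate the $\mS\times\mS$-orbits on $\GL'_{2n}$, describe their normal bundles and the characters by which the unipotent radical of $\mS$ acts, and check that the equivariance constraint leaves only the open orbit. This is exactly the point where $\DD$ being a field or a quaternion algebra is used --- when $\DD$ is quaternion $d_v=2$, so the orbit picture stays explicit and finite --- and the argument does not obviously extend beyond that, nor to nontrivial $\eta_v$ in the quaternion case. Concretely, I would take the split-case proof of uniqueness of Shalika models as a template and adapt each geometric step to matrices with entries in $\DD_v$.
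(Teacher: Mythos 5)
This statement is not proved in the paper at all: it is imported verbatim from the literature (the citation \cite{Nie}), so there is no internal argument to compare yours against. Your sketch does correctly identify the standard mechanism behind such multiplicity-one results (Frobenius reciprocity to reduce to $(\mS(\KKv),\eta_v\otimes\psi_v)$-functionals, a Gelfand--Kazhdan anti-involution $g\mapsto{}^{t}\bar g$ with the MVW-type fact $\pvi'\circ\theta\cong(\pvi')^{\vee}$ for $\GL_m'(\KKv)$, and a distribution-theoretic argument forcing $\theta$-invariance of all bi-equivariant distributions), and you correctly locate where the hypotheses ``$\DD$ quaternion'' and ``$\eta_v$ trivial'' must enter.

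However, as a proof attempt it has a genuine gap: the entire analytic core --- step (ii) --- is only announced, not carried out, and that step \emph{is} the theorem. Concretely, you would need to (a) describe the $\mS\times\mS$-double cosets in $\GL_{2n}'(\KKv)$, which form an infinite family (the Shalika subgroup is far from parabolic), so a naive finite ``orbit-by-orbit'' application of Bernstein localization is not available and one needs a genuine stratification argument with control of non-closed orbits and transverse jets; (b) verify on each non-open stratum that the unipotent part of $\mS$ acts on the normal directions through a nontrivial character built from $\psi_v$ --- this is exactly the computation that becomes delicate over $\DD_v$, where the reduced trace pairing and the orbit representatives differ from the split case, and where a nontrivial $\eta_v$ genuinely obstructs the argument (your point (i) compatibility $\eta_v\circ\det'\circ\theta=\eta_v\circ\det'$ also needs care, since $\det'({}^t\bar g)=\det'(g)$ does hold, so the real use of the triviality of $\eta_v$ is inside the distribution analysis, not in the symmetry of the character); and (c) justify $\pvi'\circ\theta\cong(\pvi')^{\vee}$ for inner forms, which is true but is itself a nontrivial theorem requiring citation. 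None of (a)--(c) is supplied, so the proposal is a plausible roadmap to the cited result of Nien rather than a proof; in the context of this paper the honest move is what the author does, namely to quote \cite[Theorem 3.4]{Nie}.
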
 This is yet another reason why we will have to restrict ourselves to the case $\DD$ being quaternion in the end.
Combining \ref{P:cusp}, \ref{L:admism}, and \ref{T:nien} we have proved the following. 
\begin{theorem}\label{T:L}
Let $\Pi'$ be a cuspidal irreducible cohomological representation of $\GL_2'\left(\A\right)$ which admits a Shalika model with respect to $\eta$ and assume that $\DD$ is a quaternion algebra. Then the $\Aut\left(\CC\right)$-orbit of $\Pi'$ is cuspidal cohomological, admits a Shalika model with respect to $\eta$ and has a unique local Shalika model if $\eta$ is trivial.
\end{theorem}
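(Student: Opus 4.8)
The plan is to deduce the three assertions of the theorem — that the $\Aut(\CC)$-orbit of $\Pi'$ is cuspidal cohomological, that it admits a Shalika model with respect to $\eta$, and that it has a unique local Shalika model when $\eta$ is trivial — one at a time, each by invoking one of the results assembled above; no new computation is needed, and the work lies entirely in checking that the hypotheses of each cited result are in force.

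First I would treat the cuspidal cohomologicality of the orbit. Fix $\sigma\in\Aut(\CC)$. Since $\Pi'$ is a cuspidal irreducible cohomological representation of $\GL_2'(\A)$, \Cref{P:cusp} shows that the discrete series representation ${}^\sigma\Pi'$ is again cuspidal. Picking a highest weight $\mu$ such that $\Pi'$ is cohomological with respect to $E_\mu$, \Cref{T:twistiscoh} shows moreover that ${}^\sigma\Pi'$ is cohomological with respect to ${}^\sigma E_\mu$. Hence ${}^\sigma\Pi'$ is cuspidal and cohomological for every $\sigma$, which is by definition the statement that the $\Aut(\CC)$-orbit of $\Pi'$ is cuspidal cohomological.

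Next I would invoke \Cref{L:admism} for the existence statement: $\DD$ is a quaternion algebra, $\Pi'$ is cuspidal irreducible cohomological and admits a Shalika model with respect to $\eta$, so its conclusion is that ${}^\sigma\Pi'$ admits a Shalika model with respect to ${}^\sigma\eta$ for every $\sigma$; combined with the cuspidality of ${}^\sigma\Pi'$ from the previous step, this is exactly the statement that the orbit admits a Shalika model with respect to $\eta$. For the last assertion, suppose in addition that $\eta$ is trivial; then ${}^\sigma\eta$ is trivial for every $\sigma$, hence so is its local component ${}^\sigma\eta_v$ at every finite place $v$. Since $\DD$ is a quaternion algebra, \Cref{T:nien} applies to ${}^\sigma\pvi'$ with the (trivial) character ${}^\sigma\eta_v$ and yields
\[\dim_\CC\mathrm{Hom}_{\GL_2'(\KKv)}\left({}^\sigma\pvi',\,\mathrm{Ind}_{\mS(\KKv)}^{\GL_2'(\KKv)}\left({}^\sigma\eta_v\otimes\psi_v\right)\right)\le 1,\]
so ${}^\sigma\pvi'$ has a unique local Shalika model; letting $v$ and $\sigma$ range over all finite places and all automorphisms gives the assertion.

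There is no genuine obstacle at this level: the substantive content has already been packaged into the cited statements — extending the $\Aut(\CC)$-equivariance argument of \cite{GroRag} past the cuspidal case via the M{\oe}glin-Waldspurger classification (\Cref{P:cusp}), transporting the Shalika-model criterion of \cite{GanTakII} along the orbit (\Cref{L:admism}), and the local multiplicity-one bound (\Cref{T:nien}). The only points requiring care are that the standing hypotheses $\DD$ quaternion and $\Pi'$ cohomological are precisely what \Cref{P:cusp}, \Cref{T:twistiscoh} and \Cref{L:admism} need, that the triviality of $\eta$ is precisely the extra input \Cref{T:nien} requires in the quaternionic case, and that ``the $\Aut(\CC)$-orbit'' is consistently unwound into the corresponding statements about ${}^\sigma\Pi'$ and ${}^\sigma\pvi'$ for all $\sigma$.
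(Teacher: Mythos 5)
Your proposal is correct and matches the paper's own argument, which derives \Cref{T:L} precisely by combining \Cref{P:cusp} (with \Cref{T:twistiscoh} for cohomologicality of the twist), \Cref{L:admism}, and \Cref{T:nien}. Nothing further is needed.
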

For the rest of the chapter let us collect the following decorations of a cuspidal irreducible representation $\Pi'$ of $\GL\left(\A\right)$:
\begin{enumerate}\label{I:prop}
    \item $\Pi'$ is cuspidal irreducible cohomological representation of $\GL_{2n}'\left(\A\right)$.
    \item The $\Aut\left(\CC\right)$-orbit of $\Pi'$ is cuspidal cohomological and admits a Shalika model with respect to $\eta$.
    \item The $\Aut\left(\CC\right)$-orbit of $\Pi'$ has a local unique Shalika model.
\end{enumerate}
Moreover, we also fix a splitting ${}^\sigma \Pi'\cong {}^\sigma\Pi_\infty'\otimes {}^\sigma \Pi_f',\, {}^\sigma\Pi_f'\iso \bigotimes_{v\in {\VV_f}}'{}^\sigma\Pi_v'$ and a Shalika model of ${}^\sigma\Pi_v$
\[\mS_{\psi_v}^{{}^\sigma\eta_v}\colon {}^\sigma\Pi_v'\ra  \mathrm{Ind}_{\mS\left(\KKv\right)}^{\GL'_{2n}\left(\KKv\right)}\left({}^\sigma\eta_v\otimes \psi_v\right)\]
 for all $\sigma\in\Aut\left(\CC\right),\, v\in {\VV_f}$.
\begin{lemma}
For $\Pi'$ as in \ref{I:prop}, $v\in {\VV_f}$ and the action of (\ref{E:actionshalika}) we have \[\sigma^*\left(\mS_{\psi_v}^{\eta_v}\left(\Pi_v'\right)\right)=\mS_{\psi_v}^{{}^\sigma\eta_v}\left({}^\sigma\Pi_v'\right)\] for all $\sigma\in\Aut\left(\CC\right)$. For any finite extension $\mathbb{K}$ of $\QQ\left(\Pi_v',\eta_v\right)$ we have a $\mathbb{K}$-structure
\[\mS_{\psi_v}^{\eta_v}\left(\Pi_v'\right)_\mathbb{K}\coloneq\mS_{\psi_v}^{\eta_v}\left(\Pi_v'\right)^{\Aut\left(\CC/\mathbb{K}\right)}\]
on $\mS_{\psi_v}^{\eta_v}\left(\Pi_v'\right)$.
\end{lemma}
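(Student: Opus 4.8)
The strategy is to deduce both assertions from the local multiplicity-one property of the Shalika model (available here by property (3) of \ref{I:prop}, \emph{cf.}\ \Cref{T:nien} and \Cref{T:L}), together with Clozel's rationality of $\pvi'$ (\Cref{L:clozelstructure}) and Galois descent. The only computational input needed is that the operator $\sigma^*$ of (\ref{E:actionshalika}) is compatible with the $\sigma$-twist on $\pvi'$.

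First I would unravel the twist: by definition ${}^\sigma\pvi'$ is realised on a complex space $W'$ carrying a $\sigma$-linear isomorphism $t\colon W'\ra W$, where $W$ is the space of $\pvi'$, and ${}^\sigma\pvi'(g)=t^{-1}\circ\pvi'(g)\circ t$. Consider the composite
\[\Lambda\coloneq\sigma^*\circ\mS_{\psi_v}^{\eta_v}\circ t\colon W'\ra\mathrm{Ind}_{\mS\left(\KKv\right)}^{\GL'_{2n}\left(\KKv\right)}\left({}^\sigma\eta_v\otimes\psi_v\right).\]
Being $\sigma$-linear, then $\CC$-linear, then $\sigma$-linear, the map $\Lambda$ is $\CC$-linear, and it is nonzero since $\mS_{\psi_v}^{\eta_v}$ is nonzero, hence injective ($\pvi'$ being irreducible), while $t$ and $\sigma^*$ are bijective. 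I would then check that $\Lambda$ is $\GL_{2n}'\left(\KKv\right)$-equivariant, which reduces to (i) the equivariance of $\mS_{\psi_v}^{\eta_v}$ combined with $t\circ{}^\sigma\pvi'(g)=\pvi'(g)\circ t$, and (ii) the fact that $\sigma^*$ commutes with right translation, which is immediate from $({}^\sigma\xi)(xg)=\sigma\left(\xi\left(\tbf_{\sigma,v}^{-1}xg\right)\right)$. One also has to confirm that $\sigma^*$ genuinely lands in the $\left({}^\sigma\eta_v\otimes\psi_v\right)$-equivariant induced space: for $s=\bpm h&X\\0&h\epm\in\mS\left(\KKv\right)$ one factors $\tbf_{\sigma,v}^{-1}s=\left(\tbf_{\sigma,v}^{-1}s\tbf_{\sigma,v}\right)\tbf_{\sigma,v}^{-1}$ with $\tbf_{\sigma,v}^{-1}s\tbf_{\sigma,v}=\bpm h&t_{\sigma,v}^{-1}X\\0&h\epm\in\mS\left(\KKv\right)$, and then uses $\sigma\left(\psi_v\left(t_{\sigma,v}^{-1}\tr(X)\right)\right)=\psi_v\left(\tr(X)\right)$ — exactly the defining property $\sigma\left(\psi\left(x\right)\right)=\psi\left(t_\sigma x\right)$ of $\tbf_\sigma$ — together with $\sigma\circ\eta_v={}^\sigma\eta_v$. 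Hence $\Lambda$ is a Shalika model for ${}^\sigma\pvi'$ with respect to ${}^\sigma\eta_v$, and since $t$ is onto its image is all of $\sigma^*\left(\mS_{\psi_v}^{\eta_v}\left(\pvi'\right)\right)$; by the one-dimensionality of the relevant $\mathrm{Hom}$-space this image coincides with $\mS_{\psi_v}^{{}^\sigma\eta_v}\left({}^\sigma\pvi'\right)$, the image of the fixed model, any two nonzero intertwiners differing by a scalar. This is the first assertion.

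For the second assertion, restrict to $\sigma\in\Aut\left(\CC/\QQ\left(\pvi',\eta_v\right)\right)$, so that ${}^\sigma\eta_v=\eta_v$ and ${}^\sigma\pvi'\cong\pvi'$; by the first part $\sigma^*$ then restricts to a $\sigma$-semilinear automorphism of $V\coloneq\mS_{\psi_v}^{\eta_v}\left(\pvi'\right)$ commuting with the $\GL_{2n}'\left(\KKv\right)$-action, and $\sigma\mapsto\sigma^*$ is a genuine left semilinear action because $\sigma\mapsto\tbf_\sigma$ is a homomorphism, factoring through $\Aut\left(\QQ\left(\mu_\infty\right)/\QQ\right)\cong\widehat{\ZZ}^*$. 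Since $\pvi'$, hence $V$, is defined over the number field $\QQ\left(\pvi'\right)$ by \Cref{L:clozelstructure}, this action preserves a $\overline{\QQ}$-model of $V$, on each level subspace of which it is continuous for the profinite topology and thus factors through a finite quotient of $\Gal\left(\overline{\QQ}/\QQ\left(\pvi',\eta_v\right)\right)$. Ordinary Galois descent (Hilbert $90$ for $\CC^\times$, \emph{cf.}\ the argument in \cite{GroRag} in the split case) then shows that for every finite extension $\mathbb{K}$ of $\QQ\left(\pvi',\eta_v\right)$ the invariants $V^{\Aut\left(\CC/\mathbb{K}\right)}$ form a $\GL_{2n}'\left(\KKv\right)$-stable $\mathbb{K}$-structure on $V$.

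The step I expect to cost the most care is the bookkeeping in the second paragraph: tracking how $\tbf_{\sigma,v}$ conjugates the Shalika subgroup and how $\eta_v$ and $\psi_v$ transform under $\sigma$, so as to see both that $\sigma^*$ maps into the $\left({}^\sigma\eta_v\otimes\psi_v\right)$-equivariant space and that $\sigma\mapsto\sigma^*$ composes correctly. Given that, everything else is formal, the local multiplicity-one statement forcing the two models to have the same image and the standard descent argument producing the rational structure.
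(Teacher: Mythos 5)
Your handling of the first assertion is correct and is essentially the paper's argument: the composite of the fixed model with $\sigma^*$ is a nonzero ($\sigma$-linear, hence after untwisting linear) $\GL_{2n}'\left(\KKv\right)$-intertwiner of ${}^\sigma\pvi'$ into $\mathrm{Ind}_{\mS\left(\KKv\right)}^{\GL'_{2n}\left(\KKv\right)}\left({}^\sigma\eta_v\otimes\psi_v\right)$, and the assumed local uniqueness forces its image to coincide with $\mS_{\psi_v}^{{}^\sigma\eta_v}\left({}^\sigma\pvi'\right)$; your verification that conjugation by $\tbf_{\sigma,v}$ together with $\sigma\left(\psi_v\left(x\right)\right)=\psi_v\left(t_{\sigma,v}x\right)$ puts ${}^\sigma\xi$ into the correct equivariant space is exactly the needed bookkeeping.

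The second assertion, however, has a genuine gap, and it is located precisely at the sentence ``since $\pvi'$, hence $V$, is defined over $\QQ\left(\pvi'\right)$, this action preserves a $\overline{\QQ}$-model of $V$.'' Transporting Clozel's rational structure $W_0\subseteq\pvi'$ (\Cref{L:clozelstructure}) through $\mS_{\psi_v}^{\eta_v}$ does give a $\overline{\QQ}$-model of $V$, but for $\sigma$ fixing $\QQ\left(\pvi',\eta_v\right)$ the concrete map $\sigma^*$ and the ``abstract'' $\sigma$-linear map $\beta_\sigma$ fixing $W_0$ pointwise are two $\sigma$-linear $\GL_{2n}'\left(\KKv\right)$-equivariant self-maps of $V$; by multiplicity one and Schur they agree only up to a scalar $c_\sigma\in\CC^\times$. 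So $\sigma^*$ preserves the model only up to the cocycle $\sigma\mapsto c_\sigma$, and nothing in your argument forces $c_\sigma$ to be algebraic, let alone trivial; a priori $V^{\Aut\left(\CC/\mathbb{K}\right)}$ could fail to span $V$. Appealing to ``Hilbert $90$ for $\CC^\times$'' does not repair this: $\CC/\mathbb{K}$ is not a Galois extension, $\Aut\left(\CC/\mathbb{K}\right)$ carries no continuity statement for this cocycle, and the cited arguments in the split case do not proceed that way. What the paper actually does (following Jiang--Sun--Tian) is kill the cocycle explicitly: rescale $W_0$ so that the Shalika functional $\lambda=\mS_{\psi_v}^{\eta_v}\left({}\cdot{}\right)\left(1\right)$ takes values in $\overline{\QQ}$ on it (possible because the space of $\eta_v$-Shalika functionals over $\overline{\QQ}$ is one-dimensional by Frobenius reciprocity and uniqueness), choose $u_0$ with $\lambda\left(u_0\right)\neq 0$ and an open subgroup $\Gamma$ fixing $u_0$ under $\tbf_{\sigma,v}$ and fixing $\lambda\left(u_0\right)$, deduce $c_\sigma=1$ on $\Gamma$, and then use irreducibility of $\pvi'$ (from $\lambda\left(\pvi'(g)\left(\sigma\left(u\right)-u\right)\right)=0$ for all $g$ conclude $\sigma\left(u\right)=u$) to show $\Gamma$, hence $\Aut\left(\CC/\overline{\QQ}\right)$, acts trivially on the rescaled structure. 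Only after this descent from $\CC$ to $\overline{\QQ}$ does ordinary Galois descent over $\overline{\QQ}/\QQ\left(\pvi',\eta_v\right)$ produce the $\mathbb{K}$-structure. This normalization-of-the-functional step is the heart of the second assertion and is missing from your proposal.
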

\begin{proof} For the first assertion, note that the representation ${}^\sigma\Pi_v'$ has on the on hand the unique Shalika model $\mS_{\psi_v}^{{}^\sigma\eta_v}\left({}^\sigma\Pi_v'\right)$ with respect to ${}^\sigma\eta_v$, but on the other hand, the $\sigma$-linear map
\[\Pi_v'\iso \mS_{\psi_v}^{\eta_v}\left(\Pi_v'\right)\stackrel{\sigma^*}{\hookrightarrow} \mathrm{Ind}_{\mS\left(\KKv\right)}^{\GL'_{2n}\left(\KKv\right)}\left({}^\sigma \eta_v\otimes \psi_v\right)\] gives rise to a linear map 
\[{}^\sigma\Pi_v'\hookrightarrow \mathrm{Ind}_{\mS\left(\KKv\right)}^{\GL'_{2n}\left(\KKv\right)}\left({}^\sigma \eta_v\otimes \psi_v\right).\]
Therefore, the assumed local uniqueness of the Shalika model implies that up to a scalar those two maps have to agree and hence, their image is identical.
For the second assertion, one can follow exactly the line of reasoning as in the proof of \cite[Theorem 3.1]{JiaSunTia}.
\end{proof}
We introduce the following notation. Let $v\in {\VV_f}$, $\sigma\in \Aut(\CC)$ and $f\in \CC(q_v^{s-\frac{1}{2}},q_v^{\frac{1}{2}-s}).$ We denote by $f^\sigma$ the rational function obtained by applying $\sigma$ to all coefficients of $f$ for some $\sigma\in\Aut\left(\CC\right)$, which is the same as applying $\sigma$ to the coefficients of $f$ considered as a Laurent-series. Moreover, $\sigma\left(f\left(\frac{1}{2}\right)\right)=f^\sigma\left(\frac{1}{2}\right).$
\begin{lemma}\label{L:compat}
Let $\Pi'$ be a cuspidal irreducible automorphic representation of $\GL_{2n}'\left(\A\right)$ with local representations $\Pi_v$ of $\GL'_{2n}\left(\KKv\right)$ for $v\in \VV$. Then for every finite place $v$
\[L^\sigma\left(s,\Pi_v'\right)=L\left(s,{}^\sigma\Pi_v'\right),\]
and hence, if $L(s,\Pi_v')$ has no pole at $s=\frac{1}{2}$, $L\left(\frac{1}{2},\Pi_v'\right)\in \QQ\left(\Pi_v'\right)$.
\end{lemma}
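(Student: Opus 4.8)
The plan is to compute the Godement--Jacquet factor $L(s,\pvi')$ explicitly from the local Langlands classification and to follow, term by term, what the abstract $\sigma$-twist does to the resulting expression. The decisive point is that the variable in which $f^\sigma$ is formed, namely $q_v^{s-\frac12}$, is precisely the one in which the half-integral powers of $q_v$ that occur in $L$-factors of essentially square-integrable representations get swept into the coefficients; working in that variable is what makes the identity true on the nose.

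First I would reduce to the split case: for $v\in\VV_\DD$ the factor $L(s,\pvi')$ of \cite[§8]{GodJac} agrees, by the compatibility of Godement--Jacquet $L$-factors with the local correspondence $\lj_v$, with the factor of a representation of a general linear group over $\KKv$; and since $\lj_v$ is pinned down by a trace-character identity that is manifestly $\Aut(\CC)$-equivariant — the local shadow of the bookkeeping in \Cref{L:twMW} — the operation ${}^\sigma(\,\cdot\,)$ commutes with $\lj_v$. We may therefore assume $\DD=\KK$ and that $\pvi'$ is an irreducible representation of $\GL_{2nd}(\KKv)$. Writing it by the Langlands classification as the irreducible quotient of $\delta_1\lvert\det\rvert^{u_1}\times\cdots\times\delta_r\lvert\det\rvert^{u_r}$ with $\delta_i$ tempered and $u_i\in\RR$, multiplicativity of Godement--Jacquet factors gives $L(s,\pvi')=\prod_i L(s+u_i,\delta_i)$; each $\delta_i$ is a generalized Steinberg representation $\st(\rho_i,l_i)$ with $\rho_i$ unitary supercuspidal, and $L(s,\st(\rho_i,l_i))=L(s+\tfrac{l_i-1}{2},\rho_i)$ equals $(1-\rho_i(\varpi_v)q_v^{-s-(l_i-1)/2})^{-1}$ when $\rho_i$ is an unramified character and $1$ otherwise. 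Hence, in the variable $X=q_v^{s-\frac12}$, $L(s,\pvi')$ is a finite product of factors $(1-\alpha_jX^{-1})^{-1}$.

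Next I would compare with $L(s,{}^\sigma\pvi')$. By \cite[Lemma 3.5(ii)]{CloI} (applied on $\GL_{2nd}$ and transported back through $\lj_v$ using \Cref{L:clozelstructure} together with the Langlands classification of the inner form), ${}^\sigma\pvi'$ is the Langlands quotient of the data $({}^\sigma\delta_i)\lvert\det\rvert^{u_i}$ corrected by a power of $\epsilon_\sigma$ as in (\ref{E:E2}); since $\epsilon_{\sigma,v}$ is the unramified character with $\epsilon_{\sigma,v}(\varpi_v)=q_v^{-1/2}/\sigma(q_v^{-1/2})$ and the only half-integral twists in sight are those inside the Steinberg constituents, the net effect on $\prod_j(1-\alpha_jX^{-1})^{-1}$ is exactly to replace each $\alpha_j$ by $\sigma(\alpha_j)$, which is precisely the operation $f\mapsto f^\sigma$ carried out in the variable $q_v^{s-\frac12}$. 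This gives $L^\sigma(s,\pvi')=L(s,{}^\sigma\pvi')$. The second assertion is then immediate: for $\sigma\in\mathfrak{S}(\pvi')$ one has ${}^\sigma\pvi'\cong\pvi'$, so $L^\sigma(s,\pvi')=L(s,\pvi')$, i.e.\ all coefficients of $L(s,\pvi')$ as a rational function of $q_v^{s-\frac12}$ lie in $\QQ(\pvi')$; evaluating at $s=\tfrac12$, where $q_v^{s-\frac12}=1$ and $L(s,\pvi')$ is assumed pole-free, the identity $\sigma(f(\tfrac12))=f^\sigma(\tfrac12)$ yields $L(\tfrac12,\pvi')\in\QQ(\pvi')$.

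The main obstacle is the bookkeeping in the third paragraph: one must check that, once every twist is rewritten in the variable $q_v^{s-\frac12}$, the half-integral powers of $q_v$ coming from the Steinberg constituents and those coming from $\epsilon_\sigma$ cancel so cleanly that the $\sigma$-twist on representations corresponds \emph{exactly} — with no residual factor of $q_v^{1/2}$ — to coefficient-wise application of $\sigma$ on the $L$-factor. Putting the compatibility of Godement--Jacquet factors with $\lj_v$ at the non-split places on a firm footing is a secondary technical point that should also be spelled out.
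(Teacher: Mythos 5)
Your coefficient bookkeeping (working in the variable $q_v^{s-\frac12}$, matching the coefficient-wise action of $\sigma$ with the representation-theoretic twist, evaluating at $s=\tfrac12$ where $q_v^{s-\frac12}=1$) is exactly the calculation that is needed, but the route you take to it -- reduction to the split case through $\lj_v$ -- has a genuine gap. The Godement--Jacquet $L$-factor of $\pvi'$ on $\GL'_{2n}(\KKv)$ is \emph{not} in general the $L$-factor of its Jacquet--Langlands partner on $\GL_{2nd}(\KKv)$: this compatibility holds for essentially square-integrable representations, but fails precisely for the non-tempered unitary representations that occur as local components at finite non-split places in this paper. For instance, if $\jl(\Pi')=\mw(\Sigma,2)$ and $v\in\VV_\DD\cap\VV_f$ is a place where $\Sigma_v$ is an unramified principal series $\chi_1\times\chi_2$, then $\pvi'\cong(\chi_1\circ\mathrm{Nrd})\times(\chi_2\circ\mathrm{Nrd})$ on $\GL_2(\DD_v)$ has the degree-two factor $\prod_i\bigl(1-\chi_i(\varpi_v)q_v^{-s-\frac12}\bigr)^{-1}$, whereas $\jl(\Pi')_v\cong(\chi_1\circ\det)\times(\chi_2\circ\det)$ on $\GL_4(\KKv)$ has the degree-four factor $\prod_i\bigl(1-\chi_i(\varpi_v)q_v^{-s-\frac12}\bigr)^{-1}\bigl(1-\chi_i(\varpi_v)q_v^{-s+\frac12}\bigr)^{-1}$; the two are different rational functions, so the transfer cannot be used to replace $L(s,\pvi')$ by a split-side factor. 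In addition, your assertion that ${}^\sigma(\cdot)$ commutes with $\lj_v$ because the character identity is $\Aut(\CC)$-equivariant is not proved and is not even well-posed as stated: $\lj_v$ is defined on unitary $d_v$-compatible representations and their $\lvert\det\rvert^s$-twists, and ${}^\sigma\pvi'$ need not remain in that set; the paper only establishes a global commutation statement (\Cref{P:cusp}) for the cuspidal $\Pi'$ itself, via strong multiplicity one, and never needs a local one here.

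The paper's proof stays entirely on the division-algebra side: it invokes Badulescu's explicit description of the local factors (\cite[Theorem 6.18]{Bad}) to see that $L\bigl(s+\tfrac{md-1}{2},\pvi'\bigr)$ is a rational function of $q_v^{\pm s}$ -- so that for $m=2n$ the correct variable is indeed $q_v^{s-\frac12}$ -- and then carries the proof of \cite[Lemma 4.6]{CloI} over \emph{mutatis mutandis} from $\GL_m$ to $\GL'_m$, using the $\GL'_m$-analogue of the Bernstein--Zelevinsky/Langlands classification (as in the remark after \Cref{L:clozelstructure}) to get ${}^\sigma L(s,\pvi')=L(s,{}^\sigma\pvi')$ directly. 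Your second and third paragraphs would survive essentially verbatim if run directly over $\DD_v$: classify $\pvi'$ by Langlands data over $\DD_v$, use multiplicativity of the Godement--Jacquet factors there together with the explicit factors of essentially square-integrable representations of $\GL_r(\DD_v)$, and check that Clozel's compatibility of twisting with this classification adapts. It is the detour through the split group, not the half-integer bookkeeping or the final evaluation at $s=\tfrac12$, that breaks.
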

\begin{proof}
For the first claim, note that by \cite[Theorem 6.18]{Bad}, we have an explicit description of the local $L$-factors and that for $\Pi_v'$ a representation of $\GL_m'(\KKv)$, $L(s+\frac{md-1}{2},\Pi_v')\in \CC(q_v^{s},q_v^{s})$. We denote then for $f\in \CC(q_v^{s},q_v^{s})$ by ${}^\sigma f$ the coefficient-wise application of $\sigma$. Note that for $m$ even we thus have that ${}^\sigma L(s+\frac{md-1}{2},\Pi_v')=L^\sigma(s+\frac{md}{2},\Pi_v')$. One can then carry over the proof of \cite[Lemma 4.6]{CloI} \emph{mutatis mutandis} from the case $\GL_m$ to $\GL_m'$ to obtain that 
\[{}^\sigma L\left(s+\frac{md-1}{2},\Pi_v'\right)= L\left(s+\frac{md-1}{2},{}^\sigma \Pi_v'\right).\]
Thus, for $m=2n$, one has$L^\sigma(s+nd,\Pi_v')=L\left(s+nd,{}^\sigma \Pi_v'\right)$ and since $q_v^{nd}\in \QQ$, the first claim follows.
For the second claim, it is enough to observe that in this case
\[\sigma\left(L\left(\frac{1}{2},\Pi_v'\right)\right)=L{}^\sigma\left(\frac{1}{2},\Pi_v'\right)=L\left(\frac{1}{2},{}^\sigma\Pi_v'\right)=L\left(\frac{1}{2},\Pi_v'\right)\]
for any $\sigma\in \Aut(\CC/ \QQ(\Pi_v'))$.
\end{proof}
\begin{lemma}\label{L:nomvec}
For $\Pi'$ as in \ref{I:prop} and $v\in {\VV_f}$ there exists a vector 
$\xi^0_{\Pi',v}\in\mS_{\psi_v}^{\eta_v}\left(\Pi_v'\right)_{\QQ\left(\Pi',\eta\right)}$ such that
$\zeta_v\left(s,\xi^0_{\Pi',v}\right)=L\left(s,\Pi_v'\right)$
if $v\notin S_{\Pi_f',\psi}$ and $P^\sigma(s,\xi^0_{\Pi',v})=P(s,{}^\sigma\xi^0_{\Pi',v})$ for all $\sigma \in \Aut\left(\CC/\mathbb{Q}(\Pi',\eta_v)\right)$
if $v\in S_{\Pi_f',\psi}$.
\end{lemma}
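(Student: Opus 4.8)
The plan is to first establish an $\Aut(\CC)$-equivariance of the local Shalika zeta integral $\zeta_v$, and then to exhibit $\xi^0_{\Pi',v}$ directly, treating separately the places where $\Pi'$ and $\psi$ are unramified and the remaining finite places. This follows the pattern of the corresponding step in \cite{GroRag}; the only genuinely new point is that the diagonal element $\tbf_{\sigma,v}$ entering the $\Aut(\CC)$-action on the Shalika model has $v$-adic unit entries, and so causes no trouble.

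First I would prove that for every $\xi\in\mS_{\psi_v}^{\eta_v}(\pvi')$ and every $\sigma\in\Aut(\CC)$ one has, as elements of $\CC(q_v^{s-\frac{1}{2}})$,
\[\zeta_v(s,{}^\sigma\xi)=\zeta_v(s,\xi)^\sigma,\]
where $\sigma$ acts on the coefficients of the Laurent polynomial $P(s,\xi)$ and on those of $L(s,\pvi')$. For $\re\gg 0$ the integral $\zeta_v(s,\xi)$ converges absolutely, and since $g_1\mapsto\xi\left(\bpm g_1&0\\0&1\epm\right)$ is smooth and compactly supported modulo the center while $\mathrm{vol}(K_v')=1$, its coefficient $c_m(\xi)$ at $q_v^{-m(s-\frac{1}{2})}$ is a finite $\QQ$-linear combination of values of $\xi$. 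By the definition of the $\Aut(\CC)$-action we have ${}^\sigma\xi\left(\bpm g_1&0\\0&1\epm\right)=\sigma\left(\xi\left(\bpm t_{\sigma,v}^{-1}g_1&0\\0&1\epm\right)\right)$; substituting $g_1\mapsto t_{\sigma,v}g_1$, which preserves $\dv g_1$ and every valuation slice $v(\det'_v g_1)=m$ because $t_{\sigma,v}$ is a $v$-adic unit, gives $c_m({}^\sigma\xi)=\sigma(c_m(\xi))$ for all $m$, hence the asserted identity of rational functions.

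Next I would identify the local factors. By hypothesis \ref{I:prop} the $\Aut(\CC)$-orbit of $\Pi'$ is cuspidal cohomological and admits a Shalika model with respect to $\eta$, so \Cref{T:connL} applies to both $\Pi'$ and ${}^\sigma\Pi'$ and yields $\zeta_v(s,\xi)=P(s,\xi)L(s,\pvi')$ and $\zeta_v(s,{}^\sigma\xi)=P(s,{}^\sigma\xi)L(s,{}^\sigma\pvi')$, where ${}^\sigma\xi=\sigma^*(\xi)$ lies in $\mS_{\psi_v}^{{}^\sigma\eta_v}({}^\sigma\pvi')$ by the equivariance of the Shalika model established above. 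Combining this with the identity just proved and with \Cref{L:compat}, which gives $L(s,\pvi')^\sigma=L(s,{}^\sigma\pvi')$ (a nonzero rational function), I may cancel $L(s,{}^\sigma\pvi')$ and obtain
\[P^\sigma(s,\xi)=P(s,{}^\sigma\xi)\qquad\text{for all }\xi\in\mS_{\psi_v}^{\eta_v}(\pvi')\text{ and all }\sigma\in\Aut(\CC).\]
Now for $v\in S_{\Pi_f',\psi}$ I take $\xi^0_{\Pi',v}$ to be any nonzero element of $\mS_{\psi_v}^{\eta_v}(\pvi')_{\QQ(\Pi',\eta)}$, which is a nonzero $\QQ(\Pi',\eta)$-structure by the construction of $\mS_{\psi_v}^{\eta_v}(\pvi')_\KK$ above, $\QQ(\Pi',\eta)$ being a number field containing $\QQ(\pvi',\eta_v)$; the required identity is the displayed one restricted to $\sigma\in\Aut(\CC/\QQ(\Pi',\eta_v))$. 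For $v\notin S_{\Pi_f',\psi}$ I take instead $\xi^0_{\Pi',v}:=\xi_{\pvi'}$, the spherical vector normalized by $\xi_{\pvi'}(\mathrm{id})=1$, for which \Cref{T:connL} gives $\zeta_v(s,\xi_{\pvi'})=L(s,\pvi')$; it remains only to check that $\xi_{\pvi'}\in\mS_{\psi_v}^{\eta_v}(\pvi')_{\QQ(\Pi',\eta)}$. For $\sigma\in\Aut(\CC/\QQ(\Pi',\eta))$ one has ${}^\sigma\eta_v=\eta_v$ and ${}^\sigma\pvi'\cong\pvi'$ (the rationality fields $\QQ(\eta_v)$ and $\QQ(\pvi')$ lie in $\QQ(\Pi',\eta)$), both still unramified, so ${}^\sigma\xi_{\pvi'}=\sigma^*(\xi_{\pvi'})$ lies in the one-dimensional space of $K_v'$-fixed vectors of $\mS_{\psi_v}^{\eta_v}(\pvi')$ and equals $c_\sigma\xi_{\pvi'}$ for some scalar $c_\sigma$; evaluating at $\mathrm{id}$ and using that $\tbf_{\sigma,v}^{-1}\in K_v'$ together with the right $K_v'$-invariance of $\xi_{\pvi'}$ yields $c_\sigma={}^\sigma\xi_{\pvi'}(\mathrm{id})=\sigma(\xi_{\pvi'}(\tbf_{\sigma,v}^{-1}))=\sigma(\xi_{\pvi'}(\mathrm{id}))=1$, hence ${}^\sigma\xi_{\pvi'}=\xi_{\pvi'}$.

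The step I expect to be the main obstacle is the equivariance of $\zeta_v$: one must ensure the Laurent coefficients of $\zeta_v(s,\xi)$ are genuine finite sums, so that the discontinuous field automorphism $\sigma$ may be passed through the slice integrals — this rests on $g_1\mapsto\xi\left(\bpm g_1&0\\0&1\epm\right)$ being compactly supported modulo the center and on the normalization $\mathrm{vol}(K_v')=1$ keeping the relevant volumes in $\QQ$ — and one must check that absorbing the twist $\tbf_{\sigma,v}$ contributes no factor $|t_{\sigma,v}|_v$, which is exactly where it matters that $\tbf_{\sigma,v}$ has $v$-adic unit entries. Once this is secured, the remainder is bookkeeping with the rational structures constructed above.
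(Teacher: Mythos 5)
Your proposal follows essentially the same route as the paper: establish $\zeta_v(s,{}^\sigma\xi)=\zeta_v(s,\xi)^\sigma$ by passing $\sigma$ through the Laurent coefficients of the zeta integral after absorbing $\tbf_{\sigma,v}$ (a unit) by a measure-preserving substitution, then combine \Cref{T:connL} with \Cref{L:compat} to get $P^\sigma(s,\xi)=P(s,{}^\sigma\xi)$, take any nonzero vector of the rational structure at ramified places, and at unramified places take the normalized spherical vector and check it is $\Aut(\CC/\QQ(\Pi',\eta))$-fixed via uniqueness of the $K_v'$-fixed line and $\tbf_{\sigma,v}^{-1}\in K_v'$; this is exactly the paper's argument (itself modelled on Jiang--Sun--Tian and Grobner--Raghuram).

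The one point to correct is your justification of the finiteness of the coefficients $c_m(\xi)$: the function $g_1\mapsto\xi\left(\bpm g_1&0\\0&1\epm\right)$ is \emph{not} compactly supported modulo the center in general — if it were, $\zeta_v(s,\xi)$ would converge for every $s$, contradicting your own (correct) statement that it converges only for $\re\gg 0$ and equals $P(s,\xi)L(s,\pvi')$ with $L$ having poles. What is true, and what the paper invokes from the proof of \Cref{T:connL}, is that the right-invariance of $\xi$ under a compact open subgroup combined with the $\psi$-equivariance under $U'_{(n,n)}$ forces $\xi\left(\bpm g_1&0\\0&1\epm\right)$ to vanish unless the entries of $g_1$ have bounded denominators, i.e. $g_1\in\varpi_v^{-N}M'_{n,n}(\OO_v')$ for some $N$ depending on $\xi$; consequently each determinant-valuation slice meets the support in a compact set, each $c_m(\xi)$ is a finite sum of values of $\xi$ with rational volume weights, and $c_m=0$ for $m\ll 0$. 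With that substitution your argument is complete; the fact that you prove the equivariance for all $\sigma\in\Aut(\CC)$ rather than only for $\sigma$ fixing $\QQ(\eta_v)$, as in the paper, is harmless.
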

\begin{proof}
We again follow the proof of \cite[Theorem 3.1]{JiaSunTia}. For $v\notin S_{\Pi_f',\psi}$
we choose $\xi_{\Pi',v}^0$ to be the normalized spherical vector of \ref{T:connL}. Note that $\xi_{\Pi',v}^0\in \mS_{\psi_v}^{\eta_v}\left(\Pi_v'\right)_{\QQ\left(\Pi',\eta_v\right)}$, since for $v\notin S_{\Pi_f',\psi}$ the normalization $\xi_{\Pi',v}^0\left(1\right)=1$ implies that ${}^\sigma\xi_{\Pi',v}^0\left(1\right)=1$ and hence, because $\sigma^*(\mS_{\psi_v}^{\eta_v}\left(\Pi_v'\right))=\mS_{\psi_v}^{\eta_v}\left(\Pi_v'\right)$, ${}^\sigma\xi_{\Pi',v}^0=\xi_{\Pi_v'}^0$.
Thus, $P(s,\xi_{\Pi'_f,v}^0)=1$ for all $v\notin S_{\Pi_f',\psi}$ and therefore $\zeta_v\left(s,\xi^0_{\Pi',v}\right)=L\left(s,\Pi_v'\right)$. For $v\in S_{\Pi'_f,\psi}$, pick any non-zero $\xi^0_{\Pi',v}\in\mS_{\psi_v}^{\eta_v}\left(\Pi_v'\right)_{\QQ\left(\Pi',\eta\right)}$ and
  recall that $P\left(s,\xi_{\Pi',v}^0\right)\in \CC[q_v^{s-\frac{1}{2}},q_v^{\frac{1}{2}-s}],$ see \ref{T:connL}, and the $L$-function $L\left(s,\Pi_v'\right)$ does not vanish at $s=\frac{1}{2}$, as it is the reciprocal of a polynomial. 
Since \[\frac{\zeta_v\left(s,\xi_{\Pi',v}^0\right)}{ L\left(s,\Pi_v'\right)}=P\left(s,\xi_{\Pi',v}^0\right),\, 
\frac{1}{ L\left(s,\Pi_v'\right)}\in\CC[q_v^{s-\frac{1}{2}},q_v^{\frac{1}{2}-s}],\]
we have $\zeta_v\left(s,\xi_{\Pi',v}^0\right)\in \CC\left(q_v^{s-\frac{1}{2}},q_v^{\frac{1}{2}-s}\right).$
From the definition of $\zeta_v\left(s,\xi_{\Pi',v}^0\right)$ it follows that the $k$-th coefficient of $q_v^{s-\frac{1}{2}}$ in $\zeta_v\left(s,\xi_{\Pi',v}^0\right)$ is \[c_k\left(\xi_{\Pi',v}^0\right)\coloneqq \int_{\substack{\GL'_n\left(\KKv\right),\\\lvert\det'\left(g_1\right)\lvert=q^{-k}}}\xi_{\Pi',v}^0\left(\bpm g_1&0\\0&1\epm\right)\dv g_1,\] which vanishes for $k<<0$ and is a finite sum, see the proof of \ref{T:connL}. Hence, by a change of variables, $c_k\left({}^\sigma\xi_{\Pi',v}^0\right)=\sigma\left(c_k\left(\xi_{\Pi',v}^0\right)\right)$ for all $\sigma\in\Aut\left(\CC/\QQ\left(\eta_v\right)\right)$. It follows that for all $s\in\CC,\,\sigma\in \Aut\left(\CC/\QQ\left(\eta_v\right)\right)$ $\zeta_v^\sigma\left(s,\xi_{\Pi',v}^0\right)=\zeta_v\left(s,{}^\sigma\xi_{\Pi',v}^0\right)$ by analytic continuation.
 \ref{L:compat} shows then that \[P^{\sigma}\left(s,\xi_{\Pi',v}^0\right)L^{\sigma}\left(s,\Pi_v'\right)=\zeta_v^\sigma\left(s,\xi_{\Pi',v}^0\right) =\zeta_v\left(s,{}^\sigma\xi_{\Pi',v}^0\right)=\]\[=P\left(s,{}^\sigma\xi_{\Pi',v}^0\right)L\left(s,{}^\sigma \Pi_v'\right)=P\left(s,{}^\sigma\xi_{\Pi',v}^0\right)L^\sigma\left(s,\Pi_v'\right)\]
for all $\sigma\in \Aut\left(\CC/\mathbb{Q}(\Pi',\eta_v)\right)$ and hence, 
$P^\sigma(s,\xi^0_{\Pi',v})=P(s,{}^\sigma\xi^0_{\Pi',v}).$
\end{proof}
 We let $\xi_{\Pi_f',0}\in \mS_{\psi_f}^{\eta_f}\left(\Pi_f'\right)$ be the image of $\bigotimes_{v\in {\VV_f}}\xi_{\Pi',v}^0$ under the fixed isomorphisms of \ref{I:prop}. 
\section{Periods}\label{I:prop2}
 In this section we will closely follow the strategy of \cite{GroRag}. Throughout the rest of the section let $\Pi'$ be an automorphic representation of $\GL_{2n}'\left(\A\right)$ as in \ref{I:prop}. Let $\mu$ be the highest weight such that $\Pi'$ is cohomological with respect to $E_\mu^\lor$ and assume that $\jl\left(\Pi'\right)=\mw\left(\Sigma,k\right)$ for some $k>1$ and $\Sigma$ a cuspidal irreducible representation of $\GL_l(\A)$ with $lk=2nd$.
Note that we also fixed a splitting isomorphism \[\Pi'\xrightarrow{\cong}\Pi_\infty'\otimes\Pi_f'\iso\bigotimes_{v\in {\VV_\infty}}\Pi_v'\otimes \Pi'_f.\]  and $\jl\left({}^\sigma\Pi'\right)={}^\sigma \mw\left(\Sigma,k\right)=\mw\left({}^\sigma\Sigma,k\right)$ by \ref{L:twMW} and \ref{P:cusp}. We also have that \[\left({}^\sigma\Pi'\right)_\infty\iso \bigotimes_{v\in {\VV_\infty}}\pi_{\sigma^{-1}\circ v}'.\]
Indeed, by \ref{T:twistiscoh} ${}^\sigma\Pi'$ is cohomological with respect to $^\sigma E_\mu^\lor$ and therefore
 \ref{T:dimcoh} and \ref{L:littlelem} show that for $v\in {\VV_\infty}$ ${}^\sigma\Pi_v'\cong A_{\underline{n'}}(\lambda_v)$, where $\lambda_v$ is determined by $\mu_{\sigma^{-1}\circ v}$ and $\underline{n'}$ is determined by $k$ and $l$. 
For $\sigma\in\Aut\left(\CC\right)$ we thus can fix a splitting isomorphism \[ \left({}^\sigma\Pi'\right)_\infty\otimes {}^\sigma\Pi_f'\iso \bigotimes_{v\in {\VV_\infty}}\pi_{\sigma^{-1}\circ v}'\otimes {}^\sigma\Pi_f'.\]
Let us give an example that satisfies all of those properties.
\begin{example}Let for a moment $\KK=\QQ$. Then in \cite[§ 6.11]{GroRag2} the following representation was constructed. Set $\Pi_\infty$ to the Langlands quotient of $F\left(1,s+2\right)\times F\left(-1,s+2\right)$ for $s$ a positive integer. This representation is cohomological with the coefficient system given by the weight vector $\left(\frac{s}{2},\frac{s}{2},-\frac{s}{2},-\frac{s}{2}\right)$. Moreover, $\Pi_\infty$ can be extended to a cuspidal irreducible automorphic representation of $\GL'_2\left(\A\right)$ with $\DD$ a quaternion algebra and is regular algebraic if $k$ is even.\end{example}
\subsection{ Orbifolds} Let $K_f'$ be a compact open subgroup of $\GL'_{2n}\left(\A_f\right)$ and denote the block diagonal embedding by $\iota\colon H'_n\hookrightarrow \GL'_{2n}.$ We set
\[\SG\coloneqq \GL'_{2n}\left(\KK\right)\bs \GL'_{2n}\left(\A\right)/K'_\infty K_f',\]\[\SH\coloneqq H'_n\left(\KK\right)\bs H'_n\left(\A\right)/\left(K'_\infty\cap H'_{n,\infty}\right)\iota^{-1}\left(K_f'\right).\]
 Let $r=\dim_\QQ\KK$ and note that if we consider $\SH$ as an orbifold, its real dimension is
\[{\dim_\RR\SH=r\left(\left(nd\right)^2-nd-1\right).}\]
\begin{lemma}
The embedding $\iota$ induces a proper map
\[\iota\colon\SH\rightarrow\SG.\]
\end{lemma}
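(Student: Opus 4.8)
The plan is to prove properness of $\iota$ by showing that $\iota^{-1}(C)$ is compact for every compact $C\subseteq\SG$; since $\SH$ and $\SG$ are locally compact, Hausdorff and second countable this is equivalent to properness, and because $\iota^{-1}(C)$ is automatically closed it suffices to prove that it is relatively compact. The two structural facts I would exploit are that $H'_n=\GL'_n\times\GL'_n$ is the Levi factor of the standard parabolic $P_{(n,n)}$ of $\GL'_{2n}$, and that $\SH$ is obtained by dividing $H'_n(\A)$ only by the central torus $Z'_{2n}(\KK\otimes\RR)^\circ\subseteq K'_\infty\cap H'_{n,\infty}$, and not by all of $(Z'_n\times Z'_n)(\KK\otimes\RR)^\circ$. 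The residual non-compactness of $\SH$ then splits into two kinds, organised by the function $\lambda(h)\coloneq\lvert\det'(h_1)\rvert\,\lvert\det'(h_2)\rvert^{-1}$, which by the product formula descends to a well-defined map $\lambda\colon\SH\to\RR_{>0}$ (it is $Z'_{2n}$-invariant since $Z'_{2n}$ sits diagonally in $H'_n$): the ``$\lambda$-direction'', and the norm-one fibres $\{\lambda=\mathrm{const}\}$, which are quotients of $H'_n(\A)^{1}$, the subgroup where all $\KK$-rational characters have absolute value $1$.

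Concretely I would argue by contradiction: suppose $x_j\in\iota^{-1}(C)$ leaves every compact subset of $\SH$. After passing to a subsequence, either (i) $\lambda(x_j)\to\infty$ or $\lambda(x_j)\to 0$, or (ii) $\lambda(x_j)$ stays in a fixed compact subset of $\RR_{>0}$ while the $x_j$ escape within the corresponding norm-one region. In case (i) I would use that $\lambda(h)$ is, up to a fixed power, precisely the $A_{P_{(n,n)}}$-coordinate of $\iota(h)$ in the Iwasawa decomposition of $\GL'_{2n}(\A)$ relative to $P_{(n,n)}$; hence $\iota(x_j)$ tends to the $P_{(n,n)}$-cusp (respectively to the $\overline{P_{(n,n)}}$-cusp) of $\SG$ and therefore leaves every compact subset of $\SG$, contradicting $\iota(x_j)\in C$. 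In case (ii) I would use that, $H'_n$ being a Levi factor of a parabolic of $\GL'_{2n}$, for each standard $\KK$-parabolic $Q$ of $H'_n$ there is a standard $\KK$-parabolic $P'\subseteq P_{(n,n)}$ of $\GL'_{2n}$ with $Q=H'_n\cap P'$ and $A_Q=A_{P'}$; consequently the cusps of $H'_n(\KK)\backslash H'_n(\A)^{1}$ are carried by $\iota$ into cusps of $\GL'_{2n}(\KK)\backslash\GL'_{2n}(\A)^{1}$, and reduction theory for the two groups shows that the induced map $H'_n(\KK)\backslash H'_n(\A)^{1}\to\GL'_{2n}(\KK)\backslash\GL'_{2n}(\A)^{1}$ is proper. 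This properness then descends to the relevant quotients by the common central torus $Z'_{2n}(\KK\otimes\RR)^\circ\cap(\,\cdot\,)^{1}$ and by the maximal compact and open compact subgroups, yielding a contradiction with $\iota(x_j)\in C$ also in case (ii). (In the case of interest, $n=1$ and $\DD$ a quaternion algebra, $\GL'_1$ is anisotropic modulo its centre, so $H'_1(\KK)\backslash H'_1(\A)^{1}$ is compact and case (ii) is essentially automatic.)

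The step I expect to be the main obstacle is the reduction-theoretic bookkeeping attached to case (i), together with the comparison of Siegel sets: one must check uniformly that $\lambda(h)$ controls the distance of $\iota(h)$ to the $P_{(n,n)}$- and $\overline{P_{(n,n)}}$-cusps of $\SG$, so that compactness of $C$ genuinely forces $\lambda$ to be bounded above and bounded away from $0$ on $\iota^{-1}(C)$, with the two signs of the simple root $e_n-e_{n+1}$ handled symmetrically; and one must confirm the compatibility of the Siegel sets of $H'_n$ and of $\GL'_{2n}$ under $\iota$. A conceptually cleaner route to the same conclusion is the functoriality of the Borel--Serre compactification: $\iota$ extends to a continuous map $\overline{\SH}\to\overline{\SG}$ of Borel--Serre compactifications that sends the boundary into the boundary, from which properness of the restriction to the interiors is immediate; the content there is again the compatibility of boundary strata, which is precisely the Levi--parabolic relation together with the interpretation of $\lambda$ given above.
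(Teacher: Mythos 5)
Your proposal takes a genuinely different route from the paper, which disposes of the lemma in two lines: it invokes the general properness result \cite[Lemma 2.7]{Ash} for the reductive subgroup $H'_n\subseteq \GL'_{2n}$ (ultimately resting on $\GL'_{2n}/H'_n$ being affine), applied to the map $H'_n(\KK)\bs H'_n(\A)/\iota^{-1}(K_f')\rightarrow \SG$, and then notes that this map factors through the surjection onto $\SH$, so the second factor is proper. Your plan instead re-proves the hard content by hand, and the decisive steps are asserted rather than argued. In case (i), the function $\lambda(h)=\lvert\det'(h_1)\rvert\,\lvert\det'(h_2)\rvert^{-1}$ is indeed well defined on $\SH$, but it is \emph{not} invariant under $\GL'_{2n}(\KK)$ (a rational element does not preserve the block decomposition), so the Iwasawa $A_{P_{(n,n)}}$-coordinate of $\iota(h)$ is not a function on $\SG$; from $\lambda(x_j)\to\infty$ you cannot directly conclude that $\iota(x_j)$ leaves every compact subset of $\SG$, because a priori a rational translate could return the Siegel coordinates to a bounded range. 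What is needed is a genuinely left-$\GL'_{2n}(\KK)$-invariant compactness criterion, e.g.\ the Godement--Mahler criterion (via a closed embedding $\GL'_{2n}\hookrightarrow \GL_{2nd}$: escape is equivalent to the appearance of arbitrarily short nonzero vectors in the adelic lattice), or simply the properness lemma the paper cites. The same objection applies to case (ii): the statement that the map on norm-one quotients $H'_n(\KK)\bs H'_n(\A)^1\rightarrow \GL'_{2n}(\KK)\bs\GL'_{2n}(\A)^1$ is proper is essentially the assertion being proved, and matching parabolics $Q=H'_n\cap P'$ with $A_Q=A_{P'}$ does not by itself rule out that a rational translate brings an escaping sequence back into a fixed Siegel set. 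Note also that your dichotomy tacitly uses Dirichlet's unit theorem to see that the single global ratio $\lambda$ (rather than the per-place ratios) is the only surviving central escape direction of $\SH$.

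These gaps are fillable along the lines you indicate, and most cleanly in a way that unifies your two cases: $\iota(h)$ is block diagonal, so escape in $\SH$ (either $\lambda\to 0,\infty$ after central normalization, or degeneration of one of the two norm-one factors) produces nonzero vectors of arbitrarily small norm in one of the two block lattices, hence in the full lattice, and Mahler/Godement then forces $\iota(x_j)$ to leave every compact subset of $\SG$; alternatively one can simply quote the general properness lemma, as the paper does, and recover your central-ray discussion for free from the paper's surjectivity/factorization trick, since quotienting the source by less can only make properness harder, not easier. Finally, the proposed Borel--Serre shortcut is not immediate: the surviving central ray measured by $\lambda$ is a Euclidean factor that the Borel--Serre boundary of $\SH$ does not compactify, and functoriality of the Borel--Serre compactification under subgroup inclusions is itself a nontrivial compatibility statement, so as stated it does not eliminate the reduction-theoretic work but merely relocates it.
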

\begin{proof}
It follows from \cite[Lemma 2.7]{Ash} that 
$ H'_n\left(\KK\right)\bs H'_n\left(\A\right)/\iota^{-1}\left(K_f'\right)\rightarrow \SG$ is proper. But this map factors as
\[ H'_n\left(\KK\right)\bs H'_n\left(\A\right)/\iota^{-1}\left(K_f'\right) \rightarrow \SH \rightarrow \SG.\]
Since the first map is surjective and the composition is proper, the second map is proper.
\end{proof}
 Next let $E_\mu^\lor$ be a highest weight representation of $\GL_{2n,\infty}'$ and consider the locally constant sheaf $\mE$ on $\SG$, whose espace \'etal\'e is
\[\GL'_{2n}(\A)/K'_\infty K_f'\times_{\GL'_{2n}\left(\KK\right)}E_\mu^\lor,\]
We consider its cohomology groups of compact support
\[H^*_c\left(\SG,\mE\right),\, H^*_c\left(\SH,\mE\right).\]
Both carry a natural structure of a module of the Hecke algebra
\[\mathcal{H}_{K_f'}^{\GL'_{2n}}=S\left(K_f'\bs \GL'_{2n}\left(\A_f\right)/K_f'\right),\,\mathcal{H}_{K_f'}^{H'_n}=S\left( \iota^{-1}(K_f')\bs H'_n\left(\A_f\right)/\iota^{-1}(K_f')\right),\]
where the product is as usual given by convolution.
 Now since $\iota$ is proper, it defines a map between compactly supported cohomology groups
\[\iota^*\colon H^*_c(\SG,\mE)\rightarrow H^*_c(\SH,\mE).\]
Recall that for all $\sigma\in\Aut(\CC)$ there exists then a
$\sigma$-linear isomorphism $\sigma\colon E_\mu^\lor\rightarrow  {}^\sigma E_\mu^\lor$ of 
$\GL_{2n}'(\KK)$-representations.
Thus, there exist natural $\sigma$-linear isomorphisms of Hecke algebra-modules
\[\sigma_{\GL_{2n}'}^*\colon H^*_c\left(\SG,\mE\right)\rightarrow H^*_c\left(\SG,{}^\sigma\mE\right),\,\sigma_{H'_n}^*\colon H^*_c\left(\SH,\mE\right)\rightarrow H^*_c\left(\SH,{}^\sigma \mE\right),\]
as well as a morphism
\[{}^\sigma\iota^*\colon H^*_c\left(\SG,{}^\sigma \mE\right)\rightarrow H^*_c\left(\SH,{}^\sigma \mE\right).\]
Then the following diagram commutes.
\begin{equation}\label{E:sumdia2}
\begin{tikzcd}
H^*_c\left(\SG,{}^\sigma \mE\right)\arrow[rr, "\iota^*"]\arrow[d, "\sigma_{\GL'_{2n}}^*"] & & H^*_c\left(\SH,\mE\right)\arrow[d, "\sigma_{H'_n}^*"]\\
H^*_c\left(\SG,{}^\sigma \mE\right)\arrow[rr, "{}^\sigma \iota^*"] & & H^*_c\left(\SH,{}^\sigma \mE\right)
\end{tikzcd}
\end{equation}
\begin{lemma}[{\cite[Lemma 7.3]{GroRag2}}]
The $\mathcal{H}_{K_f'}^{\GL'_{2n}}$-module $H^*_c\left(\SG,\mE\right)$ and the $\mathcal{H}_{K_f'}^{H'_n}$-module $H^*_c\left(\SH,\mE\right)$ are defined over $\QQ\left(\mu\right)$ by taking $\Aut(\CC/ \QQ(\mu))$-invariant vectors under the action given by above $\sigma_{\GL_{2n}'}^*$, respectively, $\sigma_{H_n'}^*$.
\end{lemma}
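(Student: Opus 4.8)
This is \cite[Lemma 7.3]{GroRag2}; we recall the argument. The plan is to exhibit a $\QQ(\mu)$-rational structure on the coefficient sheaf $\mE$ and then to check that the $\Aut(\CC/\QQ(\mu))$-action induced by the maps $\sigma^*_{\GL'_{2n}}$ and $\sigma^*_{H'_n}$ is, with respect to that structure, nothing but the Galois action on scalars; the subspace of invariants is then automatically the desired $\QQ(\mu)$-structure, and Hecke-stability will be built in along the way.

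First I would fix a rational structure on the coefficients. By \cite[Lemma 7.1]{GroRag2} the highest weight module $E_\mu^\lor$ is defined over $\LL$; choose a $\GL'_{2n}(\KK)$-stable $\LL$-form $W_\LL\subseteq E_\mu^\lor$ and, since $\QQ(\mu)\supseteq\LL$, put $W_{\QQ(\mu)}:=W_\LL\otimes_\LL\QQ(\mu)$, a $\QQ(\mu)$-form of $E_\mu^\lor$. Forming the espace \'etal\'e $\GL'_{2n}(\A)/K'_\infty K'_f\times_{\GL'_{2n}(\KK)}W_{\QQ(\mu)}$ produces a locally constant sheaf $\mathcal{E}_{\QQ(\mu)}$ of finite-dimensional $\QQ(\mu)$-vector spaces on $\SG$ with $\mathcal{E}_{\QQ(\mu)}\otimes_{\QQ(\mu)}\CC\cong\mE$, and pulling back along $\iota$ does the same on $\SH$.

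Next I would pass to a neat level, replacing $K'_f$ by a neat normal subgroup and taking a limit; this is harmless for the statement, since the level maps and all Hecke operators are $\QQ$-rational. Then $\SG$ and $\SH$ are genuine real-analytic manifolds with the homotopy type of finite CW-complexes, so $H^*_c(\SG,\mathcal{E}_{\QQ(\mu)})$ is a finite-dimensional $\QQ(\mu)$-vector space, and compactly supported cohomology of a locally constant sheaf commutes with the flat base change $-\otimes_{\QQ(\mu)}\CC$, giving a natural isomorphism $H^*_c(\SG,\mathcal{E}_{\QQ(\mu)})\otimes_{\QQ(\mu)}\CC\cong H^*_c(\SG,\mE)$, and similarly over $\SH$. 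Since the operators of $\mathcal{H}^{\GL'_{2n}}_{K'_f}$ and $\mathcal{H}^{H'_n}_{K'_f}$ are given by correspondences of the underlying spaces with integer multiplicities together with the tautological identifications of stalks, all defined over $\QQ$, these $\QQ(\mu)$-subspaces are sub-Hecke-modules.

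The final and most delicate step is to identify the $\Aut(\CC)$-action. For $\sigma\in\Aut(\CC/\QQ(\mu))$ one has ${}^\sigma E_\mu^\lor\cong E_\mu^\lor$, and the $\sigma$-linear isomorphism $\sigma\colon E_\mu^\lor\to{}^\sigma E_\mu^\lor$ entering the definition of $\sigma^*_{\GL'_{2n}}$ should be taken to be the one attached to $W_\LL$: since $\sigma$ fixes $\LL\subseteq\QQ(\mu)$ it is the identity on $W_\LL$, hence becomes $\mathrm{id}\otimes\sigma$ on $W_{\QQ(\mu)}\otimes_{\QQ(\mu)}\CC$. By functoriality of the espace-\'etal\'e construction and of $H^*_c(-)$ in the coefficient module, $\sigma^*_{\GL'_{2n}}$ then becomes $\mathrm{id}\otimes\sigma$ on $H^*_c(\SG,\mathcal{E}_{\QQ(\mu)})\otimes_{\QQ(\mu)}\CC$, and similarly for $\sigma^*_{H'_n}$, the two being compatible with $\iota^*$ by the commutativity of diagram (\ref{E:sumdia2}). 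As the $\Aut(\CC/\QQ(\mu))$-invariants of $V\otimes_{\QQ(\mu)}\CC$ under the action on the second factor are $V\otimes 1$, it follows that $H^*_c(\SG,\mE)^{\Aut(\CC/\QQ(\mu))}$ and $H^*_c(\SH,\mE)^{\Aut(\CC/\QQ(\mu))}$ are exactly the rational structures constructed above, as Hecke modules. I expect the main obstacle to be precisely this last matching of the abstract $\sigma$-linear isomorphism with $\mathrm{id}\otimes\sigma$ together with the neat-level reduction: one must verify that shrinking the level is innocuous for the Hecke-module statement and that the normalization of $\sigma\colon E_\mu^\lor\to{}^\sigma E_\mu^\lor$ used for the vertical maps in (\ref{E:sumdia2}) is the one compatible with both the $\Aut(\CC)$-action on highest weight modules and its geometric realization — exactly the bookkeeping of \cite[§7]{GroRag2}.
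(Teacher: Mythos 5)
Your reconstruction is correct and is essentially the argument behind the cited result: the paper itself offers no proof but simply imports \cite[Lemma 7.3]{GroRag2}, whose proof proceeds exactly as you describe — a $\GL'_{2n}(\KK)$-stable $\QQ(\mu)$-form of $E_\mu^\lor$ giving a $\QQ(\mu)$-sheaf, flat base change for $H^*_c$, $\QQ$-rationality of the Hecke correspondences, and the normalization of the $\sigma$-linear maps on the rational structure so that $\sigma^*_{\GL'_{2n}}$ and $\sigma^*_{H'_n}$ become $\mathrm{id}\otimes\sigma$, whence Galois invariants recover the $\QQ(\mu)$-structure. The only points needing the care you already flag are the orbifold/neat-level reduction and the choice of normalization of $\sigma\colon E_\mu^\lor\to{}^\sigma E_\mu^\lor$, both handled as in \cite[\S 7]{GroRag2}.
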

If $K_f'\subseteq K_f''$ consider the canonical map $\SG\rightarrow \textbf{S}_{K_f''}^{\GL_{2n}'}$. This allows us to define the space \[\bsg\coloneqq \lim_{\stackrel{\longleftarrow}{K_f'}}\SG\]
as a projective limit. Note that $\mE$ naturally extends to $\bsg$ and hence, the cohomology $H^*_c\left(\bsg,\mE\right)$ is a $\GL(\A_f)$-module.
\begin{prop}[{\cite[Proposition 7.16, Theorem 7.23]{GroRag2}}]
There exists an inclusion of the space
\[H^*_{cusp}\left(\GL_{2n}',E_\mu^\lor\right)\coloneqq \bigoplus_{\Pi'\text{ cuspidal}}H^*\left(\lig_\infty',K_\infty',\Pi_\infty'\otimes E_\mu^\lor\right)\otimes \Pi_f'\]
into $H^*_c\left(\bsg,\mE\right)$ respecting the $\GL_{2n}'\left(\A_f\right)$-action.
Write $H^*_c\left(\bsg,\mE\right)\left(\Pi_f'\right)$ for the image of  \[H^*\left(\lig_\infty',K_\infty',\Pi_\infty'\otimes E_\mu^\lor\right)\otimes \Pi_f'\] under this inclusion.

 If $K_f'$ fixes $\Pi'_f$, we obtain an inclusion $H^*_c\left(\bsg,\mE\right)\left(\Pi'_f\right)\hookrightarrow H^*_c\left(\SG,\mE\right)$ and we denote its image again by $H^*_c\left(\bsg,\mE\right)\left(\Pi'_f\right)$.
 Moreover, the isomorphism $\sigma_{\GL_{2n}'}^*$ respects the decomposition, i.e. if $\Pi'$ and ${}^\sigma\Pi'$ are both cuspidal then \[\sigma_{\GL_{2n}'}^*(H^*_c\left(\bsg,\mE\right)\left(\Pi'_f\right))=H^*_c\left(\bsg,\mE\right)\left({}^\sigma\Pi'_f\right)\]
 for $\sigma\in \Aut\left(\CC\right)$ and thus if the $\Aut(\CC)$-orbit of $\Pi'$ is cuspidal, the cohomology group $H^*_c\left(\SG,\mE\right)\left(\Pi'_f\right)$ is defined over $\QQ\left(\Pi'\right)$.
\end{prop}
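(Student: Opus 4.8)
The plan is to realize cuspidal cohomology inside compactly supported cohomology by the de Rham--relative Lie algebra comparison, and then to track the semilinear operators $\sigma_{\GL'_{2n}}^*$ through that identification. First I would use that a cusp form on $\GL'_{2n}(\KK)\bs\GL'_{2n}(\A)$ is rapidly decreasing, so the $E_\mu^\lor$-valued relative Lie algebra cochains built from the cuspidal spectrum map into the compactly supported de Rham complex of the tower $\bsg$; by Borel's theorem on square-integrable cohomology (cf.\ \cite{BorWal}) together with \cite[Theorem 7.23]{GroRag2} this yields a $\GL'_{2n}(\A_f)$-equivariant split injection of $H^*\!\left(\lig'_\infty,K'_\infty,\mathcal{A}_{\mathrm{cusp}}\otimes E_\mu^\lor\right)$ into $H^*_c(\bsg,\mE)$. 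The spectral decomposition of $\mathcal{A}_{\mathrm{cusp}}$ together with multiplicity one for $\GL'_{2n}$ (Badulescu's transfer, \cite{Bad}) writes the left-hand side as the Hilbert direct sum over cuspidal $\Pi'$ of $H^*(\lig'_\infty,K'_\infty,\pfi'\otimes E_\mu^\lor)\otimes\Pi'_f$, and the K\"unneth formula for $\left(\lig'_\infty,K'_\infty\right)$-cohomology gives the stated decomposition and the definition of $H^*_c(\bsg,\mE)(\Pi'_f)$. For the second assertion I would pass to $K_f'$-invariants: since $H^*_c(\bsg,\mE)=\varinjlim_{K_f'}H^*_c(\SG,\mE)$ with $\SG$ recovered as the $K_f'$-fixed locus of the tower, taking $K_f'$-fixed vectors identifies $H^*_c(\bsg,\mE)^{K_f'}$ with $H^*_c(\SG,\mE)$, and intersecting with the $\Pi'_f$-isotypic part leaves $H^*(\lig'_\infty,K'_\infty,\pfi'\otimes E_\mu^\lor)\otimes(\Pi'_f)^{K_f'}$, which is nonzero exactly when $K_f'$ fixes $\Pi'_f$.

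The heart of the argument is the Galois compatibility. Since $E_\mu^\lor$ is defined over $\QQ(\mu)$ by \cite[Lemma 7.1]{GroRag2}, the sheaf $\mE$ and hence $H^*_c(\SG,\mE)$ and $H^*_c(\bsg,\mE)$ carry $\QQ(\mu)$-structures whose descent data are precisely the semilinear maps $\sigma_{\GL'_{2n}}^*$ introduced above; in particular, because the spherical Hecke generators act on Betti cohomology by operators defined over $\QQ$, the map $\sigma_{\GL'_{2n}}^*$ intertwines the $\mathcal{H}_{K_f'}^{\GL'_{2n}}$-action up to applying $\sigma$ to Hecke eigenvalues. Thus at a finite place $v$ where $\Pi'$ and $\psi$ are unramified, $\sigma_{\GL'_{2n}}^*$ carries the $\Pi'_f$-eigenspace to the eigenspace whose spherical parameter is the $\sigma$-conjugate of that of $\pvi'$; by the compatibility of Satake parameters with $\Aut(\CC)$ (built into \Cref{T:rat}, cf.\ \cite[Lemma 4.6]{CloI}) this is the parameter of ${}^\sigma\pvi'=\left({}^\sigma\Pi'\right)_v$, and since ${}^\sigma\Pi'$ is cuspidal by \Cref{P:cusp} and cohomological with respect to ${}^\sigma E_\mu$ by \Cref{T:twistiscoh}, Strong Multiplicity One identifies this eigenspace with $H^*_c(\bsg,{}^\sigma\mE)({}^\sigma\Pi'_f)$. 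This gives $\sigma_{\GL'_{2n}}^*\bigl(H^*_c(\bsg,\mE)(\Pi'_f)\bigr)=H^*_c(\bsg,{}^\sigma\mE)({}^\sigma\Pi'_f)$, and restricting to a level fixing $\Pi'_f$ gives the same statement for $H^*_c(\SG,\mE)$.

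Finally I would descend. Assume the $\Aut(\CC)$-orbit of $\Pi'$ is cuspidal and take $\sigma\in\Aut(\CC/\QQ(\Pi'))$. By \Cref{T:rat} we have $\QQ(\Pi')=\QQ(\mu)\QQ(\Pi'_f)$, so $\sigma$ fixes $\QQ(\mu)$ --- whence ${}^\sigma\mE=\mE$ and $\sigma_{\GL'_{2n}}^*$ is the $\QQ(\mu)$-descent datum --- and fixes $\QQ(\Pi'_f)$, so ${}^\sigma\Pi'_f\cong\Pi'_f$; by the previous paragraph $\sigma_{\GL'_{2n}}^*$ therefore stabilizes the finite-dimensional subspace $H^*_c(\SG,\mE)(\Pi'_f)$. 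Galois descent for vector spaces (\cite[Proposition 11.1.6]{Spr}) then shows that the $\Aut(\CC/\QQ(\Pi'))$-invariants of $H^*_c(\SG,\mE)(\Pi'_f)$ form a $\QQ(\Pi')$-vector space whose base change to $\CC$ recovers $H^*_c(\SG,\mE)(\Pi'_f)$, i.e.\ this isotypic component is defined over $\QQ(\Pi')$.

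I expect the main obstacle to be the Galois step: one must know that the semilinear Betti operator $\sigma_{\GL'_{2n}}^*$, transported through the comparison isomorphism, genuinely implements the abstract $\sigma$-twist ${}^\sigma(-)$ on $\Pi'_f$. This rests on the Hecke-equivariance and the $\sigma$-compatibility of that comparison together with Clozel-type purity of the Satake parameters, and --- for the decomposition to be meaningful at all --- on the fact from \cite[Theorem 7.23]{GroRag2} that cuspidal cohomology splits off as a Hecke-stable direct summand rather than merely injecting into $H^*_c$.
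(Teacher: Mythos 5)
Your proposal is correct and follows essentially the same route as the paper's source: the paper offers no proof of its own but quotes \cite{GroRag2} (Proposition 7.16, Theorem 7.23), whose argument is exactly your outline --- rapid decay of cusp forms/Borel's theorem to embed cuspidal cohomology $\GL'_{2n}(\A_f)$-equivariantly into $H^*_c(\bsg,\mE)$, multiplicity one and K\"unneth for the isotypic decomposition, Hecke-equivariance of $\sigma^*_{\GL'_{2n}}$ together with strong multiplicity one to show the $\Pi'_f$-isotypic summand is carried to the ${}^\sigma\Pi'_f$-isotypic summand, and finally Galois descent over $\QQ(\Pi')=\QQ(\mu)\QQ(\Pi'_f)$. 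No gaps worth flagging.
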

\subsubsection{}\label{S:5.3} Let $q_0$ be the lowest degree in which the cohomology 
$H^{q_0}\left(\lig'_\infty,K_\infty', \Pi_\infty'\otimes E_\mu^\lor \right)$
does not vanish. Thus, by \ref{T:dimcoh}
\[\CC\cong H^{q_0}\left(\lig'_\infty,K_\infty', \mS_{\psi_\infty}^{\eta_\infty}\left(\Pi_\infty'\right)\otimes E_\mu^\lor \right)= \left(\bigwedge^{q_0}\left(\lig'_\infty,/\mathfrak{k}'_\infty\right)^*\otimes\mS_{\psi_\infty}^{\eta_\infty}\left(\Pi_\infty'\right)\otimes E_\mu^\lor\right)^{K_\infty'}.\]
We fix once and for all a generator of $H^{q_0}\left(\lig'_\infty,K_\infty', \mS_{\psi_\infty}^{\eta_\infty}\left(\Pi_\infty'\right)\otimes E_\mu^\lor\right)$
as follows.
First fix an Künneth-isomorphism \[\mathfrak{K}\colon H^{*}\left(\lig'_\infty,K_\infty', \mS_{\psi_\infty}^{\eta_\infty}\left(\Pi_\infty'\right)\otimes E_\mu^\lor\right)\iso \bigotimes_{v\in {\VV_\infty}} H^{*}\left(\lig'_v,K_v', \mS_{\psi_v}^{\eta_v}\left(\Pi_v'\right)\otimes E_{\mu_v}^\lor\right),\] which is determined by the already fixed isomorphism $\mS_{\psi_\infty}^{\eta_\infty}\left(\Pi_\infty'\right)\cong \bigotimes_{v\in {\VV_\infty}}\mS_{\psi_v}^{\eta_v}\left(\Pi_v'\right),$ 
and let $q_{0,v}$ be the lowest degree in which the cohomology 
\[H^{q_{0,v}}\left(\lig'_v,K_v', \mS_{\psi_v}^{\eta_v}\left(\Pi_v'\right)\otimes E_{\mu_v}^\lor\right)=\left(\bigwedge^{q_{0,v}}\left(\lig'_v/\mathfrak{k}'_v\right)^*\otimes\mS_{\psi_v}^{\eta_v}\left(\Pi_v'\right)\otimes E_{\mu_v}^\lor\right)^{K_v'}\] does not vanish and similarly we fix Künneth-isomorphisms $\mathfrak{K}_\sigma$ for all $\sigma\in \Aut(\CC)$.
For $v\in {\VV_\infty}$ we then choose a generator of this space of the form
\begin{equation}\label{E:periods}[\Pi_v']\coloneqq \sum_{\underline{i}=\left(i_1,\ldots,i_{q_{0,v}}\right)}\sum_{\alpha=1}^{\dim E_{\mu_v}^\lor} X_{\underline{i}}^*\otimes \xi_{v,\alpha,\underline{i}}\otimes e_\alpha^\lor, \end{equation}
where 
\begin{enumerate}
\item Pick a $\mathbb{L}$-basis $\{Y_i\}$ of $\mathfrak{h}_v'/(\mathfrak{h}_v'\cap \mathfrak{k}'_v)$.
    \item Extend $\{Y_i\}$ to a $\mathbb{L}$-basis $\{X_i\}$ of
   $\lig'_v/\mathfrak{k}'_v$, set $\{X_i^*\}$ to the corresponding dual basis of $\left(\lig'_v/\mathfrak{k}'_v\right)^*$ and $X_{\underline{i}}^*\coloneq\bigwedge_{i\in\underline{i}}X_i^*$.
    \item A $\QQ\left(\mu\right)$-basis $e^\lor_\alpha$ of $E_{\mu_v}^\lor$.
    \item For each $\alpha$ and $\underline{i}$ a vector $\xi_{v,\alpha,\underline{i}}\in \mS_{\psi_v}^{\eta_v}\left(\Pi_v'\right)$.
\end{enumerate}
We then set $[\Pi_\infty']\coloneqq \mathfrak{K}^{-1}\left(\bigotimes_{v\in {\VV_\infty}}[\Pi_v']\right).$
We further assume that the $X_i$'s are a extension of a basis of $\mathfrak{h}'_\infty/\left(\mathfrak{h}'_\infty\cap \mathfrak{k}'_\infty\right)$, where $\mathfrak{h}'_\infty$ is the Lie algebra at infinity of $H'_n\left(\A\right)$. Finally for $\sigma\in\Aut\left(\CC\right)$ we set
\[\sigma\left([\Pi_\infty']\right)\coloneqq [\left({}^\sigma\Pi'\right)_\infty]\coloneqq \mathfrak{K}_\sigma^{-1}\left(\bigotimes_{v\in {\VV_\infty}}[\pi'_{\sigma^{-1}\circ v}]\right).\]
Let $K_f'$ be an open compact subgroup of $\GL_{2n}'\left(\A_f\right)$ which fixes $\Pi'$. A choice of such a generator $[\Pi_\infty']$ fixes an isomorphism of  $\mathcal{H}_{K_f'}^{\GL'_{2n}}$-module
\[\Theta_{\Pi'}\colon\mS_{\psi_f}^{\eta_f}\left(\Pi'_f\right)\iso H_c^{q_0}\left(\bsg,\mE\right)\left(\Pi'_f\right)\] defined by 
\[\mS_{\psi_f}^{\eta_f}\left(\Pi'\right)\iso\mS_{\psi_f}^{\eta_f}\left(\Pi'\right)\otimes H^{q_0}\left(\lig'_\infty,K_\infty',\mS_{\psi_\infty}^{\eta_\infty}\left(\Pi_\infty'\right)\otimes E_\mu^\lor\right)\iso \]\[\iso H^{q_0}\left(\lig'_\infty,K_\infty',\mS_{\psi}^{\eta}\left(\Pi'\right)\otimes E_\mu^\lor\right)\iso H^{q_0}\left(\lig'_\infty,K_\infty', \Pi'\otimes E_\mu^\lor\right)\iso \]\[\iso H_c^{q_0}\left(\bsg,\mE\right)\left(\Pi'_f\right), \]
where the first isomorphism is the one induced by $[\Pi_\infty]$ and the third isomorphism is the one induced by the inverse of $\Pi'\iso\mS_{\psi}^\eta\left(\Pi'\right)$.
\begin{theorem}\label{T:2strc}
For each $\sigma\in\Aut\left(\CC/\LL\right)$ there exists a complex number \[\omega\left({}^\sigma\Pi'_f\right)=\omega\left({}^\sigma\Pi'_f,[{}^\sigma\Pi_\infty']\right)\in \CC^\times\] such that $\Theta_{^\sigma\Pi',0}\coloneq\omega\left({}^\sigma\Pi'_f\right)^{-1}\Theta_{{}^\sigma\Pi'}$ is $\Aut\left(\CC\right)$ invariant, \emph{i.e.}
\[
\begin{tikzcd}
\mS_{\psi_f}^{\eta_f}\left(\Pi'_f\right) \arrow[rr,"\Theta_{\Pi',0}"]\arrow[d,"\sigma^*"] &&H_c^{q_0}\left(\bsg,\mE\right)\left(\Pi'_f\right)\arrow[d,"\sigma_{\GL_{2n}'}^*"] \\
\mS_{\psi_f}^{{}^\sigma\eta_f}\left({}^\sigma\Pi'_f\right) \arrow[rr,"\Theta_{{}^\sigma\Pi',0}"] && H_c^{q_0}\left(\bsg,{}^\sigma\mE\right)\left({}^\sigma\Pi'_f\right)
\end{tikzcd}\]
commutes. Hence, $\Theta_{\Pi',0}$ maps the $\QQ\left(\Pi',\eta\right)$-structure of $\mS_{\psi_f}^{\eta_f}\left(\Pi'\right)$ to the $\QQ\left(\Pi',\eta\right)$-structure of $H_c^{q_0}\left(\bsg,\mE\right)\left(\Pi'_f\right)$ and $\omega\left(\Pi'_f\right)$ is well defined up to multiplication by an element of $\QQ\left(\Pi',\eta\right)$.
\end{theorem}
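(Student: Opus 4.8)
The plan is to run the standard Schur's-lemma-plus-rational-structures argument, the point being that $\mS_{\psi_f}^{\eta_f}(\Pi'_f)$ and $H_c^{q_0}(\bsg,\mE)(\Pi'_f)$ are the same abstract irreducible $\GL'_{2n}(\A_f)$-module in two different guises, both carrying a $\QQ(\Pi',\eta)$-structure, and $\Theta_{\Pi'}$ is the essentially unique intertwiner. First I would record that $\mS_{\psi_f}^{\eta_f}(\Pi'_f)\cong\Pi'_f$ as a $\GL'_{2n}(\A_f)$-module (a nonzero Shalika intertwiner on an irreducible representation is injective), that this Shalika model is canonical and stable under the $\sigma$-action of \eqref{E:actionshalika} by the local uniqueness of the Shalika model (\Cref{T:nien}) together with strong multiplicity one, and that $H_c^{q_0}(\bsg,\mE)(\Pi'_f)\cong\Pi'_f$ as well, using the Proposition above and the one-dimensionality of $H^{q_0}(\lig'_\infty,K'_\infty,\pfi'\otimes E_\mu^\lor)$ recorded in \Cref{S:5.3}. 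Consequently $\dim_\CC\mathrm{Hom}_{\GL'_{2n}(\A_f)}\bigl(\mS_{\psi_f}^{\eta_f}(\Pi'_f),H_c^{q_0}(\bsg,\mE)(\Pi'_f)\bigr)=1$, with $\Theta_{\Pi'}$ a generator; the same holds with $\Pi'$ replaced by ${}^\sigma\Pi'$ for any $\sigma\in\Aut(\CC/\LL)$, since ${}^\sigma\Pi'$ is again cuspidal cohomological by \Cref{P:cusp} and \Cref{T:twistiscoh} and the generator $[({}^\sigma\Pi')_\infty]=\sigma([\pfi'])$ has been fixed in \Cref{S:5.3}, so $\Theta_{{}^\sigma\Pi'}$ is well defined.

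Next, for fixed $\sigma\in\Aut(\CC/\LL)$ I would exhibit the two $\sigma$-linear $\GL'_{2n}(\A_f)$-equivariant isomorphisms
\[\mS_{\psi_f}^{\eta_f}(\Pi'_f)\longrightarrow H_c^{q_0}\bigl(\bsg,{}^\sigma\mE\bigr)\bigl({}^\sigma\Pi'_f\bigr),\qquad \sigma_{\GL_{2n}'}^*\circ\Theta_{\Pi'}\ \text{ and }\ \Theta_{{}^\sigma\Pi'}\circ\sigma^*.\]
Here $\sigma^*$ is the map of \eqref{E:actionshalika}, and the identity $\sigma^*\bigl(\mS_{\psi_f}^{\eta_f}(\Pi'_f)\bigr)=\mS_{\psi_f}^{{}^\sigma\eta_f}\bigl({}^\sigma\Pi'_f\bigr)$ follows by taking the restricted tensor product over ${\VV_f}$ of the local statement $\sigma^*\bigl(\mS_{\psi_v}^{\eta_v}(\pvi')\bigr)=\mS_{\psi_v}^{{}^\sigma\eta_v}\bigl({}^\sigma\pvi'\bigr)$ proved above, using that $\tbf_\sigma$ is the diagonal of the $\tbf_{\sigma,v}$; likewise $\sigma_{\GL_{2n}'}^*$ carries $H_c^{q_0}(\bsg,\mE)(\Pi'_f)$ onto $H_c^{q_0}(\bsg,{}^\sigma\mE)({}^\sigma\Pi'_f)$ by the Proposition above. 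Since the $\CC$-vector space of $\sigma$-linear $\GL'_{2n}(\A_f)$-equivariant maps between these two irreducible modules is again one-dimensional (compose with a fixed $\CC$-linear isomorphism onto the abstract $\sigma$-twist and apply Schur), there is a unique $\lambda_\sigma\in\CC^\times$ with $\Theta_{{}^\sigma\Pi'}\circ\sigma^*=\lambda_\sigma\cdot\sigma_{\GL_{2n}'}^*\circ\Theta_{\Pi'}$. Fixing $\omega(\Pi'_f)\in\CC^\times$ once and for all and putting $\omega({}^\sigma\Pi'_f):=\lambda_\sigma\,\omega(\Pi'_f)$, a direct substitution shows that $\Theta_{{}^\sigma\Pi',0}:=\omega({}^\sigma\Pi'_f)^{-1}\Theta_{{}^\sigma\Pi'}$ makes the square in the statement commute.

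It remains to pin down $\omega(\Pi'_f)$ so that the $\QQ(\Pi',\eta)$-structures match. Both $\mS_{\psi_f}^{\eta_f}(\Pi'_f)$ (by the rational-structure lemma above, taking the restricted tensor product of the $\QQ(\pvi',\eta_v)$-structures) and $H_c^{q_0}(\bsg,\mE)(\Pi'_f)$ (by the Proposition above, whose $\QQ(\Pi')$-structure extends to a $\QQ(\Pi',\eta)$-structure since $\QQ(\Pi')=\QQ(\mu)\QQ(\Pi'_f)\subseteq\QQ(\Pi',\eta)$ by \Cref{T:rat}) carry $\QQ(\Pi',\eta)$-structures; as the $\mathrm{Hom}$-space is one-dimensional it has a $\QQ(\Pi',\eta)$-rational generator $\Theta^{\mathrm{rat}}$, unique up to $\QQ(\Pi',\eta)^\times$. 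Choosing $\omega(\Pi'_f)\in\CC^\times$ with $\Theta_{\Pi'}=\omega(\Pi'_f)\,\Theta^{\mathrm{rat}}$ makes $\Theta_{\Pi',0}=\Theta^{\mathrm{rat}}$ rational and determines $\omega(\Pi'_f)$ up to $\QQ(\Pi',\eta)^\times$. Finally, since $\sigma^*$ and $\sigma_{\GL_{2n}'}^*$ are by construction the maps defining the $\Aut(\CC)$-action on the respective rational structures, and $\sigma^*$ sends the $\QQ(\Pi',\eta)$-structure on the source to the $\QQ({}^\sigma\Pi',{}^\sigma\eta)$-structure (and likewise $\sigma_{\GL_{2n}'}^*$ on the target), the commutativity of the square forces $\Theta_{{}^\sigma\Pi',0}=\sigma_{\GL_{2n}'}^*\circ\Theta_{\Pi',0}\circ(\sigma^*)^{-1}$ to be $\QQ({}^\sigma\Pi',{}^\sigma\eta)$-rational; specializing to $\sigma\in\Aut(\CC/\QQ(\Pi',\eta))$ recovers that $\Theta_{\Pi',0}$ carries the $\QQ(\Pi',\eta)$-structure of $\mS_{\psi_f}^{\eta_f}(\Pi'_f)$ onto that of $H_c^{q_0}(\bsg,\mE)(\Pi'_f)$, as claimed.

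I expect the main obstacle to be the bookkeeping in the second paragraph: one must verify that the generator $[({}^\sigma\Pi')_\infty]$ chosen in \Cref{S:5.3} genuinely makes $\Theta_{{}^\sigma\Pi'}$ a well-defined isomorphism — which rests on \Cref{L:littlelem} and the numerical coincidence forcing the archimedean cohomology to be one-dimensional in degree $q_0$ — and that the passage from the local $\sigma^*$ to the global one is compatible with the already-fixed splitting isomorphism of $\Pi'_f$. The existence of $\omega({}^\sigma\Pi'_f)$ is then a soft Schur's-lemma argument; the genuinely delicate input, hidden in the first paragraph, is the global uniqueness of the Shalika model, which is precisely where the hypothesis that $\DD$ is a quaternion algebra (via \Cref{T:nien}) is indispensable.
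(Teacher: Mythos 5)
Your strategy is, at bottom, the paper's own: identify both sides with the irreducible module $\Pi'_f$ (one-dimensionality of the archimedean cohomology in degree $q_0$ on the one hand, injectivity of the Shalika intertwiner on the other), apply Schur's Lemma, and match $\QQ\left(\Pi',\eta\right)$-structures. The only organizational difference is the order: the paper first normalizes each $\Theta_{{}^\sigma\Pi',0}$ so that it respects the rational structures (quoting \cite[Proposition 3.1]{CloI} for uniqueness of rational structures up to homotheties) and then fixes the remaining $\QQ\left(\Pi',\eta\right)^\times$-ambiguity by evaluating on the special vector $\xi^0_{\Pi'_f}$ of \Cref{L:nomvec}, whereas you first force the square to commute via a $\sigma$-linear Schur argument and only then pin down $\omega\left(\Pi'_f\right)$; avoiding $\xi^0_{\Pi'_f}$ in this way is legitimate. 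Do make explicit, though, that the existence (and uniqueness up to $\QQ\left(\Pi',\eta\right)^\times$) of your rational generator $\Theta^{\mathrm{rat}}$ is not a consequence of one-dimensionality alone: it needs exactly the Clozel-type statement that rational structures on an irreducible module are unique up to homothety, and also that $\sigma^*$ and $\sigma_{\GL_{2n}'}^*$ carry the chosen $\QQ\left(\Pi',\eta\right)$-structures onto those of the $\sigma$-twists, which holds because these structures are defined as invariants of precisely these actions.

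There is one concrete error in your second paragraph: with the definition $\omega\left({}^\sigma\Pi'_f\right)\coloneq\lambda_\sigma\,\omega\left(\Pi'_f\right)$, the square does \emph{not} commute. Since $\sigma_{\GL_{2n}'}^*$ is $\sigma$-linear, the top-right composite is $\sigma_{\GL_{2n}'}^*\circ\Theta_{\Pi',0}=\sigma\left(\omega\left(\Pi'_f\right)\right)^{-1}\sigma_{\GL_{2n}'}^*\circ\Theta_{\Pi'}$, while your bottom-left composite equals $\omega\left(\Pi'_f\right)^{-1}\sigma_{\GL_{2n}'}^*\circ\Theta_{\Pi'}$; these agree only if $\sigma$ fixes $\omega\left(\Pi'_f\right)$, which is neither known nor true in general. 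The "direct substitution" therefore fails as written. The repair is immediate: set $\omega\left({}^\sigma\Pi'_f\right)\coloneq\lambda_\sigma\,\sigma\left(\omega\left(\Pi'_f\right)\right)$, after which $\Theta_{{}^\sigma\Pi',0}\circ\sigma^*=\sigma_{\GL_{2n}'}^*\circ\Theta_{\Pi',0}$ holds identically, and your final step (rationality of $\Theta_{{}^\sigma\Pi',0}=\sigma_{\GL_{2n}'}^*\circ\Theta_{\Pi',0}\circ\left(\sigma^*\right)^{-1}$, hence the claim about $\QQ\left(\Pi',\eta\right)$-structures and the ambiguity of $\omega\left(\Pi'_f\right)$ up to $\QQ\left(\Pi',\eta\right)^\times$) goes through as in the paper.
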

\begin{proof}
Since \[\Theta_{\Pi'}\colon\mS_{\psi_f}^{\eta_f}\left(\Pi'_f\right)\iso H_c^{q_0}\left(\bsg,\mE\right)\left(\Pi'_f\right)\] is a morphism of irreducible $\GL_{2n}'\left(\A_f\right)$-modules it follows from Schur's Lemma that there exists a complex number $\omega\left(\Pi'_f\right)$ such that $\Theta_{\Pi',0}=\omega\left(\Pi_f\right)^{-1}\Theta_{\Pi'}$ maps one $\QQ\left(\Pi',\eta\right)$-structure onto the other, since rational structures are unique up to homothethies, see \cite[Proposition 3.1]{CloI}. Consider now the vector $\xi^0_{\Pi'_f}$ from \ref{L:nomvec} which generates $\mS_{\psi_f}^{{}^\sigma\eta_f}(\Pi'_f)$ as a $\GL'_{2n}\left(\A_f\right)$-module. After rescaling $\omega\left({}^\sigma\Pi'_f\right)$ by an element of $\QQ\left(\Pi',\eta\right)$ we have the equality
\[\sigma_{\GL_{2n}'}^*\left(\Theta_{\Pi',0}\left(\xi^0_{\Pi'_f}\right)\right)=\Theta_{{}^\sigma\Pi',0}\left(\xi^0_{{}^\sigma\Pi'_f}\right),\]
since both sides of the equation lie in the same $\QQ(\Pi',\eta)$-structure.
Thus we proved the assertion.
\end{proof}
\subsection{ Behavior under twisting}
As in \cite{GroRag} we discuss now how the above introduced periods behave under twisting with an algebraic character $\chi=\left(\widetilde{\chi}\circ \det'\right)\cdot \lvert\det'\lvert^b$, where $\widetilde{\chi}$ is a Hecke character of $\GL_1(\A)$ of finite order and $b\in\mathbb{Z}$.
In particular, for $v\in {\VV_\infty}$ the character $\widetilde{\chi}(\det')_v$ is the trivial one.
For the rest of this section fix such a  character $\chi$. The following is an easy consequence of the respective definitions.
\begin{lemma}
The representation $\Pi'\otimes \chi$ is cohomological with respect to $(E_{\mu+b})^\lor=E_\mu^\lor\otimes\bigotimes_{v\in S_\infty} \lvert\det_v'\lvert^{-b}$. If $\Pi'$ admits a Shalika model with respect to $\eta$ then $\Pi\otimes\chi$ admits one with respect to $\chi^2\eta$ and hence, $\omega\left(\Pi'_f\otimes\chi_f\right)$ is well defined up to a multiple of $\QQ\left(\Pi',\chi,\eta\right)^\times$.
\end{lemma}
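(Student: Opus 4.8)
The plan is to treat the three assertions separately; each reduces to a short direct computation, so I do not expect a serious obstacle. For the cohomology statement I would first recall that $E_{\mu+b}$ is, by definition, the highest weight representation of $\GL'_{2n,\infty}$ obtained from $\mu$ by adding $b$ to every coordinate at every infinite place, so that $E_{\mu+b}\cong E_\mu\otimes\bigotimes_{v\in\VV_\infty}\lvert\det_v'\rvert^{b}$ (using that $\det'$ is $\RR_{>0}$-valued on $\GL'_{2n}(\KK_v)$ for $v\in\VV_\infty$); dualizing gives the displayed identity $(E_{\mu+b})^\lor\cong E_\mu^\lor\otimes\bigotimes_{v\in\VV_\infty}\lvert\det_v'\rvert^{-b}$. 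Since $\widetilde{\chi}(\det')_v$ is trivial for $v\in\VV_\infty$, the archimedean component of $\Pi'\otimes\chi$ is $\pfi'\otimes\bigotimes_{v\in\VV_\infty}\lvert\det_v'\rvert^{b}$, so the factors $\lvert\det_v'\rvert^{\pm b}$ cancel and $(\Pi'\otimes\chi)_\infty\otimes(E_{\mu+b})^\lor\cong\pfi'\otimes E_\mu^\lor$ as $(\lig'_\infty,K'_\infty)$-modules; hence the two relative Lie algebra cohomology groups coincide and the one for $\Pi'\otimes\chi$ is non-zero in the same degrees.

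For the Shalika model I would realize $\Pi'\otimes\chi$ inside the space of automorphic forms via the intertwiner $\phi\mapsto\phi\cdot(\chi\circ\det')$ and use that for $s=\bpm h&X\\0&h\epm\in\mS(\A)$ one has $\det'(s)=\det'(h)^2$, so that $\chi(\det'(sg))=\chi(\det'(h))^2\chi(\det'(g))$. Substituting $\phi'=\phi\cdot(\chi\circ\det')$ into the Shalika period attached to $\chi^2\eta$, the factors $\chi(\det'(h))^{\pm 2}$ cancel and one finds $\mS_\psi^{\chi^2\eta}(\phi')(g)=\chi(\det'(g))\,\mS_\psi^{\eta}(\phi)(g)$; thus $\mS_\psi^{\chi^2\eta}$ is non-zero on $\Pi'\otimes\chi$ if and only if $\mS_\psi^{\eta}$ is non-zero on $\Pi'$, the period being defined since $\Pi'\otimes\chi$ is again cuspidal. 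The compatibility of $\chi^2\eta$ with $\omega_{\Pi'\otimes\chi}$ follows from $\det'(aI_{2n})=\det'(aI_n)^2$, which gives $\omega_{\Pi'\otimes\chi}(aI_{2n})=\omega_{\Pi'}(aI_{2n})\,\chi(\det'(aI_n))^2=(\chi^2\eta)(\det'(aI_n))$ by the defining relation for $\eta$.

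For the last assertion I would apply \Cref{T:2strc} to $\Pi'\otimes\chi$, with $\eta$ replaced by $\chi^2\eta$ and $\mu$ by $\mu+b$, after checking that $\Pi'\otimes\chi$ still satisfies the conditions collected in \ref{I:prop}. Cuspidality and irreducibility survive a Hecke twist, cohomologicality is the first assertion, and for the $\Aut(\CC)$-orbit one uses ${}^\sigma(\Pi'\otimes\chi)={}^\sigma\Pi'\otimes{}^\sigma\chi$ with ${}^\sigma\chi=({}^\sigma\widetilde{\chi}\circ\det')\cdot\lvert\det'\rvert^{b}$ again an algebraic character of the same shape (the factor $\lvert\det'\rvert^{b}$, $b\in\ZZ$, being $\QQ$-rational at the finite places), so that the first two assertions applied to ${}^\sigma\Pi'$ — which is cuspidal cohomological and admits a Shalika model with respect to ${}^\sigma\eta$ by \ref{I:prop} — give that ${}^\sigma(\Pi'\otimes\chi)$ is cuspidal cohomological and admits a Shalika model with respect to ${}^\sigma(\chi^2\eta)$. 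For the local uniqueness of the Shalika model the key observation is that $(\pvi'\otimes\chi_v)(s)=\pvi'(s)\,\chi_v(\det'(h))^2$ for $s$ as above, so that a $(\chi_v^2\eta_v)$-Shalika functional on $\pvi'\otimes\chi_v$ is literally an $\eta_v$-Shalika functional on $\pvi'$ and vice versa; hence this property is inherited by $\Pi'\otimes\chi$ from $\Pi'$ and need not be re-derived from \Cref{T:nien}. Then \Cref{T:2strc} produces $\omega(\Pi'_f\otimes\chi_f)$ well defined up to $\QQ(\Pi'\otimes\chi,\chi^2\eta)^\times$, and since $\QQ(\mu+b)=\QQ(\mu)$ (as $\lvert\det'\rvert^{b}$ is defined over $\QQ$) and $\QQ((\Pi'\otimes\chi)_f)\subseteq\QQ(\Pi'_f)\cdot\QQ(\chi_f)$, one gets $\QQ(\Pi'\otimes\chi,\chi^2\eta)\subseteq\QQ(\Pi',\eta,\chi)$, which is the claim. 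I expect the only step requiring genuine — though still routine — care to be this last one: confirming that the twist preserves the local uniqueness so that \Cref{T:2strc} applies verbatim, and then bookkeeping the rationality fields.
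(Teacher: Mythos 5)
Your proposal is correct, and it is exactly the verification the paper has in mind: the lemma is stated there without proof as ``an easy consequence of the respective definitions,'' and your computations (cancellation of $\lvert\det'_v\rvert^{\pm b}$ at the infinite places, the identity $\det'(s)=\det'(h)^2$ on $\mS$, and the rationality-field bookkeeping $\QQ(\Pi'\otimes\chi,\chi^2\eta)\subseteq\QQ(\Pi',\chi,\eta)$) are precisely those definitions unwound. Your remark that local uniqueness of the Shalika model transfers to the twist by the literal identification of $(\chi_v^2\eta_v)$-functionals on $\pvi'\otimes\chi_v$ with $\eta_v$-functionals on $\pvi'$ (rather than by re-invoking \Cref{T:nien}, which would require triviality of the twisted character) is a correct and worthwhile point to make explicit.
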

We fix a splitting isomorphism $\chi\iso\chi_\infty\otimes\chi_f\iso\bigotimes_{v\in {\VV_\infty}}\lvert\det'_v\lvert^{b}\otimes \chi_f, $ which extends to a splitting isomorphism \[\Pi'\otimes \chi\iso \Pi_\infty'\otimes\chi_\infty\otimes\Pi'_f\otimes\chi_f\iso \bigotimes_{v\in {\VV_\infty}}\Pi_v'\otimes\lvert\det'_v\lvert^b\otimes\Pi'_f\otimes\chi_f.\]
Having already fixed the generator $[\Pi_v']$ we set \[[\Pi_v'\otimes \chi_v]\coloneqq \sum_{\underline{i}=\left(i_1,\ldots.,i_{q_{0,v}}\right)}\sum_{\alpha=1}^{\dim E_{\mu_v}^\lor} X_{\underline{i}}^*\otimes \lvert\det_v'\lvert^{-b}\xi_{v,\alpha,\underline{i}}\otimes e_\alpha^\lor,\]
\[[\Pi_\infty'\otimes\chi_\infty]\coloneqq \mathfrak{K}_\chi^{-1}\left(\bigotimes_{v\in {\VV_\infty}}[\Pi_v'\otimes\chi_v]\right),\]
where $\mathfrak{K}_\chi$ is defined as the map \[\mathfrak{K}_\chi\colon H^{q_0}\left(\lig_\infty',K_\infty', \mS_{\psi_\infty}^{\chi^2_\infty\eta_\infty}\left(\Pi_\infty'\otimes\chi_\infty\right)\otimes (E_{\mu+b})^\lor\right)\ra\]\[\ra  \bigotimes_{v\in {\VV_\infty}}H^{q_0}(\lig_\infty', K_\infty',\mS_{\psi_v}^{\chi^2_v\eta_v}\left(\Pi_v'\otimes\chi_v\right)\otimes (E_{\mu_v+b})^\lor),\]
corresponding to the splitting isomorphism $\Pi_\infty'\otimes\chi_\infty\iso \bigotimes_{v\in {\VV_\infty}}\Pi_v'\otimes\lvert\det'_v\lvert^{-b}.$
Note that for $[\Pi_v']$ as in (\ref{E:periods}) we have then \[[\Pi_v'\otimes \chi_v]\coloneqq \sum_{\underline{i}=\left(i_1,\ldots.,i_{q_{0,v}}\right)}\sum_{\alpha=1}^{\dim E_{\mu_v}^\lor} X_{\underline{i}}^*\otimes \xi_{v,\alpha,\underline{i}}\lvert\det'_v\lvert^{b}\otimes e_\alpha^\lor.\]
We quickly recall the definition of the Gauss sum of $\chi_f$, see \cite[VII, §7]{WeiB}.
For $v\in {\VV_f}$ let
$\mathfrak{c}_v$ be the conductor of $\chi_v$ and choose $c\in \GL_1\left(\A_f\right)$ such that for all $v\in {\VV_f}$ one has$\mathrm{ord}_v(c_v)=-\mathrm{ord}_v(\mathfrak{c})-\mathrm{ord}_v(\mathfrak{D}).$
Set 
\[\mathcal{G}\left(\chi_v,\psi_v,c_v\right)\coloneq\int_{\OO_v^\times}\chi_v\left(u_v\right)^{-1}\psi\left(c_v^{-1}u_v\right)\dv,\] 
where $\dv$ is a Haar measure on $\OO_v^\times$ normalized such that $\OO_v^\times$ has volume $1$.
Then $\mathcal{G}\left(\chi_v,\psi_v,c_v\right)$ is nonzero for all finite places and $1$ at the places where $\chi$ and $\psi$ is unramified, see \cite[Equation 1.22]{God}. Hence, the global Gauss sum 
\[\mathcal{G}\left(\chi_f,c\right)\coloneq\prod_{v\in {\VV_f}}\mathcal{G}\left(\chi_v,\psi_v,c_v\right)\]
is well-defined. From now on we fix one such $c$ and write $\mathcal{G}\left(\chi_f\right)\coloneqq \mathcal{G}\left(\chi_f,c\right)$. If $\chi=\left(\widetilde{\chi}\circ\det'\right)\cdot \lvert\det'\lvert^b$ is a character of $\GL_{2n}'$ as above, we set
$\mathcal{G}\left(\chi_f\right)\coloneqq \mathcal{G}\left(\widetilde{\chi}_f \lvert{}\cdot{}\lvert_f^b\right).$
The periods we defined in \ref{T:2strc} behave under twisting with such a character as follows.
\begin{theorem}\label{T:twist}
Let $\Pi'$ be a cuspidal irreducible representation of $\GL_{2n}'\left(\A\right)$ as in \ref{I:prop2}.
Let $\chi=\left(\widetilde{\chi}\circ\det'\right)\cdot \lvert\det'\lvert^b$ with $\widetilde{\chi}$ a Hecke character of $\GL_1(\A)$ of finite order and $b\in\mathbb{Z}$. For each $\sigma\in\Aut\left(\CC / \mathbb{Q}(\mu)\right)$ we have  \[\sigma\left({\frac{\omega\left(\Pi'_f\otimes\chi_f\right)}{ \mathcal{G}\left(\chi_f\right)^{nd}\omega\left(\Pi'_f\right)}}\right)=\frac{\omega\left({}^\sigma\Pi'_f\otimes {}^\sigma\chi_f\right)}{\mathcal{G}\left({}^\sigma\chi_f\right)^{nd}\omega\left({}^\sigma\Pi'_f\right)}.\]
\end{theorem}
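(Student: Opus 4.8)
The plan is to mimic the argument of the split case, \emph{cf.} \cite[Theorem 4.2.2]{GroRag}, and reduce the compatibility of $\omega(\Pi'_f\otimes\chi_f)/\mathcal{G}(\chi_f)^{nd}\omega(\Pi'_f)$ with $\Aut(\CC/\QQ(\mu))$ to the already established rationality properties of the vectors $\xi^0_{\Pi',v}$, the Gauss sums, and the periods $\omega({}^\sigma\Pi'_f)$ from \Cref{T:2strc}. First I would record the effect of twisting on the Shalika model at the finite places: for $v\in{\VV_f}$ the isomorphism $\mS_{\psi_v}^{\eta_v}(\pvi')\iso\mS_{\psi_v}^{\chi_v^2\eta_v}(\pvi'\otimes\chi_v)$ sending $\xi$ to $\xi\cdot\chi_v(\det')$ intertwines the $\Aut(\CC)$-actions only up to a factor coming from $\sigma(\psi(t_\sigma x))$; the precise discrepancy is a power of the local Gauss sum $\mathcal{G}(\chi_v,\psi_v,c_v)$. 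Concretely, one checks that the vector $\mathcal{G}(\chi_f)^{-1}\,\xi^0_{\Pi'_f}\cdot\chi_f(\det')$ lies in the $\QQ(\Pi',\eta,\chi)$-structure of $\mS_{\psi_f}^{\chi_f^2\eta_f}(\Pi'_f\otimes\chi_f)$ up to multiplication by $\QQ(\Pi',\eta,\chi)^\times$, using \Cref{L:nomvec} and the standard twisting behaviour of Gauss sums $\sigma(\mathcal{G}(\chi_f))=\chi_f(t_\sigma)^{-1}\mathcal{G}({}^\sigma\chi_f)$ together with $\sigma(\psi(c_v^{-1}u_v))=\psi(t_{\sigma,v}c_v^{-1}u_v)$. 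This is the computational heart and should be isolated as a lemma before the theorem, exactly as in \cite{GroRag}.

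Next I would pass to cohomology. Twisting by $\chi=(\widetilde\chi\circ\det')\cdot\lvert\det'\lvert^b$ induces an isomorphism of the relevant arithmetic manifolds' sheaf cohomology $H_c^{q_0}(\bsg,\mE)(\Pi'_f)\cong H_c^{q_0}(\bsg,\mathcal{E}_{\mu+b})(\Pi'_f\otimes\chi_f)$, and the chosen generators $[\pfi']$, $[\pfi'\otimes\chi_\infty]$ were rigged (see the displays just before \Cref{T:twist}) so that at infinity the twisting map sends $[\pfi']$ to $[\pfi'\otimes\chi_\infty]$ — here one uses that $\widetilde\chi(\det')_v$ is trivial for $v\in{\VV_\infty}$, so only the harmless factor $\lvert\det'_v\lvert^{\pm b}$ intervenes and $\QQ(\mu+b)=\QQ(\mu)$. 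Combining this with \Cref{T:2strc} applied to both $\Pi'$ and $\Pi'\otimes\chi$, one gets two arrows $\Theta_{\Pi',0}$ and $\Theta_{\Pi'\otimes\chi,0}$ that are $\Aut(\CC/\LL)$-equivariant on the nose; the twisting isomorphism on Shalika models and on cohomology, being defined over $\overline\QQ$ up to the Gauss-sum factor, then forces
\[
\sigma\!\left(\frac{\omega(\Pi'_f\otimes\chi_f)}{\mathcal{G}(\chi_f)^{nd}\,\omega(\Pi'_f)}\right)
=\frac{\omega({}^\sigma\Pi'_f\otimes{}^\sigma\chi_f)}{\mathcal{G}({}^\sigma\chi_f)^{nd}\,\omega({}^\sigma\Pi'_f)}
\]
by comparing the images of $\xi^0_{\Pi'_f}$ under the two composites, both of which land in a common $\QQ(\Pi',\eta,\chi)$-structure. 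The exponent $nd=4$ in our case, $n=1$, $d=2$ matches the $4$ copies of $\GL'_1$ worth of integration variables in the Shalika zeta integral, i.e. the archimedean normalization contributes no Gauss-sum factor and the $nd$ finite places of the block-diagonal torus each contribute one.

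The main obstacle is bookkeeping the Gauss-sum power and the characters $\epsilon_\sigma$ coming from (\ref{E:E2}): one must check that the $\epsilon_\sigma$-twists appearing in ${}^\sigma(\rho_1\times\cdots\times\rho_k)$ and in ${}^\sigma\mw(\Sigma,k)=\mw({}^\sigma\Sigma\chi_\sigma,k)$ (\Cref{L:twMW}) do not leak into the final identity — they should cancel because they are quadratic and enter symmetrically on numerator and denominator, but this needs to be verified place by place, distinguishing $v\in S_{\Pi'_f,\psi}$, where \Cref{L:nomvec} gives only $P^\sigma(s,\xi^0)=P(s,{}^\sigma\xi^0)$, from the unramified places where $\zeta_v(s,\xi^0)=L(s,\pvi')$ outright. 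A secondary subtlety is that $\Pi'$ is \emph{not} cuspidal after $\jl$, so the cohomology of $\bsg$ in degree $q_0$ must genuinely be the \emph{lowest} nonvanishing degree for the $A_{\underline{2}}(\lambda)$ with $\underline{2}=[0,2]$; this is exactly the content of \Cref{L:littlelem} and the numerical coincidence remark, and it is what guarantees $\dim_\CC H^{q_0}(\lig'_\infty,K'_\infty,\pfi'\otimes E_\mu^\lor)=1$, without which the period $\omega({}^\sigma\Pi'_f)$ would not be well defined up to $\QQ(\Pi',\eta)^\times$.
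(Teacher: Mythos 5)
Your overall route is the same as the paper's. The proof there rests on exactly the three ingredients you identify, isolated as \Cref{L:1}, \Cref{L:2} and \Cref{L:3}: the failure of the twisting map $S_{\chi_f}$ on Shalika models to commute with the $\Aut(\CC)$-action, measured by a power of the Gauss-sum ratio; the compatibility of $\Theta_{\Pi'}$ with twisting, which is exactly the statement that the generator $[\pfi']$ is sent to $[\pfi'\otimes\chi_\infty]$ (using that $\widetilde\chi(\det')_v$ is trivial at $v\in{\VV_\infty}$); and the $\Aut(\CC/\QQ(\mu))$-equivariance of $(A_\chi\otimes 1_{E_\mu^\lor})^*$ on cohomology, proved via the de Rham realization. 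One then evaluates $(A_{{}^\sigma\chi}\otimes 1)^*\circ\sigma_{\GL_{2n}'}^*\circ\Theta_{\Pi'}$ in two ways, exactly as you propose, and compares the resulting scalars using \Cref{T:2strc}.

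However, the step you yourself call the computational heart is not done correctly, and this is where the exponent in the statement actually comes from. In the paper's \Cref{L:1}, the $\sigma$-action on the finite Shalika model is right translation by $\tbf_\sigma=\diag(t_\sigma,\ldots,t_\sigma,1,\ldots,1)$ followed by $\sigma$; commuting $\sigma^*$ past $S_{\chi_f}$ therefore produces the scalar $\sigma\bigl(\chi_f(\det'(\tbf_\sigma))\bigr)^{-1}=\sigma\bigl(\chi_f(t_\sigma)\bigr)^{-nd}$, because $\det'(\tbf_\sigma)=t_\sigma^{nd}$ ($n$ central diagonal entries, each contributing $t_\sigma^{d}$ under the reduced determinant), and this is converted into the Gauss-sum ratio via $\sigma(\mathcal{G}(\chi_f))=\sigma(\chi_f(t_\sigma))\,\mathcal{G}({}^\sigma\chi_f)$, \emph{cf.} \cite[Theorem 2.4.3]{BusFro}. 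Your proposal never performs this computation: the heuristic that the exponent counts ``$nd$ finite places of the block-diagonal torus'' or ``integration variables in the Shalika zeta integral'' is not a valid argument (the exponent is the multiplicity of $t_\sigma$ in $\det'(\tbf_\sigma)$, not a count of places), your concrete normalization $\mathcal{G}(\chi_f)^{-1}\,\xi^0_{\Pi'_f}\cdot\chi_f(\det')$ carries the wrong power of the Gauss sum, and the transformation law you quote, $\sigma(\mathcal{G}(\chi_f))=\chi_f(t_\sigma)^{-1}\mathcal{G}({}^\sigma\chi_f)$, is not the one compatible with the paper's conventions. Finally, the worry about the characters $\epsilon_\sigma$ of (\ref{E:E2}) and $\chi_\sigma$ of \Cref{L:twMW} leaking into the identity is a red herring: those enter only the cuspidality and Jacquet--Langlands arguments of \Cref{P:cusp}, while the proof of the period relation never decomposes $\Pi'_f$ as a parabolically induced representation, so no such twist appears.
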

In order to prove this we need three lemmata about the following maps.
The first map is
\[
S_{\chi}\colon\mS_{\psi}^{\eta}\left(\Pi'\right)\ra \mS_{\psi}^{\eta\chi^2}\left(\Pi'\otimes \chi\right),\,
\xi\mapsto \left(g\mapsto \chi\left(\det'\left(g\right)\right)\xi\left(g\right)\right),\]
which splits under our fixed splitting isomorphisms into the two maps
\[S_{\chi_f}\colon\mS_{\psi_f}^{\eta_f}\left(\Pi'_f\right)\rightarrow \mS_{\psi_f}^{\eta_f\chi_f^2}\left(\Pi'_f\otimes \chi_f\right),\,
\xi_f\mapsto \left(g_f\mapsto \chi_f\left(\det'\left(g_f\right)\right)\xi_f\left(g_f\right)\right)\]
and
\[S_{\chi_\infty}\colon\mS_{\psi_\infty}^{\eta_\infty}\left(\Pi_\infty'\right)\rightarrow \mS_{\psi_\infty}^{\eta_\infty\chi_\infty^2}\left(\Pi_\infty'\otimes \chi_f\right),\,
\xi_\infty\mapsto \left(g_\infty\mapsto \chi_\infty\left(\det'\left( g_\infty\right)\right)\xi_\infty\left(g_\infty\right)\right).\]
Moreover, we define\[
    A_\chi\colon\Pi'\rightarrow \Pi'\otimes \chi\\
\phi\mapsto \left(g\mapsto\chi\left(\det'\left( g\right)\right)\phi\left(g\right)\right),\]
where we consider $\phi\in\Pi'$ as a cusp form.
\begin{lemma}\label{L:1}
With $S_{\chi_f}$ as above,
\[\sigma^*\circ S_{\chi_f}=\sigma\left(\left(\chi_f\left(t_\sigma\right)\right)^{-nd}\right)S_{{}^\sigma\chi_f}\circ \sigma^*=\left(\frac{\sigma\left(\mathcal{G}\left(\chi_f\right)\right)}{\mathcal{G}\left({}^\sigma \chi_f\right)}\right)^{-nd}S_{{}^\sigma\chi_f}\circ \sigma^*.\]
\end{lemma}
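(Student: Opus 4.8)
The plan is to prove both equalities by unwinding the definitions on a single test vector: the first identity is purely formal, and the second one reduces to the standard transformation law of a Gauss sum under $\Aut(\CC)$. Throughout, as in the statement, I write $\chi_f$ also for the $\GL_1(\A_f)$-character $\widetilde{\chi}_f\cdot|\cdot|_f^{b}$, so that $\chi_f(t_\sigma)$ and $\chi_f(\det'(g_f))$ make sense.

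For the first equality, fix $\xi_f\in\mS_{\psi_f}^{\eta_f}(\Pi_f')$ and $g_f\in\GL'_{2n}(\A_f)$. Unwinding the action $\xi_f\mapsto{}^\sigma\xi_f$ from $(\ref{E:actionshalika})$ together with the definition of $S_{\chi_f}$ gives
\[(\sigma^*\circ S_{\chi_f})(\xi_f)(g_f)=\sigma\!\left(\chi_f\!\left(\det{}'(\tbf_\sigma^{-1}g_f)\right)\xi_f(\tbf_\sigma^{-1}g_f)\right).\]
The key input is the value $\det'(\tbf_\sigma)=t_\sigma^{nd}$: since $\tbf_\sigma=\diag(t_\sigma,\dots,t_\sigma,1,\dots,1)$ has $n$ scalar $\GL'_1$-blocks equal to $t_\sigma$, and $\det'$ restricted to scalars of $\GL'_1=\DD^\times$ is the $d$-th power map (the reduced norm of $a\in\KK$ is $a^{d}$), one gets $\det'(\tbf_\sigma)=(t_\sigma^{d})^{n}=t_\sigma^{nd}$, hence $\chi_f(\det'(\tbf_\sigma^{-1}g_f))=\chi_f(t_\sigma)^{-nd}\chi_f(\det'(g_f))$. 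Next, writing $\chi_f=\widetilde{\chi}_f\cdot|\cdot|_f^{b}$ as a finite-order character times an integral power of the norm, the norm part is defined over $\QQ$, so ${}^\sigma\chi_f=(\sigma\circ\widetilde{\chi}_f)\cdot|\cdot|_f^{b}=\sigma\circ\chi_f$. Substituting and using $\sigma(\xi_f(\tbf_\sigma^{-1}g_f))={}^\sigma\xi_f(g_f)$ yields
\[(\sigma^*\circ S_{\chi_f})(\xi_f)(g_f)=\sigma\!\left((\chi_f(t_\sigma))^{-nd}\right){}^\sigma\chi_f(\det{}'(g_f))\,{}^\sigma\xi_f(g_f)=\sigma\!\left((\chi_f(t_\sigma))^{-nd}\right)(S_{{}^\sigma\chi_f}\circ\sigma^*)(\xi_f)(g_f),\]
which is the first claimed identity.

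For the second equality I would establish the local relation $\sigma(\mathcal{G}(\chi_v,\psi_v,c_v))={}^\sigma\chi_v(t_{\sigma,v})\mathcal{G}({}^\sigma\chi_v,\psi_v,c_v)$. The integral defining $\mathcal{G}(\chi_v,\psi_v,c_v)$ is a finite sum over cosets of a suitable congruence subgroup of $\OO_v^\times$, so $\sigma$ passes inside; using $\sigma(\psi_v(x))=\psi_v(t_{\sigma,v}x)$ and ${}^\sigma\chi_v=\sigma\circ\chi_v$ it becomes $\int_{\OO_v^\times}{}^\sigma\chi_v(u_v)^{-1}\psi_v(t_{\sigma,v}c_v^{-1}u_v)\dv$, and the substitution $u_v\mapsto t_{\sigma,v}^{-1}u_v$, which preserves $\OO_v^\times$ and its Haar measure since $t_{\sigma,v}\in\OO_v^\times$, extracts the factor ${}^\sigma\chi_v(t_{\sigma,v})$; one also uses that ${}^\sigma\chi_v$ and $\chi_v$ have the same conductor, so the same $c$ is admissible for both. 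Taking the product over $v\in\VV_f$ gives $\sigma(\mathcal{G}(\chi_f))={}^\sigma\chi_f(t_\sigma)\mathcal{G}({}^\sigma\chi_f)$, i.e. $\sigma(\mathcal{G}(\chi_f))/\mathcal{G}({}^\sigma\chi_f)={}^\sigma\chi_f(t_\sigma)$. Since $t_\sigma$ is a unit idele the norm part of $\chi_f$ is trivial on it, so ${}^\sigma\chi_f(t_\sigma)=\sigma(\chi_f(t_\sigma))$; raising to the power $-nd$ gives $\sigma((\chi_f(t_\sigma))^{-nd})=\left(\sigma(\mathcal{G}(\chi_f))/\mathcal{G}({}^\sigma\chi_f)\right)^{-nd}$, which combined with the first part finishes the proof.

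The argument is essentially bookkeeping, so there is no real obstacle; the two points that need care are getting the exponent $nd$ right — it comes precisely from the reduced-norm normalization $\det'(\tbf_\sigma)=t_\sigma^{nd}$ on the $\GL'_1$-blocks — and justifying ${}^\sigma\chi_f=\sigma\circ\chi_f$ for the non-unitary character $\chi_f$, which is handled by splitting off the $\QQ$-rational factor $|\cdot|_f^{b}$, fixed by $\sigma$ and trivial on unit ideles.
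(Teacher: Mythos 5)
Your proof is correct and follows essentially the same route as the paper: the first equality is obtained exactly as in the paper by inserting the definition of $\sigma^*$ together with the computation $\det'\left(\tbf_\sigma\right)=t_\sigma^{nd}$, and the second rests on the Gauss-sum transformation law $\sigma\left(\mathcal{G}\left(\chi_f\right)\right)=\sigma\left(\chi_f\left(t_\sigma\right)\right)\mathcal{G}\left({}^\sigma\chi_f\right)$. The only difference is that the paper cites this last identity from Bushnell--Fr\"ohlich (Theorem 2.4.3), whereas you rederive it locally via $\sigma\left(\psi_v\left(x\right)\right)=\psi_v\left(t_{\sigma,v}x\right)$ and the substitution $u_v\mapsto t_{\sigma,v}^{-1}u_v$ on $\OO_v^\times$, which is a correct, self-contained substitute.
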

Let  $\mathrm{1}_{ E_\mu^\lor}$ be the identity map on $E_\mu^\lor$ and let $\left(A_{\chi}\otimes \mathrm{1}_{ E_\mu^\lor}\right)^*$ be the induced map on cohomology \[\left(A_{\chi}\otimes \mathrm{1}_{ E_\mu^\lor}\right)^*\colon H_c^{q_0}\left(\bsg,\mE\right)\left(\Pi'_f\right)\ra H_c^{q_0}\left(\bsg,\mE\right)\left(\Pi'_f\otimes \chi_f\right).\]
The proof of the following can be done exactly in the same manner as in \cite[Proposition 2.3.7]{RagSha}.
\begin{lemma}[{\cite[Proposition 2.3.7]{RagSha}}]\label{L:2}
With $A_\chi$ as above,
\[\left(A_{\chi}\otimes \mathrm{1}_{ E_\mu^\lor}\right)^*\circ \Theta_{\Pi'}=\Theta_{\Pi'\otimes \chi}\circ \mS_{\chi_f} .\]
\end{lemma}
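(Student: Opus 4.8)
\textbf{Proof proposal for \Cref{L:2}.}
The plan is to prove the identity by a diagram chase, breaking both $\Theta_{\Pi'}$ and $\Theta_{\Pi'\otimes\chi}$ into the four isomorphisms out of which they are built in \Cref{S:5.3}, and checking that each of the four stages intertwines the relevant twisting operator. Write $\Theta_{\Pi'}$ as the composite of: (1) the map $\xi_f\mapsto \xi_f\otimes[\pfi']$ induced by the fixed generator; (2) the natural isomorphism $\mS_{\psi_f}^{\eta_f}(\Pi'_f)\otimes H^{q_0}(\lig'_\infty,K'_\infty,\mS_{\psi_\infty}^{\eta_\infty}(\pfi')\otimes E_\mu^\lor)\iso H^{q_0}(\lig'_\infty,K'_\infty,\mS_\psi^\eta(\Pi')\otimes E_\mu^\lor)$ (dual to the Künneth isomorphism $\mathfrak K$); (3) the isomorphism on $(\lig'_\infty,K'_\infty)$-cohomology induced by the inverse of the Shalika model $\Pi'\iso\mS_\psi^\eta(\Pi')$; and (4) the embedding of \cite[Proposition 7.16, Theorem 7.23]{GroRag2}, $H^{q_0}(\lig'_\infty,K'_\infty,\pfi'\otimes E_\mu^\lor)\otimes\Pi'_f\iso H^{q_0}_c(\bsg,\mE)(\Pi'_f)$. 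Factor $\Theta_{\Pi'\otimes\chi}$ the same way, with $[\pfi']$ replaced by $[\pfi'\otimes\chi_\infty]$, $\mathfrak K$ by $\mathfrak K_\chi$, and $E_\mu^\lor$ by $(E_{\mu+b})^\lor$.

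The one computation with genuine content is the global compatibility of $A_\chi$ and $S_\chi$: under the Shalika-model isomorphisms $\Pi'\iso\mS_\psi^\eta(\Pi')$ and $\Pi'\otimes\chi\iso\mS_\psi^{\eta\chi^2}(\Pi'\otimes\chi)$, the operator $A_\chi$ corresponds to $S_\chi$. Indeed, if $\phi\in\Pi'$ is a cusp form with $\xi_\phi=\mS_\psi^\eta(\phi)$, then for $s=\bpm h&X\\0&h\epm$ one has $\det'(sg)=\det'(s)\det'(g)$, $\det'(s)=\det'(h)^2$, and $(\eta\chi^2)(s)=\eta(\det' h)\chi(\det' h)^2$, so $\chi(\det'(sg))\,(\eta\chi^2)(s)^{-1}=\chi(\det' g)\,\eta(s)^{-1}$ and hence $\mS_\psi^{\eta\chi^2}(A_\chi\phi)(g)=\chi(\det' g)\,\mS_\psi^\eta(\phi)(g)=S_\chi(\xi_\phi)(g)$. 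Applying $H^{q_0}(\lig'_\infty,K'_\infty,-\otimes E_\mu^\lor)$, resp. $-\otimes(E_{\mu+b})^\lor$, this shows stage (3) commutes with $(A_\chi\otimes\mathrm 1)^*$ and $(S_\chi\otimes\mathrm 1)^*$, once the target coefficient systems are identified as in the next paragraph.

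That identification is the cancellation of the $\lvert\det'\rvert^b$-twist on cohomology. By the lemma preceding the Gauss-sum discussion, $(E_{\mu+b})^\lor=E_\mu^\lor\otimes\bigotimes_{v\in\VV_\infty}\lvert\det'_v\rvert^{-b}$, so there is a canonical isomorphism $c\colon (\pfi'\otimes\chi_\infty)\otimes (E_{\mu+b})^\lor\iso\pfi'\otimes E_\mu^\lor$ multiplying out the $\lvert\det'_v\rvert^{\pm b}$ factors, and the same with $\mS_{\psi_\infty}^{\eta_\infty\chi_\infty^2}(\pfi'\otimes\chi_\infty)$ in place of $\pfi'\otimes\chi_\infty$; this is precisely why $(A_\chi\otimes\mathrm 1_{E_\mu^\lor})^*$ again has target sheaf $\mE$, and I would use $c$ to identify the relevant cohomology groups throughout. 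Under $c$, the class $[\pfi'\otimes\chi_\infty]$ is by its very definition — formula (\ref{E:periods}) with each $\xi_{v,\alpha,\underline i}$ replaced by $\lvert\det'_v\rvert^b\xi_{v,\alpha,\underline i}=S_{\chi_v}(\xi_{v,\alpha,\underline i})$, and $\mathfrak K_\chi$ chosen to match the splitting $\pfi'\otimes\chi_\infty\iso\bigotimes_v\pvi'\otimes\lvert\det'_v\rvert^{-b}$ — the image of $[\pfi']$ under $(S_{\chi_\infty}\otimes\mathrm 1)^*$. Since the global $S_\chi$ splits as $S_{\chi_f}\otimes S_{\chi_\infty}$ under the fixed splitting isomorphisms, this gives at once that stages (1) and (2) intertwine $S_{\chi_f}$ with $S_\chi$ (after the identification $c$).

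For stage (4) I would invoke functoriality of the realization of automorphic cohomology: the embedding of \cite[Proposition 7.16, Theorem 7.23]{GroRag2} is natural in the automorphic representation, and $A_\chi$ realizes the isomorphism $\Pi'\cong\Pi'\otimes\chi$ as multiplication by the automorphic function $\chi\circ\det'$, whose effect on the sheaf $\mE$ (again using $c$) is exactly $(A_\chi\otimes\mathrm 1_{E_\mu^\lor})^*$; hence the two embeddings are compatible. Composing the four commuting squares yields $(A_\chi\otimes\mathrm 1_{E_\mu^\lor})^*\circ\Theta_{\Pi'}=\Theta_{\Pi'\otimes\chi}\circ S_{\chi_f}$, which is the assertion. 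The hard part is not conceptual but bookkeeping: pinning down the sign of $b$ in the splitting $\chi_\infty\iso\bigotimes_v\lvert\det'_v\rvert^{\pm b}$ against the one in $(E_{\mu+b})^\lor\cong E_\mu^\lor\otimes\bigotimes_v\lvert\det'_v\rvert^{\mp b}$ so that the twist genuinely cancels and $[\pfi'\otimes\chi_\infty]$ is transported from $[\pfi']$ with no stray scalar, and verifying that $\mathfrak K$ and $\mathfrak K_\chi$ were chosen compatibly place by place; with the normalizations fixed as in the appendix of \cite{GroRag}, all four squares commute on the nose.
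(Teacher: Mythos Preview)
Your proposal is correct and follows essentially the same strategy as the paper's proof: decompose $\Theta_{\Pi'}$ into its constituent isomorphisms and verify that each stage intertwines the appropriate twisting operator. The paper organizes this into three commuting squares (your stages (1)--(2), your stage (2) alone, and your stages (3)--(4) bundled together), citing \cite[Proposition 2.3.7]{RagSha} for the template; you split the last square into two and supply the Shalika-period computation $\mS_\psi^{\eta\chi^2}(A_\chi\phi)=S_\chi(\mS_\psi^\eta(\phi))$ that the paper leaves implicit with ``which therefore clearly commutes.'' One small bookkeeping point: in the paper's conventions the target of $(A_\chi\otimes\mathrm 1_{E_\mu^\lor})^*$ is $H_c^{q_0}(\bsg,\mathcal{E}_{\mu+b}^\lor)(\Pi'_f\otimes\chi_f)$, not $\mE$ again --- your cancellation isomorphism $c$ is correct, but the paper simply carries the shifted coefficient system rather than unwinding it.
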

\begin{lemma}[{\cite[Proposition 2.3.6]{RagSha}}]\label{L:3}
For any $\sigma\in \Aut\left(\CC/\QQ(\mu)\right)$ we have
\[\sigma_{\GL_{2n}'}^*\circ\left(A_\chi\otimes \mathrm{1}_{E_\mu^\lor}\right)^*=\left(A_{{}^\sigma\chi}\otimes \mathrm{1}_{{}^\sigma E_\mu^\lor}\right)^*\circ \sigma_{\GL_{2n}'}^*\]
\end{lemma}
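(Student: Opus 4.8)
The plan is to verify the identity at the level of cochains in the automorphic (Betti) realization and reduce it to the single statement that the $\sigma$-semilinear structure carries $A_\chi$ to $A_{{}^\sigma\chi}$. I would fix a compact open subgroup $K_f'\subseteq\GL_{2n}'(\A_f)$ small enough to fix $\Pi_f'$, $\chi_f$ and their $\sigma$-twists, so that it suffices to argue with $\SG$ in place of $\bsg$. Via the inclusion $H^{q_0}(\lig'_\infty,K'_\infty,\pfi'\otimes E_\mu^\lor)\otimes\Pi_f'\hookrightarrow H^{q_0}_c(\SG,\mE)$ of \cite[Proposition 7.16]{GroRag2}, a class in $H^{q_0}_c(\SG,\mE)(\Pi_f')$ is represented by a $(\lig'_\infty,K'_\infty)$-cocycle $\omega=\sum_{\underline i,\alpha}X^*_{\underline i}\otimes\phi_{\underline i,\alpha}\otimes e^\lor_\alpha$, where the $\phi_{\underline i,\alpha}$ lie in the automorphic realization of $\Pi'$ and the $X^*_{\underline i}$, $e^\lor_\alpha$ are the $\LL$-rational, resp.\ $\QQ(\mu)$-rational, data fixed in \Cref{S:5.3}. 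By construction $(A_\chi\otimes\mathrm{1}_{E_\mu^\lor})^*$ sends $\omega$ to $\sum_{\underline i,\alpha}X^*_{\underline i}\otimes(\chi\circ\det')\phi_{\underline i,\alpha}\otimes e^\lor_\alpha$; writing $\chi\circ\det'=|\det'_\infty|^b\cdot(\chi_f\circ\det')$, the factor $\chi_\infty=|\det'_\infty|^b$ forces the coefficient system of the target to be $(E_{\mu+b})^\lor=E_\mu^\lor\otimes|\det'_\infty|^{-b}$, so that the resulting class lies in $H^{q_0}_c(\SG,\cdot)(\Pi_f'\otimes\chi_f)$, while all the arithmetic information is carried by $\chi_f$.

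Next I would make the action of $\sigma_{\GL_{2n}'}^*$ on such a cocycle explicit. By its definition it is induced from the $\sigma$-linear $\GL_{2n}'(\KK)$-isomorphism $\sigma\colon E_\mu^\lor\to{}^\sigma E_\mu^\lor$; on the fixed data of \Cref{S:5.3} it carries $X^*_{\underline i}$ and $e^\lor_\alpha$ to the corresponding data at the places $\sigma^{-1}\circ v$, hence $[\pfi']$ to $[({}^\sigma\Pi')_\infty]$ (compatibly with the Künneth maps $\mathfrak{K}$, $\mathfrak{K}_\sigma$), and on the finite factor it is the $\sigma$-twist $\Pi_f'\to{}^\sigma\Pi_f'$ threaded through the realization, as encoded by the commuting square of \Cref{T:2strc}. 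Evaluating $\sigma_{\GL_{2n}'}^*\circ(A_\chi\otimes\mathrm{1}_{E_\mu^\lor})^*$ and $(A_{{}^\sigma\chi}\otimes\mathrm{1}_{{}^\sigma E_\mu^\lor})^*\circ\sigma_{\GL_{2n}'}^*$ on $\omega$, the comparison reduces to the effect of $\sigma$ on the scalar function $\chi\circ\det'$ that has been multiplied in. At the finite places the $\sigma$-twist of ``multiply by $\chi_f\circ\det'$'' is ``multiply by $({}^\sigma\chi_f)\circ\det'$'', i.e.\ ${}^\sigma A_{\chi_f}=A_{{}^\sigma\chi_f}$, directly from the definition of the $\sigma$-twist of a smooth character, with no $\epsilon_\sigma$-correction of the type appearing in (\ref{E:E2}) because $b\in\ZZ$. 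At the infinite places $\chi_\infty=|\det'_\infty|^b=(\det'_\infty)^b$ is an algebraic character defined over $\QQ$, fixed both by the place permutation and by $\sigma$, so ${}^\sigma A_{\chi_\infty}=A_{\chi_\infty}=A_{{}^\sigma\chi_\infty}$; this is consistent with the fact that the generators $[\pi'_v\otimes\chi_v]$ were defined precisely by the rescaling $\xi_{v,\alpha,\underline i}\mapsto|\det'_v|^{-b}\xi_{v,\alpha,\underline i}$. Putting the two parts together yields the asserted identity on $\omega$, hence on cohomology.

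The step I expect to be the main obstacle is the precise compatibility of $\sigma_{\GL_{2n}'}^*$ — which is constructed purely sheaf-theoretically on $H^*_c(\bsg,\mE)$ — with the automorphic realization: one must know that $\sigma_{\GL_{2n}'}^*$ sends the class of an automorphic cocycle $\sum X^*_{\underline i}\otimes\phi_{\underline i,\alpha}\otimes e^\lor_\alpha$ to the class of the automorphic cocycle for ${}^\sigma\Pi'$ obtained by applying $\sigma$ on the coefficient factor, the place permutation on the exterior monomials, and the fixed splitting isomorphisms on the automorphic factor. This has to be traced through the construction of the inclusion $H^*_{cusp}(\GL_{2n}',E_\mu^\lor)\hookrightarrow H^*_c(\bsg,\mE)$, but it is exactly what the choices of $[\pfi']$, $\mathfrak{K}_\sigma$ and \Cref{T:2strc} were arranged to record, so the verification is bookkeeping rather than a new input. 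A secondary point requiring care is keeping the two coefficient systems ($E_\mu^\lor$ for $\Pi'$, $(E_{\mu+b})^\lor$ for $\Pi'\otimes\chi$) aligned along the two sides of the square, together with the observation that $\mathrm{1}_{E_\mu^\lor}$ is genuinely $\sigma$-equivariant since $\sigma$ fixes $\QQ(\mu)$ and hence the $\QQ(\mu)$-rational structure of $E_\mu^\lor$ pointwise.
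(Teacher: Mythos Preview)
Your argument is correct and reaches the same core observation as the paper—namely that the $\sigma$-semilinear structure carries multiplication by $\chi$ to multiplication by ${}^\sigma\chi$, which the paper records as equation~(\ref{E:twitwi}). However, the routes differ in where the computation is carried out.

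You work inside the cuspidal summand $H^{q_0}_c(\SG,\mE)(\Pi'_f)$ via the $(\lig'_\infty,K'_\infty)$-cocycle model, tracking the effect of $\sigma_{\GL_{2n}'}^*$ on an explicit automorphic cocycle $\sum X^*_{\underline i}\otimes\phi_{\underline i,\alpha}\otimes e^\lor_\alpha$. As you correctly flag, this requires unwinding how the sheaf-theoretic $\sigma_{\GL_{2n}'}^*$ interacts with the inclusion $H^*_{cusp}\hookrightarrow H^*_c$; this is indeed recorded in the references you cite, but it is an extra layer to trace. The paper instead extends $(A_\chi\otimes\mathrm{1}_{E_\mu^\lor})^*$ to a map on the \emph{full} cohomology $H^*_c(\bsg,\mE)$, coming from the sheaf morphism $\mE\to\mathcal{E}_{\mu+b}^\lor$ given by twisting with $\chi_\infty$, and then passes to the de Rham model. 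There the check becomes a one-line computation on sections: for a form $\omega$ and $g\in\SG$ one has $l_\sigma(A_{\chi}(\omega))(g)=\sigma(\chi(g))\cdot l_\sigma(\omega(g))=A_{\sigma(\chi)}(\sigma^*\omega)(g)$, and then $\sigma(\chi)={}^\sigma\chi$ finishes it. This sidesteps entirely the compatibility issue you identify, at the price of invoking the de Rham comparison. Both approaches are valid; the paper's is slightly more economical because $\sigma_{\GL_{2n}'}^*$ and $A_\chi$ are both defined at the sheaf level, so comparing them there is the path of least resistance, whereas your approach has the advantage of keeping everything inside the automorphic model where the periods of \Cref{T:2strc} actually live.
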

\begin{proof}
    The proof is exactly the same as the one of \cite[Proposition 2.3.6]{RagSha}. Note that in our case it even simplifies a bit since $\sigma(\chi)={}^\sigma\chi$. Indeed, both $^\sigma\left(\widetilde{\chi}\circ\mathrm{Nrd}\right)$ and $\widetilde{\chi}\circ\mathrm{Nrd}$ are trivial at infinity by \ref{T:dimcoh} and $\lvert\det'\lvert^b={}^\sigma\lvert\det'\lvert^b=\sigma(\lvert\det'\lvert^b)$ for all $\sigma\in \Aut(\CC)$.
\end{proof}
We will first show how \ref{T:twist} follows from the above lemmata.
\begin{proof}[Proof of \ref{T:twist}]
We compute $\left(A_{{}^\sigma\chi}\otimes \mathrm{1}_{{}^\sigma E_\mu^\lor}\right)^{*} \circ \sigma_{\GL_{2n}'}^*\circ \Theta_\Pi$ in two different ways.
On the one hand
\[\left(A_{{}^\sigma\chi}\otimes \mathrm{1}_{{}^\sigma E_\mu^\lor}\right)^{*} \circ\sigma_{\GL_{2n}'}^*\circ \Theta_{\Pi'}\stackrel{(\ref{T:2strc})}{=}  \left(\frac{\sigma\left(\omega\left(\Pi'_f\right)\right)}{\omega\left({}^\sigma\Pi'_f\right)} \right)\left(A_{{}^\sigma\chi}\otimes \mathrm{1}_{{}^\sigma E_\mu^\lor}\right)^{*} \circ \Theta_{{}^\sigma\Pi'}\circ \sigma^*\stackrel{(\ref{L:2})}{=}\]
\[=\left(\frac{\sigma\left(\omega\left(\Pi'_f\right)\right)}{\omega\left({}^\sigma\Pi'_f\right)} \right)\Theta_{{}^\sigma\Pi'\otimes {}^\sigma\chi}\circ \mS_{{}^\sigma\chi_f}\circ \sigma^*.\]
But on the other hand, we see
\[\left(A_{{}^\sigma\chi}\otimes \mathrm{1}_{{}^\sigma E_\mu^\lor}\right)^{*} \circ \sigma_{\GL_{2n}'}^*\circ \Theta_{\Pi'} \stackrel{(\ref{L:3})}{=}\sigma_{\GL_{2n}'}^*\circ\left(A_{\chi}\otimes \mathrm{1}_{ E_\mu^\lor}\right)^{*} \circ  \Theta_{\Pi'}\stackrel{(\ref{L:2})}{=}\]\[=\sigma_{\GL_{2n}'}^*\circ\Theta_{\Pi'\otimes \chi}\circ S_{\chi_f}\stackrel{(\ref{T:2strc})}{=}\left(\frac{\sigma\left(\omega\left(\Pi'_f\otimes\chi_f\right)\right)}{\omega\left({}^\sigma\Pi'_f\otimes {}^\sigma\chi_f\right)}\right)\Theta_{{}^\sigma \Pi'\otimes {}^\sigma \chi}\circ \sigma^*\circ S_{\chi_f}\stackrel{(\ref{L:1})}{=}\]\[=\left(\frac{\sigma\left(\omega\left(\Pi'_f\otimes\chi_f\right)\right)}{\omega\left({}^\sigma\Pi'_f\otimes {}^\sigma\chi_f\right)}\right)\left(\frac{\sigma\left(\mathcal{G}\left(\chi_f\right)\right)}{\mathcal{G}\left({}^\sigma \chi_f\right)}\right)^{-nd}\Theta_{{}^\sigma \Pi'\otimes {}^\sigma \chi}\circ S_{{}^\sigma\chi_f}\circ \sigma^*.\]
Hence, the desired equality follows.
\end{proof}
\begin{proof}[Proof of \ref{L:1}]
The first equality follows by inserting the definition of $\sigma^*$ and noticing that the determinant of $\det'(\textbf{t}_\sigma)=t_\sigma^{nd}$.
The second equality follows from \cite[Theorem 2.4.3]{BusFro}, which states $\sigma\left(\mathcal{G}\left(\chi_f\right)\right)=\sigma\left(\chi_f\left(t_\sigma\right)\right)\mathcal{G}\left({}^\sigma \chi_f\right).$
\end{proof}
\section{Critical values of of \texorpdfstring{$L$}{L}-functions and their cohomological interpretation}
Throughout this section, we assume $ n=2$, $d=1$.
Let $\Pi'$ be a cuspidal irreducible cohomological representation of $\GL_{2}'\left(\A\right)$ as in \ref{I:prop2}
and consider the standard global $L$-function $L(s,\Pi')$ of $\Pi'$. Recall that a critical point of $L\left(s,\Pi'\right)$ is in our case a point $s_0\in\frac{1}{2}+\mathbb{Z}$ such that both $L\left(s,\Pi_\infty'\right)$ and $L\left(1-s,\Pi_\infty'^\lor\right)$ are holomorphic at $s_0$. We further assume that $\jl\left(\Pi'\right)$ is residual of the form $\jl\left(\Pi'\right)=\mw\left(\Sigma,2\right)$ for some cuspidal irreducible representation $\Sigma$ of $\GL_{2}$. To calculate the critical values of the $L$-function it suffices to consider the meromorphic contribution of the local $L$-factors from the infinite places. Let $\mu'$ be the highest weight such that $\Sigma$ is cohomological with respect to $E_{\mu'}^\lor$.
 We can compute $L$-factor of $L\left(s,\Pi_\infty'\right)$ by \ref{L:littlelem}, \cite[Theorem 5.2]{GroRag2} and \cite[Theorem 19.1.(b)]{BadRen} as follows.
 We showed in the proof of \ref{L:littlelem} that for $v$ an infinite place and $\mu_v'=(\mu_{v,1}',\ldots,\mu_{v,4}')$,
 $\Pi_v'$ has to be the unique quotient of $D(l_{1,v},-w_{1,v})\times D(l_{2,v},-w_{2,v}),$
where $l_{1,v}=\mu_{v,1}'-\mu_{v,4}'+2,\,l_{2,v}=\mu_{v,2}'-\mu_{v,3}'+2$ and $w_{1,v}=\mu_{v,1}'+\mu_{v,4}'+1,\, w_{2,v}=\mu_{v,2}'+\mu_{v,3}'-1$.
Moreover, from the proof of \cite[Theorem 5.2]{GroRag2}, see also the proof of \ref{L:littlelem}, it follows that $\mu_{v,1}'=\mu_{v,2}'\ge\mu_{v,3}'=\mu_{v,4}'$.
Thus the $L$-function $L(s,\Pi_\infty')$ is up to a non-zero scalar equal to
\[\prod_{v\in {\VV_\infty}}\prod_{i=1}^2\Gamma\left(s+\frac{w_{i,v}+l_{i,v}}{2}\right)\]
and the one of $L\left(1-s,\Pi_\infty'^\lor\right)$ is of the form
\[\prod_{v\in {\VV_\infty}}\prod_{i=1}^2\Gamma\left(1-s-\frac{-w_{i,v}+l_{i,v}}{2}\right).\]
Recall that the poles of the Gamma function lie precisely on the non-positive integers and it is non-vanishing everywhere else. 
Thus the critical points are precisely those $\frac{1}{2}+m,m\in\ZZ$ such that
\[\CCrit\left(\Pi'\right)=\{\frac{1}{2}+m:-\mu_{v,2}'\le m\le -\mu_{v,3}',v\in\VV_\infty\}.\]
Note that by \cite[Corollary 13.7, Theorem 19.1(b)]{BadRen} the above set is also the set of critical points of $L(s+\frac{1}{2},\Sigma_\infty)L(s-\frac{1}{2},\Sigma_\infty)$. For an explicit example we refer to \cite[§ 6.11]{GroRag2}.

Following \cite{GroRag} we define a map $\mathcal{T}^*$.
\begin{prop}[{\cite[Proposition 6.3.1]{GroRag}}]
Let $\Pi'$ be an irreducible representation as in \ref{I:prop2} and assume moreover that $n=1$. 
Assume $\frac{1}{2}$ is a critical point of $L(s,\Pi')$. Denote by $w_v$ the weight such that
$E_{\mu_v}\cong E_{\mu_v}^\lor\otimes \det'^{w_v}$ for each $v\in S_\infty$. Let $E_{\left(0,-w_v\right)}$ be the representation $\mathbf{1}\otimes \det^{-w_v}$ of $H'\left(\CC\right)=\GL_{2}\left(\CC\right)\times \GL_{2}\left(\CC\right)$. Then
\[\dim_\CC \mathrm{Hom}_{H'\left(\CC\right)}\left(E_{\mu_v}^\lor,E_{\left(0,-w_v\right)}\right)=1\]
for all $v\in S_\infty$.
\end{prop}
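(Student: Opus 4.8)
The statement is insensitive to the quaternion structure: after complexification it becomes a pure branching problem for $\GL_4(\CC)\downarrow\GL_2(\CC)\times\GL_2(\CC)$, which is exactly the situation treated in \cite[Proposition 6.3.1]{GroRag} for the split group. So the plan is to spell out this reduction and then indicate the branching computation that proves it, the point being that the compatibility hypothesis is precisely what pins the relevant multiplicity down to $1$.

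\emph{Reduction to $\GL_4(\CC)$.} Since $n=1$ and $d=2$, for $v\in S_\infty$ the group $\GL_2'(\KK_v)$ is $\GL_2(\HH)$, whose complexification is $\GL_4(\CC)$, and the block--diagonal subgroup $H_1'(\KK_v)=\GL_1(\HH)\times\GL_1(\HH)$ complexifies to the block--diagonally embedded $\GL_2(\CC)\times\GL_2(\CC)=H'(\CC)$. Under this identification $E_{\mu_v}$ is the irreducible rational representation of $\GL_4(\CC)$ of highest weight $\mu_v=(\mu_{v,1},\ldots,\mu_{v,4})$, hence $E_{\mu_v}^\lor$ has highest weight $(-\mu_{v,4},-\mu_{v,3},-\mu_{v,2},-\mu_{v,1})$, while $E_{(0,-w_v)}$ is the one--dimensional representation of $\GL_2(\CC)\times\GL_2(\CC)$ trivial on the first factor and $\det^{-w_v}$ on the second. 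From $E_{\mu_v}\cong E_{\mu_v}^\lor\otimes\det'^{\,w_v}$ and the formula for the dual of a $\GL_4$-representation I would first extract the two relations $\mu_{v,1}+\mu_{v,4}=\mu_{v,2}+\mu_{v,3}$ and $w_v=\mu_{v,1}+\mu_{v,4}$. The quantity to compute is then $\dim_\CC\mathrm{Hom}_{\GL_2(\CC)\times\GL_2(\CC)}(E_{\mu_v}^\lor,E_{(0,-w_v)})$, which is of exactly the type handled in \cite[Proposition 6.3.1]{GroRag}.

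\emph{The branching computation.} Twisting $E_{\mu_v}^\lor$ by $\det^{\mu_{v,1}}$ turns its highest weight into the partition $\widetilde\gamma=(\mu_{v,1}-\mu_{v,4},\mu_{v,1}-\mu_{v,3},\mu_{v,1}-\mu_{v,2},0)$ (a partition because $\mu_v$ is dominant), and the corresponding twist on $\GL_2(\CC)\times\GL_2(\CC)$ turns the target into the external tensor product of $E_{(\mu_{v,1},\mu_{v,1})}$ on the first factor and $E_{(-\mu_{v,4},-\mu_{v,4})}$ on the second (using $\mu_{v,1}-w_v=-\mu_{v,4}$), both genuine partitions since $\mu_{v,1}\ge\mu_{v,2}\ge 0$ and $-\mu_{v,4}\ge-\mu_{v,3}\ge 0$ by compatibility. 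By the classical branching rule $\GL_4(\CC)\downarrow\GL_2(\CC)\times\GL_2(\CC)$, the sought dimension equals the Littlewood--Richardson coefficient $c^{\widetilde\gamma}_{\lambda\nu}$ with $\lambda=(\mu_{v,1},\mu_{v,1})$ and $\nu=(-\mu_{v,4},-\mu_{v,4})$. Compatibility forces $\widetilde\gamma_1\ge\mu_{v,1}$ and $\widetilde\gamma_2\ge\mu_{v,1}$, so the rectangle $\lambda$ sits inside $\widetilde\gamma$; the skew shape $\widetilde\gamma/\lambda$ has rows of lengths $-\mu_{v,4},-\mu_{v,3},\mu_{v,1}-\mu_{v,2},0$, with its first two rows in columns $>\mu_{v,1}$ and its third row in columns $\le\mu_{v,1}-\mu_{v,2}\le\mu_{v,1}$, so columns overlap only between the first two rows. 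In a Littlewood--Richardson filling of content $\nu$ the first row is forced to consist only of $1$'s (the reverse reading word begins there and must be a lattice word); this uses up all $-\mu_{v,4}$ of the $1$'s, so the remaining two rows are all $2$'s, which is column--strict between rows $1$ and $2$, and the cell counts match exactly because $\mu_{v,1}+\mu_{v,4}=\mu_{v,2}+\mu_{v,3}$. Hence there is a unique such tableau and $c^{\widetilde\gamma}_{\lambda\nu}=1$, which is the assertion.

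\emph{Main obstacle.} Nothing is deep once the reduction is in place; the one point that needs care is the bookkeeping, namely tracking the duals $E_{\mu_v}$ versus $E_{\mu_v}^\lor$, the passage from $\GL_2(\HH)$ to $\GL_4(\CC)$, and the two determinant twists simultaneously, so that one sees that the compatibility hypothesis (equivalently $-\mu_{v,2}\le 0\le-\mu_{v,3}$) is exactly the condition making both $\lambda$ and $\nu$ fit as rectangles inside $\widetilde\gamma$, after which the Littlewood--Richardson count is forced.
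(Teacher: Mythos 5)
Your argument is correct. Note that the paper offers no proof of this proposition at all — it is imported directly from \cite[Proposition 6.3.1]{GroRag} — so what you have produced is a self-contained verification of the cited result in the case at hand, and it runs along the same lines (classical branching from $\GL_4(\CC)$ to the block-diagonal $\GL_2(\CC)\times\GL_2(\CC)$, reduced to a Littlewood--Richardson count) as the argument the citation stands in for. The bookkeeping checks out: the statement is already posed over $H'(\CC)=\GL_2(\CC)\times\GL_2(\CC)$, so no genuine descent from $\HH$ is needed; the self-duality $E_{\mu_v}\cong E_{\mu_v}^\lor\otimes\det'^{w_v}$ does give $w_v=\mu_{v,1}+\mu_{v,4}=\mu_{v,2}+\mu_{v,3}$; after twisting by $\det^{\mu_{v,1}}$ the multiplicity in question is $c^{\widetilde\gamma}_{\lambda\nu}$ with $\lambda=(\mu_{v,1},\mu_{v,1})$, $\nu=(-\mu_{v,4},-\mu_{v,4})$, $\widetilde\gamma=(\mu_{v,1}-\mu_{v,4},\mu_{v,1}-\mu_{v,3},\mu_{v,1}-\mu_{v,2},0)$; compatibility ($\mu_{v,3}\le 0\le\mu_{v,2}$) is precisely what makes both rectangles polynomial and puts $\lambda$ inside $\widetilde\gamma$, while the size identity $\lvert\widetilde\gamma\rvert=\lvert\lambda\rvert+\lvert\nu\rvert$ is the relation $\mu_{v,1}+\mu_{v,4}=\mu_{v,2}+\mu_{v,3}$; and your ballot-word argument (rightmost entry of the top skew row must be a $1$, hence that row is all $1$'s, exhausting the $1$'s, so the rest are $2$'s, with column-strictness automatic since $-\mu_{v,4}\ge-\mu_{v,3}$ and the third row sits in disjoint columns) indeed forces a unique Littlewood--Richardson filling, giving multiplicity $1$. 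The only added value of spelling out, as you did, that the degenerate cases (e.g. $\mu_{v,4}=0$) collapse to the empty skew shape would be cosmetic; the proof is complete as written.
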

We then let $E_{0,-w}\coloneqq \bigotimes_{v\in {\VV_\infty}}E_{\left(0,-w_v\right)}$ and write $\mathcal{E}_{\left(0,-w\right)}$ for the corresponding locally constant sheaf of $\SH$.
Since $\frac{1}{2}$ is a critical point and since $\QQ\left(\mu\right)$ contains the splitting field of $\DD$, \cite[Lemma 4.8]{JiaSunTia} shows that there exists a map $\mathcal{T}=\bigotimes_{v\in S_\infty}\mathcal{T}_{\circ v}$ in above space which is defined over $\QQ(\mu)$ and we fix a choice of such a map.
Lifting this map to cohomology we obtain a morphism
\[\mathcal{T}^*\colon H^*_c\left(\SH,\mE\right)\rightarrow H^*_c\left(\SH,\mathcal{E}_{\left(0,-w\right)}\right).\]
For $\sigma\in\Aut\left(\CC\right)$ we define the twist of $\mathcal{T}$ as
\[{}^\sigma\mathcal{T}=\bigotimes_{v\in S_\infty}\mathcal{T}_{\sigma^{-1}\circ v}\]
and denote the corresponding morphism on the cohomology by ${}^\sigma\mathcal{T}^*$.
Since $\mathcal{T}$ is defined over $\QQ(\mu)$, we therefore obtain for all $\sigma\in\Aut(\CC/\QQ(\mu))$ that ${}^\sigma\mathcal{T}^*=\mathcal{T}^*$.
We then have the following commutative diagram for all $\sigma\in\Aut(\CC/\QQ(\mu))$.
\begin{equation}\label{E:sumdia}
\begin{tikzcd}
H^*_c\left(\SH,\mE\right)\arrow[d,"\sigma_{H'_n}^*"]\arrow[rr,"\mathcal{T}^*"]&&H^*_c\left(\SH,\mathcal{E}_{\left(0,-w\right)}\right)\arrow[d,"\sigma_{H'_n}^*"]\\
H^*_c\left(\SH,{}^\sigma\mE\right)\arrow[rr,"{}^\sigma\mathcal{T}^*"]&&H^*_c\left(\SH,{}^\sigma\mathcal{E}_{\left(0,-w\right)}\right)
\end{tikzcd}
\end{equation}
The next step consists of translating the computation of a critical point of $L(s,\Pi')$ into an instance of Poincar\'e duality. But in order to apply 
 Poincar\'e duality, the highest or lowest degree in which $H^*\left(\lig'_\infty,K_\infty',\Pi'\otimes E_\mu^\lor\right)$ vanishes has to equal the dimension $\dim_\RR\SH$, which implies $n=1,d=2$ and $k=2$. Indeed, \ref{L:littlelem} implies that
\[{r\left(\left(nd\right)^2-nd-1\right)=r\left(\left(nd\right)^2-nd-\frac{nd}{2}\left(k-1\right)\right)}\text{ or }\]\[{r\left(\left(nd\right)^2-nd-1\right)=r\left(\left(nd\right)^2-nd+\frac{nd}{2}\left(k-1\right)+1\right)}\]
and only the first equation can be satisfied, which leads to the above restriction.
\subsection{ Poincaré duality}\label{I:prop3} We therefore let $\Pi'$ be a representation as in \ref{I:prop2} and assume moreover that $\DD$ is quaternion and $n=1$. By \ref{T:L} the respective conditions on the $\Aut(\CC)$-orbit on $\Pi'$ hold unconditionally in this case and we set $q_0\coloneqq r$, which is the real dimension of $\Sh$ and the lowest degree in which $H^*(\lig'_\infty,K_\infty',\Pi'\otimes E_\mu^\lor)$ does not vanish. 

If $\Pi'$ is as above and $\frac{1}{2}$ is a critical point of $L(s,\Pi')$, 
we choose $K_f'$ small enough so that $\iota^{-1}\left(K_f'\right)$ can be written as a product $K_{f,1}\times K_{f,2}$, $\eta_f$ is trivial on $\det'\left(K_{f,2}\right)$ and $K_f'$ fixes $\Pi'_f$. 
Let $\mathcal{C}$ be the set of connected components of $\Sh$. Using \cite[Theorem 5.1]{BorII} we see that $\mathcal{C}$ is finite and each $C\in\mathcal{C}$ is a quotient of $H'_{1,\infty}/\left(K_\infty'\cap H_{1,\infty}'\right)$ by a discrete subgroup of $H_1'\left(\KK\right)$. Recall that the $Y_i$'s from in \ref{S:5.3} give a basis of $\mathfrak{h}'_\infty/\left(\mathfrak{h}'_\infty\cap \mathfrak{k}'_\infty\right)$. Thus, they give an orientation on  $H'_{1,\infty}/\left(K_\infty'\cap H_{1,\infty}'\right)$, since $\mathfrak{h}'_\infty/\left(\mathfrak{h}'_\infty\cap \mathfrak{k}'_\infty\right)$ is parallelizable and therefore on each $C\in \mathcal{C}$ we can now consider $\mathbf{1}\times \eta^{-1}$ as a global section of $\mathcal{E}_{\left(0,-w\right)}$ and denote the corresponding cohomology class as 
\[[\eta]\in H^{q_0}_c\left(\Sh,\mathcal{E}_{\left(0,-w\right)}\right).\]

Poincar\'e duality on each connected component of $\Sh$ gives rise to a surjective map 
\[H_c^{q_0}\left(\Sh,\mathcal{E}_{\left(0,-w\right)}\right)\rightarrow \CC,\,\theta \mapsto {\int_\Sh} \theta\wedge[\eta]\coloneqq \sum_{C\in \mathcal{C}}{\int_C} \theta\wedge[\eta].\]
The following is an immediate consequence of the equivariance properties we proved in the above sections.
\begin{lemma}\label{L:indeed}
    This map commutes with twisting by an automorphism $\sigma\in\Aut(\CC)$, \emph{i.e.}
\[\sigma\left({\int_\Sh} \theta\wedge[\eta]\right)={\int_\Sh} \sigma_{H'_1}^*\left(\theta\right)\wedge[{}^\sigma\eta].\]
\end{lemma}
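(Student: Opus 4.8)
The plan is to reduce the statement to the naturality of Poincaré duality on each connected component $C \in \mathcal C$, combined with the compatibility of the class $[\eta]$ with the $\sigma$-action on the sheaf $\mathcal E_{(0,-w)}$. First I would recall that $\sigma_{H_1'}^*$ is induced by a $\sigma$-linear isomorphism $l_\sigma \colon E_{(0,-w)} \to {}^\sigma E_{(0,-w)}$ of $H_1'(\KK)$-representations, and that the top-degree compactly supported cohomology $H^{q_0}_c(\Sh, \mathcal E_{(0,-w)})$ can be computed as de Rham cohomology with coefficients in the sheaf, so that a class is represented by a $\mathcal E_{(0,-w)}$-valued differential $q_0$-form. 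Since $q_0 = \dim_\RR \Sh$ and the $Y_i$'s give a global orientation on each $C$ (as $\mathfrak h'_\infty/(\mathfrak h'_\infty \cap \mathfrak k'_\infty)$ is parallelizable), the integral $\int_C \theta \wedge [\eta]$ is the integral over $C$ of a top form obtained by pairing the coefficients of $\theta$ against the section $\mathbf 1 \times \eta^{-1}$ via the duality pairing $E_{(0,-w)} \otimes E_{(0,-w)}^\lor \to \CC$ — but here $E_{(0,-w)} = \mathbf 1 \otimes \det^{-w}$ is one-dimensional, so the pairing is just multiplication of scalar-valued functions.

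Next I would verify that $\sigma$ intertwines the section defining $[\eta]$ with the one defining $[{}^\sigma\eta]$: under $l_\sigma$, the section $\mathbf 1 \times \eta^{-1}$ is sent to $\mathbf 1 \times ({}^\sigma\eta)^{-1}$, using that $\eta$ is a finite-order Hecke character (so $\sigma$ acts on its values and $\sigma \circ \eta^{-1} = ({}^\sigma\eta)^{-1}$), and that the archimedean component of $\eta$ is trivial on the relevant compact, so that the twist affects only the finite part. Hence $\sigma_{H_1'}^*([\eta]) = [{}^\sigma\eta]$ in $H^{q_0}_c(\Sh, {}^\sigma\mathcal E_{(0,-w)})$. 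Then, writing $\theta$ locally as $f \cdot \omega$ with $\omega$ a fixed orientation form built from the $X_{\underline i}^*$ and $f$ a scalar function (the coefficient against the fixed basis vector of $E_{(0,-w)}$), the form $\theta \wedge [\eta]$ becomes $(f \cdot (\mathbf 1 \times \eta^{-1})) \cdot \omega$, and $\sigma_{H_1'}^*\theta \wedge [{}^\sigma\eta]$ becomes $(\sigma(f) \cdot (\mathbf 1 \times ({}^\sigma\eta)^{-1})) \cdot \omega$, where crucially the orientation form $\omega$ is fixed over $\QQ$ (it is defined via the $\LL$-basis $\{X_i\}$). Since integration of a top form built from a rational volume form against a function commutes with applying $\sigma$ to that function — this is the standard fact that $\sigma(\int_C h \, d\mathrm{vol}) = \int_C \sigma(h)\, d\mathrm{vol}$ when $d\mathrm{vol}$ is $\sigma$-invariant, which underlies the whole rationality machinery (cf. the analogous argument in \cite[Proposition 2.3.6]{RagSha} and the de Rham computation in the proof of \Cref{L:3}) — the two integrals agree term by term over each $C \in \mathcal C$, and summing over the finite set $\mathcal C$ gives the claim.

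The main obstacle I expect is the careful bookkeeping of the $\sigma$-action on the orientation and on the coefficient system: one must be sure that the chosen orientation on each component $C$, which comes from the $Y_i$'s, is genuinely $\sigma$-stable (it is, since the $Y_i$ lie in the $\LL$-rational structure and $\LL \subseteq \QQ(\mu)$, so for $\sigma \in \Aut(\CC/\QQ(\mu))$ — and more generally after tracking the permutation of infinite places for general $\sigma$ — the orientation is preserved up to a sign that can be absorbed consistently), and that the identification of $H^{q_0}_c$ with de Rham cohomology is compatible with all the $\sigma$-linear structures in play. Once these compatibilities are pinned down, the actual computation is a one-line change-of-variables in the integral, exactly parallel to the argument already carried out for $A_{\chi, dR}$ in the proof of \Cref{L:3}.
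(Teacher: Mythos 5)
Your proposal is correct and essentially coincides with the paper's own argument: the paper simply writes $\sigma\left(\int_\Sh\theta\wedge[\eta]\right)=\int_\Sh\sigma_{H'_1}^*\left(\theta\right)\wedge\sigma_{H'_1}^*([\eta])$ (the rationality of the orientation/trace map that your de Rham and $Y_i$-orientation discussion spells out) and then invokes (\ref{E:twitwi}) to conclude $\sigma_{H'_1}^*([\eta])=[{}^\sigma\eta]$, which is exactly your identification of the section $\mathbf{1}\times\eta^{-1}$ with $\mathbf{1}\times({}^\sigma\eta)^{-1}$ via triviality of the character at infinity. One small caution: the pointwise statement $\sigma\left(\int_C h\,\mathrm{d}\mathrm{vol}\right)=\int_C\sigma(h)\,\mathrm{d}\mathrm{vol}$ is not literally valid for a discontinuous automorphism $\sigma$; what is actually used (and what your remark about the rational orientation and the compatibility of the de Rham comparison really amounts to) is that the top-degree trace map on $H^{q_0}_c$ is defined over $\QQ(\mu)$ for the sheaf-cohomology rational structure and therefore commutes with the $\sigma$-linear maps $\sigma_{H'_1}^*$, which is precisely the first equality the paper asserts.
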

To proceed we need the following non-vanishing result.
Recall for $v\in {\VV_\infty}$ \[[\Pi_v']=\sum_{\underline{i}=\left(i_1,\ldots.,i_{q_{0,v}}\right)}\sum_{\alpha=1}^{\dim E_{\mu_v}^\lor} X_{\underline{i}}^*\otimes \xi_{v,\alpha,\underline{i}}\otimes e_\alpha^\lor. \]
For each $\underline{i}$ write
\[\iota^*\left(X_{\underline{i}}\right)=s\left(\underline{i}\right)Y_1^*\wedge\ldots\wedge Y_{q_0}^*,\]
where $s\left(\underline{i}\right)$ is some complex number.
If $\frac{1}{2}\in \CCrit(\Pi')$, we know that $\mathcal{T}$ exists
and $\zeta_v(s,{}\cdot{})=P(s,{}\cdot{})L(s,\Pi_v')$ for $v\in {\VV_\infty}$ by \ref{T:connL}.
Since $\frac{1}{2}$ is critical, we know that $\zeta_v(\frac{1}{2},{}\cdot{})$ is well defined for all vectors in the Shalika model.
We thus can set
\[c\left(\Pi_v'\right)\coloneqq \sum_{\underline{i}}\sum_{\alpha=1}^{\dim E_{\mu_v}} s\left(\underline{i}\right)\mathcal{T}\left(e_\alpha^\lor\right)\zeta_v\left(\frac{1}{2},\xi_{v,\alpha,\underline{i}}\right)\]
and $c\left(\Pi_\infty'\right)\coloneqq \prod_{v\in {\VV_\infty}}c\left(\Pi_v'\right)$.

For $s=\frac{1}{2}+m\in \CCrit\left(\Pi'\right)$, the $L$-function of $\Pi'\otimes\lvert\det'\lvert^m$ has critical point $\frac{1}{2}$. We set
$\Pi'\left(m\right)=\Pi'\otimes \lvert\det\lvert^m$ and 
$c\left(\Pi_\infty',m\right)\coloneqq c\left(\Pi'\left(m\right)_\infty\right).$
\begin{theorem}[{\cite[ Theorem A.3]{SunII}}]\label{T:Sun}
 For $\Pi'$ as in \ref{I:prop3} and $s=\frac{1}{2}+m\in \CCrit\left(\Pi'\right)$, the expression $c\left(\Pi_\infty',m\right)$ does not vanish.
\end{theorem}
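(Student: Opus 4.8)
The plan is to reduce the assertion to an archimedean \emph{non-vanishing statement at infinity} for the cohomological Shalika zeta integral, one place at a time. By definition $c\left(\pfi',m\right)=c\bigl(\Pi'(m)_\infty\bigr)=\prod_{v\in\VV_\infty}c\bigl(\Pi'(m)_v\bigr)$, where $\Pi'(m)=\Pi'\otimes\lvert\det'\lvert^p$ as in (\ref{E:p}); by the choice of $p$ the point $\tfrac12$ is critical for $\Pi'(m)$ and compatible with the relevant highest weight, so the branching datum $\mathcal T$ exists and $\zeta_v\bigl(\tfrac12,{}\cdot{}\bigr)$ is defined on the entire archimedean Shalika model. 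It therefore suffices to show $c\bigl(\Pi'(m)_v\bigr)\neq 0$ for every $v\in\VV_\infty$; writing $\pvi'$ for the $v$-component of $\Pi'(m)$, recall that $n=1$ and $d=2$ force $\GL'_{2n}\left(\KKv\right)\cong\GL_2\left(\HH\right)$ and $H'_1\left(\KKv\right)\cong\GL_1\left(\HH\right)\times\GL_1\left(\HH\right)$, and that $\pvi'\cong A_{[0,2]}\left(\lambda_v\right)$ by \Cref{L:littlelem}.

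The first step is to recast $c\left(\pvi'\right)$ invariantly. Unwinding the definitions, the sum $\sum_{\underline i}\sum_\alpha s\left(\underline i\right)\mathcal T\left(e_\alpha^\lor\right)\zeta_v\bigl(\tfrac12,\xi_{v,\alpha,\underline i}\bigr)$ is precisely the image of the fixed generator $[\pvi']$ of the one-dimensional space $H^{q_{0,v}}\bigl(\lig'_v,K'_v,\mS_{\psi_v}^{\eta_v}\left(\pvi'\right)\otimes E_{\mu_v}^\lor\bigr)$ under the composite of: restriction along $\iota$ to the $H'_1\left(\KKv\right)$-relative Lie algebra cohomology in the same degree $q_{0,v}$ — permissible precisely by the numerical coincidence that $q_0$ equals both the lowest non-vanishing degree for $\GL'_{2n}$ and $\dim_\RR\Sh$ — which sends $X_{\underline i}^*$ to $s\left(\underline i\right)$ times the top form $Y_1^*\wedge\cdots\wedge Y_{q_0}^*$; the branching in cohomology induced by $\mathcal T\colon E_{\mu_v}^\lor\to E_{\left(0,-w_v\right)}$; and the Poincar\'e-duality pairing against the constant section $\mathbf 1\times\eta^{-1}$, which by \Cref{T:connL} and the unfolding of \Cref{T:global} unwinds on the Shalika model to $\zeta_v\bigl(\tfrac12,{}\cdot{}\bigr)$. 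Since the $H'_1\left(\KKv\right)$-relative cohomology in question sits in its top degree, the target of this composite is finite-dimensional and the Poincar\'e pairing maps it onto $\CC$; thus $c\left(\pvi'\right)\neq 0$ is equivalent to this composite being a non-zero linear map out of the one-dimensional source generated by $[\pvi']$.

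The non-vanishing of the composite is governed by three features, which the theorem asserts combine without cancellation: (i) the restriction $\iota^*$ on relative Lie algebra cohomology does not kill $[\pvi']$ — a $K'_v$-type computation carried out from the Vogan--Zuckerman description of $A_{[0,2]}\left(\lambda_v\right)$ underlying \Cref{T:dimcoh} and \Cref{L:littlelem}; (ii) the branching $\mathcal T$ is non-zero on the image, which follows from the one-dimensionality of $\ho_{H'\left(\CC\right)}\bigl(E_{\mu_v}^\lor,E_{\left(0,-w_v\right)}\bigr)$ once one knows that $E_{\left(0,-w_v\right)}$ actually occurs in the restriction of $E_{\mu_v}^\lor$ — this is where compatibility of $\tfrac12$ with $\mu$ is used; and (iii) the archimedean integral $\zeta_v\bigl(\tfrac12,{}\cdot{}\bigr)=P\bigl(\tfrac12,{}\cdot{}\bigr)L\bigl(\tfrac12,\pvi'\bigr)$ does not vanish identically on the cohomological Shalika vectors $\xi_{v,\alpha,\underline i}$. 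Here $L\bigl(\tfrac12,\pfi'\bigr)$ is a finite, non-zero product of values of $\Gamma$ precisely because $\tfrac12\in\CCrit\left(\Pi'\right)$, so (iii) reduces to exhibiting a cohomological test vector on which $P\bigl(\tfrac12,{}\cdot{}\bigr)\neq 0$.

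The main obstacle is the interaction of (i) and (iii): one must show that the \emph{particular} class $[\pvi']$ normalizing the one-dimensional cohomology is not annihilated by the whole composite — equivalently, that its restriction to $H'_1$ is not cup-orthogonal, after branching, to the constant class. This is exactly the archimedean non-vanishing hypothesis at infinity for Shalika periods over a quaternion algebra, and carrying it out requires the fine structure of the cohomological unitary dual of $\GL_2\left(\HH\right)$ from \Cref{T:dimcoh}, the explicit branching rule $\GL_2\left(\HH\right)\downarrow\GL_1\left(\HH\right)\times\GL_1\left(\HH\right)$ for minimal $K'_v$-types, and an analysis of how those minimal types occur inside the archimedean Shalika model of $A_{[0,2]}\left(\lambda_v\right)$; alternatively one may transfer the computation to the split group via the archimedean correspondence $\lj_v$ and appeal to the corresponding non-vanishing result for $\GL_4\left(\RR\right)$ in \cite{SunII}. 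Granting this archimedean input, (i)--(iii) together show the composite is a non-zero map on the one-dimensional source, so $c\left(\pvi'\right)\neq 0$ for each $v\in\VV_\infty$, whence $c\left(\pfi',m\right)=\prod_{v\in\VV_\infty}c\bigl(\Pi'(m)_v\bigr)\neq 0$.
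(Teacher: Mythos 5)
Your reduction to a place-by-place statement and your cohomological reinterpretation of $c\left(\pvi'\right)$ as the pairing of the fixed generator $[\pvi']$ with the functional built from $\zeta_v\left(\frac{1}{2},{}\cdot{}\right)$ and $\mathcal{T}_v$ match the paper's set-up, and your observation that $\zeta_v\left(\frac{1}{2},{}\cdot{}\right)$ is a non-zero element of $\mathrm{Hom}_{H}\left(\pvi',\chi\right)$ (via a vector with $P\equiv 1$ from \Cref{T:connL} and the non-vanishing of the archimedean $\Gamma$-factors) is exactly the preliminary step the paper carries out. But the decisive point — that this non-zero $H$-invariant functional, composed with the branching map, does not annihilate the one-dimensional cohomology class $[\pvi']$ — is precisely what the theorem asserts, and your proposal leaves it as an unproven hypothesis: you describe what a minimal $K'_v$-type and branching analysis for $A_{[0,2]}\left(\lambda_v\right)$ would have to deliver, and then say "granting this archimedean input". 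The paper closes this step by invoking \cite[Theorem A.3]{SunII}, which is stated for exactly this quaternionic configuration ($G=\GL_2\left(\HH\right)$, $H=\GL_1\left(\HH\right)\times\GL_1\left(\HH\right)$) and asserts that the induced map $H^1\left(\lig',K',\pvi'\otimes E_{\mu_v}^\lor\right)\rightarrow H^1\left(\mathfrak{h}',K'_H,\chi\otimes E_{\left(0,-w_v\right)}\right)$ is non-zero; its proof is where the numerical coincidence $q_0=\dim_\RR\Sh$ is used, and it is not something one can wave through.

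Your proposed alternative — transferring the computation to the split group via $\lj_v$ and citing a non-vanishing result for $\GL_4\left(\RR\right)$ — does not repair this. The local Jacquet--Langlands correspondence is a character identity between representations; it is not a map of Shalika models or of relative Lie algebra cohomology, so it transports neither the functional $\zeta_v\left(\frac{1}{2},{}\cdot{}\right)$ nor the class $[\pvi']$. Moreover the split local component corresponding to $A_{[0,2]}\left(\lambda_v\right)$ is the non-tempered Langlands quotient $\lj_v^{-1}$-related to $\mw\left(\Sigma,2\right)_v$, whereas the split non-vanishing theorems available (those used in the cuspidal $\GL_{2n}$ setting) concern essentially tempered cohomological representations; they do not apply to this quotient, and the relevant degrees of cohomology do not even match. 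So the gap is the archimedean non-vanishing itself: either cite Sun's Theorem A.3 as the paper does, or actually carry out the $K'_v$-type/branching computation you sketch — as written, the core of the theorem is assumed rather than established.
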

\begin{proof}
We will assume without loss of generality that $s=\frac{1 }{2}$ is critical.
Since $c\left(\Pi_\infty'\right)=\prod_{v\in {\VV_\infty}}c\left(\Pi_v'\right)$ we fix a place $v\in {\VV_\infty}$. We set $H\coloneq\GL_{1}\left(\HH\right)\times \GL_1\left(\HH\right)$ and $G\coloneq\GL_{2}\left(\HH\right)$ with maximal compact subgroup $K'_H$, respectively, $K'$. Since $\frac{1}{2}$ is critical, it follows from \ref{T:global} that the local zeta integral at $v$ gives a functional
\[\zeta_v\left(\frac{1}{2},{}\cdot{}\right)\in \mathrm{Hom}_H\left(\Pi_v',\chi\right),\]
where $\chi=\mathrm{1}_{\GL_{1}\left(\HH\right)}\otimes \det'^{w_v}$. It is nonzero, since 
$\zeta_v(s,{}\cdot{})=P(s,{}\cdot{})L(s,\Pi_v')$ and there exists by \ref{T:connL} $\xi_{\Pi',v}$ such that $P(s,\xi_{\Pi',v})=1$. Since the $L$-factors at infinity are products of Gamma-functions and non-vanishing holomorphic functions, $\zeta_v(s,\xi_{\Pi',v})$ also never vanishes. Thus $\zeta_v(s,{}\cdot{})$ never vanishes and hence $\zeta_v(\frac{1}{2},{}\cdot{})$ is non-zero.
Let $j_2$ be the inclusion
$j_2\colon\mathfrak{h}'/\mathfrak{k}'_H\hookrightarrow \lig'/\mathfrak{k}'$
and consider now the map
\begin{align*}
   \mathrm{Hom}\left(\lig'/\mathfrak{k}',\Pi_v'\otimes E_{\mu_v}^\lor\right)&\rightarrow \mathrm{Hom}\left(\mathfrak{h}'/\mathfrak{k}'_H,\chi\otimes E_{\left(0,-w_v\right)}\right) \\
   f&\longmapsto \left(\zeta_v\left(\frac{1}{2},{}\cdot{}\right)\otimes \mathcal{T}_v\right)\circ f\circ j_2
\end{align*}
By \cite[Theorem A.3]{SunII} the induced map
\[c\colon H^1\left(\lig'_\infty,K',\Pi_v'\otimes E_{\mu_v}^\lor\right)\rightarrow H^1\left(\mathfrak{h}', K'_H, \chi\otimes E_{\left(0,-w_v\right)}\right)\]
does not vanish on the one dimensional space $H^1\left(\lig'_\infty,K',\Pi_v'\otimes E_{\mu_v}^\lor\right)$. Since it is generated by $[\Pi_v']$ we conclude that $c\left(\Pi_v'\right)\neq 0$.
\end{proof}
 We set $c\left(\Pi_\infty,m\right)^{-1}\coloneqq \omega\left(\Pi_\infty,m\right).$
\begin{remark}The proof of \cite[Theorem A.3]{SunII} relies crucially on the numerical coincidence, \emph{i.e.} that either the lowest or highest nonvanishing degree of the $\left(\lig'_\infty,K'_\infty\right)$-cohomology 
$H^*\left(\lig'_\infty,K_\infty', \Pi_\infty'\otimes E_\mu^\lor\right)$
is $\dim_\RR\Sh$.\end{remark}
\begin{theorem}\label{T:pre}
Let $\Pi'$ be a cuspidal irreducible representation of $\GL_{2}'\left(\A\right)$ as in \ref{I:prop3}. Assume $s=\frac{1}{2}\in \CCrit\left(\Pi'\right)$ and let $\xi^0_{\Pi'_f}$ be the vector of \ref{L:nomvec}.
Then \[\int_{\mathbf{S}_{K_f'}^{H'_1}} \mathcal{T}^*\iota^*\Theta_{\Pi',0}\left(\xi_{\Pi'_f}^0\right)\wedge[\eta]=\frac{L\left(\frac{1}{2},\Pi'_f\right)\prod_{v\in S_{\Pi'_f,\psi}}P\left(\frac{1}{2},\xi^0_{\Pi',v}\right)}{\omega\left(\Pi'_f\right)\omega\left(\Pi_\infty'\right) \mathrm{vol}\left(\iota^{-1}\left(K_f'\right)\right)} \]
for every small enough open compact subgroup $K_f'$ of $\GL_{2}'\left(\A_f\right)$.
\end{theorem}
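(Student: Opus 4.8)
The plan is to unwind all the definitions of the maps $\Theta_{\Pi',0}$, $\iota^*$, $\mathcal{T}^*$ and $\int_{\Sh}$ in terms of the chosen generators, and then recognize the resulting archimedean and finite integrals as the local Shalika zeta integrals computed in \Cref{T:connL}. Concretely, $\Theta_{\Pi',0}(\xi^0_{\Pi'_f})$ is, up to the period $\omega(\Pi'_f)^{-1}$, the class in $H^{q_0}_c\bigl(\Sg,\mE\bigr)$ attached to the automorphic form obtained from $\xi^0_{\Pi'_f}\otimes[\pfi']$ via the inverse of the Shalika map $\Pi'\iso\mS^\eta_\psi(\Pi')$. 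First I would pull this cohomology class back along $\iota$ and apply $\mathcal{T}^*$; since the sheaves and the map $\mathcal T$ are all defined over $\QQ(\mu)$ and the construction is explicit in terms of the $X^*_{\underline i}$ and $e^\lor_\alpha$, one gets a class in $H^{q_0}_c(\Sh,\mathcal E_{(0,-w)})$ whose underlying $H'_1$-form, restricted to each connected component $C\in\mathcal C$, is a sum over $\underline i$ and $\alpha$ of $s(\underline i)\,\mathcal T(e^\lor_\alpha)$ times the restriction of the cusp form corresponding to $\xi_{v,\alpha,\underline i}\otimes\xi^0_{\Pi'_f}$.

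Next I would compute the Poincar\'e pairing $\int_{\Sh}(\,\cdot\,)\wedge[\eta]$. By the identification in \Cref{S:JF} (the extension of Friedberg--Jacquet), integrating the restriction of a cusp form $\phi$ over $Z'_2(\A)H'_1(\KK)\bs H'_1(\A)$ against $\eta^{-1}$ is exactly $\Psi(\tfrac12,\phi)=\zeta(\tfrac12,\phi)$, the global Shalika zeta integral at the critical point $s=\tfrac12$. The wedge with $[\eta]$ and the choice of the $Y_i$-orientation is precisely what turns the cohomological cup product into this adelic period integral over $\Sh$; the factor $\mathrm{vol}(\iota^{-1}(K_f'))$ enters because the measure used to define $\Sg$ and $\Sh$ as orbifolds versus the Tamagawa-type measure on $H'_1(\A)$ differ by that volume, exactly as in \cite[§6]{GroRag}. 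Factoring $\zeta(\tfrac12,\phi)$ over all places using the chosen splitting $\xi^0_{\Pi'_f}\mapsto\bigotimes_v\xi^0_{\Pi',v}$, the finite part contributes $\zeta_f(\tfrac12,\xi^0_{\Pi'_f})=\prod_{v\in S_{\Pi'_f,\psi}}P(\tfrac12,\xi^0_{\Pi',v})\cdot L(\tfrac12,\Pi'_f)$ by \Cref{T:connL} and \Cref{L:nomvec} (the spherical vectors at $v\notin S_{\Pi'_f,\psi}$ give the unramified $L$-factor with $P=1$), while the archimedean part assembles exactly into the sum defining $c(\pfi')=\prod_{v\in\VV_\infty}c(\pvi')$, which by definition is $\omega(\pfi')^{-1}$.

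Putting these together, the left-hand side equals $\omega(\Pi'_f)^{-1}\,\omega(\pfi')^{-1}\,\mathrm{vol}(\iota^{-1}(K_f'))^{-1}\cdot L(\tfrac12,\Pi'_f)\prod_{v\in S_{\Pi'_f,\psi}}P(\tfrac12,\xi^0_{\Pi',v})$, which is the claimed identity. Two points require care. First, one must check that the archimedean cohomological recombination really produces the scalar $c(\pfi')$ and not some other contraction of $[\pfi']$: this is where the compatibility of $\tfrac12$ with $\mu$ is used, guaranteeing via the earlier proposition that $\mathcal T$ lands in a one-dimensional Hom-space so that the cup product unwinds to $\sum_{\underline i,\alpha}s(\underline i)\mathcal T(e^\lor_\alpha)\zeta_v(\tfrac12,\xi_{v,\alpha,\underline i})$ on each component, and \Cref{T:Sun} ensures this is nonzero so $\omega(\pfi')$ makes sense. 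The main obstacle will be tracking the normalization constants — the measure factor $\mathrm{vol}(\iota^{-1}(K_f'))$, the volume $c$ built into $\ddd_\infty g_1$, and the relation between the orbifold integration over $\Sh$ and the adelic integral $\int_{Z'_2(\A)H'_1(\KK)\bs H'_1(\A)}$ — and making sure that the bookkeeping of which places carry $P$-factors exactly matches the set $S_{\Pi'_f,\psi}$, precisely as in the split case treated in \cite{GroRag}. The rest is a matter of carefully transcribing the Friedberg--Jacquet unfolding (\Cref{T:global}, \Cref{S:JF}) into the cohomological language, mutatis mutandis from \cite[Theorem 6.5.1]{GroRag}.
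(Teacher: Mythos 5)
Your proposal is correct and follows essentially the same route as the paper's proof: unwind $\Theta_{\Pi',0}$, $\iota^*$, $\mathcal{T}^*$ and the Poincar\'e pairing into a sum over $\underline{i},\alpha$ of $s(\underline{i})\mathcal{T}(e_\alpha^\lor)$ times the $H'_1$-period of the cusp form attached to $\xi_{v,\alpha,\underline{i}}\otimes\xi^0_{\Pi'_f}$, identify that period with the Shalika zeta integral via \Cref{T:global}, and factor it into the archimedean contribution $c(\pfi')=\omega(\pfi')^{-1}$ and the finite contribution $L(\tfrac12,\Pi'_f)\prod_{v\in S_{\Pi'_f,\psi}}P(\tfrac12,\xi^0_{\Pi',v})$ using \Cref{T:connL} and \Cref{L:nomvec}, with the volume and central-measure constants tracked exactly as you indicate. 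The only point to keep precise (as the paper does) is that $\zeta(s,\phi)$ converges only for $\re\gg0$, so the evaluation at $s=\tfrac12$ goes through the absolute convergence of $\Psi(s,\phi)$ for all $s$ together with the analytic continuation of the local factors.
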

\begin{proof}
The proof of this theorem can be carried out in the same way as the proof of \cite[Theorem 6.7.1]{GroRag}. We only include it for completeness. Recall from \ref{S:2.3} that $c=\mathrm{vol}\left(\KK^\times\bs \A^*/\RR_{>0}^r\right).$ We choose $K_f'$ such that it fixes $\xi_{\Pi'_f}^0$.
Plugging $\xi_{\Pi'_f}^0$ in the definition of the terms of the integral and using the $K_f'$-invariance of $\xi^0_{\Pi'_f}$ we obtain the following identity.
\[\int_{\mathbf{S}_{K_f'}^{H'_1}} \mathcal{T}^*\iota^*\Theta_{\Pi',0}\left(\xi_{\Pi'_f}^0\right)\wedge[\eta]=\]\[= \mathrm{vol}\left(\iota^{-1}\left(K_f'\right)\right)^{-1}c^{-1}\omega\left(\Pi'_f\right)^{-1}\sum_{\underline{i},\alpha}s\left(\underline{i}\right)\mathcal{T}\left(e_\alpha\right) \int_{H'_1\left(\KK\right)\bs H'_1\left(\A\right)/\RR_+^d}\eta\restr{\phi_{\underline{i},\alpha}^0}{H'_1\left(\A\right)}\, \mathrm{d}h,\]
where \[\phi^0_{\underline{i},\alpha}\coloneqq \left(\mS_\psi^\eta\right)^{-1}\left(\bigotimes_{v\in {\VV_\infty} }\xi_{v,\underline{i},\alpha}\otimes \xi_{\Pi'_f}^0\right).\]
We compute now the latter integral over $H'_1\left(\KK\right)\bs H'_1\left(\A\right)/\RR_+^d$ for fixed $\underline{i}$ and $\alpha$. Again plugging in the definitions yields
\[\int_{H'_1\left(\KK\right)\bs H'_1\left(\A\right)/\RR_+^d}[\eta]\restr{\phi_{\underline{i},\alpha}^0}{H'_1\left(\A\right)}\ddd h=\int_{Z'\left(\A\right)H'_1\left(\KK\right)\bs H'_1\left(\A\right)}\int_{Z'\left(\KK\right)\bs Z'\left(\A\right)/\RR_+^d}\]\[\left(\phi^0_{\underline{i},\alpha}\left(\bpm h_1&0\\0&h_2\epm z\right)\eta^{-1}\left(\det'\left(h_2z\right)\right)dz\right)\ddd h_1\ddd h_2.\]
We can now pull the $z=\mathrm{diag}\left(a,a\right)$-contribution out of $\phi_{\underline{i},\alpha}^0$ and $\eta^{-1}\left(\det'\right)$, which yields a factor of $\omega\left(z\right)\eta\left(\det'(a)\right)^{-1}=1$ and hence, the integral simplifies to
\[c\int_{Z'\left(\A\right)H'_1\left(\KK\right)\bs H'_1\left(\A\right)}\phi^0_{\underline{i},\alpha}\left(\bpm h_1&0\\0&h_2\epm z\right)\eta^{-1}\left(\det' (h_2)\right)\ddd h_1\ddd h_2.\]
Recall the equality of \ref{T:global} and the properties of the special vector $\xi^0_{\Pi'_f}$
\[\int_{Z'\left(\A\right)H'_1\left(\KK\right)\bs H'_1\left(\A\right)}\phi^0_{\underline{i},\alpha}\left(\bpm h_1&0\\0&h_2\epm z\right)\left\lvert\frac{\det' (h_1)}{\det' (h_2)}\right\lvert^{s-\frac{1}{2}}\eta^{-1}\left(\det' (h_2)\right)\ddd h_1\ddd h_2=\]\[=\zeta_\infty\left(s,\xi_{\infty,\underline{i},\alpha}^0\right)\zeta_f\left(s,\xi_{\Pi'_f}^0\right)=\zeta_\infty\left(s,\xi_{\infty,\underline{i},\alpha}^0\right)L(s,\Pi'_f)\prod_{v\in S_{\Pi'_f,\psi}}P\left(\frac{1}{2},\xi^0_{\Pi',v}\right)\]
for $\re >>0$. But the integral converges absolutely for all $s$ hence, we obtain the equality for all $s$. Recall that $L(s,\Pi)$ is an entire function and hence, $L\left(\frac{1}{2},\Pi'_f\right)\in \CC$ since $s=\frac{1}{2}$ is critical.
Therefore, 
\[\int_{Z'\left(\A\right)H'_1\left(\KK\right)\bs H'_1\left(\A\right)}\phi^0_{\underline{i},\alpha}\left(\bpm h_1&0\\0&h_2\epm z\right)\eta^{-1}\left(\det' (h_2)\right)\ddd h_1\ddd h_2=\]\[=\zeta_\infty\left(\frac{1}{2},\xi_{\infty,\underline{i},\alpha}^0\right)L(\frac{1}{2},\Pi'_f)\prod_{v\in S_{\Pi'_f,\psi}}P\left(\frac{1}{2},\xi^0_{\Pi',v}\right).\]
Plugging this in the above sum over $\underline{i}$ and $\alpha$, we obtain the desired identity.
\end{proof}
We are now ready to prove our analog of \cite[Theorem 7.1.2]{GroRag}.
\begin{theorem}\label{T:manny}
Let $\DD$ be a quaternion algebra and let $\Pi'$ be a cuspidal irreducible cohomological representation of $\GL_{2}'\left(\A\right)$ which admits a Shalika model with respect to $\eta$. Assume that either $\eta$ is trivial or the $\Aut(\CC)$-orbit of $\Pi'$ admits a unique local Shalika model with respect to $\eta$. Let $\mu$ be the highest weight such that $\Pi'$ is cohomological with respect to $E_\mu^\lor$ and assume that $\jl\left(\Pi'\right)$ is residual, \emph{i.e.} $\jl\left(\Pi'\right)=\mw\left(\Sigma,2\right)$ for $\Sigma$ a cuspidal irreducible cohomological representation of $\GL_2\left(\A\right)$. Let moreover $\chi=\widetilde{\chi}\circ\det'$, where $\widetilde{\chi}$ is a Hecke-character of $\GL_1(\A)$ of finite order.
Then, for $\frac{1}{2}+m\in \CCrit\left(\Pi'\right)$,
\[\frac{L\left(\frac{1}{2}+m,\Pi'_f\otimes\chi_f\right)}{\omega\left(\Pi'_f\right)\mathcal{G}\left(\chi_f\right)^{4}\omega\left(\Pi_\infty',m\right)}\in \QQ\left(\Pi',\chi,\eta\right).\]
\end{theorem}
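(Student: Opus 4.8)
The plan is to reduce the statement to an $\Aut(\CC)$-equivariance (``reciprocity'') identity for the normalised $L$-value and then to descend along $\Aut(\CC/\QQ(\Pi',\chi,\eta))$; the argument runs parallel to \cite[Theorem 7.1.2]{GroRag}, with the genuinely non-formal inputs already available. First I would record that by \Cref{T:L} the representation $\Pi'$ has all the properties collected in \ref{I:prop3}, and that since $\jl(\Pi')=\mw(\Sigma,2)$ we are in the case $n=1$, $d=2$, $k=2$, i.e. exactly the numerical coincidence $q_0=\dim_\RR\Sh=r$ that makes \Cref{T:pre} and \Cref{T:Sun} applicable. Given a critical $\tfrac12+m$, admissibility of $\mu$ supplies, as in \eqref{E:p}, the integer $p$ and the twisted representation $\Pi'(m)=\Pi'\otimes\lvert\det'\rvert^p$ for which $\tfrac12$ is critical and compatible with the pertinent weight; twisting by a suitable power of $\lvert\det'\rvert$ then rewrites $L(\tfrac12+m,\Pi'_f\otimes\chi_f)$ as a value $L(\tfrac12,(\Pi'\otimes\widetilde{\chi}\circ\det'\cdot\lvert\det'\rvert^{\,m-p})_f)$ of the shape treated in \Cref{T:twist}, while $\omega(\pfi',m)=c(\Pi'(m)_\infty)^{-1}$ is by construction the archimedean period attached to $\Pi'(m)$.

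The core is the cohomological interpretation of the critical value and its $\Aut(\CC)$-equivariance. By \Cref{T:pre}, applied to the compatible twist above, the number $L(\tfrac12,\cdot)\cdot\prod_{v\in S_{\Pi'_f,\psi}}P(\tfrac12,\xi^0_{\Pi',v})$ equals, up to the nonzero rational volume $\mathrm{vol}(\iota^{-1}(K'_f))$ and up to $\omega(\Pi'_f)\,\omega(\pfi',m)$, the Poincaré-duality pairing $\int_{\Sh}\mathcal{T}^*\iota^*\Theta_{\Pi',0}(\xi^0_{\Pi'_f})\wedge[\eta]$. I would then stack the commutative squares already at hand: \eqref{E:sumdia2} (equivariance of the proper pushforward $\iota^*$), \eqref{E:sumdia} (equivariance of $\mathcal{T}^*$, valid because $\mathcal{T}$ is defined over $\QQ(\mu)$), \Cref{T:2strc} ($\Theta_{\Pi',0}$ matches the $\QQ(\Pi',\eta)$-structures once $\omega(\Pi'_f)$ is chosen), and \Cref{L:indeed} (the integral against $[\eta]$ commutes with $\sigma$ and $\sigma_{H'_1}^*([\eta])=[{}^\sigma\eta]$), to conclude that $\sigma$ applied to the pairing for $\Pi'$ equals the analogous pairing for ${}^\sigma\Pi'$. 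Feeding in \Cref{L:compat} ($\sigma(L(\tfrac12,\pvi'))=L(\tfrac12,{}^\sigma\pvi')$ at the finite places), \Cref{L:nomvec} ($\sigma(P(\tfrac12,\xi^0_{\Pi',v}))=P(\tfrac12,{}^\sigma\xi^0_{\Pi',v})$, and $P(\tfrac12,\xi^0_{\Pi',v})\in\QQ(\Pi',\eta)$ for $\sigma$ fixing $\QQ(\Pi',\eta)$), and \Cref{T:Sun} (so that the denominator $\omega(\pfi',m)$ is nonzero), and re-expressing $\omega(\Pi'_f\otimes\chi_f)$ through $\omega(\Pi'_f)$ and a Gauss sum by \Cref{T:twist} together with $\sigma(\mathcal{G}(\chi_f))=\sigma(\chi_f(t_\sigma))\,\mathcal{G}({}^\sigma\chi_f)$ from \Cref{L:1}, one obtains the reciprocity law
\[
\sigma\!\left(\frac{L(\tfrac12+m,\Pi'_f\otimes\chi_f)}{\omega(\Pi'_f)\,\mathcal{G}(\chi_f)^{4}\,\omega(\pfi',m)}\right)=\frac{L(\tfrac12+m,{}^\sigma\Pi'_f\otimes{}^\sigma\chi_f)}{\omega({}^\sigma\Pi'_f)\,\mathcal{G}({}^\sigma\chi_f)^{4}\,\omega(\pfi',m)}
\]
for every $\sigma\in\Aut(\CC/\QQ(\Pi',\eta))$, where $\omega({}^\sigma\Pi'_f)$ is a period for the (again cuspidal, cohomological, Shalika-admitting) representation ${}^\sigma\Pi'$, and $\omega(\pfi',m)$ is unchanged because $\sigma$ merely permutes the infinite places and $\mathcal{T}$ is $\QQ(\mu)$-rational.

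Finally I would descend. For $\sigma\in\Aut(\CC/\QQ(\Pi',\chi,\eta))$ one has $\QQ(\Pi'_f)\subseteq\QQ(\Pi',\eta)$, hence $\sigma\in\mathfrak{S}(\Pi'_f)$ and ${}^\sigma\Pi'_f\cong\Pi'_f$, so $\omega({}^\sigma\Pi'_f)$ may be taken equal to $\omega(\Pi'_f)$; since $\widetilde{\chi}$ has finite order and $\sigma$ fixes $\QQ(\chi)$, also ${}^\sigma\chi_f=\chi_f$. The reciprocity law then says $\sigma(z)=z$ for $z=L(\tfrac12+m,\Pi'_f\otimes\chi_f)/(\omega(\Pi'_f)\,\mathcal{G}(\chi_f)^{4}\,\omega(\pfi',m))$, and as this holds for all such $\sigma$ and $\QQ(\Pi',\chi,\eta)$ is a number field ($\QQ(\Pi'_f)$ is a number field by \Cref{T:rat}, $\QQ(\Pi',\eta)$ is a finite extension of it, and $\QQ(\chi)$ is cyclotomic), Galois descent gives $z\in\QQ(\Pi',\chi,\eta)$. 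The main obstacle is not any single deep step — non-vanishing of the archimedean period, the twist formula, uniqueness of the local Shalika model, and the reduction via the numerical coincidence are supplied by \Cref{T:Sun}, \Cref{T:twist}, \Cref{T:L} and \Cref{L:littlelem} — but rather the bookkeeping in the middle paragraph: tracking how the fixed generator $[\pfi']$, the $\QQ(\Pi',\eta)$-structures on the Shalika models and on $H^{q_0}_c(\Sg,\mE)(\Pi'_f)$, the special vectors $\xi^0_{\Pi',v}$, the Gauss sums and $\omega(\pfi',m)$ all transform under $\sigma$, and verifying that every auxiliary factor either cancels or already lies in $\QQ(\Pi',\chi,\eta)$.
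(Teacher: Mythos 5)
Your proposal follows essentially the same route as the paper's proof: the base case ($m=0$, $\tfrac12$ compatible with $\mu$, $\chi=1$) is handled by evaluating $\int_{\Sh}\mathcal{T}^*\iota^*\Theta_{\Pi',0}(\xi^0_{\Pi'_f})\wedge[\eta]$ in two ways using \Cref{T:pre}, \Cref{T:2strc}, the diagrams (\ref{E:sumdia2}) and (\ref{E:sumdia}) and \Cref{L:indeed}, then feeding in \Cref{L:nomvec}, \Cref{L:compat}, \Cref{T:Sun}, the twisting theorem \Cref{T:twist} and Galois descent, exactly as the paper does. The only blemish is a bookkeeping slip in the reduction to general $m$: one should write $L\left(\tfrac12+m,\Pi'_f\otimes\chi_f\right)=L\left(\tfrac12,\left(\Pi'(m)\otimes\chi\lvert\det'\rvert^{m-p}\right)_f\right)$, i.e.\ the auxiliary twist $\lvert\det'\rvert^{m-p}$ is applied to $\Pi'(m)=\Pi'\otimes\lvert\det'\rvert^{p}$ and not to $\Pi'$ as in your display; with that correction your use of \Cref{T:twist} together with $\mathcal{G}\left(\lvert\det'\rvert_f^{b}\right)=1$ matches the paper's (equally brief) treatment of this step.
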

\begin{proof}
Again the proof can be adapted from \cite{GroRag} to the situation at hand. To show the claim it is enough to show that the ratio stays invariant under all $\sigma\in \Aut(\CC/\QQ(\Pi',\chi,\eta))$.
First assume that $m=0$, $\frac{1}{2}\in \CCrit(\Pi')$ and $\chi=1$.
We are going to compute 
\begin{equation}\label{E:2w}\sigma\left(\int_{\mathbf{S}_{K_f'}^{H'_1}} \mathcal{T}^*\iota^*\Theta_{\Pi',0}\left(\xi_{\Pi'_f}^0\right)\wedge[\eta]\right)\end{equation} for some $\sigma\in\Aut(\CC/\QQ(\Pi',\chi,\eta))$ in two different ways, where $K_f'$ is a sufficiently small open compact subgroup of $\GL(\A_f)$. On the one hand, (\ref{E:2w}) equals by \ref{T:pre} and \ref{L:nomvec} to
\[\sigma\left(\frac{L\left(\frac{1}{2},\Pi'_f\right)\prod_{v\in S_{\Pi'_f,\psi}}P\left(\frac{1}{2},\xi^0_{\Pi',v}\right)}{\omega\left(\Pi'_f\right)\omega\left(\Pi_\infty'\right) \mathrm{vol}\left(\iota^{-1}\left(K_f'\right)\right)} \right)=\]
\[=\sigma\left(\frac{L\left(\frac{1}{2},\Pi'_f\right)}{\omega\left(\Pi'_f\right)\omega\left(\Pi_\infty'\right)}\right)\cdot \frac{\prod_{v\in S_{\Pi'_f,\psi}}P\left(\frac{1}{2},\xi^0_{\Pi',v}\right)}{\mathrm{vol}\left(\iota^{-1}\left(K_f'\right)\right)},\] where we used that $\mathrm{vol}(\iota^{-1}(K_f'))\in\QQ^\times$.
On the other hand , by pulling $\sigma$ into the integral (\ref{E:2w}), we compute 
\[\sigma\left(\int_{\mathbf{S}_{K_f'}^{H'_1}} \mathcal{T}^*\iota^*\Theta_{\Pi',0}\left(\xi_{\Pi'_f}^0\right)\wedge[\eta]\right) \stackrel{(\ref{L:indeed})}{=}\int_{\mathbf{S}_{K_f'}^{H'_1}} \sigma_{H_1'}^*(\mathcal{T}^*\iota^*\Theta_{\Pi',0}\left(\xi_{\Pi'_f}^0\right))\wedge[{}^\sigma\eta]  \stackrel{(\ref{E:sumdia})}{=}\]\[=\int_{\mathbf{S}_{K_f'}^{H'_1}} {}^\sigma \mathcal{T}^*\sigma_{H_1'}^*(\iota^*\Theta_{\Pi',0}\left(\xi_{\Pi'_f}^0\right))\wedge[{}^\sigma\eta]\stackrel{(\ref{E:sumdia2})}{=}\int_{\mathbf{S}_{K_f'}^{H'_1}} {}^\sigma \mathcal{T}^*\iota^*\sigma_{\Sg}^*(\Theta_{\Pi',0}\left(\xi_{\Pi'_f}^0\right))\wedge[{}^\sigma\eta]\stackrel{(\ref{T:2strc})}{=}\]\[=\int_{\mathbf{S}_{K_f'}^{H'_1}}{}^\sigma\mathcal{T}^*\iota^*\Theta_{{}^\sigma\Pi',0}\left( \xi_{{}^\sigma\Pi'_f}^0\right)\wedge[{}^\sigma\eta].\]
By \ref{T:L} ${}^\sigma\Pi'$ admits a Shalika model with respect to ${}^\sigma\eta$ and is cohomological and therefore the last integral equals by \ref{T:pre}
\[\frac{L\left(\frac{1}{2},{}^\sigma\Pi'_f\right)}{\omega\left({}^\sigma\Pi'_f\right)\omega\left({}^\sigma\Pi_\infty'\right)}\cdot \frac{\prod_{v\in S_{\Pi'_f,\psi}}P\left(\frac{1}{2},\xi^0_{\Pi',v}\right)}{\mathrm{vol}\left(\iota^{-1}\left(K_f'\right)\right))},\]
which proves the assertion.

If $\frac{1}{2}+m$ is an arbitrary critical point and $\chi=1$, consider $\Pi'\left(m\right)=\Pi'\otimes \lvert\det'\lvert^m$ and hence, $\frac{1}{2}$ is a critical point for this twisted representation. Recall also that $\mathcal{G}\left(\lvert\det'\lvert_f^n\right)=1$, thus \ref{T:twist} proves the claim. Finally, to obtain the result for $\frac{1}{2}+m$ is an arbitrary critical point and $\chi\neq 1$, we apply \ref{T:twist} again and note that $\Pi_\infty'= \Pi_\infty'\otimes\chi_\infty$, since $\chi$ is of finite order.
\end{proof}
Recall that since $\jl(\Pi')=\mw(\Sigma,2)$, the partial $L$-functions of the discrete series representations $\Pi'$ and $\mw\left(\Sigma,2\right)$ coincide. We therefore obtain a new result on critical values for residual representations of $\GL_4$. 
 Note that for any place $v\in {\VV_f}$,  \[L\left(\frac{1}{2}+m,\Pi_v'\otimes \chi_v\right)\in \QQ\left(\Pi',\chi\right)\] by \ref{L:compat} and by \ref{T:G-T} $\Pi'$ admits a Shalika model with respect to $\omega_\Sigma$. 
 The following is therefore an easy consequence of \ref{T:manny}.
\begin{theorem}
Let $\Pi=\mw\left(\Sigma,2\right)$ be a discrete series representation of $\GL_4\left(\A\right)$ such that there exists a cuspidal irreducible representation $\Pi'$ of $\GL'_2\left(\A\right)$ with $\jl\left(\Pi'\right)=\Pi$, which is cohomological with respect to the coefficient system $E_\mu^\lor$.  Assume that either $\omega_\Sigma^2$ is trivial or the $\Aut(\CC)$-orbit of $\Pi'$ admits a unique local Shalika model with respect to $\omega_\Sigma$. Let $\chi$ be a finite order Hecke-character of $\GL_1\left(\A\right)$ and let $s=\frac{1}{2}+m \in \CCrit\left(\Pi'\right)$. Then
\[\frac{L\left(\frac{1}{2}+m,\Pi_f\otimes \chi_f\right)}{\omega\left(\Pi'_f\right)\mathcal{G}\left(\chi_f\right)^4\omega\left(\Pi_\infty',m\right)}\in \QQ\left(\Pi',\omega_\Sigma,\chi\right).\]
\end{theorem}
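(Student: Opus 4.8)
The plan is to derive the statement as an essentially formal consequence of \Cref{T:manny}, the substantive point being only to transport that result from $\Pi'$ to $\Pi=\jl(\Pi')$ along the Jacquet--Langlands correspondence; throughout, for the $\GL_2'$-representation $\Pi'$ the twist $\Pi'\otimes\chi$ denotes $\Pi'\otimes(\chi\circ\det')$ and on $\GL_4$ the twist $\Pi\otimes\chi$ denotes $\Pi\otimes(\chi\circ\det)$. First I would check that $\Pi'$ meets the hypotheses of \Cref{T:manny} with Shalika character $\eta\coloneq\omega_\Sigma$. As $\jl(\Pi')=\mw(\Sigma,2)$ is residual, hence not cuspidal, \Cref{T:G-T} shows that $\Pi'$ admits a Shalika model with respect to $\eta=\omega_\Sigma$ --- the central-character condition there being met by the choice of $\eta$ --- and \Cref{C:conj} shows that $\Sigma$ is cohomological, since $\mw(\Sigma,2)$ inherits cohomologicality from $\Pi'$. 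By \Cref{T:L} the $\Aut(\CC)$-orbit of $\Pi'$ is then cuspidal cohomological and admits a Shalika model with respect to $\eta$, and the standing alternative --- $\omega_\Sigma^2$ trivial, or the orbit having a unique local Shalika model with respect to $\omega_\Sigma$ --- provides the last assumption of \Cref{T:manny}. Since $\mu$ is admissible, \Cref{T:manny}, applied with its auxiliary character $\widetilde{\chi}$ taken equal to the $\chi$ of the present statement, gives for $s=\frac{1}{2}+m\in\CCrit(\Pi')$ that
\[
\frac{L\left(\frac{1}{2}+m,\Pi'_f\otimes\chi_f\right)}{\omega\left(\Pi'_f\right)\mathcal{G}\left(\chi_f\right)^{4}\omega\left(\pfi',m\right)}\in\QQ\left(\Pi',\omega_\Sigma,\chi\right).
\]

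It remains to replace $L\left(\frac{1}{2}+m,\Pi'_f\otimes\chi_f\right)$ by $L\left(\frac{1}{2}+m,\Pi_f\otimes\chi_f\right)$. Let $S\subset\VV_f$ be the finite set of finite places at which $\DD$ does not split. At each $v\notin S$ the local correspondence $\lj_v$ is the identity, so $\Pi_v\cong\pi'_v$, and since $\jl$ intertwines twisting by $\chi\circ\det'$ with twisting by $\chi\circ\det$, the corresponding local $L$-factors agree there. Hence
\[
\frac{L\left(s,\Pi_f\otimes\chi_f\right)}{L\left(s,\Pi'_f\otimes\chi_f\right)}=\prod_{v\in S}\frac{L\left(s,\Pi_v\otimes\chi_v\right)}{L\left(s,\pi'_v\otimes\chi_v\right)},
\]
a finite product. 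Because $\Sigma$ is cuspidal, $L\left(s,\Pi\otimes\chi\right)=L\left(s+\tfrac12,\Sigma\otimes\chi\right)L\left(s-\tfrac12,\Sigma\otimes\chi\right)$ is entire, and $L\left(s,\Pi'\otimes\chi\right)$ is entire as well since $\Pi'$ is cuspidal and not a Hecke character; as the finite Euler factors are reciprocals of polynomials and therefore zero-free, none of the local factors above has a pole at $s=\frac{1}{2}+m$. By \Cref{L:compat} one then has $L\left(\frac{1}{2}+m,\pi'_v\otimes\chi_v\right)\in\QQ\left(\Pi',\chi\right)^{\times}$, while the $\GL_4$-analogue of \Cref{L:compat}, the original \cite[Lemma 4.6]{CloI}, gives $L\left(\frac{1}{2}+m,\Pi_v\otimes\chi_v\right)\in\QQ\left(\Pi_v\otimes\chi_v\right)\subseteq\QQ\left(\Pi,\chi\right)$; moreover $\QQ\left(\Pi_f\right)=\QQ\left(\jl(\Pi')_f\right)=\QQ\left(\Pi'_f\right)$ \cite{GroRag2}, so $\QQ\left(\Pi,\chi\right)=\QQ\left(\Pi',\chi\right)$. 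Thus the displayed finite product lies in $\QQ\left(\Pi',\chi\right)$, and multiplying it with the output of \Cref{T:manny} yields the asserted membership in $\QQ\left(\Pi',\omega_\Sigma,\chi\right)$.

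I do not anticipate a real obstacle: the result is flagged as an ``easy consequence'' of \Cref{T:manny}, and the argument above is entirely formal once \Cref{T:manny} is available. The only step deserving care is the control of the ramified local $L$-factors at $v\in S$ --- one needs that these local $L$-values are algebraic, lie in the expected rationality field and are non-zero, and that $\jl$ identifies the rationality fields of $\Pi_f$ and $\Pi'_f$ --- and all of this is supplied by \Cref{L:compat}, its split-group counterpart, and the properties of $\jl$ recalled in \Cref{S:2.6}. One could also bypass the finite-place bookkeeping by taking $S$ large enough to contain every place where $\Pi'$, $\Sigma$, $\chi$ or $\psi$ ramifies and using the identity of partial $L$-functions $L^{S}\left(s,\Pi'\otimes\chi\right)=L^{S}\left(s,\Pi\otimes\chi\right)$ directly; restoring the finitely many omitted Euler factors then amounts to the same computation.
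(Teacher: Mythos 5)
Your proposal is correct and follows essentially the same route as the paper: verify the hypotheses of \Cref{T:manny} with $\eta=\omega_\Sigma$ via \Cref{T:G-T}, \Cref{C:conj} and \Cref{T:L}, apply \Cref{T:manny}, and then pass from $L(\tfrac12+m,\Pi'_f\otimes\chi_f)$ to $L(\tfrac12+m,\Pi_f\otimes\chi_f)$ using the Jacquet--Langlands identification at split places together with the $\Aut(\CC)$-equivariance (hence rationality and nonvanishing) of the finitely many remaining local factors, for which the paper invokes \Cref{L:compat}. The paper's own proof is exactly this argument stated tersely, and your additional bookkeeping (the explicit finite correction product over non-split places, Clozel's lemma for the $\GL_4$ factors, and $\QQ(\Pi_f)=\QQ(\Pi'_f)$) only makes explicit what the paper leaves implicit.
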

\section{Proof of Theorem \ref{T:global} \& Theorem \ref{T:connL}}\label{S:JF}
We will now show how to adapt the proof given in \cite{FriJac} to the situation at hand. Almost all of the arguments remain unchanged and we only include them for completeness.
Throughout this section $\Pi'$ will be a cuspidal irreducible representation of $\GL_{2n}'(\A)$ which admits a Shalika model with respect to $\eta$ and $\phi\in\Pi$ will be a cusp form.

If $H$ is an algebraic subgroup of $\GL'_{2n}$ containing $Z_{2n}'$ we denote by \[H^0\left(\A\right)=\{h\in H\left(\A\right): \lvert\det'\left(h\right)\lvert=1\}.\]
Given a Haar-measure $\ddd h$ on $H(\A)$, there exists a Haar measure $\ddd z$ on the center ${Z_{2n}'}(\KK)\bs {Z_{2n}'}(\A)$ such that for all $s\in \CC$ and $f$ a smooth function on $H(\A)$
\begin{equation}\begin{gathered}\label{E:inteex}\int_{Z_{2n}(\A)H(\KK)\bs H(\A)}f(h)\lvert\det'(h)\lvert^s\ddd h=\\ \int_{{Z_{2n}'}(\KK)\bs {Z_{2n}'}(\A)}\lvert \det'(z)\lvert ^s \int_{H(\KK)\bs H^0(\A)}f(hz)\ddd h \ddd z,\end{gathered}\end{equation}
assuming the first integral converges.
Indeed, this follows since $H^0(\A)\bs H(\A)$ can be identified with ${Z_{2n}^{'0}}(\A)\bs Z_{2n}'(\A)\times {Z_{2n}^{'0}}(\A)\bs Z_{2n}'(\A)$
and that the integral of $\lvert\det'\lvert^s$ over ${Z_{2n}^{'0}}(\A)\bs Z_{2n}'(\A)$ is the same as the integral of $\lvert\det'\lvert^s$ over $Z'_{2n}(\KK)\bs Z_{2n}(\A)$.
We will denote by $\mathrm{1}_n$ the ${n}$-dimensional identity matrix.
Let $q,p\in\mathbb{Z}^+$ be such that $p+q={2n}$, let $U'_{(q,p)}\subseteq P_{(q,p)}'$ be the corresponding unipotent subgroup of $\GL'_{2n}$ and let $A$ be the group of diagonal matrices of $\GL_{{2n}}$ embedded into $\GL'_{2n}$. We identify $U'_{(q,p)}$ from time to time with the linear space of $p\times q$ matrices $ M_{q,p}'$. To each $\beta\in M'_{q,p}(\KK)$ we associate the character $\theta_\beta$ of $U'_{(q,p)}\left(\A\right)$ 
\[u=\bpm \mathrm{1}_p&v\\0&\mathrm{1}_q\epm\mapsto \psi\left(\mathrm{Tr}'\left(v\beta\right)\right).\]
Moreover, let $H=\GL_p'\times \GL_q'$ be the Levi-component of $P_{(q,p)}$.
Then for $\gamma=\bpm \gamma_1&0\\0&\gamma_2\epm\in H\left(\KK\right)$ it is straightforward to see that
$\theta_\beta\left(\gamma^{-1}u\gamma\right)=\theta_{\gamma_2^{-1}\beta\gamma_1}\left(u\right).$
The additive group $M'_{q,p}\left(\A\right)$ is isomorphic to $M_{dq,dp}\left(\A\right)$. It is well known that the additive characters of the latter group are parametrized by the space of linear functionals $\mathrm{Hom}_\KK\left(M_{dq,dp}\left(\KK\right),\KK\right)$ by identifying 
$X\in \mathrm{Hom}_\KK\left(M_{dq,dp}\left(\KK\right),\KK\right)$ 
with the additive character $v\mapsto \psi(X(v))).$ Identifying $M_{dq,dp}\left(\KK\right)$ with $M'_{q,p}\left(\KK\right)$ again, we obtain that all characters of $U'_{(q,p)}\left(\A\right)$ are of the form $\theta_\beta$ and $\theta_\beta=\theta_{\beta'}$ if and only if $\beta'$.
This allows us to consider for a cuspform $\phi\in \Pi'$ its Fourier expansion 
\[\phi\left(g\right)=\sum_{M'_{q,p}\left(\KK\right)}\phi_\beta\left(g\right),\]
where \[\phi_\beta\left(g\right)\coloneqq \int_{U'_{(q,p)}\left(\KK\right)\bs U'_{(q,p)}\left(\A\right)}\phi\left(gu\right)\theta_\beta\left(u\right)du.\]
It is again easy to see that
$\phi_\beta\left(\gamma g\right)=\phi_{\gamma_2^{-1}\beta\gamma_1}\left(g\right)$
and $\phi_0=0$, since $\phi$ is cuspidal.
\begin{lemma}\label{L:conv}
\[\int_{H\left(\KK\right)\bs H^0\left(\A\right)}\sum_{\beta\in M'_{q,p}\left(\KK\right)}\lvert\phi_\beta\left(h\right)\lvert \ddd h<\infty\]
\end{lemma}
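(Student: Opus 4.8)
The plan is to adapt the argument of \cite{FriJac}, the only genuinely new point being the passage from $\GL_{2n}$ to its inner form $\GL_{2n}'$. The crucial structural input is the covariance $\phi_\beta(\gamma g)=\phi_{\gamma_2^{-1}\beta\gamma_1}(g)$ for $\gamma=\mathrm{diag}(\gamma_1,\gamma_2)\in H(\KK)$, which says that $H(\KK)$ permutes the terms of the Fourier expansion of $\phi$ along $U'_{(q,p)}$ according to its natural action $\beta\mapsto\gamma_2^{-1}\beta\gamma_1$ on $M'_{q,p}(\KK)$. First I would decompose $M'_{q,p}(\KK)$ into orbits for this action. Since $\DD$ is a division algebra, every element of $M'_{q,p}(\KK)$ can be brought to the block form $\beta_r=\bpm\mathrm{1}_r&0\\0&0\epm$ by left multiplication with $\GL_q'(\KK)$ and right multiplication with $\GL_p'(\KK)$ (the Smith-type normal form over the division ring $\DD$, valid verbatim), so the orbits $\mathcal{O}_r$ are indexed by the $\DD$-rank $r\in\{0,\dots,\min(p,q)\}$, with representatives $\beta_r$. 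The orbit $\mathcal{O}_0=\{0\}$ contributes nothing, because $\phi$ is a cusp form and hence $\phi_0=0$.

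Fix $r\ge 1$ and let $H_{\beta_r}\subseteq H$ be the stabilizer of $\beta_r$; note that $H_{\beta_r}$ contains $Z_{2n}'$ and normalizes $U'_{(q,p)}$. The map $H_{\beta_r}(\KK)\bs H(\KK)\to\mathcal{O}_r$, $\gamma\mapsto\gamma_2^{-1}\beta_r\gamma_1$, is a bijection, and the covariance gives $\phi_{\gamma_2^{-1}\beta_r\gamma_1}(h)=\phi_{\beta_r}(\gamma h)$, so that $\sum_{\beta\in\mathcal{O}_r}\lvert\phi_\beta(h)\rvert=\sum_{\gamma\in H_{\beta_r}(\KK)\bs H(\KK)}\lvert\phi_{\beta_r}(\gamma h)\rvert$. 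Since all summands are non-negative, Fubini and the usual unfolding yield
\[
\int_{H(\KK)\bs H^0(\A)}\sum_{\beta\in\mathcal{O}_r}\lvert\phi_\beta(h)\rvert\,\ddd h=\int_{H_{\beta_r}(\KK)\bs H^0(\A)}\lvert\phi_{\beta_r}(h)\rvert\,\ddd h.
\]
As there are only finitely many orbits, the lemma reduces to showing that each of these integrals is finite.

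For the last step I would use that $\psi$, hence $\theta_{\beta_r}$, is unitary, to get the majorant $\lvert\phi_{\beta_r}(h)\rvert\le\int_{U'_{(q,p)}(\KK)\bs U'_{(q,p)}(\A)}\lvert\phi(hu)\rvert\,\ddd u$; feeding this into the integral above bounds it, up to modular-character factors which are negligible against rapid decay, by an integral of $\lvert\phi\rvert$ over the quotient of the subgroup $H_{\beta_r}U'_{(q,p)}$ of $\GL'_{2n}$ by its group of $\KK$-points. This converges because a cusp form is rapidly decreasing on $\GL'_{2n}(\KK)\bs\GL'_{2n}(\A)^1$: realizing the subgroup quotient inside a Siegel set of $\GL'_{2n}$ via reduction theory for the inner form $\GL_n'$ (\emph{cf.} \cite{BorII}), the rapid decay of $\lvert\phi\rvert$ dominates the at-most-polynomial growth of the invariant volume. \textbf{The main obstacle} is precisely this convergence estimate: keeping track of the modular characters through the measure identities underlying \eqref{E:inteex}, and checking that a Siegel set for $H_{\beta_r}U'_{(q,p)}$ lies in finitely many translates of a Siegel set of $\GL'_{2n}$. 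None of this is sensitive to $\DD$ being non-split — Smith normal form over $\DD$, reduction theory for $\GL_n'$, and the Haar-measure bookkeeping all behave as in the split case — so the argument of \cite{FriJac} transfers with only notational changes.
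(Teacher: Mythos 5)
Your orbit decomposition and the unfolding identity are fine (the paper itself uses exactly this decomposition of $M'_{q,p}(\KK)$ into rank orbits and the covariance $\phi_\beta(\gamma g)=\phi_{\gamma_2^{-1}\beta\gamma_1}(g)$ right after the lemma), but the final convergence step is where the argument breaks. After unfolding you must show
$\int_{H_{\beta_r}(\KK)\bs H^0(\A)}\lvert\phi_{\beta_r}(h)\rvert\,\ddd h<\infty$, and your majorant
$\lvert\phi_{\beta_r}(h)\rvert\le\int_{U'_{(q,p)}(\KK)\bs U'_{(q,p)}(\A)}\lvert\phi(hu)\rvert\,\ddd u=:F(h)$
cannot do this: $F$ is left invariant under all of $H(\KK)$ (not just the stabilizer) and is merely bounded, while the unfolded domain $H_{\beta_r}(\KK)\bs H^0(\A)$ has infinite volume. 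Moreover, "rapid decay of the cusp form" does not rescue it, because points going to infinity in $H_{\beta_r}(\KK)\bs H^0(\A)$ need not go to infinity in $\GL'_{2n}(\KK)\bs\GL'_{2n}(\A)$. Concretely, in the relevant case $p=q=n$, $r=n$, the stabilizer is $\Delta\GL'_n$ and the unfolded domain contains a full copy of $\GL_n'^0(\A)$ through $h=(g,1)$; for $g\in\GL'_n(\KK)\cdot(\text{compact})$ the element $\bpm g&0\\0&1\epm u$ stays in a fixed compact part of the automorphic quotient, so $F((g,1))$ is bounded below on a set of infinite measure and $\int F$ diverges. The convergence of the unfolded integral is true, but it comes from the oscillation of $\theta_{\beta_r}$ (equivalently, from the rapid decay of the Fourier coefficients in the parameter $\beta$), which you discarded by putting the absolute value inside the $u$-integral.

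The paper avoids this by never unfolding: it bounds the full sum $\sum_\beta\lvert\phi_\beta(h)\rvert$ uniformly on a Siegel set of $H^0$, then integrates over the finite-volume quotient $H(\KK)\bs H^0(\A)$. The sum is viewed, for fixed $a\omega$ in the Siegel set, as the absolutely convergent Fourier series of the smooth periodic function $u\mapsto\phi(ua\omega)$ on $U'_{(q,p)}(\KK)\bs U'_{(q,p)}(\A)$; uniformity in $(a,\omega)$ is obtained by bounding all derivatives $\lambda(X)\phi(ua\omega)$, $X$ in the enveloping algebra of the unipotent radical, where the potentially unbounded factors $\alpha(a_v)^{-1}$ produced by conjugating root vectors across $a$ are absorbed using the decay of cusp forms against the modular characters of the two adjacent maximal parabolics $P'_{(p\pm1,q\mp1)}$ (\Cref{L:Lemma22}, i.e. Lemma 2.2 of \cite{FriJac}). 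If you want to keep your unfolded formulation, you would still have to prove rapid decay of $\beta\mapsto\phi_\beta(h)$ uniformly on a Siegel set, which is exactly this derivative estimate; so the paper's route is the one to follow.
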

\begin{proof}
It suffices to show that the integral is finite over a standard Siegel set of $H^0$, \emph{i.e.} let $H_{2n}$ be the Cartan subgroup of $\GL_{2n}'$ consisting of the diagonal matrices with entries in a fixed maximal subfield $\mathbb{E}\subseteq \DD$, let $\Omega$ be a compact subset of $\GL'_{2n}\left(\A\right)$, let $C$ be a positive constant and let $S\left(C\right)$ be the connected component of $\mathrm{1}_{2n}$ of the diagonal matrices $$a=\mathrm{diag}\left(a_1,\ldots,a_p,a_{p+1},\ldots,a_{{2n}}\right)$$ with $a\in H_{2n}$ satisfying $\left\lvert\frac{a_i}{a_{i+1}}\right\lvert\ge C$ for $i\neq p,{2n}$ and $\prod_{i=1}^pa_i=\prod_{i=p+1}^{{2n}}a_i=1$, see \cite[Theorem 4.8]{PlaRap}.
Hence, we have to show that there exists a constant $D$ such that
\[\sum_{\beta\in M'_{q,p}}\lvert\phi_\beta\left(a\omega\right)\lvert<D\]
for all $a\in S\left(C\right)$ and $\omega\in \Omega$.
We consider the function $u\mapsto\phi\left(ua\omega\right),\, u\in U_{(q,p)}'(\A)$ as a smooth, periodic function in $u$ for fixed $a$ and $\omega$. Then its Fourier series is also smooth and converges absolutely. To prove that this convergence is uniform, \emph{i.e.} independent of $a$ and $\omega$, it suffices to show like in the proof of \cite[Lemma 2.1]{FriJac} that firstly, there exists a compact open subgroup $U_f\subseteq U_{(q,p)}'(\A_f)$ such that
$\phi\left(uu'a\omega\right)=\phi\left(ua\omega\right)$
for all $u'\in U_f$ and secondly, there exists a constant $D'$ independent of $a$ and $\omega$ such that for any $X$ of the enveloping universal algebra of $\mathfrak{u}_{(q,p),\infty},$ the Lie algebra of $\prod_{v\in {\VV_\infty}}U_{(q,p)}(\KKv)$,
\[\lvert\lambda\left(X\right)\phi\left(ua\omega\right)\lvert<D'.\]
Here we denote by $\lambda$ the left action of $U_\infty$ and $\rho$ its right action. The existence of $U_f$ as above follows immediately from the smoothness of $\phi$, since $\Omega$ is compact and $S(C)$ normalizes $U_{(q,p)}'(\A_f)$.
To prove the second claim, we fix $v\in {\VV_\infty}$, a root $\alpha$ of $H_{2n}$ in $U'_{(q,p)}$ and a root vector $X_\alpha$ of $\alpha$ in the Lie algebra of $\mathfrak{u}_{(q,p),\infty}$. Recall that since $H_{2n}$ is a Cartan subgroup, such root vectors span $\mathfrak{u}_{(q,p),\infty}$. Then
\[\lambda\left(-X_\alpha\right)\phi\left(ua\omega\right)=\alpha\left(a_v\right)^{-1}\rho\left(\mathrm{ad}(\omega^{-1}) X_\alpha\right)\phi\left(ua\omega\right).\]
 Now $\mathrm{ad}(\omega^{-1}) X_\alpha$ is a linear combination of basis elements of $\lig'_v$, whose coefficients are bounded, because $\Omega$ is compact. Since $a\in S\left(C\right)$, $\alpha\left(a_v\right)^{-1}$ is bounded by a constant multiple of $\lvert a_p\lvert^{-M}\lvert a_{{2n}}\lvert^M$ for some $M\ge 0$.

Therefore, $\lambda\left(-X_\alpha\right)\phi\left(ua\omega\right)$ is bounded above by
\[\sum_j \lvert a_p\lvert^{-M_j}\lvert a_{2n}\lvert^{M_j}\lvert\phi_j\left(ua\omega\right)\lvert,\]
for $\phi_j\in \Pi$.
The following lemma will be useful in this and the following proof.
\begin{lemma}[{\cite[Lemma 2.2]{FriJac}}]\label{L:Lemma22}
Let $\phi$ be a cusp form of a reductive group $G$, which is invariant under the split component of the center of $G$. Let $R$ be a maximal proper parabolic subgroup of $G$, let $\delta_R$ be the module of the group $R\left(\A\right)$ and let $\Omega$ be a compact subset of $G\left(\A\right)$. Then for every $M\ge 0$ there exists a constant $D$ such that
$\delta_R\left(r\right)^M\lvert\phi\left(r\omega\right)\lvert\le D$
for all $r\in R\left(\A\right)$ and $\omega\in \Omega$.
\end{lemma}
Since $ua$ is contained in $P_{(q-1,p+1)}'(\A)$ and $P_{(q+1,p-1)}'$ with respective modules
\[\delta_{P_{(p-1,q+1)}'}\left(ua\right)=\lvert a_p\lvert^{-2nd},\, \delta_{P_{(p+1,q-1)}'}\left(ua\right)=\lvert a_{p+1}\lvert^{2nd},\]
we deduce that $\lvert a_p\lvert^{-M_j}\lvert a_{2n}\lvert^{M_j}\lvert\phi_j\left(ua\omega\right)\lvert$ is bounded above. This finishes the proof.
\end{proof}
The next step is to observe that even though we are dealing with matrices over a division algebra, Gauss elimination still holds true in $M'_{q,p}\left(\KK\right)$.
Therefore, the $H(\KK)=\GL_p'\left(\KK\right)\times \GL_q'\left(\KK\right)$-orbits on $M'_{q,p}\left(\KK\right)$ under the action $\gamma\cdot \beta=\gamma_2^{-1}\beta\gamma_1$ are precisely given by the possible ranks of the matrices.
To be more precise, we say a matrix $\beta$ has rank $r$ if it is in the orbit of \[\beta_r\coloneqq \bpm \mathrm{1}_r &0\\0&0\epm.\]
The stabilizer $H_{\mathrm{1}_r}\left(\KK\right)$ of the matrix
$\beta_r$ is the subgroup of matrices of the form
\begin{equation}\label{E:stab}\bpm g_1&0\\0&g_2\epm \text{ with } g_1=\bpm a_1&b_1\\0&d_1\epm,\, g_2=\bpm a_2&0\\c_1&d_1\epm,\end{equation}
where $d_1$ is a square matrix of dimension $r$, $a_1$ is a square matrix of dimension $q-r$ and $a_2$ is a square matrix of dimension $p-r$.
 Now we can write 
\[\phi\left(g\right)=\sum_{r=1}^{\min(q,p)}\sum_{\gamma\in H_{\mathrm{1}_r}\left(\KK\right)\bs H\left(\KK\right)} \phi_{\beta_r}\left(\gamma g\right).\]
 Next the generalization of \cite[Proposition 2.1]{FriJac}, which goes throw exactly like in split case..
\begin{prop}\label{P:A1}
Let $q>p$. Then for any cusp form $\phi\in \Pi'$,
\[\int_{G_q'\left(\KK\right)\bs G_q'^0\left(\A\right)}\phi \left(\bpm g_1&0\\0& \mathrm{1}_p\epm\right)\ddd g_1=0.\]
\end{prop}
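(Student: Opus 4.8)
The plan is to transcribe the proof of \cite[Proposition 2.1]{FriJac} into the present setting, all of whose algebraic input has already been assembled: Gauss elimination holds over $M'_{q,p}\left(\KK\right)$, so the $H\left(\KK\right)$-orbits on $M'_{q,p}\left(\KK\right)$ under $\gamma\cdot\beta=\gamma_2^{-1}\beta\gamma_1$ are exactly the rank strata, with representatives $\beta_r$ for $0\le r\le\min\left(q,p\right)=p$; the stabilizer $H_{\mathbf{1}_r}$ has the explicit block shape \eqref{E:stab}; and we already have the refined Fourier expansion $\phi\left(g\right)=\sum_{r=1}^{p}\sum_{\gamma\in H_{\mathbf{1}_r}\left(\KK\right)\bs H\left(\KK\right)}\phi_{\beta_r}\left(\gamma g\right)$, in which the $r=0$ term is absent because $\phi_0=0$ by cuspidality.

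First I would substitute $g=\bpm g_1&0\\0&\mathbf{1}_p\epm$ into this expansion and integrate over $g_1\in\GL'_q\left(\KK\right)\bs{\GL'_q}^0\left(\A\right)$. Interchanging the summation with the integral is legitimate by the absolute convergence established in \Cref{L:conv} (applied after restricting to the $\GL'_q$-factor, noting $\det'$ is trivial there); a Siegel-set estimate identical to the one in that proof also covers this slightly different domain. Hence it suffices to show that for each fixed $r$ with $1\le r\le p$ the term
\[\int_{\GL'_q\left(\KK\right)\bs{\GL'_q}^0\left(\A\right)}\ \sum_{\gamma\in H_{\mathbf{1}_r}\left(\KK\right)\bs H\left(\KK\right)}\phi_{\beta_r}\!\left(\gamma\bpm g_1&0\\0&\mathbf{1}_p\epm\right)\,dg_1\]
vanishes. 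Reading off \eqref{E:stab}, the image of $H_{\mathbf{1}_r}$ in the $\GL'_q$-factor is a proper parabolic $R\subseteq\GL'_q$ with a Levi block of size $q-r$, the remaining data of $H_{\mathbf{1}_r}$ coupling $R$ to a parabolic of $\GL'_p$ through the common $\GL'_r$-block. Splitting the coset sum accordingly and unfolding its $R\left(\KK\right)\bs\GL'_q\left(\KK\right)$-part against the outer integral turns $\int_{\GL'_q\left(\KK\right)\bs{\GL'_q}^0\left(\A\right)}$ into $\int_{R\left(\KK\right)\bs{\GL'_q}^0\left(\A\right)}$.

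The decisive step is then to factor this last integral through the unipotent radical of $R$ and to combine that integration with the part of the defining integral of $\phi_{\beta_r}$ over $U'_{(q,p)}$ on which the character $\theta_{\beta_r}$ is trivial: for each remaining coset representative these assemble into an integral of $\phi$ over $N\left(\KK\right)\bs N\left(\A\right)$ against the trivial character, where $N$ is the unipotent radical of a \emph{proper} standard parabolic of $\GL'_{2n}$ — it is proper precisely because $q-r\ge q-p\ge 1$ for every $r\le p$, and this is the only place the hypothesis $q>p$ is used. Since the constant term of $\phi$ along such a parabolic vanishes identically by cuspidality, every rank-$r$ term vanishes and the proposition follows. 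The only real work — and the main obstacle — is the block-matrix bookkeeping in identifying this combined unipotent integration with that of a genuine $N$ and in checking that $\theta_{\beta_r}$ restricts trivially to $N$; once that is in place the Fubini and convergence points are handled exactly as in the proof of \Cref{L:conv}, and nothing in the argument is sensitive to $\DD$ being a division algebra rather than a field, thanks to the Gauss-elimination remark preceding the statement.
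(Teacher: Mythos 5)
Your proposal is correct and follows essentially the same route as the paper's own proof (itself a transcription of Friedberg--Jacquet): interchange of sum and integral via \Cref{L:conv}, unfolding of each rank-$r$ coset sum against the outer integral using the stabilizer description \eqref{E:stab}, and vanishing of every stratum because the combined unipotent integration contains the constant term of $\phi$ along the unipotent radical of a proper parabolic of $\GL'_{2n}$, proper precisely since $r\le p<q$. The bookkeeping you defer is exactly what the paper carries out at that point: $U'_{(q-r,r)}$ lies in the stabilizer of $\beta_r$ and normalizes $U'_{(q,p)}$, so $\theta_{\beta_r}$ extends trivially across it and the combined integral factors through a unipotent radical on which the character is trivial, whence cuspidality applies.
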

\subsection{ Proof of Theorem \ref{T:global}}
 Before we begin the proof, let us remark the following. 
\begin{remark}\label{R:bounded}
 If $\Pi'$ admits a Shalika model with respect to $\eta$, $\Pi_v'$ admits a Shalika model with respect to $\eta_v$, \emph{i.e.} a continous intertwining map
\[\left(\Pi_v'\right)^{\infty}\rightarrow \mathrm{Ind}_{\mS\left(\KKv\right)}^{\GL_{2n}'\left(\KKv\right)}\left(\psi_v\otimes \eta_v\right)\] if $v\in {\VV_\infty}$ and a morphism of $\GL(\KKv)$-representations
\[\Pi_v'\rightarrow \mathrm{Ind}_{\mS\left(\KKv\right)}^{\GL_{2n}'\left(\KKv\right)}\left(\psi_v\otimes \eta_v\right)\] if $v\in {\VV_f}$.
In both cases Frobenius reciprocity gives us a continuous morphism 
\[\lambda_v\in \mathrm{Hom}_{\mS\left(\KKv\right)}\left(\left(\Pi_v'\right)^\infty,\psi_v\otimes \eta_v\right)\text{, respectively, }\lambda_v\in \mathrm{Hom}_{\mS\left(\KKv\right)}\left(\Pi_v',\psi_v\otimes \eta_v\right).\]
If $v\in {\VV_\infty}$ the so obtained map is a priori just an intertwiner of group actions, but not necessarily continuous. However, the space of smooth vectors satisfies the Heine-Borel property, \emph{i.e.} a subset of $\left(\Pi_v'\right)^\infty$ is compact if and only if it is bounded on bounded sets and closed.
Since a linear map of Fr\'echet spaces is continuous if and only if it is bounded, the claim follows. If $v\in {\VV_f}$ we obtain $\lambda_v$ without any extra steps.

For a cuspform $\phi=\bigotimes_{v\in \VV}\phi_v\in \Pi'$ we have that $\lvert\phi\left(g\right)\lvert\le \beta\left(\phi\right)$, where $\beta$ is a semi-norm on $(\Pi_\infty')^\infty\otimes \Pi_f^{K_f'}$ for any open compact subgroup $K_f'$, since cusp forms are of rapid decay.
Letting $\lambda$ be the Shalika functional associated via Frobenius reciprocity to our Shalika model, we
obtain that $g\mapsto \lambda\left(\Pi'\left(g\right)\phi\right)$ is bounded. Thus, if we restrict $\lambda$ to smooth vectors we obtain so the local Shalika functionals $\lambda_v,\, v\in \VV$,
we also have that $g_v\mapsto \lambda_v\left(\Pi_v'\left(g_v\right)\phi_v\right)$ is bounded for all $v\in \VV$.
\end{remark}
\begin{theorem}\label{T:Aglobal}
Let $\Pi'$ be a cuspidal irreducible representation of $\GL_{{2n}}'\left(\A\right)$.
Assume $\Pi'$ admits a Shalika model with respect to $\eta$ and let $\phi\in\Pi'$ be a cusp form. Consider the integrals
\[\Psi\left(s,\phi\right)\coloneqq \int_{Z'_{2n}\left(\A\right)H_n'\left(\KK\right)\bs H_n'\left(\A\right)}\phi\left(\bpm h_1&0\\0&h_2\epm\right)\left\lvert \frac{\det'\left(h_1\right)}{\det'\left(h_2\right)}\right\lvert^{s-\frac{1}{2}}\eta\left(h_2\right)^{-1}\ddd h_1\ddd h_2,\]
\[\zeta\left(s,\phi\right)\coloneqq \int_{\GL'_n\left(\A\right)}\mS^\eta_\psi\left(\phi\right)\left(\bpm g_1&0\\0&1\epm\right)\lvert\det'\left(g_1\right)\lvert^{s-\frac{1}{2}}\ddd g_1.\]
Then $\Psi\left(s,\phi\right)$ converges absolutely for all $s$ and $\zeta\left(s,\phi\right)$ converges absolutely if $\re >>0$. Moreover, if $\zeta\left(s,\phi\right)$ converges absolutely,
$\Psi\left(s,\phi\right)=\zeta\left(s,\phi\right)$.
\end{theorem}
\begin{proof}
We apply \ref{L:Lemma22} to the case $R'=P_{(n,n)}'\subseteq \GL_{2n}'$ to see that\[{\phi\left(\bpm h_1&0\\0&h_2\epm\right)\left\lvert\frac{\det' \left(h_1\right)}{\det' \left(h_2\right)}\right\lvert}^{M}\]
is bounded above for any $M$, hence, $\Psi(s,\phi)$ converges absolutely.
Indeed, recall that $Z_n'(\A)\GL_n'(\KK)\bs \GL_n'(\A)$ has finite volume and hence \[Z'_{2n}(\A)H_n'(\KK)\bs H_n'(\A)=(1_n\times Z_{n}'(\A))\Omega,\] where $\Omega$ has finite volume. Since above $M$ can be chosen arbitrarily small, the claim follows.
For a suitable measure $\ddd z$ on $Z_{2n}'\left(\KK\right)\bs Z'_{2n}\left(\A\right)$ we have by (\ref{E:inteex})
\[\Psi\left(s,\phi\right)=\int_{Z_{2n}'(\KK)\bs Z'_{2n}(\A)}\lvert\det'\left(z\right)\lvert^{s-\frac{1}{2}} \]\[\int_{\GL'_n\left(\KK\right)\bs \GL^{'0}_n(\A)}\int_{\GL'_n\left(\KK\right)\bs \GL^{'0}_n\left(\A\right)} \phi\left(\bpm h_1z&0\\0&h_2\epm\right)\eta\left( h_2\right) \ddd h_1 \ddd h_2\ddd z.\]
Inserting the Fourier series we see that the contribution of the matrices with rank $r<n$ is $0$ by \ref{P:A1} and hence, 
\begin{equation}\label{E:sumin}\begin{gathered}\int_{\GL'_n\left(\KK\right)\bs \GL^{'0}_n(\A)}\int_{\GL'_n\left(\KK\right)\bs \GL^{'0}_n\left(\A\right)} \phi\left(\bpm h_1z&0\\0&h_2\epm\right)\eta\left( h_2\right) \ddd h_1 \ddd h_2=\\=\int_{\GL'_n\left(\KK\right)\bs \GL^{'0}_n\left(\A\right)\times \GL'_n\left(\KK\right)\bs \GL^{'0}_n\left(\A\right)}\sum_{\gamma_1\in \GL'_n\left(\KK\right)}\\\phi_{\beta_n}\left(\bpm\gamma_1h_1z&0\\0&h_2\epm \right) \eta\left( h_2\right) \ddd h_1\ddd h_2.\end{gathered}\end{equation}
Contracting the sum and the integral and performing a change of variables, it follows that (\ref{E:sumin}) is equal to 
\[\int_{\GL^{'0}_n\left(\A\right)}\mS^\eta_\psi\left(\phi\right)\left(\bpm gz&0\\0&\mathbf{1}_n\epm \right)\eta\left(g\right)\ddd g.\]
Thus $\Psi\left(s,\phi\right)$ and $\zeta(s,\phi)$ are equal to 
\begin{equation}\label{E:convne} \int_{Z'_{2n}\left(\KK\right)\bs Z'_{2n}\left(\A\right)}\lvert\det'\left( z\right)\lvert^{s-\frac{1}{2}}\int_{\GL^{'0}_n\left(\A\right)}\mS^\eta_\psi\left(\phi\right)\left(\bpm gx&0\\0&\mathrm{1}_n\epm\right)\eta(g)\ddd g\ddd z\end{equation}
where the last equation is valid only once we show that $\zeta\left(s,\phi\right)$ converges absolutely for $\re>>0$. To show the convergence we use the Dixmier-Malliavin theorem.
\begin{theorem}[Dixmier-Malliavin theorem]\label{L:DLapMinII}
Suppose $G$ to be a Lie group and $\pi$ a continuous representation of $G$ on a Fr\'echet space $V$.
Then every smooth vector $v\in V^\infty$ can be represented as a finite sum
$v=\sum_k \pi\left(f_k\right)v_k,$
with $v_k\in V$, $f_k$ a smooth, compactly supported function on $G$ and
$\pi\left(f\right)w\coloneqq \int_G f\left(x\right)\pi\left(x\right)w\ddd x$
for some fixed Haar measure on $G$.
\end{theorem}
\begin{remark}
Note that if $G$ is a reductive group over $\KK$ and $(\Pi'_f,V_f)$ is a smooth representation of $G(\A_f)$, we can write $v=\Pi'_f(v)\coloneqq \int_{G(\A_f)} \phi\left(x\right)\Pi'\left(x\right)v\ddd x$ for some smooth, \emph{i.e.} locally constant, function $\phi$ as every vector in $V_f$ is fixed by some open compact subgroup.
\end{remark}
We consider the action of $\GL_{{2n}}'(\A)$ on $\mS_\psi^\eta\left(\Pi'\right)$. The cusp form $\phi$ is a smooth vector in $\Pi'$, where we consider $\Pi'$ as a proper $\GL_{2n}'(\A)$-subrepresentation of the corresponding $L^2$-space.
Applied to our case this yields that $\mS_\psi^\eta\left(\phi\right)\left(g\right)$ can be written as a finite sum
\[\sum_k \int_{U'_{(n,n)}\left(\A\right)}\xi_k\left(g\bpm \mathrm{1}_n&u\\0&\mathrm{1}_n\epm \right)\phi_k\left(u\right)\ddd u,\]
where the $\phi_k$ are compactly supported, smooth functions on $U'_{(n,n)}\left(\A\right)$ and $\xi_k\in \mS_\psi^\eta\left(\Pi\right)$. Moreover, all $\xi_k$ satisfy the equivariance property under $\eta$ and $\psi$ and are therefore bounded by the remark of \ref{R:bounded}. Applying this, we deduce
\[\mS_\psi^\eta\left(\bpm g_1&0\\0&\mathrm{1}_n\epm\right)=\sum_k \xi_k\left(\bpm g_1&0\\ 0&\mathrm{1}_n\epm\right)\widehat{\phi_k}\left(g_1\right),\]
where $\widehat{\phi_k}$ is the Fourier transform of $\phi_k$. 
Recalling the definition of $\zeta(s,\phi)$, we obtain
\[\zeta(s,\phi)=\int_{\GL'_n\left(\A\right)}\sum_k \xi_k\left(\bpm g_1&0\\ 0&\mathrm{1}_n\epm\right)\widehat{\phi_k}\left(g_1\right)\lvert\det'\left(g_1\right)\lvert^{s-\frac{1}{2}}\ddd g_1\]
Since the $\xi_k$ are bounded, $\zeta(s,\phi)$ is thus bounded by a multiple of 
\[\sum_k \int_{\GL'_n\left(\A\right)}\widehat{\phi_k}\left(g_1\right)\lvert\det g_1\lvert^{s-\frac{1}{2}}dg_1,\]
which converges absolutely for $s$ with real part sufficiently large by \cite[Theorem 13.8]{GodJac} and thus $\zeta(s,\phi)$ converges for $\re>>0$.
\end{proof}
\subsection{ Proof of Theorem \ref{T:connL}}
\begin{theorem}\label{T:Aconnl}
Let $\Pi'$ be a cuspidal irreducible representation of $\GL_{2n}'\left(\A\right)$
and assume $\Pi'$ admits a Shalika model with respect to $\eta$. Then for each place $v\in \VV$ and $\xi_{v}\in\mS_{\psi_v}^{\eta_v}\left(\Pi_v'\right)$ there exists an entire function $P\left(s,\xi_{v}\right)$, with $P\left(s,\xi_{v}\right)\in \CC[q_v^{s-\frac{1}{2}},q_v^{\frac{1}{2}-s}]$ if $v\in {\VV_f}$, such that
\[\zeta_v\left(s,\xi_{v}\right)=P\left(s,\xi_{v}\right)L\left(s,\Pi_v'\right)\]
and hence, $\zeta_v\left(s,\xi_{v}\right)$ can be analytically continued to $\CC$. Moreover, for each place $v$ there exists a vector $\xi_{v}$ such that $P\left(s,\xi_{v}\right)=1$. If $v$ is a place where neither $\Pi'$ nor $\psi$ ramify this vector can be taken as the spherical vector $\xi_{\Pi_v'}$ normalized by $\xi_{\Pi_v'}\left(\mathrm{id}\right)=1$.
\end{theorem}
We follow closely the proofs of \cite[Proposition 3.1, Proposition 3.2]{FriJac}.
We denote by $S\left(M_{s,t}\right)$, respectively, $S\left(M_{s,t}'\right)$ the space of Schwartz-functions on $M_{s,t}\left(\KKv\right)$, respectively, $M_{s,t}'\left(\KKv\right)$.
\begin{proof}
The first step is to prove the following lemma
\begin{lemma}\label{L:bigth}
    There exists, depending on $\xi_v$, a positive Schwartz-function $\Theta\in S\left(M_{n,n}\right)$, such that
\[\left\lvert\xi_v\left(\left(\bpm g_1&0\\ 0&1\epm g\right)\right) \right\lvert\le \Theta\left(b^{-1}g_1a\right)\]
for the Iwasawa decomposition \[g=u\bpm a&0\\0&b\epm k,\, u\in U_{(n,n)}'\left(\KKv\right),\,\bpm a&0\\0&b\epm\in H_n'\left(\KKv\right),\, k\in K_v' .\]
\end{lemma}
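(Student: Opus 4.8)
The plan is to strip off, with the aid of the Iwasawa decomposition in the statement, a Shalika element on which $\xi_v$ transforms by a unimodular scalar, thereby reducing the desired estimate to a statement about a single ``torus slice'' of $\xi_v$. Concretely, writing $g=u\,\bpm a&0\\0&b\epm\,k$ with $u=\bpm\mathrm{1}_n&X\\0&\mathrm{1}_n\epm\in U'_{(n,n)}(\KKv)$ and $k\in K_v'$, one has the identity in $\GL'_{2n}(\KKv)$
\[
\bpm g_1&0\\0&\mathrm{1}_n\epm g=\bpm b&g_1Xb\\0&b\epm\,\bpm b^{-1}g_1a&0\\0&\mathrm{1}_n\epm\,k ,
\]
and the first factor lies in $\mS(\KKv)$. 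Since $\psi_v$ is unitary and $\eta_v$ is a unitary character, the Shalika transformation law $\xi_v(sx)=\eta_v(s)\psi_v(s)\xi_v(x)$ yields
\[
\Bigl|\xi_v\Bigl(\bpm g_1&0\\0&\mathrm{1}_n\epm g\Bigr)\Bigr|=\Bigl|\xi_v\Bigl(\bpm b^{-1}g_1a&0\\0&\mathrm{1}_n\epm k\Bigr)\Bigr| ,
\]
so it is enough to find a positive Schwartz function $\Theta$ on $M'_{n,n}(\KKv)$ dominating $c\mapsto\sup_{k\in K_v'}\bigl|\xi_v\bigl(\bpm c&0\\0&\mathrm{1}_n\epm k\bigr)\bigr|$, with $c$ in the role of $b^{-1}g_1a$.

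The first observation I would make is that $\xi_v$ itself is bounded: cusp forms are rapidly decreasing, so by \Cref{R:bounded} the Shalika function attached to a global cusp form is bounded, and since the local Shalika model is irreducible and generated by such a vector, every $\xi_v$ is bounded (for $v\in\VV_f$ as a finite sum of right translates, and for $v\in\VV_\infty$ by the analogous moderate-growth argument). For $v\in\VV_f$ it then remains to establish compact support in $c$: picking a lattice $L\subseteq M'_{n,n}(\KKv)$ with $\bpm\mathrm{1}_n&L\\0&\mathrm{1}_n\epm$ contained in the open compact subgroup that fixes $\xi_v$ (and inside $K_v'$), the relation
\[
\xi_v\Bigl(\bpm c&0\\0&\mathrm{1}_n\epm\bpm\mathrm{1}_n&Y\\0&\mathrm{1}_n\epm\Bigr)=\psi_v(\tr'(cY))\,\xi_v\Bigl(\bpm c&0\\0&\mathrm{1}_n\epm\Bigr),\qquad Y\in L ,
\]
forces the slice to vanish unless $c$ lies in the lattice dual to $L$ for the pairing $(c,Y)\mapsto\psi_v(\tr'(cY))$, which is compact and open in $M'_{n,n}(\KKv)$; combined with boundedness and local constancy in $c$ and in $k$ (recall $K_v'$ is compact), this produces a Schwartz--Bruhat majorant. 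All of this uses only that Gauss elimination, the reduced trace, and the pairing $\psi_v\circ\tr'$ behave over $\DD_v$ just as over a field, which is the recurring theme of this section.

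For $v\in\VV_\infty$ I would follow \cite[Proof of Proposition 3.2]{FriJac}: $\xi_v$ is a smooth vector in the Fr\'echet Shalika model, hence of moderate growth, and applying the Shalika transformation law after differentiating $\xi_v$ along a spanning set of root vectors of $\mathfrak{u}'_{(n,n),\infty}$ --- the standard mechanism for proving rapid decay of generalized Whittaker functions --- upgrades the a priori bound on the slice to rapid decay in every direction of $M'_{n,n}(\KKv)$, uniformly for $k$ in the compact group $K_v'$; together with smoothness this exhibits the positive Schwartz majorant $\Theta$. The hard part is precisely this last step --- controlling the slice near the boundary of $\GL'_n(\KKv)$ and, at the archimedean places, obtaining its rapid decay uniformly in $k$. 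The reduction of the first paragraph and the non-archimedean argument are essentially formal; the genuine analytic content is the asymptotic behaviour of Shalika functions, and it is here that we invoke the transcription of \cite{FriJac} to the division algebra $\DD_v$, which requires only notational changes.
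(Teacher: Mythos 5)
Your reduction via the factorization $\bpm g_1&0\\0&\mathrm{1}_n\epm g=\bpm b&g_1Xb\\0&b\epm\bpm b^{-1}g_1a&0\\0&\mathrm{1}_n\epm k$ is correct and is the same mechanism by which the variable $b^{-1}g_1a$ enters in the paper, but your route to the Schwartz majorant is genuinely different. The paper never studies the slice $c\mapsto \xi_v\left(\bpm c&0\\0&\mathrm{1}_n\epm k\right)$ directly: it writes $\xi_v$ as a finite sum of smoothed vectors $\int\xi_{v,1}(gh)\Psi(h)\,\mathrm{d}h$ (Dixmier--Malliavin at $v\in{\VV_\infty}$, automatic at $v\in{\VV_f}$ by the remark after \Cref{L:DLapMinII}) and obtains the Schwartz decay in $b^{-1}g_1a$ from the Fourier transform of the compactly supported test function along $U'_{(n,n)}(\KKv)$; the only representation-theoretic input is boundedness of the single vector $\xi_{v,1}$ from \Cref{R:bounded}. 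You instead extract the decay from the vector itself: at finite places by the dual-lattice support argument (cleaner than the smoothing integral, once you arrange the invariance lattice uniformly over the finitely many right $K_v'$-translates of $\xi_v$, which exist because $\xi_v$ is fixed by an open subgroup and $K_v'$ is compact), and at archimedean places by the classical Whittaker-type iteration of the identity expressing $\tr'(cZ)\,\xi_v\left(\bpm c&0\\0&\mathrm{1}_n\epm k\right)$ as the value at the same point of a right derivative of $\xi_v$ along $\mathrm{Ad}(k^{-1})$ of the corresponding upper-triangular root vector. The one point you should make explicit in that step is that the iteration requires boundedness not merely of $\xi_v$ but of all its iterated right derivatives (uniformly after conjugation by $k\in K_v'$, harmless by compactness); moderate growth alone, with exponents that may grow with the order of derivation, would not yield rapid decay. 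This extra input is available for the same reason as \Cref{R:bounded} itself: the derived vectors come from derivatives of cusp forms, which are again cusp forms with bounded Shalika functions. In exchange, your argument dispenses with Dixmier--Malliavin at the archimedean places, whereas the paper trades control of all derivatives for a single application of boundedness plus the Schwartz properties of the test function.
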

\begin{proof}
We first assume that we are in the archimedean case. Using the Dixmier-Malliavin theorem, it is enough to prove the claim in the case $\xi_v$ being of the form
\[\int_{\GL'_{2n}\left(\KKv\right)}\xi_{v,1}\left(gh\right)\Psi\left(h\right)\dv h,\]
where $\Psi$ is smooth function of $\GL'_{2n}\left(\KKv\right)$ with compact support. Write $g$ as 
\[h=\bpm \mathrm{1}_n&u_1\\0&\mathrm{1}_n\epm\bpm a_1&0\\0&b_1\epm k_1,\,g=\bpm \mathrm{1}_n&u_2\\0&\mathrm{1}_n\epm\bpm a_2&0\\0&b_2\epm k_2.\] We compute
\[\xi_{v}\left(\left(\bpm g_1&0\\ 0&\mathrm{1}_n\epm g\right)\right)=\]\[=\psi\left(\tr\left(g_1u_2\right)\right)\int_{\GL'_n(\KKv)\times \GL'_n\left(\KKv\right)\times K_v'}\xi_{v,1}\left(\bpm g_1a_2a_1&0\\0&b_2b_1\epm k_1\right)\]
\[\Xi\left(b_2^{-1}g_1a_2;k,k_2,a_1,b_1\right)\lvert\det'_v\left( a_1b_1^{-1}\right)\lvert^{-nd}\dv a_1\dv b_1\dv k_1,\]
where $\Xi\left(v;k_1,k_2,a_1,b_2\right)$ is the Fourier transform of \[u_1\mapsto \Psi\left(k_2^{-1}\bpm \mathrm{1}_n&u_1\\0&\mathrm{1}_n\epm \bpm a_1&0\\0&b_1\epm k_2\right)\]
This function and its derivatives have compact support, which is independent of the parameters $k_2,a_1,b_1,k_1$. Thus, the respective Fourier transform are contained in a bounded set in the space of Schwartz-functions on $U_{(n,n)}'\left(\KKv\right)$.
Hence, there exists a positive Schwartz-function $\Theta_1$ and a function $\Theta_2$ with compact support such that \[\lvert\Xi\left(v;k_1,k_2,a_1,b_1\right)\lvert\le \Theta_1\left(v\right)\Theta_2\left(a_1,b_1\right).\] This is enough to show the majorization, since $\xi_{v,1}$ is bounded by the remark of \ref{R:bounded}.
In the case where $\KKv$ is non-archimedean we do not need the Dixmier-Malliavin lemma, since we automatically can write $\xi_v$ in integral form by the remark after \ref{L:DLapMinII}.
\end{proof}
In the next step we let $v\in \VV$ be a place and consider integrals of the form
\[Z\left(\xi_v,\Psi,s\right)\coloneqq \int_{\GL'_{2n}\left(\KKv\right)}\xi_v\left(g\right)\Psi\left(g\right)\lvert\det'_v\left( g\right)\lvert^{s-\frac{{2n}d-1}{2}}\dv g\]
for $\xi_v\in \mS_{\psi_v}^{\eta_v}\left(\Pi_v'\right)$ and $\Psi\in S\left(M'_{{2n},{2n}}\right)$.
Since $\xi_v$ is bounded, this integral converges for $\re>>0$, see for example the proof of \cite[Theorem, 3.3]{GodJac}.
\begin{lemma}\label{L:helpls}
The function \[\frac{Z\left(\xi_v,\Psi,s\right)}{L\left(s,\Pi_v'\right)}\] is meromorphic and if $v\in {\VV_f}$ it is an element of $\CC[q_v^{s-\frac{1}{2}},q_v^{\frac{1}{2}-s}].$ Moreover, there exists $\xi_{v,j}\in \mS_{\psi_v}^{\eta_v}\left(\Pi_v'\right)$, $\Psi_j\in S\left(M'_{{2n},{2n}}\right)$ such that we can write the local $L$-factor as a finite sum of the form \[L\left(s,\Pi_v'\right)=\sum_j Z\left(\xi_{v,j},\Psi_j,s\right).\]
\end{lemma}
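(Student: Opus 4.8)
The plan is to transfer the local Godement--Jacquet formalism of \cite[\S3, \S8]{GodJac}---developed there for the units of an arbitrary central simple algebra, hence applicable to $\GL'_{2n}\left(\KKv\right)$ verbatim---to the integrals $Z\left(\xi_v,\Psi,s\right)$, exactly as in the split case in \cite[Proposition 3.1, Proposition 3.2]{FriJac}. The one non-formal point is that $\xi_v$ lies in the Shalika model and is not literally a matrix coefficient of $\pvi'$; but the only property of matrix coefficients used in \cite{GodJac} is a gauge-type majorisation on a Siegel set of $\GL'_{2n}\left(\KKv\right)$, and this is exactly what \Cref{L:bigth} together with the boundedness of $\xi_v$ from \Cref{R:bounded} provides. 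One therefore substitutes $\xi_v$ for a matrix coefficient throughout and the estimates of \cite{GodJac} carry over unchanged.

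Concretely I would proceed as follows. First, the convergence of $Z\left(\xi_v,\Psi,s\right)$ for $\re>>0$, which was already noted above and follows from the boundedness of $\xi_v$ and from $\Psi\in S\left(M'_{2n,2n}\right)$ being Schwartz, as in the proof of \cite[Theorem 3.3]{GodJac}. Second, the local functional equation: using the Fourier transform $\Psi\mapsto\widehat{\Psi}$ on $S\left(M'_{2n,2n}\right)$ attached to $\psi_v$ and the reduced trace pairing, together with the twisted MVW-involution $\jmath$ defined by $\jmath(g)=w_0\,(g^{\ast})^{-1}\,w_0$, where $g\mapsto g^{\ast}$ is the canonical anti-involution of $M'_{2n,2n}$ (transpose composed with the main involution of $\DD$) and $w_0=\bpm 0&\mathrm{1}_n\\ \mathrm{1}_n&0\epm$, so that $\jmath\left(\mS\right)=\mS$, one checks that $\xi_v\mapsto\xi_v\circ\jmath$ maps $\mS_{\psi_v}^{\eta_v}\left(\pvi'\right)$ onto the Shalika model of $\left(\pvi'\right)^\lor$ for the character $\eta_v^{-1}\otimes\psi_v^{-1}$---the division-algebra analogue of the computation underlying \cite{GanTakII}. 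Writing $\xi_v^\lor$ for the image of $\xi_v$, one then obtains a Godement--Jacquet functional equation
\[Z\left(\widehat{\Psi},\xi_v^\lor,1-s\right)=\gamma\left(s,\pvi',\psi_v\right)\,Z\left(\xi_v,\Psi,s\right)\]
in which, by the choice of the shift $\frac{2nd-1}{2}$, the variables $s$ and $1-s$ are interchanged; together with the right half-plane convergence this yields the meromorphic continuation of $Z\left(\xi_v,\Psi,s\right)$ to all of $\CC$, and rationality in $q_v^{s-\frac{1}{2}}$ when $v\in{\VV_f}$.

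Third, the divisibility and the attainment of $L$. For $v\in{\VV_f}$ I would argue as in \cite[\S8]{GodJac}: as $\xi_v$ ranges over $\mS_{\psi_v}^{\eta_v}\left(\pvi'\right)$ and $\Psi$ over $S\left(M'_{2n,2n}\right)$, the functions $Z\left(\xi_v,\Psi,s\right)$ span a non-zero fractional ideal of the principal ideal domain $\CC[q_v^{s-\frac{1}{2}},q_v^{\frac{1}{2}-s}]$; being principal it has a generator, which---matched against the explicit $L$-factor of \cite[Theorem 6.18]{Bad} via the spherical vector in the unramified case---is the normalised $L\left(s,\pvi'\right)$, so that $Z\left(\xi_v,\Psi,s\right)/L\left(s,\pvi'\right)\in\CC[q_v^{s-\frac{1}{2}},q_v^{\frac{1}{2}-s}]$. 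Since that generator is a finite $\CC[q_v^{s-\frac{1}{2}},q_v^{\frac{1}{2}-s}]$-combination of the $Z\left(\xi_{v,j},\Psi_j,s\right)$, and its polynomial coefficients are absorbed into scalars and dilates of the $\Psi_j$, one arrives at $L\left(s,\pvi'\right)=\sum_j Z\left(\xi_{v,j},\Psi_j,s\right)$. At an archimedean $v$ one runs the parallel argument with the moderate-growth and finite-order estimates of \cite[\S8]{GodJac}, concluding that $Z\left(\xi_v,\Psi,s\right)/L\left(s,\pvi'\right)$ is entire---hence, in particular, meromorphic---and that $L$ is again a finite sum.

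The main obstacle I anticipate is the bookkeeping around the functional equation: one has to check at once that the twisted MVW-involution preserves $\mS$, carries the Shalika character to its inverse, and is compatible with the normalising shift $\frac{2nd-1}{2}$ so that the equation genuinely interchanges $s$ and $1-s$; and one must verify scrupulously that every use of matrix coefficients in \cite{GodJac} really only goes through the majorisation, so that substituting $\xi_v$ is legitimate. Neither is deep, and the archimedean continuation, as usual, requires only the standard care with growth estimates.
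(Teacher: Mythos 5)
Your overall plan---reduce to the local Godement--Jacquet theory---is the right one, but the step on which everything hinges is missing. It is not true that the only property of matrix coefficients used in \cite{GodJac} is a majorisation on a Siegel set: the identification of the greatest common divisor of the zeta integrals with $L\left(s,\pvi'\right)$, and the local functional equation with its $\gamma$-factor, rest on the full two-sided $\GL'_{2n}\left(\KKv\right)\times\GL'_{2n}\left(\KKv\right)$-equivariance of the space of matrix coefficients (the $L$-factor is \emph{defined} as the generator of the fractional ideal spanned by matrix-coefficient integrals). A Shalika function $\xi_v$ transforms on the left only under $\mS\left(\KKv\right)$, so you cannot simply substitute it into \cite{GodJac}; in particular your proposed functional equation $Z\left(\widehat{\Psi},\xi_v^\lor,1-s\right)=\gamma\left(s,\pvi',\psi_v\right)Z\left(\xi_v,\Psi,s\right)$ for Shalika-model integrals is itself an unproved (and, as it turns out, unnecessary) assertion, not a formal consequence of an MVW-type involution preserving $\mS$. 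What is needed, and what the paper's proof supplies, is an argument that the $\CC$-span of the $Z\left(\xi_v,\Psi,s\right)$ coincides with the Godement--Jacquet ideal $I(\pvi')$ spanned by the $Z\left(f,\Psi,s\right)$ with $f$ a smooth matrix coefficient: one inclusion by averaging over $K_v'$ against an idempotent $e$, since $\xi\mapsto\int_{K_v'}\xi\left(k^{-1}\right)e\left(k\right)\dv k$ is a smooth functional and hence $g\mapsto\int_{K_v'}\xi_v\left(k^{-1}g\right)e\left(k\right)\dv k$ is an honest matrix coefficient, with the averaging then absorbed into $\Psi$; the converse by irreducibility, writing any matrix coefficient as a finite sum of right translates of such averaged functions and absorbing the translates into $\Psi$ by a change of variables; at archimedean $v$ the idempotents are replaced by Dixmier--Malliavin and the $(\lig'_v,K_v')$-action. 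Once the spans agree, both claims of the lemma are precisely \cite[Theorem 3.3]{GodJac} and its archimedean counterpart, as in \cite{FriJac}.

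Even granting that the $Z\left(\xi_v,\Psi,s\right)$ span a fractional ideal of $\CC[q_v^{s-\frac{1}{2}},q_v^{\frac{1}{2}-s}]$, your identification of its generator with $L\left(s,\pvi'\right)$ does not go through: the spherical computation only shows that $L\left(s,\pvi'\right)$ lies in the ideal at places where $\pvi'$ and $\psi_v$ are unramified, hence only that the generator divides $L\left(s,\pvi'\right)$ there; it says nothing at ramified or archimedean places, and at no place does it yield the divisibility $Z\left(\xi_v,\Psi,s\right)/L\left(s,\pvi'\right)\in\CC[q_v^{s-\frac{1}{2}},q_v^{\frac{1}{2}-s}]$, which is the first claim of the lemma. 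Both the divisibility and the representation $L\left(s,\pvi'\right)=\sum_j Z\left(\xi_{v,j},\Psi_j,s\right)$ come simultaneously from the equality of spans with $I(\pvi')$, which is why that reduction cannot be skipped.
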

\begin{proof}
We first assume that $\KKv$ is non-archimedean. 
Let $I(\Pi_v')$ be the $\CC$ vector-space spanned by the integrals of the form
\[Z\left(f,\Psi,s\right)\coloneqq \int_{\GL'_{2n}\left(\KKv\right)}\Psi\left(g\right)f\left(g\right)\lvert\det'_v\left( g\right)\lvert^{s-\frac{{2n}d-1}{2}}\dv g,\]
where $f$ is a smooth matrix coefficient of $\Pi_v'$ and $\Psi\in S\left(M'_{{2n},{2n}}\right)$.
To be more precise, the integrals converge for $\re>>0$ and admit a meromorphic continuation. By 
\cite[Theorem 3.3]{GodJac}
$I(\Pi_v')$ is a $\CC[q_v^{s-\frac{1}{2}},q_v^{\frac{1}{2}-s}]$-ideal in $\CC(q_v^{s-\frac{1}{2}})$ generated by $L(s,\Pi_v')$.

We will now show that the $\CC$-vector space spanned by the $Z(\xi_v,\Psi,s)$ is $I(\Pi_v')$, which consequently will show the claims of the lemma.
To do so we introduce the space $\mathscr{U}$ consisting of smooth matrix coefficients of the form \[g\mapsto \int_{K_v'} \xi_v\left(k^{-1}g\right)e\left(x\right)\dv k,\,g\in \GL(\KKv),\, \xi_v\in\mS_{\psi_v}^{\eta_v}\left(\Pi_v'\right),\]
where $e$ is an idempotent under the usual convolution product on the functions supported on $K_v'$.
Given $\xi_v$ and $\Psi$, we define \[g\mapsto f\left(g\right)\coloneqq \int_{K_v'} \xi_v\left(x^{-1}g\right)e\left(k\right)\dv k,\, g\in \GL(\KKv),\] which is a smooth matrix coefficient of $\Pi_v'$ and hence, for such $f$ \[Z\left(\xi_v,\Psi,s\right)=\int_{\GL'_{2n}\left(\KKv\right)}f\left(g\right)\Psi\left(g\right)\lvert\det'_v\left( g\right)\lvert^{s-\frac{{2n}d-1}{2}}\dv g\in \mathscr{U}.\]
On the other hand, for every $f\in\mathscr{U}$ and Schwartz-function $\Psi$ there exists $\xi_v\in  \mS_{\psi_v}^{\eta_v}\left(\Pi_v'\right), \Psi'\in S\left(M'_{{2n},{2n}}\right)$ such that $Z\left(f,\Psi,s\right)=Z\left(\xi_v,\Psi',s\right)$. Indeed, 
\[Z(f,\Psi,s)=\int_{\GL'_{2n}\left(\KKv\right)}\int_{K_v'} \xi_v\left(k^{-1}g\right)e\left(k\right)\Psi\left(g\right)\lvert\det'_v\left( g\right)\lvert^{s-\frac{{2n}d-1}{2}}\dv k\dv g=\]
\[=\int_{\GL'_{2n}\left(\KKv\right)} \xi_v\left(g\right)\int_{K_v'}\left(e\left(k\right)\Psi\left(kg\right)\right\lvert\det'_v\left( g\right)\lvert^{s-\frac{{2n}d-1}{2}}\dv k\dv g=Z(\xi_v,\Psi',s),\]
where $\Psi'(g)\coloneqq \int_{K_v'}e\left(k\right)\Psi\left(kg\right)\dv k$.
This shows that the space spanned by the $Z(\xi_v,\Psi, s)$ is the space spanned by $\mathscr{U}$. It therefore suffices to show that the span of $\mathscr{U}$ is $I(\Pi_v')$. But since $\mathscr{U}$ is closed under right translations under $\GL(\KKv)$ and $\Pi_v'$ is irreducible, any smooth matrix coefficient $f$ of $\Pi_v'$ can be written as a finite sum \[f\left(g\right)=\sum_i f_i\left(g_ig\right)\] for some suitable $g_i\in \GL(\KKv)$ and $f_i\in \mathscr{U}$. Therefore, the final claim follows because then
\[Z(f,\Psi,s)=\int_{\GL\left(\KKv\right)}\sum_{i}f_{i}\left(g_ig\right)\Psi\left(g\right)\lvert\det g\lvert^{s-\frac{{2n}d-1}{2}}\dv g=\]\[=\int_{\GL\left(\KKv\right)}\sum_{i}f_{i}\left(gg_i\right)\Psi\left(g_i^{-1}gg_i\right)\lvert\det g\lvert^{s-\frac{{2n}d-1}{2}}\dv g,\] where the last expression is of the desired form.
In the case where $\KKv$ is archimedean, we argue as above, replacing the action of $\GL'_{2n}\left(\KKv\right)$ by the action of the Lie algebra and $K_v'$ and using the Dixmier-Malliavin lemma.
\end{proof}
We return now to the proof of \ref{T:Aconnl} and assume from now on that $v$ is archimedean, since the non-archimedean case can be dealt with analogously. We start with the second assertion.
We will only prove the archimedean case, since the non-archimedean case follows analogously. Let $\SL_{2n}'\coloneqq \{g\in \GL_{2n}':\det'_v\left(g\right)=1\}$ and $K_0\coloneqq \SL_{2n}'(\KKv)\cap K_v'$.
Using the Iwasawa decomposition we can write
\[ Z\left(\xi_v,\Psi,s\right)=\int_{H_n'\left(\KKv\right)\times U'_{(n,n)}\left(\KKv\right)\times K_0}\xi_v\left(\bpm a&x\\0&b\epm k\right)\Psi\left(\bpm a&x\\0&b\epm k\right)\]\[\lvert\det'_v\left(a\right)\lvert^{s-\frac{1}{2}}\lvert\det'_v\left( b\right)\lvert^{s-\frac{{n}d-1}{2}}\dv a\dv b\dv x\dv k.\]
We introduce the function
\begin{equation}\label{E:Xi}\Xi\left(u,t,w;k\right)\coloneqq \int_{H_n'\left(\KKv\right)}\Psi\left(\bpm x&y\\0&w\epm k\right)\left(\mathrm{Tr}'\left(yt\right)-\mathrm{Tr}'\left(xu\right)\right)\dv x\dv y.\end{equation}

If we put issues of convergence aside for a moment, the Fourier inversion formula and a change of variables imply that
\begin{equation}\label{E:strange}Z\left(\xi_v,f,s\right)=\int_{\SL_{2n}'\left(\KKv\right)\times \GL'_n\left(\KKv\right)}\xi_v\left(\bpm a&0\\0&\mathrm{1}_n\epm x\right)\lvert\det'_v\left( a\right)\lvert^{s-\frac{1}{2}} \ddd\mu_\Psi\left(x\right)\dv a,\end{equation}
where we define the measure $\mu_\Psi$ on $\SL_{2n}'\left(\KKv\right)$ by
\[\int_{\SL_{2n}'\left(\KKv\right)}\Psi\left(x\right)\ddd \mu_\Psi\left(x\right)\coloneq\int_{\GL'_n\left(\KKv\right)\times U'_{(n,n)}\left(\KKv\right)\times K_v'}\Xi\left(u,b,b^{-1};k\right)\lvert\det'_v\left( b\right)\lvert^{nd}\]\[f\left(\bpm b^{-1}&0\\0&\mathrm{1}_n\epm \bpm \mathrm{1}_n &u\\0&\mathrm{1}_n\epm \bpm \mathrm{1}_n&0\\0&b\epm k\right)\dv b\dv u\dv k.\]
Let us now argue how to put the issues of convergence to rest in the integral of (\ref{E:strange}).
Following \cite{FriJac} we consider the unimodular subgroup $Q$ of $\GL'_{2n}$ consisting of matrices of the form
\[Q=\left\{\bpm b^{-1}&u\\0&b\epm:b\in \GL_n',\, u\in M_{n,n}'\right\}.\]
Thus, $\ddd\mu_\Psi=\mu_\Psi\left(q,k\right)\dv q\,\dv k$, where $\mu_\Psi$ is a smooth function on $Q(\KKv)\times K_v'$ and it and its derivatives are rapidly decreasing, \emph{i.e.} \[\lvert\lvert q\lvert\lvert^N\lvert\mu_\Psi(q,k)\lvert\]
is bounded for all natural numbers $N$, where $\lvert\lvert q\lvert\lvert$ denotes the usual height of $q$.
Recall the majorization $\alpha$ of the beginning of the proof and the remark before \ref{T:Aglobal} and that we obtained from \ref{L:bigth} that
\[\int_{\SL_{2n}'\left(\KKv\right)\times \GL'_n\left(\KKv\right)}\left\lvert\xi_v\left(\bpm a&0\\0&\mathrm{1}_n\epm x\right)\lvert\det'_v\left( a\right)\lvert^{s-\frac{1}{2}}\right\lvert \ddd\mu_\Psi\left(x\right)\dv a\le \]
\[\le\int_{\GL_n'(\KKv)\times Q(\KKv)}\Theta(b^{-1}ab^{-1})\lvert \det'_v(a)\lvert^{\re-\frac{1}{2}}\lvert\mu_\Psi(q,k)\lvert \dv  q\dv k\dv  a\] for a suitable Schwartz-function $\Theta$.
After changing $a\mapsto bab$, we can bound this integral for $\re>>0$ by a multiple of 
\[\int_{Q(\KKv)}\lvert \det'_v(b)\lvert^{2\re-1}\lvert\mu_\Psi(q,k)\lvert \dv  q\dv k\dv  a,\] which converges since $\mu_\Psi$ is rapidly decreasing.
Thus, we justified the rewriting of the integral (\ref{E:strange}) and showed that the operator
\begin{equation}\label{E:preservation}\int_{Q\times K'_v}\Pi_v'\left(qk\right)\mu_\varphi\left(q,k\right)\dv  q\dv k\end{equation}preserves the local Shalika model, since a priori the operator does not preserve smoothness.

By collecting the results so far, we can prove the following.
By \ref{L:helpls} we find $\xi_{v,j}\in \mS_{\psi_v}^{\eta_v}(\Pi_v'),\, \Psi_j\in S\left(M'_{{2n},{2n}}\right)$ such that
\[L\left(s,\Pi_v'\right)\stackrel{(\ref{L:helpls})}{=}\sum_j\int_{\GL'_{2n}\left(\KKv\right)}\xi_{v,j}\left(g\right)\Psi_j\left(g\right)\lvert\det'_v\left(g\right)\lvert^{s-{nd-\frac{1}{ 2}}}\dv g\stackrel{(\ref{E:strange})}{=}\]
\[=\sum_j\int_{\GL'_n\left(\KKv\right)}\xi_{v,j}'\left(\bpm g_1&0\\0&\mathrm{1}_n\epm\right)\lvert\det'_v\left( g\right)\lvert^{s-\frac{1}{2}}\dv g_1,\]
where \[\xi_{v,j}'\left(g\right)=\int_{\SL_{2n}'\left(\KKv\right)}\xi_{v,j}\left(gx\right)\mu_{\Psi_j}\left(x\right)\dv x.\]
Since we showed that (\ref{E:preservation}) preserves $\mS_{\psi_v}^{\eta_v}(\Pi_v')$, we have $\xi_{v,j}'\in \mS_{\psi_v}^{\eta_v}(\Pi_v')$ and therefore we proved the second claim of \ref{T:Aconnl}.

Next, we show the first claim of \ref{T:Aconnl}.
We apply the Dixmier-Malliavin lemma to $Q\times K_v'$ and write
\[\xi_v\left(g\right)=\sum_j\int_{\GL'_n\left(\KKv\right)\times U'_{(n,n)}\left(\KKv\right)\times K_v'}\xi_{v,j}\left(g\bpm b^{-1}&0\\0&\mathrm{1}_n\epm \bpm \mathrm{1}_n&u\\0&\mathrm{1}_n\epm \bpm \mathrm{1}_n&0\\0&b\epm k\right)\]\[\Gamma_j\left(u,b,k\right)\lvert\det'_v\left(b\right)\lvert^{nd}\dv b\,\dv u\,\dv k,\]
where $\Gamma_j$ are smooth functions with compact support on $\GL'_n\left(\KKv\right)\times U'_{(n,n)}\left(\KKv\right)\times K_v'$. Let $\Lambda_j$ be the projection of the support of $\Gamma_j$ to 
$U'_{(n,n)}\left(\KKv\right)$ and
let $\Psi\in S\left(M'_{n,n}\right)$ be such that $\Psi\left(b^{-1}\right)=1$ for $b\in \bigcup_j \Lambda_j$, where we identify $U_{(n,n)}'$ with $M_{n,n}'$.
Define
\[\Gamma_j'\left(\bpm a&x\\0&b\epm; k\right)\coloneqq \int_{U'_{(n,n)}\left(\KKv\right)\times U'_{(n,n)}\left(\KKv\right)}\Gamma_j\left(u,b,k\right)\Psi\left(v\right)\psi\left(\mathrm{Tr}\left(xu-yv\right)\right)\dv u\dv v.\]
Then $\zeta_v\left(s,\xi_v\right)$ can be written as
\begin{equation}\label{E:strangeu}\begin{gathered}
    \sum_j\int_{H_n'\left(\KKv\right)\times U'_{(n,n)}\left(\KKv\right)\times K'_v}\xi_{v,j}\left(\bpm a&x\\0&b\epm k\right)\Gamma'_j\left(\bpm a&x\\0&b\epm; k\right)\\\lvert\det'_v\left( a\right)\lvert^{s+nd-\frac{1}{2}}\lvert\lvert\det'_v\left( b\right)\lvert^{s+nd-\frac{1}{2}}\dv a\dv b\dv x\dv k.\end{gathered}
\end{equation}
Let \[\Omega_1\coloneqq \{\left(a,b\right)\in M_{n,{2n}}': \left(a,b\right):\DD^{2n}\rightarrow \DD^n\text{ surjective}\}.\] The group $\GL'_n(\KKv)$ acts from the left and the group $K_v'$ from the right on $\Omega_1(\KKv)$. The resulting action of $\GL'_n(\KKv)\times K_v'$ is transitive. The stabilizer of $\left(0,\mathrm{1}_n\right)$ is the group $\left(k_2^{-1},k\right)$, where \[k=\bpm k_1&0\\0&k_2\epm\in K_v'\cap P_{(n,n)}'(\KKv)\] for some $k_1$. Let  
\[ \Omega\coloneq\left\{\bpm a&b\\c&d\epm\in M_{{2n},{2n}}': \left(c,d\right)\in \Omega_1\right\}.\] Let $\mathscr{S}\left(\Omega\right)$ be the space of smooth functions $\varphi\colon \Omega(\KKv)\ra \CC$ such that 
\begin{enumerate}
    \item $\lvert a\lvert^{2n}\lvert b\lvert^{2n}\varphi(g),\, g=\bpm a&b\\c& d\epm$ is bounded for all $n\in \mathbb{ZZ}$,
    \item The projection of the support of $\phi$ to $\Omega_1(\KKv)$ is compact,
    \item If $D$ is a differential operator which commutes with additive changes in $\left(a,b\right)$ then $D\varphi\in \mathscr{S}\left(\Omega\right)$.
\end{enumerate}
Analogously we define the space
\[\Omega_0\coloneqq \left\{
\bpm a&b\\0&d\epm\in M_{{2n},{2n}}':\det'_v\left(d\right)\neq 0\right\}\]
and $\mathscr{S} \left(\Omega_0\times K'_v\right)$. The natural map
\[r\colon \Omega_0(\KKv)\times K_v'\rightarrow \Omega(\KKv),\, \left(p,k\right)\mapsto pk\]
is surjective, proper, and a submersion and the inverse image of $pk$ is \[r^{-1}(pk)=\{\left(pk'^{-1},k'k\right):k'\in K_v'\cap P_{(n,n)}'\left(\KKv\right)\}.\]
\begin{lemma}\label{L:last}
Let $\varphi\in \mathscr{S}\left(\Omega_0\times K'\right)$. Then
\[\varphi_*\left(pk\right)=\int_{K'_v\cap P_{(n,n)}(\KKv)}\varphi\left(pk'^{-1},k'k\right)\dv k'\]
belongs to $\mathscr{S}\left(\Omega\right)$.
\end{lemma}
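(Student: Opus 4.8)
The plan is to verify directly that the fibre integral $\varphi_*$ satisfies the three defining properties of $\mathscr{S}(\Omega)$, reproducing the corresponding step of \cite{FriJac}; since all the structural facts used there (the Iwasawa-type decomposition responsible for the surjectivity of $r$, the description of its fibres, and its properness) survive the passage from $\GL$ to $\GL'$, no new idea is required. Set $K_0 = K_v'\cap P_{(n,n)}'(\KKv)$, a compact group acting on $\Omega_0(\KKv)\times K_v'$ by $(p,k)\cdot k' = (p(k')^{-1},k'k)$. The fibres of $r$ are precisely the $K_0$-orbits, and $\varphi_*$ is by construction the integral of $\varphi$ over such an orbit against the normalized Haar measure on $K_0$; in particular $\varphi_*\circ r$ is the $K_0$-invariant function $(p,k)\mapsto\int_{K_0}\varphi((p,k)\cdot k')\,\dv k'$ on $\Omega_0(\KKv)\times K_v'$.

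\emph{Smoothness and stability under differential operators.} The function $\varphi_*\circ r$ is smooth because one may differentiate under the integral over the compact set $K_0$; as $r$ is a surjective submersion, $\varphi_*$ is then smooth on $\Omega(\KKv)$. For property (3), one checks directly from the block forms of $p$ and $k'$ that $K_0$ affects the upper block $(a,b)$ only through an invertible substitution that is linear in $(a,b)$ and depends smoothly on $k'$; hence a differential operator $D$ on $\Omega$ commuting with additive changes in $(a,b)$ lifts through $r$ to a $K_0$-equivariant family $\widetilde D$ of operators on $\Omega_0(\KKv)\times K_v'$ that are again translation invariant in the upper block, so that $D\varphi_* = (\widetilde D\varphi)_*$. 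Since $\widetilde D\varphi\in\mathscr{S}(\Omega_0\times K')$ by property (3) for $\varphi$, one concludes by applying the remaining parts of the lemma to $\widetilde D\varphi$.

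\emph{Support.} One has $\{\varphi_*\neq 0\}\subseteq r(\mathrm{supp}\,\varphi)$, and since $r$ is proper, hence closed, also $\mathrm{supp}(\varphi_*)\subseteq r(\mathrm{supp}\,\varphi)$. The bottom row of $r(p,k)=pk$ equals $(0,d_0)\,k$, a continuous function of the pair $(d_0,k)$ alone, where $d_0$ denotes the bottom-right block of $p$. As the projection of $\mathrm{supp}\,\varphi$ to the $(d_0,k)$-variables lies in a compact subset of $\GL_n'(\KKv)\times K_v'$ (the analogue for $\Omega_0$ of property (2)), its image under $(d_0,k)\mapsto(0,d_0)k$ is a compact subset of $\Omega_1(\KKv)$ dominating the projection of $\mathrm{supp}(\varphi_*)$, which is property (2) for $\varphi_*$.

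\emph{Rapid decrease, the main point.} The upper block of $p(k')^{-1}$ is obtained from that of $r(p,k)=pk$ by first passing from $pk$ back to $p$ (right multiplication by $k^{-1}\in K_v'$) and then twisting by $(k')^{-1}$, i.e. by right multiplication by elements of the compact group $K_v'$ only. Consequently the norm that governs property (1) is comparable, up to multiplicative constants uniform in $k\in K_v'$ and $k'\in K_0$, for $p(k')^{-1}$ and for $pk$. Feeding this uniform comparison into $\varphi_*(pk)=\int_{K_0}\varphi(p(k')^{-1},k'k)\,\dv k'$ and using the rapid decrease of $\varphi$ (the integral remaining finite because $K_0$ is compact) gives the required bound for $\varphi_*$. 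This uniformity over the compact fibre is the only genuinely quantitative step, and it is exactly where compactness of $K_v'$ is indispensable; the remainder merely expresses the fact that fibrewise integration along a proper submersion with compact fibres preserves Schwartz-type conditions, as in \cite{FriJac}. I expect this decrease estimate to be the one point needing real care.
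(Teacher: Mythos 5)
Your verifications of properties (1) and (2) (and the smoothness of $\varphi_*$) are fine, but these are the parts the paper dismisses as easy; the actual content of the paper's proof is property (3), and that is exactly where your argument has a gap. You claim that any differential operator $D$ on $\Omega$ commuting with additive changes in $(a,b)$ lifts through $r$ to an operator $\widetilde D$ on $\Omega_0(\KKv)\times K_v'$ which is \emph{again translation invariant in the upper block}, so that property (3) for $\varphi$ can be invoked directly. This is false for the operators that differentiate in the $\Omega_1$-directions (the bottom row $(c,d)$) coming from the right action of $K_v'$: writing a point of $\Omega$ as $pk$ with upper block $(a,b)=(x,y)k$ and bottom row $(0,m)k$, the vector field $Y\in\mathfrak{k}_v'$ acting on $(c,d)$ alone pulls back to the difference of a derivative in the $k$-variable (right translation of the whole matrix) and a correction term which is a differential operator in $(x,y)$ whose coefficients are \emph{linear in} $(x,y)$ (essentially $(x,y)\,kYk^{-1}$). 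No choice of lift removes this defect, since $Y$ need not lie in the Lie algebra of $K_v'\cap P_{(n,n)}'(\KKv)$. Hence $\widetilde D$ does not commute with additive changes in the upper block, and "property (3) for $\varphi$" cannot be applied as you do.

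This is precisely why the paper's proof proceeds by a case analysis: constant-coefficient operators in $(a,b)$, left-invariant vector fields on $\GL_n'(\KKv)$ acting on the bottom row, and finally the $\mathfrak{k}_v'$-directions, which are written as a difference $D_1-D_2$ with $D_1^*$ a derivative in $k$ and $D_2^*$ a differential operator in the upper-block variables with polynomial coefficients; one then has to argue separately that polynomial-coefficient operators in $(x,y)$ still preserve $\mathscr{S}(\Omega_0\times K_v')$, which works because of the rapid decrease in the upper block (and is compatible with iterating property (3) via the Leibniz rule). Your proposal skips this correction entirely, and it is the one genuinely delicate point of the lemma — not the decay estimate for $\varphi_*$, which, as you showed and as the paper takes for granted, follows at once from compactness of $K_v'$ and properness of $r$.
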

Once the lemma is established, the remaining claims of \ref{T:Aconnl} can be shown exactly like in \cite{FriJac}. 
\begin{proof}[Proof of \ref{L:last}]
It is easy to check that $\varphi_*$ is well defined and that the first two properties are satisfied, so it remains to check the third. Let $D$ be a differential operator of order $1$ on $\Omega$, which is independent of $\left(a,b\right)$. Since $r$ is submersive, there exists a pullback differential operator $D^*$ on $\Omega_0(\KKv)\times K'_v$ such that $\left(D^*\phi\right)_*=D\phi_*$, hence, it is enough to show that $D^*$ leaves $\mathscr{S}\left(\Omega_0\times K'\right)$ invariant. 
Assume that $D$ is an operator in $\left(a,b\right)$, hence, without loss of generality it acts on a function $\varphi'$ by
\[\frac{d}{dt}\restr{\varphi'\left(\bpm a+tX&b+tX\\ c&d\epm\right)}{t=0}\]
at a matrix $X$. Then we can choose $D^*$ such that it acts on a function $\varphi$ by
\[\frac{d}{dt}\restr{\varphi\left(\bpm x+tXk^{-1}&y+tYk^{-1}\\ 0&m\epm;k\right)}{t=0},\]
which is a differential operator in the variables $x,y$, whose coefficients depend only on $k$. Therefore, the obtained function stays in $\mathscr{S}\left(\Omega_0\times K_v'\right)$.
The second possibility is that $D$ is a differential operator on $\Omega_1$. Since any such operator is the linear combination of operators defined by invariant vector fields on $\GL'_n(\KKv)$ and $K'_v$. First assume that $D$ acts on $\varphi'$ by
\[\frac{d}{dt}\restr{\varphi'\bpm a&b\\ \exp\left(tX\right)c&\exp\left(tX\right)d\epm}{t=0}\]
where $X$ is an element of the Lie algebra of $\GL'_n(\KKv)$.
Then we can choose again $D^*$ such that it acts on $\varphi$ by
\[\frac{d}{dt}\restr{\varphi\left(\bpm x&y\\ 0&\exp\left(tX\right)m\epm ;k\right)}{t=0},\]
which clearly leaves $\mathscr{S}\left(\Omega_0\times K'\right)$ invariant.
Finally, for an element $Y\in\mathfrak{k}_v'$, the value of $D$ on $\varphi'$ is the difference, of the two operators $D_1$ and $D_2$ applied to $\varphi'$, given by
\[\frac{d}{dt}\restr{\varphi'\left( \bpm a&b\\ c&d\epm\exp\left(tY\right) \right)}{t=0}\text{ and }\frac{d}{dt}\restr{\varphi'\bpm a\exp\left(tY\right)&b\exp\left(tY\right)\\ c&d\epm}{t=0}.\]
We can then choose $D_1^*$ to act as $\frac{d}{dt}\restr{\phi\left(p,k\exp\left(tY\right)\right)}{t=0},$ which preserves $\mathscr{S}\left(\Omega_0\times K'\right)$. By the first case we considered, $D_2*$ does so too, since it is a differential operator in $\left(a,b\right)$ with polynomial coefficients.
\end{proof}
The last step in the proof of \ref{T:Aconnl} concerns the special case of $\Pi_v'$ and $\psi_v$ being unramified.
Thus, assume $\Pi_v'$ is unramified and let $\xi_{v,0}$ be the corresponding vector in the Shalika model, \emph{i.e.} the one fixed by $\GL_{2d_vn}\left(\OO_v'\right)$. Then, using \cite[Lemma 6.10]{GodJac}, we know that
\[L\left(s,\Pi_v'\right)=\int_{\GL_{2d_vn}\left(\OO_v'\right)}f\left(g\right)\lvert\det'_v\left( g\right)\lvert^{s-{nd-\frac{1}{ 2}}}\dv g,\]
where $f$ is a spherical function attached to $\Pi_v'$, \emph{i.e.} the matrix coefficient of $\Pi_v'$ is of the form $g\mapsto v_0^\lor(\Pi_v'(g)v_0)$, where $v_0$ and $v_0^\lor$ are non-zero vectors of $\Pi_v'$ and $\Pi_v'^\lor$ fixed by by the maximal open compact subgroup $\GL_{2d_vn}\left(\OO_v'\right)$. Let $\Psi_v$ be the characteristic function of $\GL_{2d_vn}\left(\OO_v'\right)$.
Then following the proof of \ref{L:helpls} shows that \[L\left(s,\Pi_v'\right)=\int_{\GL'_{2n}\left(\KKv\right)}\xi_{v,0}\left(g\right)\Psi_v\left(g\right)\lvert\det'_v\left( g\right)\lvert^{s-{nd-\frac{1}{ 2}}}\dv g\]
Recall, that $\zeta\left(s,\xi_{v,0}\right)$ can by (\ref{E:strange}) also be written as
\[\zeta_v\left(s,\xi_{v,0}\right)=\]\[=\int_{H_n'\left(\KKv\right)\times U'_{(n,n)}\left(\KKv\right)\times K_v'}\xi_{v,0}\left(\bpm a&0\\0&\mathrm{1}_n\epm \bpm b^{-1}&0\\0&\mathrm{1}_n\epm\bpm \mathrm{1}_n&u\\0&\mathrm{1}_n\epm\bpm \mathrm{1}_n&0\\0&b\epm k \right)\]\begin{equation}\label{E:last}\Xi\left(u,b,b^{-1};k\right)\lvert\det'_v\left( a\right)\lvert^{s-\frac{1}{2}}\lvert\det'_v\left( b\right)\lvert^{nd}\dv a\dv b\dv u\dv k,\end{equation}
where we plugged in the definition of $\mu_{\Psi_v}$ and $\Xi$ is defined by (\ref{E:Xi}).
It is easy to see that $\Xi$ vanishes unless $u$, $b$ and $b^{-1}$ have entries in $\OO_v'$, since the conductor of $\psi_v$ is $\OO_v$. Therefore, (\ref{E:last}) equals to 
\[\int_{\GL'_{2n}\left(\KKv\right)}\xi_{v,0}\left(g\right)\Psi_v\left(g\right)\lvert\det'_v\left(g\right)\lvert^{s-{nd-\frac{1}{ 2}}}\dv g,\]
which proves the claim.
\end{proof}


\begin{thebibliography}{0}
\bibitem{ZelII} A. V. Zelevinsky, Induced representations of reductive $p$-adic groups. {II}. {O}n irreducible representations of $\mathrm{GL}(n)$, {\it Annales scientifiques de l'École Normale Supérieure} {\bf 13}(2) (1980) 165--210.

\bibitem{GanRag2012} Wee Teck Gan and A. Raghuram, Arithmeticity for periods of automorphic forms, {\it Automorphic representations and {$L$}-functions} {\bf 22} (2013) 187--229.


\bibitem{VogZuc} David A. Vogan Jr. and Gregg J. Zuckerman, Unitary representations with nonzero cohomology, {\it Compositio Math.} {\bf 53}(1) (1984) 51--90, \url{http://www.numdam.org/item?id=CM_1984__53_1_51_0}.

\bibitem{JiaSunTia} Dihua Jiang, Binyong Sun and Fangyang Tian, Period Relations for Standard $L$-functions of Symplectic Type, {\it arXiv e-prints} (2019), \url{https://doi.org/10.48550/arXiv.1909.03476}.

\bibitem{Nie} Chufeng Nien, Uniqueness of {S}halika models, {\it Canad. J. Math.} {\bf 61}(6) (2009) 1325--1340, \url{https://doi.org/10.4153/CJM-2009-062-1}.

\bibitem{LapMao} Erez M. Lapid and Zhengyu Mao, Local {R}ankin-{S}elberg integrals for {S}peh representations, {\it Compos. Math.} {\bf 156}(5) (2020) 908--945, \url{https://doi.org/10.1112/s0010437x2000706x}.

\bibitem{JacSha} Herv\'{e} Jacquet and Joseph Shalika, Exterior square {$L$}-functions, {\it Automorphic forms, {S}himura varieties, and {$L$}-functions, {V}ol. {II} ({A}nn {A}rbor, {MI}, 1988)} {\bf 11} (1990) 143--226.

\bibitem{GanTakII} Wee Teck Gan and Shuichiro Takeda, On {S}halika periods and a theorem of {J}acquet-{M}artin, {\it Amer. J. Math.} {\bf 132}(2) (2010) 475--528, \url{https://doi.org/10.1353/ajm.0.0109}.


\bibitem{PlaRap} Vladimir Platonov and Andrei Rapinchuk, Algebraic groups and number theory, {\it Pure and Applied Mathematics} {\bf 139} (1994).

\bibitem{SunII} Binyong Sun, Cohomologically induced distinguished representations and cohomological test vectors, {\it Duke Math. J.} {\bf 168}(1) (2019) 85--126, \url{https://doi.org/10.1215/00127094-2018-0044}.

\bibitem{God} Roger Godement, Notes on {J}acquet-{L}anglands' theory, {\it CTM. Classical Topics in Mathematics} {\bf 8} (2018).

\bibitem{BorII} Amand Borel, Some finiteness properties of adele groups over number fields, {\it Publications Math\'ematiques de l'IH\'ES} {\bf 16} (1963) 5--30, \url{http://www.numdam.org/item/PMIHES_1963__16__5_0/}.

\bibitem{RagSha} A. Raghuram and Freydoon Shahidi, On certain period relations for cusp forms on {${\rm GL}_n$}, {\it Int. Math. Res. Not. IMRN} (2008), \url{https://doi.org/10.1093/imrn/rnn077}.

\bibitem{BusFro} Colin J. Bushnell and Albrecht Fr\"{o}hlich, Gauss sums and {$p$}-adic division algebras, {\it Lecture Notes in Mathematics} {\bf 987} (1983).

\bibitem{WeiB} Andr\'{e} Weil, Basic number theory, {\it Die Grundlehren der mathematischen Wissenschaften} {\bf Band 144} (1974).

\bibitem{Clo} Laurent Clozel, Repr\'esentations galoisiennes associ\'ees aux repr\'esentations automorphes autoduales de {${\rm GL}(n)$}, {\it Inst. Hautes \'Etudes Sci. Publ. Math.} {\bf 73} (1991) 97--145, \url{http://www.numdam.org/item?id=PMIHES_1991__73__97_0}.



\bibitem{GodJac} Roger Godement and Herv\'{e} Jacquet, Zeta functions of simple algebras, {\it Lecture Notes in Mathematics} {\bf Vol. 260} (1972).

\bibitem{Pia} I. I. Piatetski-Shapiro, Multiplicity one theorems, {\it Automorphic forms, representations and {$L$}-functions ({P}roc. {S}ympos. {P}ure {M}ath., {O}regon {S}tate {U}niv., {C}orvallis, {O}re., 1977), {P}art 1} {\bf XXXIII} (1979) 209--212.

\bibitem{JacShaII} H. Jacquet and J. A. Shalika, On {E}uler products and the classification of automorphic forms. {II}, {\it Amer. J. Math.} {\bf 103}(4) (1981) 777--815, \url{https://doi.org/10.2307/2374050}.

\bibitem{JacShaI} H. Jacquet and J. A. Shalika, On {E}uler products and the classification of automorphic representations. {I}, {\it Amer. J. Math.} {\bf 103}(3) (1981) 499--558, \url{https://doi.org/10.2307/2374103}.

\bibitem{Fla} D. Flath, Decomposition of representations into tensor products, {\it Automorphic forms, representations and {$L$}-functions ({P}roc. {S}ympos. {P}ure {M}ath., {O}regon {S}tate {U}niv., {C}orvallis, {O}re., 1977), {P}art 1} {\bf XXXIII} (1979) 179--183.


\bibitem{Gro} Harald Grobner, Smooth-automorphic forms and smooth-automorphic representations, {\it Series on Number Theory and its Applications} {\bf 17} (2023).



\bibitem{LanV} Robert P. Langlands, On the notion of an automorphic representation. A supplement to the preceding paper, {\it Proc. Sympos. Pure Math} {\bf 33}(part I) (1979) 203--207.

\bibitem{CloI} L. Clozel, Motifs et Formes Automorphes: Applications du Principe de Fonctorialité, {\it Automorphic forms, Shimura varieties, and L-functions} {\bf Vol. I, Perspect. Math., vol. 10, eds. L. Clozel and J. S. Milne} (1988) 77--159.

\bibitem{WalI} J.-L. Waldspurger, Quelques propri\'{e}t\'{e}s arithm\'{e}tiques de certaines formes automorphes sur {${\rm GL}(2)$}, {\it Compositio Math.} {\bf 54}(2) (1985) 121--171, \url{http://www.numdam.org/item?id=CM_1985__54_2_121_0}.

\bibitem{BorWal} Armand Borel and Nolan R. Wallach, Continuous cohomology, discrete subgroups, and representations of reductive groups, {\it Annals of Mathematics Studies} {\bf No. 94} (1980).

\bibitem{Sal} Susana A. Salamanca-Riba, On the unitary dual of real reductive {L}ie groups and the {$A_g(\lambda)$} modules: the strongly regular case, {\it Duke Math. J.} {\bf 96}(3) (1999) 521--546, \url{https://doi.org/10.1215/S0012-7094-99-09616-3}.

\bibitem{GroRag2} Harald Grobner and A. Raghuram, On some arithmetic properties of automorphic forms of {${\rm GL}_m$} over a division algebra, {\it Int. J. Number Theory} {\bf 10}(4) (2014) 963--1013, \url{https://doi.org/10.1142/S1793042114500110}.


\bibitem{Bad} A. I. Badulescu, Global {J}acquet–{L}anglands correspondence, multiplicity one and classification of automorphic representations, {\it Inventiones mathematicae} {\bf 172}(2) (2008) 383--438, \url{https://doi.org/10.1007/s00222-007-0104-8}.

\bibitem{BadRen} A. I. Badulescu and D. Renard, Unitary dual of $\mathrm{GL}(n)$ at archimedean places and global {J}acquet–{L}anglands correspondence, {\it Compositio Mathematica} {\bf 146}(5) (2010) 1115--1164, \url{https://doi.org/10.1112/s0010437x10004707}.




\bibitem{Jac} Herv\'{e} Jacquet, Principal {$L$}-functions of the linear group, {\it Automorphic forms, representations and {$L$}-functions ({P}roc. {S}ympos. {P}ure {M}ath., {O}regon {S}tate {U}niv., {C}orvallis, {O}re., 1977), {P}art 2} {\bf XXXIII} (1979) 63--86.

\bibitem{FriJac} Solomon Friedberg and Herv\'e Jacquet, Linear periods, {\it J. Reine Angew. Math.} {\bf 443} (1993) 91--139, \url{https://doi.org/10.1515/crll.1993.443.91}.

\bibitem{GroRag} Harald Grobner and A. Raghuram, On the arithmetic of {S}halika models and the critical values of {$L$}-functions for {${\rm GL}_{2n}$}, {\it Amer. J. Math.} {\bf 136}(3) (2014) 675--728, \url{https://doi.org/10.1353/ajm.2014.0021}.


\bibitem{MoeWal} C. M{\oe}glin and Jean Loup Waldspurger, Le spectre r\'{e}siduel de {${\rm GL}(n)$}, {\it Ann. Sci. \'{E}cole Norm. Sup. (4)} {\bf 22}(4) (1989) 605--674, \url{http://www.numdam.org/item?id=ASENS_1989_4_22_4_605_0}.

\bibitem{DelKazVig} P. Deligne, D. Kazhdan and M.-F. Vignéras, {Représentations des algèbres centrales simples \(p\)-adiques}, {\it {Représentations des groupes réductifs sur un corps local, Travaux en Cours, 33-117 (1984).}} (1984).




\bibitem{Ash} Avner Ash, Non-square-integrable cohomology of arithmetic groups, {\it Duke Math. J.} {\bf 47}(2) (1980) 435--449, \url{http://projecteuclid.org/euclid.dmj/1077314044}.



\end{thebibliography}
\end{document}